\newtheorem*{MAIN}{Main Theorem}
\newtheorem{theorem}{Theorem}[section]
\newtheorem*{theorem*}{Theorem}
\newtheorem*{question*}{Question}
\newtheorem*{definition*}{Definition}
\newtheorem{prop}[theorem]{Proposition}
\newtheorem{claim}{Claim}[theorem]
\newtheorem{subclaim}{Subclaim}[claim]
\newtheorem{lemma}[theorem]{Lemma}
\newtheorem{cor}[theorem]{Corollary}
\newtheorem{question}{Question}
\theoremstyle{definition}
\newtheorem{definition}[theorem]{Definition}
\newtheorem{notation}[theorem]{Notation}
\newtheorem{conv}[theorem]{Convention}
\newtheorem*{setup}{Setup}
\theoremstyle{remark}
\newtheorem{remark}[theorem]{Remark}
\def\s{\subseteq}
\def\sq{\sqsubseteq}
\def\forces{\Vdash}
\def\br{\blacktriangleright}
\newcommand{\one}{\mathop{1\hskip-3pt {\rm l}}} 
\newcommand{\Ult}{\mathrm{Ult}}
\renewcommand{\restriction}
{\mathbin\upharpoonright}    	
\renewcommand{\mid}{\mathrel{|}\allowbreak}
\newcommand{\diagonal}{\bigtriangleup}
\newcommand{\cat}{{}^{\curvearrowright}}
\DeclareMathOperator{\stem}{stem}
\newcommand\old[1]{}
\DeclareMathOperator{\range}{range}
\DeclareMathOperator{\supp}{supp}
\DeclareMathOperator{\crit}{crit}
\DeclareMathOperator{\Succ}{Succ}
\DeclareMathOperator{\eval}{eval}
\DeclareMathOperator{\dom}{dom}
\DeclareMathOperator{\gp}{GP}
\DeclareMathOperator{\otp}{otp}
\DeclareMathOperator{\cf}{cf}
\DeclareMathOperator{\id}{id}
\DeclareMathOperator{\RK}{RK}
\DeclareMathOperator{\rk}{\leq_{RK}}
\DeclareMathOperator{\image}{''}
\newcommand{\GCH}{\mathrm{GCH}}
\newcommand{\Ord}{\mathrm{Ord}}
\title[The directedness of the Rudin-Keisler order]{The directedness of the Rudin-Keisler order at measurable cardinals }
\author[Hayut]{Yair Hayut}
\address[Hayut]{Einstein Institute of Mathematics, Hebrew University of Jerusalem, Givat-Ram, 91904, Jerusalem, Israel, \url{https://math.huji.ac.il/~yairhayut/}.}
\email{yair.hayut@mail.huji.ac.il}
\author[Poveda]{Alejandro Poveda}
\address[Poveda]{Universitat de Barcelona, Departament de Matemàtiques i Informàtica, Barcelona 08007, Catalonia, Spain \\
\url{https://alejandropovedaruzafa.com/}.}
\email{alejandro.poveda@ub.edu}
\subjclass[2020]{03E35, 03E55}
\keywords{Rudin-Keisler order, Ultrafilters, Gluing property.}
\thanks{ The first author acknowledges the support of {the Israel Science   Foundation through the grant 1967/21}. The second author was partially supported by project number PID2023-147428NB-I00 funded by the the Spanish Government. }
\begin{document}
\maketitle

\begin{abstract}
The manuscript is concerned with the Rudin-Keisler order of ultrafilters on measurable cardinals. The main theorem proved  reads as follows: Given regular cardinals $\lambda\leq \kappa$, the following  theories are equiconsistent modulo ZFC:
\begin{enumerate}
    \item $\kappa$ is a measurable cardinal with $o(\kappa)=\lambda^+$ (resp. $o(\kappa)=\kappa$).
    \item The Rudin-Keisler order restricted to the set of $\kappa$-complete (non-principal) ultrafilters on $\kappa$ is $\lambda^+$-directed (resp. $\kappa^+$-directed).
\end{enumerate} 
The theorem reported here is proved after bridging the directedness of the RK-order  with the $\lambda$-Gluing Property introduced by the authors in \cite{HP}. Our result provides what seems to be  the first example of a compactness-type property at the level of  measurable cardinals whose consistency strength is much lower than the existence of a strong cardinal. As part of our analysis  we also answer a question of  Gitik by showing that in the model of \cite{ChangingCofinalities} the $\aleph_0$-Gluing Property fails and as a consequence that  the Rudin-Keisler order  is not even $\aleph_1$-directed. 
\end{abstract}


\section{Introduction}

The present manuscript is concerned with the {Rudin-Keisler order} on $\kappa$-complete non-principal ultrafilters on (an uncountable) regular cardinal. A set $\mathcal{F}\s \mathcal{P}(\kappa)$ is called  a \emph{filter} if $\mathcal{F}$ is stable under finite intersections and supersets, $\kappa\in \mathcal{F}$, and $\varnothing\notin \mathcal{F}$. A filter $\mathcal{U}$ is  an \emph{ultrafilter} if it is maximal with respect to inclusion. An ultrafilter $\mathcal{U}$ is called \emph{non-principal} (or \emph{free}) if there is no $\alpha<\kappa$ such that $\mathcal{U}=\{A\s \kappa\mid \alpha\in A\}$. Ultrafilters were independently introduced by Riesz \cite{Riesz} and Ulam \cite{Ulam} and their existence (in fact, the existence of a non-principal one) is  granted by the Axiom of Choice. Ultrafilters have played a pivotal role in the contemporary development of mathematics. A non-exhaustive, yet illustrative, list of areas where  they have surfaced includes Set Theory \cite{MalliarisShelahII,GoldbergUA}, Model Theory \cite{ MalliarisShelah,GoldbringBook}, Topology \cite{CNBook, HandbookTopology}, and Ramsey Theory \cite{TodorcevicWalks}. All the  ultrafilters considered in this introduction  will be assumed non-principal.

\smallskip

Interesting ultrafilters posses additional combinatorial properties, such as \emph{completeness}.  Given a regular cardinal $\lambda\leq \kappa$, an ultrafilter $\mathcal{U}$ is \emph{$\lambda$-complete} if the intersection of any ${<}\lambda$-sized family of members of the ultrafilter  belongs to $\mathcal{U}$. Certainly, every ultrafilter is $\aleph_0$-complete. Surprisingly, however, the standard foundation of mathematics, ZFC,\footnote{That is, the Zermelo–Fraenkel axioms together with the Axiom of Choice.} cannot prove the existence of an uncountable cardinal $\kappa$ possessing the above-mentioned property. 

An uncountable cardinal $\kappa$ is called \emph{measurable} if it carries a $\kappa$-complete (non-principal) ultrafilter. This type of cardinals were introduced by Ulam (30's)  in connection to classical  \emph{Lebesgue's Measure Problem} (\cite[\S1]{Kan}). Measurable cardinals belong to the so-called \emph{large cardinal hierarchy}. Large cardinals are a  type of 'infinite'   satisfying certain  properties making them appear 'very large',\footnote{For instance, a cardinal $\kappa$ is  \emph{weakly inaccessible} if it is a regular limit cardinal  (i.e., $\lambda^+<\kappa$ for all $\lambda<\kappa$). This notion is due to Felix Hausdorff (1908) (cf \cite[\S1]{Kan}).} and whose existence cannot be established by ZFC. Measurable cardinals have had such an influence on  the development of modern pure mathematics   that Kanamori describes them in his   monograph \cite[\S2]{Kan} as \emph{the most important concept of
all large cardinal theory}.

\smallskip

There is a  long-standing research project aimed at classifying ultrafilters. One of the most successful tools in this endeavor has been the  \emph{Rudin-Keisler} order  $\leq_{\mathrm{RK}}$. Given ultrafilters $\mathcal{U}_0$  and $\mathcal{U}_1$ on $\kappa$ one writes  $\mathcal{U}_0\leq_{\RK} \mathcal{U}_1$  if and only if there is a function $\pi\colon \kappa\rightarrow\kappa$ such that $X\in \mathcal{U}_0 \Leftrightarrow \pi^{-1}(X)\in \mathcal{U}_1.$ Independently introduced by M. E. Rudin  and J. Keisler, the Rudin-Keisler order was first systematically studied by  A. Blass in his Ph.D. dissertation \cite{BlassPhD}, and later by many others. A central feature of the Rudin-Keisler order is that it provides a framework for understanding the intrinsic structure of an ultrafilter through its relative position within the order. For instance, \emph{selective} (a.k.a. \emph{Ramsey}) ultrafilters are  those that are $\leq_{\mathrm{RK}}$-minimal.  The literature delving on the structure of the  Rudin-Keisler order   is fairly extensive  \cite{RudinTAMS,  ComfortNegrepontis, Ketonen,  KunenTAMS, BlassTAMS, KanamoriUltrafilters,Gitik1988Ordering, KR, GoldbergUA}.

\smallskip

In this manuscript we will focus on the directedness of $\leq_{\mathrm{RK}}$ on $\mathfrak{U}_\kappa$, the set of $\kappa$-complete ultrafilters on $\kappa$: For a regular uncountable cardinal $\lambda$,   $\langle \mathfrak{U}_\kappa,\leq_{\RK}\rangle$ is  \emph{$\lambda$-directed} if  given any cardinal $\mu<\lambda$ and a collection of ultrafilters $\{\mathcal{U}_\alpha\mid \alpha<\mu\}\s \mathfrak{U}_\kappa$ there is yet another ultrafilter    $\mathcal{U}\in \mathfrak{U}_\kappa$ such that $\mathcal{U}_\alpha\leq_{\mathrm{RK}} \mathcal{U}.$  Note that every $\mathcal{U}\in \mathfrak{U}_\kappa$ has at most $2^\kappa$-many predecessors in $\leq_{\mathrm{RK}}$, hence $\leq_{\mathrm{RK}}$ is at most $(2^\kappa)^+$-directed. A classical result of Kat\v{e}tov, shows that $\leq_{\mathrm{RK}}$ restricted to uniform ultrafilters on the integers is $(2^{\aleph_0})^+$-directed. Other results along this line can be found in Comfort-Negrepontis book \cite{CNBook}. 
The directedness of  $\leq_{\mathrm{RK}}$ should be imagined as an indicative of    'canonicity' or 'regularity' within the structure of the order.

\smallskip

When it comes to measurable cardinals, classical theorems of Kunen \cite[Theorem~2.3]{Ketonen} and, independently,  Comfort–Negrepontis \cite[Theorem~4.3]{ComfortNegrepontis} deduce the $(2^\kappa)^+$-directedness of $\leq_{\mathrm{RK}}$ from instances of \emph{compactness}; namely, if $\kappa$ is \emph{$\kappa$-compact}   then $\langle \mathfrak{U}_\kappa, \leq_{\mathrm{RK}}\rangle$  is $(2^\kappa)^+$-directed.\footnote{Recall that an uncountable cardinal $\kappa$ is called $\kappa$-compact if every $\kappa$-complete filter on $\kappa$ extends to a $\kappa$-complete ultrafilter on $\kappa$.}  This is not surprising, for the directedness of $\leq_{\mathrm{RK}}$ can itself be viewed as a form of compactness: rather than extending an arbitrary $\kappa$-complete filter, one extends a filter on the product space $\prod_{\alpha < 2^\kappa} \kappa$ having the property that each projection is already a $\kappa$-complete ultrafilter (see \cite[Theorem~2.3]{Ketonen}).

\smallskip

Compactness is the mathematical phenomenon whereby the local beha\-vior of a 
structure (broadly understood) determines its global properties. 
A major line of research in set theory (e.g., \cite{Shelahcompactness, Sil, MagSheGroups, BagariaMagidor, PartIII, GoldbgerPov}) has long been devoted to classifying the 
various instances of compactness that arise throughout mathematics. Modulo some 
striking exceptions, 
compactness typically entails the existence of large cardinals. 
For this reason, the program has devoted extensive efforts at  
classifying these phenomena according to their \emph{large-cardinal 
strength}—that is, according to the position of the  relevant large cardinal  in  
the large-cardinal hierarchy.

In light of its status as both 'regularity property' and 'compactness-type property', the directedness of~$\leq_{\mathrm{RK}}$ emerges as a property of independent research interest. As such, a question that this project aims to elucidate is: 
 \begin{question*}
Let $\lambda$ be a regular uncountable cardinal. What is the consistency strength of the theory  $``\mathrm{ZFC}+\langle \mathfrak{U}_\kappa,\leq_{\RK}\rangle$ is  {$\lambda$-directed}''?
 \end{question*}
 Phrased in different terms: Is the directedness of  $\langle \mathfrak{U}_\kappa,\leq_{\RK}\rangle$ an intrinsic feature of $\kappa$-compact cardinals or are there other, consistencywise weaker, large cardinals entailing such property?
Until the present moment no large cardinal with consistency strength strictly below $\kappa$-compactness was known to imply the $\aleph_1$-directedness of $\langle \mathfrak{U}_\kappa,\leq_{\RK}\rangle$. Hence, a natural conjecture is that $\kappa$-compactness (or large cardinals with similar strength) are necessary.

 Our main result, however, refutes this conjecture:

\begin{MAIN}
    Let $\kappa$ be a measurable cardinal and $\lambda\leq  \kappa$ be a cardinal. Then, the following theories are equiconsistent modulo $\mathrm{ZFC}$:
    \begin{enumerate}
        \item[$(\aleph)$]  $``\langle\mathfrak{U}_\kappa,\leq_{\RK}\rangle$ is $\lambda^+$-directed" (resp. $\kappa^+$-directed).
        \item[$(\beth)$]  $``\text{$\kappa$ is measurable with }o(\kappa)=\lambda^+$" (resp. with $o(\kappa)=\kappa$).
    \end{enumerate}
\end{MAIN}
The result above shows that high degrees of directedness can be obtained from large cardinal assumptions far weaker than the existence of a strong cardinal, whose consistency strength is itself much lower than that of a $\kappa$-compact cardinal (see e.g., \cite{Hayutpartial, Gitikcompact}). Thereby, our result demonstrates that relatively weak large cardinals suffice to produce models in which a measurable cardinal exhibits strong compactness-like properties. To the best of the author's knowledge, this is the first example of a compactness-like property, at the level of measurable cardinals, with this property.

\smallskip

Our \textbf{Main Theorem} is obtained after bridging the directedness of the Rudin-Keisler order and a variation of the \emph{Gluing Property} ($\gp$) \cite{HP} called \emph{$\gp$ via ultrafilters}. More precisely, we prove the following theorem:
\begin{theorem*}\label{thm:main1}
    Let $\kappa$ be a measurable cardinal and $\lambda\leq 2^\kappa$ be a cardinal. Then, the following statements are equivalent:
    \begin{enumerate}
        \item $\kappa$ has the ${<}\lambda^+$-$\mathrm{GP}$ via ultrafilters.
        \item $\kappa$ has the $\lambda$-$\mathrm{GP}$ via ultrafilters.
        \item $\langle\mathfrak{U}_\kappa,\leq_{\RK}\rangle$ is $\lambda^+$-directed. 
    \end{enumerate}
    Moreover, if $\lambda\leq \kappa$  then (1) and (2) above are, respectively, equivalent to
      \begin{enumerate}
        \item[(1')] $\kappa$ has the ${<}\lambda^+$-$\mathrm{GP}$,
        \item[(2')] $\kappa$ has the $\lambda$-$\mathrm{GP}$.
     
    \end{enumerate}
\end{theorem*}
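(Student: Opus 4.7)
The plan is to close a cycle $(3)\Rightarrow(1)\Rightarrow(2)\Rightarrow(3)$, and then handle the moreover clause separately. The implication $(1)\Rightarrow(2)$ should be essentially definitional: the $\lambda$-$\mathrm{GP}$ via ultrafilters concerns sequences of length exactly $\lambda$, which is a particular case of the sequences of length strictly less than $\lambda^+$ covered by the ${<}\lambda^+$-$\mathrm{GP}$ via ultrafilters. For $(3)\Rightarrow(1)$, given any family $\{\mathcal{U}_\alpha\mid\alpha<\mu\}\subseteq\mathfrak{U}_\kappa$ with $\mu\leq\lambda$, the $\lambda^+$-directedness of $\leq_{\RK}$ produces a single $\mathcal{W}\in\mathfrak{U}_\kappa$ with $\mathcal{U}_\alpha\leq_{\RK}\mathcal{W}$ for every $\alpha<\mu$. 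Packaging together the RK-projections witnessing each relation $\mathcal{U}_\alpha\leq_{\RK}\mathcal{W}$ should then supply precisely the coherent gluing data demanded by the ${<}\lambda^+$-$\mathrm{GP}$ via ultrafilters.

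The heart of the argument is $(2)\Rightarrow(3)$, which I would establish by a padding trick. Given $\mu<\lambda^+$ (so $\mu\leq\lambda$) ultrafilters $\{\mathcal{U}_\alpha\mid\alpha<\mu\}\subseteq\mathfrak{U}_\kappa$, I would extend the sequence to length exactly $\lambda$ by filling the remaining coordinates $\alpha\in[\mu,\lambda)$ with repeated copies of any fixed member of $\mathfrak{U}_\kappa$. Applying the $\lambda$-$\mathrm{GP}$ via ultrafilters to this padded sequence produces a single $\mathcal{W}\in\mathfrak{U}_\kappa$ lying $\leq_{\RK}$-above every entry, and in particular above each of the original $\mathcal{U}_\alpha$, as required for $\lambda^+$-directedness. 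A small care point is that if the formal definition of the $\lambda$-$\mathrm{GP}$ imposes non-triviality constraints (e.g.\ pairwise distinct entries, or a specific coherence in the gluing maps), then one should replace the naive repetition by a slightly richer padding—say, using iterated ultrapowers or sums of ultrafilters in the extra slots—and verify that the original $\mathcal{U}_\alpha$ are still RK-predecessors of the resulting $\mathcal{W}$.

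For the moreover clause, the direction $(1')\Rightarrow(1)$ (and analogously $(2')\Rightarrow(2)$) is immediate, since ultrafilters are particular instances of the more general $\kappa$-complete objects appearing in the definition of the unadorned $\mathrm{GP}$. The nontrivial direction is upgrading a $\mathrm{GP}$ via ultrafilters statement to the full $\mathrm{GP}$ under the hypothesis $\lambda\leq\kappa$. I would proceed by arguing that under this cardinal constraint, every $\kappa$-complete object arising in the definition of the $\mathrm{GP}$ can be replaced by a $\kappa$-complete ultrafilter on $\kappa$, via a diagonalization of length at most $\kappa$ that threads a consistent choice of extensions through the sequence; one then applies the $\mathrm{GP}$ via ultrafilters and verifies that the resulting gluing object also witnesses the original $\mathrm{GP}$ for the pre-extension sequence.

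The main obstacle I foresee is in this last step: extending an arbitrary $\kappa$-complete filter to a $\kappa$-complete ultrafilter is not automatic — it is essentially the $\kappa$-compactness condition — so the argument cannot appeal to a generic extension theorem. Instead, one must leverage specific structural features of the filters produced within the $\mathrm{GP}$ formulation (presumably, that they arise as projections or compositions of canonical measures derived from a single embedding, making their extension to ultrafilters forced rather than chosen). The constraint $\lambda\leq\kappa$ is what makes this bookkeeping survive inside a single $\kappa$-complete gluing witness; above $\kappa$ the construction would require more than the measurability of $\kappa$ to execute.
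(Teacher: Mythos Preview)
Your proposal has two substantial gaps, both stemming from misreadings of the definitions.

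\textbf{The implication $(3)\Rightarrow(1)$ is the hard direction, not the easy one.} You correctly observe that if $\mathcal{W}$ sits $\leq_{\RK}$-above each $\mathcal{U}_\alpha$, then the projections $\pi_\alpha$ yield seeds $\eta_\alpha:=[\pi_\alpha]_{\mathcal{W}}$ with $\mathcal{U}_\alpha=\{X\subseteq\kappa\mid\eta_\alpha\in j_{\mathcal{W}}(X)\}$. But the Gluing Property demands that $\langle\eta_\alpha\mid\alpha<\mu\rangle$ be \emph{increasing}, and nothing in directedness gives you that. What you have written is precisely the \emph{weak} $\mu$-GP via ultrafilters, which the paper separates out as a distinct notion. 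Upgrading to increasing seeds is the entire content of this implication, and the paper handles it by a genuine induction on $\lambda$: the base case $\lambda=\aleph_0$ uses finite products $\bar U_n=\bigotimes_{m\leq n}U_m$ (which do have increasing seeds) and then glues the $\bar U_n$'s; the inductive step bootstraps by first gluing initial segments, pushing those gluings into $\mathfrak{U}_\kappa$ via bijections $\kappa^{<\kappa}\leftrightarrow\kappa$, applying directedness to those, and then minimizing over the resulting seed-families to extract an increasing sequence. Your ``packaging together the RK-projections'' skips all of this.

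\textbf{You have the meaning of ``via ultrafilters'' backwards.} Both the plain GP and the GP via ultrafilters concern gluing sequences of $\kappa$-complete \emph{ultrafilters}; the adjective ``via ultrafilters'' refers to the \emph{witness}: the plain GP allows any embedding $j\colon V\to M$ with $M^\kappa\subseteq M$, while ``via ultrafilters'' requires $j$ to be the ultrapower by some $\kappa$-complete ultrafilter on $\kappa^{<\kappa}$. Hence $(2)\Rightarrow(2')$ is the trivial direction (an ultrapower is such an embedding), and $(2')\Rightarrow(2)$ is the nontrivial one. Your discussion of extending filters to ultrafilters, $\kappa$-compactness, and diagonalizations is entirely off target. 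The actual argument (Theorem~\ref{theorem: variantsofgp} in the paper) is: given an embedding $j$ and increasing seeds $\langle\eta_\beta\mid\beta<\alpha\rangle$ with $\alpha<\kappa^+$, the $\kappa$-closure of $M$ ensures $\langle\eta_\beta\rangle\in M$, and one derives $W:=\{X\subseteq\kappa^{<\kappa}\mid\langle\eta_\beta\rangle\in j(X)\}$; the factor map $k\colon M_W\to M$ then pulls the seeds back. The constraint $\lambda\leq\kappa$ is exactly what guarantees $\langle\eta_\beta\mid\beta<\alpha\rangle\in M$ for $\alpha<\lambda^+\leq\kappa^+$.

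Your $(1)\Rightarrow(2)$ and $(2)\Rightarrow(3)$ are fine (indeed $(2)\Rightarrow(3)$ needs no padding: the seeds give RK-projections directly).
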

Once the above fact is established, we develop the set-theoretic  machinery needed to produce the consistency of the $\lambda$-$\mathrm{GP}$ via ultrafilters, starting from a model where $(\beth)$ holds. Our strategy has an intellectual debt with Gitik \cite{ChangingCofinalities, Gitik1988Ordering} and Ben-Neria–Unger \cite{BenUng}. The basic idea is to define a non-stationary support iteration of Prikry-type forcings that ``glue'' any $\lambda$-collection of $\kappa$-complete ultrafilters (non-necessarily normal)  via a $\kappa$-complete one. To materialize this plan two key technologies had to be developed: (1) The \emph{Gluing Poset} (\S\ref{sec: the gluing iteration}); (2) The \emph{Coding Lemma} (\S\ref{sec:codinglemma}). The Gluing Poset is an elaboration of a forcing due to Gitik \cite{ChangingCofinalities} and we argue  that it is necessary to obtain the desired result (\S\ref{sec: no gluing in Gitiks}). The Coding Lemma allows for a complete description  of \textbf{all} $\kappa$-complete ultrafilters in a generic extension via a non-stationary support iteration of Prikry-type forcings over the core model up to $o(\kappa)=\kappa^{++}$. To the best of our knowledge, this is the only result in the literature that provides a complete description of all $\kappa$-complete ultrafilters in a Prikry-type extension of this kind, and is therefore of independent interest\footnote{Similar charaterization of \emph{normal} measures appeared in various works in the literature. See, for example, \cite{FriedmanMagidor, GitikKaplan-nonstationary2022}.} 

Our proof follows the lead of \cite[\S6]{HP}, alas we have to account for a number of technical obstacles after allowing uncountable degrees of gluing (e.g., Lemmas~\ref{lem;maximal-stem}, \ref{lemma:imagesofmu} and \ref{lem:mu-remains-regular}).  Results akin to the Coding Lemma have appeared in works  of Kunen \cite{KunenMeasures}, Ben-Neria \cite{BenNeriaMeasures}, Gitik–Kaplan \cite{GitikKaplan-nonstationary2022}, Benhamou–Goldberg \cite{BenhamouGoldberg} and Ben-Neria–Kaplan \cite{BenNeriaKaplan}.

\smallskip

We conclude the paper  addressing a question of Gitik on the gluing property. After a series of seminar lectures  delivered by the first author in 2023  at the Hebrew University of Jerusalem, Gitik conjectured that the $\gp$ via ultrafilters  should hold in his model of \cite{ChangingCofinalities}. We prove the following result which shows that our Gluing Poset from \S\ref{sec: the gluing iteration} is indeed necessary to get $\gp$:
\begin{theorem*}
    The $\aleph_0$-$\gp$ fails in Gitik's model from  \cite{ChangingCofinalities}. In particular, in Gitik's model the Rudin-Keisler order is not $\aleph_1$-directed.
\end{theorem*}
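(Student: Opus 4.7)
\emph{Proof proposal.} The plan is to exploit the Coding Lemma from \S\ref{sec:codinglemma} to obtain an explicit description of every $\kappa$-complete ultrafilter on $\kappa$ in Gitik's model $V[G]$ from \cite{ChangingCofinalities}, and then to exhibit a countable family of such ultrafilters admitting no common Rudin-Keisler upper bound. By the equivalence $(1')\Leftrightarrow(3)$ of the preceding theorem applied with $\lambda=\aleph_0\leq\kappa$, this failure of $\aleph_0$-$\gp$ simultaneously yields the failure of $\aleph_1$-directedness of $\langle\mathfrak{U}_\kappa,\leq_{\RK}\rangle$, so the ``in particular'' clause is free once the first claim is established.

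First I would verify that the Coding Lemma (or a straightforward adaptation of it) applies to Gitik's non-stationary support iteration. This produces a canonical parametrization of $\mathfrak{U}_\kappa^{V[G]}$ in which each $\kappa$-complete ultrafilter $W$ on $\kappa$ in $V[G]$ is coded by a pair $(W^\ast,a_W)$, where $W^\ast$ is a $\kappa$-complete ultrafilter of the core model $K$ and $a_W$ is a small amount of data read off from the generic filter $G$. This step is an invocation of the machinery developed earlier in the paper and reduces the structural analysis of $\langle\mathfrak{U}_\kappa^{V[G]},\leq_{\RK}\rangle$ to a combinatorial question about $K$ together with the prescribed generic pattern.

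Next I would single out a canonical countable family $\langle U_n : n<\omega\rangle\subseteq\mathfrak{U}_\kappa^{V[G]}$ whose underlying core-model parameters $U_n^\ast$ are the distinct normal measures used by Gitik in building the iteration. Because these measures sit on pairwise Mitchell-incomparable coordinates, no single $\kappa$-complete ultrafilter of $K$ can Rudin-Keisler dominate all of them. To derive a contradiction from a putative common $\leq_{\RK}$-upper bound $W\in\mathfrak{U}_\kappa^{V[G]}$, with witnessing projections $\pi_n\colon\kappa\to\kappa$, I would expand $W$ via the Coding Lemma as $(W^\ast,a_W)$ and translate each hypothesis $U_n\leq_{\RK}W$ in $V[G]$ into the statement that $U_n^\ast\leq_{\RK}W^\ast$ in $K$; this directly contradicts the incompatibility of the $U_n^\ast$ just mentioned.

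The principal technical obstacle is exactly this reflection step from $V[G]$ to $K$: one must argue that the generic data $a_W$ cannot produce Rudin-Keisler dominations in $V[G]$ that are not already visible at the core-model level. This is the point where the fine combinatorial structure of Gitik's non-stationary support iteration enters, and where one has to push the description furnished by the Coding Lemma hardest---extracting from each $\pi_n$ a ground-model RK-witness after absorbing the contribution of $a_W$. Once such a reflection property is secured, the conclusion is a short application of the rigidity of the Mitchell order on $\kappa$ in $K$.
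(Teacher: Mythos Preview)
Your proposal has a genuine gap at exactly the point you flag as the ``principal technical obstacle'': the reflection of a Rudin--Keisler domination from $V[G]$ down to $\mathcal K$. You acknowledge this is where the work lies but do not supply an argument, and in fact there is no reason to expect the statement you need. The Coding Lemma encodes a $\kappa$-complete ultrafilter $W\in V[G]$ by a finite normal iteration $\iota$ of $\mathcal K$ together with a condition $r$ and an ordinal $\bar\epsilon$; it does \emph{not} decompose $W$ as a core-model ultrafilter $W^\ast$ plus negligible generic decoration, and there is no mechanism for turning a projection $\pi_n\colon\kappa\to\kappa$ in $V[G]$ witnessing $U_n\leq_{\RK}W$ into a ground-model witness for $U_n^\ast\leq_{\RK}W^\ast$. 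Moreover, your choice of test family is mis-specified: in $\mathcal K$ the Mitchell order on normal measures is linear, so there are no ``pairwise Mitchell-incomparable'' normal measures on $\kappa$; the measures in Gitik's coherent sequence are Mitchell-increasing, not incomparable.

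The paper's argument proceeds along entirely different lines and does not use the Coding Lemma at all. One fixes distinct inaccessibles $\alpha_n<\kappa$ in $V$ and takes $\mathcal U_n:=\mathcal U_{\kappa,1}(\langle\alpha_n\rangle)$, all of which extend the \emph{same} $V$-measure $\mathcal U_{\kappa,1}$ but concentrate on measurables $\alpha$ whose Prikry sequence $b_\alpha$ begins with $\alpha_n$. Given a putative gluing embedding $j$ with seeds $\eta_n$, one analyzes the normal iteration $j\restriction\mathcal K$, sets $\eta_\omega:=\sup_n\eta_n$, and shows (via Mitchell covering) that $\eta_\omega$ is a non-trivial stage of $\iota(\mathbb P)$. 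The heart of the proof is a Strong Prikry Property argument exploiting the \emph{normality} of the measures in Gitik's iteration (this is exactly what distinguishes Gitik's forcing from the Gluing Poset) to show that every $\eta_n$ must lie on the Magidor sequence $b_{\eta_\omega}$. But then by coherency each $b_{\eta_n}$ is an initial segment of $b_{\eta_\omega}$, forcing $\min(b_{\eta_\omega})=\alpha_n$ for all large $n$---impossible since the $\alpha_n$ are distinct. The contradiction is thus geometric, about the structure of the generic Magidor club, rather than about RK-relations in $\mathcal K$.
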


The structure of the paper is as follows.  In \S\ref{sec:prelimminaries} we provide pertinent preliminaries and set notations for later use. In \S\ref{sec: gluing} we revisit the Gluing Property introduced in \cite{HP}, introduce a few new variants  and explore their interplay. In \S\ref{sec: SectionRK} we prove that the directedness of $\leq_{\mathrm{RK}}$ is equivalent to the $\gp$ via ultrafilters. Sections~\ref{sec: the gluing iteration} and \ref{sec:codinglemma} provide the technical core of the paper, which will be later utilized to establish our \textbf{Main Theorem} in \S\ref{sec: A model for gluing}. In \S\ref{sec: no gluing in Gitiks} we provide a negative answer to Gitik's conjecture.  \S\ref{sec: open problems} garners a few open problems that are deemed of interest. Our notations and terminologies follow (we hope) the standard vernacular of set theory.

 \section{Preliminaries}\label{sec:prelimminaries}

 \subsection{Ultrafilters} 
Let $U_0, U_1$ be ultrafilters. We  write $U_0\leq_{\mathrm{RK}}U_1$  if there is a function $\pi\colon \bigcup U_1\rightarrow\bigcup U_0$ such that for every set $X\s \bigcup U_0$, $$\text{$X\in U_0$ if and only if $\pi^{-1}(X)\in U_1.$}$$
Equivalently, there is a function $\pi$ as before such that $U_0=\pi_*(U_1)$ where $$\pi_*(U_1):=\{X\s \kappa\mid \pi^{-1}(X)\in U_1\}.$$
In the above circumstances we will say that  \emph{$U_0$ is Rudin-Keisler below $U_1$}.

We will abbreviate $``U_0\rk U_1$ and $U_1\rk U_0$" by $U_0\equiv_{\mathrm{RK}} U_1$. Similarly, $U_0<_{\mathrm{RK}} U_1$ will stand for $U_0\rk U_1$ and $\neg (U_1\rk U_0)$. Also, $U_0\equiv_{\mathrm{RK}} U_1$

The relation $\leq_{\RK}$ defines a preorder on the class of all ultrafilters –– that is, a reflexive and transitive binary relation. This latter, in turn, induces an order on the class of all ultrafilters, modulo $\equiv_{\RK}$. In practice we shall blur the  distinction between this two settings and will talk about the Rudin-Keisler order of ultrafilters. In this paper we shall be preoccupied with the Rudin-Keisler order $\leq_{\RK}$  in the context of  $\kappa$-complete non-principal ultrafilters over an uncountable cardinal $\kappa$. Moving forward we will denote this set by $\mathfrak{U}_\kappa.$  Since there are only $2^\kappa$-many functions $f\colon \kappa\rightarrow \kappa$ no member of $\mathfrak{U}_\kappa$ can have more than $2^\kappa$-many $\rk$-predecessors.
For a regular cardinal $\lambda\leq (2^\kappa)^+$ we say that  $\langle \mathfrak{U}_\kappa,\leq_{\RK}\rangle$ is \emph{$\lambda$-directed} if given $\mu<\lambda$ and ultrafilters $\{ U_\alpha\mid \alpha<\mu\}\s \mathfrak{U}_\kappa$ there is  another ultrafilter  $U\in \mathfrak{U}_\kappa$ such that $U_\alpha\leq_{\mathrm{RK}} U.$

Given a $\kappa$-complete ultrafilter $U$ on $\kappa$ we will denote by $j_U$ the ultrapower embedding $j_U\colon V\rightarrow M_U$ induced by $U$. Conversely, knowing the embedding $j_U$ and the ordinal $[\id]_U<j_U(\kappa)$, one can recover the ultrafilter $U$ via the equation $U=\{X\s \kappa\mid [\id]_U\in j_U(X)\}$.  For the general theory of ultrapowers  we refer the reader to \cite{SteelIteratedUlt} or to \cite[Ch. 1, \S5]{Kan}. 

\subsection{Forcing}\label{sectionPrikrytype}
The main forcing technology utilized in this manuscript are forcings of Prikry-type.
Here we adopt the perspective on Prikry-type posets  proposed by  Gitik \cite[\S6]{Gitik-handbook}. Specifically,
\begin{definition}\label{GitikPrikrydef}
    A triple $\langle \mathbb{P},\leq,\leq^*\rangle$ is called a \emph{Prikry-type forcing} if 
    \begin{enumerate}
        \item  $\leq^*\subseteq \leq$,
        \item $\leq^*$ has the \emph{Prikry property}; to wit, for every $p\in {P}$ and a statement $\varphi$ in the language of forcing of $\langle \mathbb{P},\leq\rangle$ there is  $q\leq^* p$ such that $$\text{$q\forces_{\mathbb{P}}\varphi$ or $q\forces_{\mathbb{P}}\neg\varphi.$}$$
    \end{enumerate}
\end{definition}
We will typically refer to $\mathbb{P}$ as being of Prikry-type when the ordering $\leq^*$ is clear from the context. Note that Gitik's definition is sufficiently general to include any poset by simply setting $\leq^* := \leq$. However, genuine Prikry-type forcings will satisfy $\leq^* \subsetneq \leq$, and typically the direct extension order $\leq^*$ will exhibit better closure properties than the regular order $\leq.$

\smallskip

In \cite{ChangingCofinalities}, Gitik introduced Easton-supported iterations of Prikry-type forcings. In this paper, we consider a modification of this construction due to Ben-Neria and Unger \cite{BenUng}, where the iteration has non-stationary support. This modification allows for finer control over the types of ultrafilters available in the generic extension (see e.g., \cite{HP, GitikKaplan-nonstationary2022, BenNeriaKaplan}).
\begin{definition}[Non-stationary support iteration, \cite{BenUng}]\label{Gitikiteration}
Let $\varrho$ be an ordinal. We define an iteration $\langle\mathbb{P}_\alpha, \dot{\mathbb{Q}}_\beta\mid \beta<\alpha\leq \varrho\rangle$ recursively as follows:  For  $\alpha\leq \varrho$ members of $\mathbb{P}_\alpha$ are sequences $p=\langle \dot{p}_\beta\mid \beta\in \supp(p)\rangle$ such that:
\begin{enumerate}[label=(\alph*)]
    \item $\supp(p)\s \alpha$ has \emph{non-stationary support}: namely, $\supp(p)\cap \beta$ is \emph{non-stationary} in $\beta$ for all inaccessible cardinals $\beta\leq \alpha$.
    \item for every $\beta\in \supp(p)$, $p\restriction\beta:=\langle \dot{p}_\beta\mid \beta\in \supp(p)\cap\beta\rangle\in \mathbb{P}_\beta$      $$p\restriction\beta\forces_{\mathbb{P}_\beta}``\dot{p}_\beta\in \dot{\mathbb{Q}}_\beta$$ and, moreover, $$\one\Vdash_{\mathbb{P}_\beta} |\dot{\mathbb{Q}}_\beta|\leq 2^\beta\,\wedge\,\text{$\langle \dot{\mathbb{Q}}_\beta,\leq_\beta, \leq_\beta^*\rangle$ is Prikry-type and $\leq^*$ is $\beta$-closed''}.\footnote{Recall that $\beta$-closure stands for thee following property: For every $\leq^*_\beta$-decreasing sequence $\langle r_\gamma\mid \gamma<\bar{\gamma}\rangle$ of members of $\mathbb{Q}_\beta$ (with $\bar{\gamma}<\beta$) there is $r\in\mathbb{Q}_\beta$ such that $r\leq^*_\beta r_\gamma.$ }$$
\end{enumerate}
Let $p=\langle \dot{p}_\beta\mid \beta\in \supp(p)\rangle$ and $q=\langle \dot{q}_\beta\mid \beta\in \supp(q)\rangle$ be elements of $\mathbb{P}_\alpha$.

We write $p\leq_\alpha q$  if and only if the following hold:
\begin{enumerate}
    \item $\supp(p)\supseteq \supp(q)$,
    \item $p\restriction\beta\forces_{\mathbb{P}_\beta}\dot{p}_\beta\leq_\beta \dot{q}_\beta$, for every $\beta\in \supp(q)$;
    \item there is $b\subseteq \supp(q)$, finite, such that for all $\beta\in \supp(q)\setminus b$
    $$p\restriction\beta\forces_{\mathbb{P}_\beta} \dot{p}_\beta\leq^*_\beta \dot{q}_\beta.$$
\end{enumerate}
The Prikry ordering $\leq^*_\alpha$ is defined by the case in which $b$ above is empty. 
\end{definition}

Two coments are in order. About (a): While members of $p\in \mathbb{P}_\varrho$ are names for the various posets involved note that $\supp(p)$ is assumed to be a set in the ground model. About (b): The assumptions that $|\dot{\mathbb{Q}}_\beta|\leq 2^\beta$ and $\langle \dot{\mathbb{Q}}_\beta,\leq^*_\beta\rangle$ is forced
to be $\beta$-closed are technical requirements permitting \emph{diagonalizations}. This is key to proving the Prikry property and its variants.

A fundamental theorem regarding the aforementioned iterations is the Prikry property. The result for Easton-supported iterations was proved by Gitik \cite[\S1]{ChangingCofinalities}; the corresponding one for non-stationary-supported iterations is due to Ben-Neria and Unger \cite[\S2]{BenUng}. Variants of the Prikry property relevant to this paper are presented in the following lemmas. (In both lemmas the iteration $\mathbb{P}_\varrho$ is assumed to have non-stationary support.) 

\begin{lemma}[\cite{BenUng}]\label{lemma:bnu}
Suppose that $\varrho$ is a limit ordinal and $e\colon \varrho\rightarrow V$ is a function such that for each $\alpha<\varrho$, $e(\alpha)$ is a $\mathbb{P}_{\alpha+1}$-name for a $\leq^*$-dense subset of $\mathbb{P}_\varrho\setminus (\alpha+1)$. Then, for each $p\in \mathbb{P}$ and $\nu<\varrho$ there is $p^*\leq^* p$ such that $p^*\restriction\nu=p\restriction\nu$ and $p^*\restriction \alpha+1\forces_{\mathbb{P}_{\alpha+1}}p^*\restriction\alpha+1\in e(\alpha).$ \qed
\end{lemma}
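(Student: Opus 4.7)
My strategy is a transfinite fusion along the iteration coordinates, in the spirit of the original Gitik/Ben-Neria--Unger diagonalization. I would build a $\leq^*$-decreasing sequence $\langle q_\alpha \mid \nu \leq \alpha \leq \varrho\rangle$ with $q_\nu = p$, each $q_\alpha\restriction\nu = p\restriction\nu$, and maintaining the invariant that $q_\alpha$ forces $q_\alpha\setminus(\beta+1)\in e(\beta)$ for every $\beta$ with $\nu\leq\beta<\alpha$. Setting $p^* := q_\varrho$ then yields the desired condition.

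The successor step at $\alpha$ is where the Prikry property enters. Given $q_\alpha$ and the $\leq^*$-density of $e(\alpha)$ in the tail $\mathbb{P}_\varrho\setminus(\alpha+1)$, I would combine (i) the Prikry property of the head $\mathbb{P}_{\alpha+1}$ (provable inductively for shorter iteration lengths by this same kind of diagonalization), with (ii) the fact that $\leq^*$ on the tail is $\beta$-closed for every $\beta\leq\alpha+1$, since each $\leq^*_\gamma$ with $\gamma>\alpha$ is $\gamma$-closed. A standard mixing/amalgamation yields a direct extension $s^\star\leq^*_{\alpha+1} q_\alpha\restriction(\alpha+1)$ together with a ground-model tail $r^\star\leq^* q_\alpha\setminus(\alpha+1)$ such that $s^\star \Vdash r^\star \in e(\alpha)$; concatenation gives $q_{\alpha+1}:=s^\star \cat r^\star$. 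At limit stages $\alpha$, I would form $q_\alpha$ coordinate by coordinate as a $\leq^*_\beta$-greatest lower bound of $\langle q_\gamma(\beta) \mid \gamma<\alpha\rangle$; the $\beta$-closure of $\leq^*_\beta$ supplies these bounds, provided the sequence of actual modifications at coordinate $\beta$ has length below $\beta$, which is arranged by the recursion's bookkeeping.

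The principal obstacle, and the reason this lemma is genuinely harder for non-stationary support than for Easton support, is the preservation of non-stationary support at the terminal stage $\varrho$. The fusion typically adds new coordinates to the support at each stage, and naively unioning $\varrho$-many non-stationary sets can produce a stationary set below some inaccessible $\beta\leq\varrho$. The remedy, following Ben-Neria and Unger, is an outer induction on the iteration length $\varrho$ combined with bookkeeping that confines all newly added support coordinates to a pre-chosen non-stationary reservoir $T\subseteq\varrho$. Concretely, when $\varrho$ is inaccessible, one selects a cofinal sequence of shorter initial segments indexed by a club disjoint from $T$, applies the inductive hypothesis to each, and splices the resulting conditions; this guarantees $\supp(p^*)\subseteq \supp(p)\cup T$, which remains non-stationary in every inaccessible $\beta\leq\varrho$.
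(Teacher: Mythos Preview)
The paper does not supply its own proof of this lemma: it is quoted from \cite{BenUng} and the statement is closed with a $\qed$ immediately after it. There is therefore nothing in the paper to compare your argument against directly.

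That said, your outline is essentially the fusion argument that Ben-Neria and Unger carry out in the cited reference. You have correctly isolated the one point that distinguishes the non-stationary-support case from Gitik's Easton-support version, namely that the union of the supports accumulated during the recursion must remain non-stationary below every inaccessible $\beta\leq\varrho$, and your remedy (an outer induction on $\varrho$ together with club-indexed bookkeeping confining new support coordinates to a fixed non-stationary reservoir) is exactly how \cite{BenUng} handles it. One small comment on the successor step: you do not actually need the Prikry property of the head $\mathbb{P}_{\alpha+1}$ there. What is used is the cardinality bound $|\mathbb{P}_{\alpha+1}|\leq 2^\alpha$ together with the $\leq^*$-closure of the tail $\mathbb{P}_\varrho\setminus(\alpha+1)$; one runs through all conditions in the head, choosing for each a tail forced into $e(\alpha)$, and then takes a $\leq^*$-lower bound of the resulting tails without touching the head at all.
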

\begin{lemma}[\cite{BenUng}]\label{lemma: reducingdensesets}
   Let $p\in \mathbb{P}_\varrho$, $\nu<\varrho$ and $D\s \mathbb{P}_\varrho$  a  dense open set. Then, there is $p^*\leq^*p$ and $\alpha<\varrho$ such that $p^*\restriction\nu+1=p\restriction\nu+1$ and $$\{r\leq p^*\restriction\alpha\mid r^\smallfrown p^*\setminus \alpha\in D\}$$ is $\leq$-dense below $p^*\restriction \alpha.$\qed
\end{lemma}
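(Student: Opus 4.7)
The plan is to derive the lemma by combining the Prikry property (Definition~\ref{GitikPrikrydef}) with the diagonalization device from Lemma~\ref{lemma:bnu}. For each $\alpha\in[\nu,\varrho)$ I will introduce a $\mathbb{P}_{\alpha+1}$-name $e(\alpha)$ for a $\leq^*$-dense subset of the tail forcing $\mathbb{P}_\varrho\setminus(\alpha+1)$ whose elements encode that the question ``does a suitable initial extension land in $D$?'' has already been settled at stage $\alpha$. Roughly, a tail $q$ should lie in $e(\alpha)$ iff either (a) the set $\{r\leq p\restriction\alpha\mid r{}^\smallfrown q\in D\}$ is $\leq$-dense below $p\restriction\alpha$, or else (b) $q$ is ``stuck''---no $\leq^*$-extension $q'\leq^* q$ verifies~(a). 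Because the tail direct-extension order is $(\alpha+1)$-closed in $V^{\mathbb{P}_{\alpha+1}}$ (see Definition~\ref{Gitikiteration}), a Zorn-type maximality argument should exhibit $e(\alpha)$ as $\leq^*$-dense in $\mathbb{P}_\varrho\setminus(\alpha+1)$.

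Next, I invoke Lemma~\ref{lemma:bnu} with this $e$ to obtain $p^*\leq^* p$ with $p^*\restriction(\nu+1)=p\restriction(\nu+1)$ and, for every $\alpha\in[\nu+1,\varrho)$, $p^*\restriction(\alpha+1)\forces_{\mathbb{P}_{\alpha+1}} p^*\setminus(\alpha+1)\in e(\alpha)$. Thus, at each stage $\alpha$, the tail $p^*\setminus(\alpha+1)$ is forced to satisfy one of the mutually exclusive cases~(a) or~(b).

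It remains to pin down the correct $\alpha$. Using the density of $D$, pick $r^*\leq p^*$ with $r^*\in D$. By Definition~\ref{Gitikiteration}, the extension $r^*\leq p^*$ is direct outside of a finite set $b\subseteq\supp(p^*)$. Setting $\alpha:=\max(b)+1$ gives $r^*\setminus\alpha\leq^* p^*\setminus\alpha$, and the tail $r^*\setminus(\alpha+1)$ exhibits a $\leq^*$-extension of $p^*\setminus(\alpha+1)$ for which the associated set in~(a) is at least \emph{non-empty}---hence case~(b) is incompatible with the existence of $r^*$. A further application of the Prikry property at coordinates below $\alpha+1$ upgrades non-emptiness to density, so case~(a) must hold for $p^*\setminus(\alpha+1)$ itself. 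This delivers the conclusion after absorbing the $\alpha$-th coordinate into the initial segment via one last direct extension at stage $\alpha$.

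The main obstacle, unsurprisingly, is to set up $e(\alpha)$ robustly enough that one simultaneously gets $\leq^*$-density \emph{and} the incompatibility of case~(b) with the witness $r^*$; the usual device is to strengthen~(a) so that it is closed under $\leq$-extensions of the head and to exploit the closure of $\leq^*$ on the tail to run a maximal antichain argument. A mild off-by-one between $p^*\restriction\alpha$ and $p^*\restriction(\alpha+1)$ arises from Lemma~\ref{lemma:bnu}, but it is absorbed by a single additional application of the Prikry property to $\dot{\mathbb{Q}}_\alpha$.
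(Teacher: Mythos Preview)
The paper does not supply a proof here; the lemma is quoted from \cite{BenUng} and closed with a \qed. Your outline via Lemma~\ref{lemma:bnu} with the dichotomy (a)~``the reduction set is dense below $p\restriction\alpha$'' versus (b)~``no $\leq^*$-extension of the tail achieves~(a)'' is the natural template, but the step ruling out~(b) does not go through as written. From $r^*\leq p^*$, $r^*\in D$, and $r^*\setminus(\alpha+1)\leq^* p^*\setminus(\alpha+1)$ you obtain only that for the single direct extension $q':=r^*\setminus(\alpha+1)$ the set $\{r\leq p\restriction(\alpha+1):r^\smallfrown q'\in D\}$ is \emph{nonempty} (it contains $r^*\restriction(\alpha+1)$). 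Your clause~(b), however, merely asserts that no $q'\leq^* p^*\setminus(\alpha+1)$ makes this set \emph{dense}; nonemptiness for one $q'$ is perfectly compatible with that. The promised ``further application of the Prikry property \dots upgrades non-emptiness to density'' is precisely the missing content, and you do not indicate what statement you would decide by a direct extension in order to effect this upgrade.

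A workable variant replaces your (a) by the generic-dependent clause (I)~``$\exists r\in\dot G_{\alpha+1}$ with $r^\smallfrown q\in D$''. Then $r^*\restriction(\alpha+1)$ genuinely refutes the stuck alternative for $p^*\setminus(\alpha+1)$, and ``$s\Vdash\text{(I)}$'' unwinds, via the forcing theorem and openness of $D$, to density of $\{r:r^\smallfrown p^*\setminus(\alpha+1)\in D\}$ below $s$. But this only gives density below $r^*\restriction(\alpha+1)$, whereas the lemma demands it below $p^*\restriction\alpha$. Arranging that $p^*\restriction(\alpha+1)$ itself (rather than some non-direct extension of it) forces~(I) for some $\alpha$ is exactly where the Ben-Neria--Unger argument does additional work---either a second diagonalization deciding (I) versus (II) at every stage, or an explicit fusion that builds this decision into $p^*$ coordinate by coordinate. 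That step is not absorbed by a single invocation of the Prikry property at one coordinate, and your last paragraph correctly identifies it as the obstacle without actually discharging it.
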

An immediate corollary is the ``reduction of names" lemma:
\begin{cor}\label{cor: reducingnames}
    Let $\sigma$ be a $\mathbb{P}_\varrho$-name and $p\in \mathbb{P}_\varrho$ such that $p\forces_{\mathbb{P}_\varrho}\sigma\in \check{V}$. Then, there is $p^*\leq^* p$, $\alpha<\varrho$ and a $\mathbb{P}_\alpha$-name $\tau$ such that $p^*\forces_{\mathbb{P}_\varrho}\sigma=\tau.$

    Moreover, for each $\nu<\varrho$ one can find $\alpha\geq \nu$ and $p^*\leq^* p$ as above so that $p^*\restriction\nu+1= p\restriction\nu+1.$
\end{cor}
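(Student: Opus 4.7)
The plan is to apply Lemma~\ref{lemma: reducingdensesets} to the set of conditions that decide the value of $\sigma$, and then to distill those decisions into a single $\mathbb{P}_\alpha$-name. First, let
\[
D := \{q \in \mathbb{P}_\varrho : q\text{ decides the value of }\sigma\}\cup\{q \in \mathbb{P}_\varrho : q\perp p\};
\]
this is $\leq$-open, and, using $p\forces_{\mathbb{P}_\varrho}\sigma\in\check{V}$, dense in all of $\mathbb{P}_\varrho$. Apply Lemma~\ref{lemma: reducingdensesets} to $D$ and to the given $\nu$ to obtain $p^*\leq^* p$ with $p^*\restriction\nu+1 = p\restriction\nu+1$ and some $\alpha<\varrho$ such that
\[
D' := \{r\leq p^*\restriction\alpha : r^\smallfrown(p^*\setminus\alpha)\in D\}
\]
is $\leq$-dense below $p^*\restriction\alpha$ in $\mathbb{P}_\alpha$. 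Since $r^\smallfrown(p^*\setminus\alpha)\leq p^*\leq p$, no condition of that shape can be incompatible with $p$, so every $r\in D'$ in fact satisfies that $r^\smallfrown(p^*\setminus\alpha)$ decides $\sigma$. If the returned $\alpha$ is below $\nu$, I would simply replace it by $\nu$: a $\mathbb{P}_\alpha$-name is automatically a $\mathbb{P}_\nu$-name.

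Next, for each $r\in D'$ let $v_r\in V$ be the unique value with $r^\smallfrown(p^*\setminus\alpha)\forces_{\mathbb{P}_\varrho}\sigma=\check{v}_r$, and set
\[
\tau := \{(\check{v}_r, r) : r\in D'\},
\]
which is manifestly a $\mathbb{P}_\alpha$-name. I claim $p^*\forces_{\mathbb{P}_\varrho}\sigma=\tau$. To check this, let $G$ be $\mathbb{P}_\varrho$-generic with $p^*\in G$ and set $G_\alpha := G\cap\mathbb{P}_\alpha$. By genericity of $G_\alpha$ over $V$ and density of $D'$ below $p^*\restriction\alpha\in G_\alpha$, pick $r\in D'\cap G_\alpha$. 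Then $r^\smallfrown(p^*\setminus\alpha)$ is a condition below $p^*$ whose initial segment $r$ lies in $G_\alpha$ and whose tail $p^*\setminus\alpha$ lies in the tail generic (as $p^*\in G$); by the standard decomposition of iterated generics, $r^\smallfrown(p^*\setminus\alpha)\in G$, whence $\sigma^G = v_r = \tau^{G_\alpha}$. The ``moreover'' clause on stems is inherited directly from the corresponding clause of Lemma~\ref{lemma: reducingdensesets}.

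The only delicate point I anticipate is the well-definedness of $\tau^{G_\alpha}$: if $r_1,r_2\in D'\cap G_\alpha$, then one must see that $v_{r_1}=v_{r_2}$. I would dispatch this by finding a common extension $r_3\in G_\alpha$ of $r_1,r_2$, pushing it into $D'$ via density to some $r_3'\leq r_3$, and noting that $r_3'^\smallfrown(p^*\setminus\alpha)$ forces $\sigma$ to equal both $\check{v}_{r_1}$ and $\check{v}_{r_2}$, so the two values coincide. I do not expect any serious obstacle: the genuinely non-trivial content --- the Prikry-style diagonalization across the non-stationary support --- has already been encapsulated in Lemma~\ref{lemma: reducingdensesets}, so the present corollary amounts to its routine name-theoretic repackaging.
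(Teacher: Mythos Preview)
Your approach is essentially the same as the paper's: apply Lemma~\ref{lemma: reducingdensesets} to the dense open set of conditions deciding $\sigma$, then assemble the local decisions into a $\mathbb{P}_\alpha$-name. The paper streamlines the last step by choosing a maximal antichain $A$ inside your $D'$ and invoking the mixing lemma to produce $\tau$, which avoids your well-definedness check. One small slip: your explicit $\tau := \{(\check{v}_r, r) : r\in D'\}$ evaluates to the \emph{set} $\{v_r : r\in D'\cap G_\alpha\}$, i.e.\ the singleton $\{v\}$ rather than $v$ itself; replacing $D'$ by a maximal antichain and mixing (or any equivalent fix) resolves this immediately.
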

\begin{proof}
    Let $D_\sigma:=\{q\leq p\mid \exists x\, (q\forces_{\mathbb{P}_\varrho}\sigma=\check{x})\}.$ Since this is dense open below $p$ there is $p^*\leq^*p$ and $\alpha<\varrho$ with $\{r\leq p^*\restriction\alpha\mid r^\smallfrown p^*\setminus\alpha\in D_\sigma\}$ being dense below $p^*\restriction\alpha.$ Let $A$ be a maximal antichain (in $\mathbb{P}_\alpha$) below $p^*\restriction\alpha$ included in the previous set. For each $r\in A$ there is $x_r$ such that $r^\smallfrown p^*\setminus\alpha\forces_{\mathbb{P}_\varrho}\sigma=\check{x}_r$. Let $\tau$ be the $\mathbb{P}_\alpha$-name resulting from mixing the names $\check{x}_r$ according to the maximal antichain $A$. It follows that, for each $r\in A$, $r^\smallfrown p^*\restriction\alpha\forces_{\mathbb{P}_\varrho}\check{x}_r=\tau$. From this it is easy to infer that $p^*\forces_{\mathbb{P}_\varrho}\sigma=\tau.$
    \end{proof}
 \begin{cor}   \label{cor: meeting-dense-open-bounded}
       Suppose that  $p\in \mathbb{P}_\varrho$ is a condition $\nu\leq \bar\varrho<\varrho$ are ordinals and $\sigma$ is a $\mathbb{P}_{\bar\varrho}$-name such that $p\restriction\bar\varrho\forces_{\mathbb{P}_{\bar\varrho}}\sigma\in \check{V}.$ Then, there is $p^*\leq^* p$ and a $\mathbb{P}_\alpha$-name $\tau$, with $\alpha<\bar\varrho$, such that $p^*\restriction\nu=p\restriction\nu$ and  $p^*\restriction\bar\varrho\forces_{\mathbb{P}_{\bar\varrho}}\sigma=\tau.$ \qed
 \end{cor}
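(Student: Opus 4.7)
The strategy is to reduce the statement to Corollary~\ref{cor: reducingnames} applied to the initial segment $\mathbb{P}_{\bar\varrho}$ of the iteration, and then \emph{glue} the resulting condition with the tail $p\restriction[\bar\varrho,\varrho)$ of $p$. This is possible because $\mathbb{P}_{\bar\varrho}$ is itself a non-stationary support iteration of Prikry-type forcings, of length $\bar\varrho<\varrho$, so all the previous lemmas of the section apply to it verbatim.

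Concretely, I would first apply Corollary~\ref{cor: reducingnames} inside $\mathbb{P}_{\bar\varrho}$ to the condition $p\restriction\bar\varrho$, the name $\sigma$, and the auxiliary ordinal $\nu$ (if $\nu=\bar\varrho$, one uses any smaller auxiliary ordinal together with the factorization of the iteration to handle the edge case). This produces a direct extension $q\leq^*_{\bar\varrho}p\restriction\bar\varrho$, an ordinal $\alpha<\bar\varrho$ with $\alpha\geq\nu$, and a $\mathbb{P}_\alpha$-name $\tau$ such that $q\restriction(\nu+1)=p\restriction(\nu+1)$ and $q\forces_{\mathbb{P}_{\bar\varrho}}\sigma=\tau$. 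I would then define $p^*$ by the obvious gluing: $(p^*)_\beta:=q_\beta$ for $\beta\in\supp(q)$ and $(p^*)_\beta:=p_\beta$ for $\beta\in\supp(p)\setminus\bar\varrho$.

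With $p^*$ defined in this way the two conclusions $p^*\restriction\nu=p\restriction\nu$ and $p^*\restriction\bar\varrho\forces_{\mathbb{P}_{\bar\varrho}}\sigma=\tau$ are immediate from the construction. What remains is to verify that $(i)$ $p^*\in\mathbb{P}_\varrho$ and $(ii)$ $p^*\leq^*p$. For $(i)$ one must check the non-stationary support condition at each inaccessible $\beta\leq\varrho$: if $\beta\leq\bar\varrho$ then $\supp(p^*)\cap\beta=\supp(q)\cap\beta$, which is non-stationary because $q\in\mathbb{P}_{\bar\varrho}$; if $\beta>\bar\varrho$ then $\supp(p^*)\cap\beta=\supp(q)\cup(\supp(p)\cap[\bar\varrho,\beta))$, where the first set is bounded in $\beta$ (hence non-stationary there) while the second is non-stationary by the hypothesis that $p\in\mathbb{P}_\varrho$. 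Point $(ii)$ is straightforward from the definitions: the supports contain those of $p$, coordinate-wise one has $(p^*)_\beta\leq^*_\beta p_\beta$ on $\supp(p)$, and the ``bad'' finite set witnessing $\leq$ is empty. The only even mildly delicate point is the case split in $(i)$---which is precisely the reason the construction uses non-stationary rather than Easton support---and I do not foresee a serious obstacle beyond this routine verification.
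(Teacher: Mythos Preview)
Your approach is correct and amounts to the same reduction the paper has in mind. The paper phrases it as re-running the proof of Corollary~\ref{cor: reducingnames} in $\mathbb{P}_\varrho$ with the modified dense set $D_\sigma=\{q\leq p\mid \exists x\,(q\restriction\bar\varrho\Vdash_{\mathbb{P}_{\bar\varrho}}\sigma=\check{x})\}$, whereas you apply Corollary~\ref{cor: reducingnames} directly to the initial segment $\mathbb{P}_{\bar\varrho}$ and then concatenate with the tail $p\restriction[\bar\varrho,\varrho)$; these are two packagings of the same idea, and your explicit glue-and-check of the non-stationary support is exactly the routine verification the paper suppresses.
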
     
        The proof  is identical to the argument given in Corollary~\ref{cor: reducingnames}, with the proviso that
$D_\sigma:=\{q\leq p\mid \exists x\, (q\restriction\bar\varrho\forces_{\mathbb{P}_{\bar\varrho}}\sigma=\check{x})\}.$    This variant of the ``name reduction lemma" will be useful later in this paper  (see Lemma~\ref{lemma:densityconditions}).

\smallskip

As in \cite{HP} in this paper we will also need to ``predict" the ultrafilters that will arise in the target generic extension. This will be achieved using \emph{Woodin's fast function forcing}; or, more precisely, using its non-stationary support version consider in \cite[\S6]{HP}:
\begin{definition}[Fast Function Forcing]\label{def: fast function forcing}
    For an inaccessible  $\kappa$  we denote by $\mathbb{S}_\kappa$ the poset consisting on partial functions $s \colon \kappa \to H(\kappa)$ such that  
\begin{enumerate}\label{fastfunctionforcing}
    \item $\dom(s)\s \mathrm{Inacc}$,
    \item $\dom (s) \cap \beta\in \mathrm{NS}_\beta$ for all $\beta\in\mathrm{Inacc}\cap (\kappa+1)$,
    \item  $s``\alpha\s \alpha$ for all $\alpha\in\dom(s)$,
    \item and $s(\alpha)\in H(\alpha^+)$ for all $\alpha\in\dom s$.
\end{enumerate}
The order of $\mathbb{S}_\kappa$ is defined naturally as $s\leq t$ iff $s\supseteq t.$
\end{definition}
\begin{prop}[Some properties of $\mathbb{S}_\kappa$, {\cite[Proposition~6.2]{HP}}]\label{PropertiesofS}
\hfill
\begin{enumerate}
    \item $\mathbb{S}_\kappa$ adds a generic fast function $\ell\colon \kappa\rightarrow \kappa$.\footnote{Specifically a function $\ell\colon\kappa\rightarrow\kappa$ for which the following holds: For all $x\in H(\kappa^+)^V$ there is an  embedding $j\colon V[S]\rightarrow M$ with $\crit(j)=\kappa$, ${}^\kappa M\s M$ and $j(\ell)(\kappa)=x$.}
    \item $\mathbb{S}_\kappa$ is $\kappa^{++}$-cc and preserves $\kappa^+$.
     \item Forcing with $\mathbb{S}_\kappa$ preserves the $\mathrm{GCH}$.
    \item For every  condition $s=\{\langle \alpha, s(\alpha)\rangle\}\in \mathbb{S}_\kappa$ 
    $$\mathbb{S}_\kappa/s\simeq \mathbb{S}_\alpha\times \mathbb{S}_\kappa\setminus \alpha,$$ where $\mathbb{S}_\alpha$ is $\alpha^{++}$-cc and $\mathbb{S}\setminus \alpha$ is $\theta_\alpha$-closed ($\theta_\alpha:=\min (\mathrm{Inacc}\setminus \alpha^+)$).
    \item  If $\kappa$ is measurable then forcing with $\mathbb{S}_\kappa$ preserves this property. $\qed$
\end{enumerate}
\end{prop}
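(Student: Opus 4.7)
My plan is to derive items (1)--(5) from the factorization stated in (4), which I would prove first. Given a condition $s=\{\langle\alpha, s(\alpha)\rangle\}$ of singleton support, any $t\leq s$ contains the pair $\langle\alpha,s(\alpha)\rangle$ and splits uniquely as $t\restriction\alpha\in\mathbb{S}_\alpha$ together with the restriction of $t$ to $\mathrm{Inacc}\cap(\alpha,\kappa)$, which lives naturally in a suborder $\mathbb{S}_\kappa\setminus\alpha$; the splitting is order-preserving and onto. For the $\theta_\alpha$-closure of $\mathbb{S}_\kappa\setminus\alpha$, take a decreasing sequence $\langle t_i\mid i<\gamma\rangle$ with $\gamma<\theta_\alpha$: the pointwise union has domain in $\mathrm{Inacc}\cap(\alpha,\kappa)$, and for each inaccessible $\beta\leq\kappa$ with $\beta\geq\theta_\alpha$, a union of fewer than $\beta$ nonstationary subsets of $\beta$ is nonstationary by regularity of $\beta$, so clause (b) of Definition~\ref{def: fast function forcing} is preserved, while clause (d) is immediate. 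The $\alpha^{++}$-cc of $\mathbb{S}_\alpha$ is just a lower instance of item (2).

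For (2) and (3), note that $|\mathbb{S}_\kappa|\leq 2^\kappa=\kappa^+$ under $\gch$. The $\kappa^{++}$-cc follows from a $\Delta$-system argument on the supports (each of size ${\leq}\kappa$) together with the bound $|H(\kappa)|=\kappa$ on pointwise values on the root: any two conditions agreeing on the root are compatible. To see $\kappa^+$ is preserved, apply the factorization: any $\mathbb{S}_\kappa$-name $\dot f$ for a cofinal map $\omega\to\kappa^+$ is, by the $\theta_\alpha$-closure of the tail applied at some sufficiently large $\alpha<\kappa$, in fact an $\mathbb{S}_\alpha$-name, and $|\mathbb{S}_\alpha|<\kappa$ rules out collapsing $\kappa^+$. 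The same strategy yields $\gch$ preservation: new subsets of any $\mu<\kappa$ first appear at an intermediate stage of size ${<}\kappa$, while at $\mu=\kappa$ the $\kappa^{++}$-cc together with $|\mathbb{S}_\kappa|=\kappa^+$ bounds the number of nice names for subsets of $\kappa$ by $(\kappa^+)^\kappa=\kappa^+$.

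For (1) and (5), assume $\kappa$ is measurable and fix a normal ultrafilter $U\in\mathfrak{U}_\kappa$ with ultrapower $j=j_U\colon V\to M$. Given $V$-generic $G\subseteq\mathbb{S}_\kappa$ and a prescribed $x\in H(\kappa^+)^V$, I aim to lift $j$ to $\hat j\colon V[G]\to M[\hat G]$ inside $V[G]$ with $\hat j(\ell)(\kappa)=x$, where $\ell$ is the function read off from $G$. Since $\mathrm{crit}(j)=\kappa$, every $p\in G$ satisfies $j(p)=p$, so $G$ sits coherently inside $j(\mathbb{S}_\kappa)$. Apply factorization (4) inside $M$ at $s^*:=\{\langle\kappa,x\rangle\}\in j(\mathbb{S}_\kappa)$: this yields $j(\mathbb{S}_\kappa)/s^*\simeq \mathbb{S}_\kappa\times(j(\mathbb{S}_\kappa)\setminus\kappa)$ with the right-hand factor $\theta_\kappa$-closed in $M$. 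Since ${}^{\kappa}M\subseteq M$, $|j(\mathbb{S}_\kappa)|^{M}\leq j(\kappa^+)$ has $V$-cardinality $\kappa^+$, and the tail is $\kappa^+$-closed in $M[G]$; a standard diagonalization in $V[G]$ meeting the at most $\kappa^+$-many dense opens of $M[G]$ below $s^*$ produces an $M[G]$-generic $H\ni s^*$. Lifting through $G*H$ yields the desired embedding, and by construction $s^*$ forces $\hat j(\ell)(\kappa)=x$, establishing both (1) and (5).

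The main obstacle I expect is verifying, in both the closure argument of (4) and the diagonal construction of $H$, that the nonstationary-support clause persists at every step. The subtlety is that nonstationarity must hold below \emph{every} inaccessible $\beta$ in the relevant range, not merely below $\kappa$ (resp.\ $j(\kappa)$); this is precisely why the closure in (4) is indexed by the next inaccessible $\theta_\alpha$ rather than by $\alpha^+$, and why conditions of the tail have domain starting only at $\theta_\alpha$. Once this bookkeeping is in place, the remainder is a routine lifting argument in the style of standard treatments of Woodin's fast function forcing.
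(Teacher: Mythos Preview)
The paper does not supply a proof of this proposition; it is quoted from \cite[Proposition~6.2]{HP} and closed with a $\qed$. So there is no argument in the present paper to compare your sketch against. Your outline follows the standard treatment of Woodin-style fast-function forcing, and the arguments you give for (4), (1), and (5) are essentially correct.

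There are, however, two related gaps in your handling of (2) and (3). For the preservation of $\kappa^+$, your factorization argument only treats a purported cofinal map $\mu\to\kappa^+$ with $\mu<\kappa$ (so that one may pick an inaccessible $\alpha\in(\mu,\kappa)$, pass below a singleton condition at $\alpha$, and absorb $\dot f$ into the small factor $\mathbb{S}_\alpha$); the case $\mu=\kappa$ is not addressed, and you cannot fall back on $\kappa^+$-cc, since $\mathbb{S}_\kappa$ is \emph{not} $\kappa^+$-cc (fix any nonstationary $D\subseteq\kappa$ cofinal in $\kappa$; the $\prod_{\alpha\in D}|H(\alpha^+)|=\kappa^+$ many conditions with domain $D$ are pairwise incompatible). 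For $\gch$ at $\kappa$, your nice-name count is off for the same reason: with only $\kappa^{++}$-cc, antichains may have size $\kappa^+$, so the number of nice names for subsets of $\kappa$ is bounded by $(2^{\kappa^+})^\kappa=\kappa^{++}$, not $(\kappa^+)^\kappa$. Both gaps are repaired by a fusion argument exploiting the nonstationary support (cf.\ \cite{BenUng} or \cite{HP}): given $\dot f\colon\kappa\to\mathrm{On}$ and $p$, build $\langle p_\xi\mid\xi<\kappa\rangle$ along a club $\langle\gamma_\xi\mid\xi<\kappa\rangle$ with $p_{\xi+1}\restriction\gamma_\xi=p_\xi\restriction\gamma_\xi$ and $p_{\xi+1}$ reducing $\dot f\restriction\gamma_\xi$ to an $\mathbb{S}_{\gamma_\xi}$-name; the limit condition forces $\dot f$ to be covered by a ground-model set of size $\kappa$, giving both $\kappa^+$-preservation and the correct bound on $2^\kappa$.
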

The basic idea is to use the fast function  $\ell\colon \kappa\rightarrow \kappa$ to predict the ``codes"  for the measures to be glued (see Definition~\ref{def: codes}). The fast function $\ell$ will also induce a natural ``order" $o^\ell(\cdot)$ (Definition~\ref{ellorder}) which will be used to define a variant of Gitik's forcing from \cite{ChangingCofinalities} yielding the gluing property. 

    \subsection{Inner model theory} As part of our analysis we will also need a few basic concepts from Inner Model theory. The first one is a natural order between ultrafilters introduced by Mitchell: 
\begin{definition}[Mitchell order]\label{MitchellIncreasing}
    For normal ultrafilters ${U}$ and ${W}$ over the same cardinal  one writes $ {U}\lhd {W}$ iff $ {U}\in \Ult(V, {W})$. 
\end{definition}
It is a theorem of Mitchell \cite[\S2]{MitChap} that $\lhd$ is well-founded, hence it makes perfect sense to speak about the rank of a normal measure $ {U}$ in $\lhd$.
\begin{definition}
    The \emph{Mitchell order of a normal measure} $ {U}$ (in symbols, $o( {U})$) is its rank in $\lhd$; namely, $o( {U}):=\sup\{o( {W})+1\mid  {W}\lhd  {U}\}.$

    Similarly, the \emph{Mitchell order of a cardinal $\kappa$} is $$o(\kappa):=\sup\{o( {U})+1\mid \text{$ {U}$ is a normal measure over $\kappa$}\}.$$
\end{definition}
 A cardinal $\kappa$ is not measurable if and only if $o(\kappa)=0$. If $o(\kappa)=1$ then $\kappa$ is measurable but any normal measure $ {U}$ over it concentrates on $\{\alpha<\kappa\mid \text{$\alpha$ is not measurable}\}.$ Solovay observed (see \cite[\S2]{MitChap}) that $o(\kappa)\leq (2^\kappa)^+$ and thus, under GCH, $o(\kappa)\leq \kappa^{++}$.

\smallskip

A related notion, also due to Mitchel, are \emph{coherent sequence of measures}.
\begin{definition}
A \emph{coherent sequence of measures} is a function ${\mathcal{U}}$ with:
\begin{enumerate}
    \item $\dom({\mathcal{U}})=\{(\alpha,\beta)\mid \alpha<\mathrm{len}({\mathcal{U}})\;\text{and}\;\beta<o^{{\mathcal{U}}}(\alpha)\}$, where $\mathrm{len}({\mathcal{U}})$ is a cardinal and $o^{{\mathcal{U}}}$ is a map between cardinals $\alpha<\mathrm{len}({\mathcal{U}})$ to ordinals;
    \item if $(\alpha,\beta)\in \dom({\mathcal{U}})$ then $\mathcal{U}(\kappa,\beta)$ is a normal measure on $\kappa$;
    \item  for all $(\alpha,\beta)\in\dom({\mathcal{U}})$,  $j_{\mathcal{U}(\alpha,\beta)}({\mathcal{U}})\restriction\alpha+1={\mathcal{U}}\restriction(\alpha,\beta)$ where
    $${\mathcal{U}}\restriction(\alpha,\beta):=\{\mathcal{U}(\gamma,\bar{\beta})\mid (\gamma< \alpha\,\wedge\,\bar{\beta}<o^{{\mathcal{U}}}(\gamma))\; \vee\; (\gamma=\alpha\,\wedge\,\bar{\beta}<\beta)\},$$
    and 
    $j_{\mathcal{U}(\alpha,\beta)}({\mathcal{U}})\restriction\alpha+1:=j_{\mathcal{U}(\alpha,\beta)}({\mathcal{U}})\restriction(\alpha+1,0)$.
\end{enumerate}
\end{definition}

\begin{definition}\label{Iteration}
A system of elementary embeddings $\langle \iota_{\alpha, \beta} \mid \alpha \leq \beta \leq \delta\rangle$ between transitive models is called a \emph{linear iteration} if the following hold:
\begin{enumerate}
    \item $\iota_{\alpha,\beta}\colon \mathcal{M}_{\alpha} \to \mathcal{M}_{\beta}$ has $\crit(\iota_{\alpha,\beta})=\mu_\alpha$,
    \item for all $\alpha \leq \beta \leq \gamma \leq \delta$, $\iota_{\beta,\gamma}\circ \iota_{\alpha,\beta} = \iota_{\alpha,\gamma}$,
    \item for all $\alpha \leq \delta$, $\iota_{\alpha,\alpha} = \id$, and
    \item for every limit ordinal $\gamma \leq \delta$, $$\text{$\langle \mathcal{M}_\gamma, \langle \iota_{\alpha,\gamma} \mid \alpha < \gamma\rangle\rangle$ is the direct limit of $\langle \iota_{\alpha,\beta} \mid \alpha \leq \beta < \gamma\rangle$.}$$
\end{enumerate}

A linear iteration is a \emph{normal iteration, using internal measures} if:
\begin{enumerate}\addtocounter{enumi}{4}
    \item  
    $\iota_{\alpha, \alpha+1}$ is an ultrapower embedding by a measure $\mathcal{U}_\alpha \in \mathcal{M}_\alpha$ on $\mu_\alpha$,   
    \item $\langle \mu_\alpha \mid \alpha < \delta\rangle$ is a strictly increasing sequence.\end{enumerate}
\end{definition}
\begin{conv}
For each $\alpha\leq \delta$, $\iota_\alpha$ will serve as a shorthand for $\iota_{0,\alpha}$. Often times we will be a bit careless and say that $\iota_\delta$ is an iteration rather than referring to the sequence $\langle \iota_{\alpha, \beta} \mid \alpha \leq \beta \leq \delta\rangle$.
\end{conv}

The following technical lemmas were established in \cite{HP}:
\begin{lemma}\label{lem;representing-elements-in-direct-limit}
Let $\langle \iota_{\alpha, \beta} \mid \alpha \leq \beta \leq \delta\rangle$ be a normal iteration using normal measures,  with critical points $\langle \mu_\alpha\mid \alpha<\delta\rangle$. Then, for every set $x \in \mathcal{M}_\delta$ there is a finite normal iteration $\tilde\iota \colon \mathcal{M}_0 \to {\mathcal{N}}$ together with a (factor) embedding $k \colon {\mathcal{N}} \to \mathcal{M}_\delta$ such that $k \circ \tilde{\iota} = \iota_\delta$ and $x \in \range(k)$. \qed
\end{lemma}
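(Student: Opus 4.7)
The plan is to argue by induction on $\delta$, strengthening the hypothesis so that for every finite tuple $x_1, \ldots, x_m \in \mathcal{M}_\delta$ there is a finite normal iteration $\tilde\iota \colon \mathcal{M}_0 \to \mathcal{N}$ together with a factor embedding $k \colon \mathcal{N} \to \mathcal{M}_\delta$ satisfying $\{x_1, \ldots, x_m\} \subseteq \range(k)$ and $k \circ \tilde\iota = \iota_\delta$. The base case $\delta = 0$ is immediate (take the length-$0$ iteration), and the passage from a finite tuple to a single element is handled by coding the tuple as a single set in $\mathcal{M}_\delta$.

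For the limit case, the direct-limit structure provides $\beta < \delta$ and $y \in \mathcal{M}_\beta$ with $x = \iota_{\beta, \delta}(y)$. Applying the inductive hypothesis to $y$ in $\mathcal{M}_\beta$ yields $\tilde\iota \colon \mathcal{M}_0 \to \mathcal{N}$ and $k \colon \mathcal{N} \to \mathcal{M}_\beta$ with $y \in \range(k)$; composing with $\iota_{\beta, \delta}$ produces the desired factor map $\iota_{\beta, \delta} \circ k \colon \mathcal{N} \to \mathcal{M}_\delta$, now having $x$ in its range and factoring $\iota_\delta$.

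The successor case $\delta = \gamma + 1$ is where the real work lies. Using \L{}o\'s theorem for the ultrapower by $\mathcal{U}_\gamma$, I would write $x = \iota_{\gamma, \gamma+1}(g)(\mu_\gamma)$ for some $g \in \mathcal{M}_\gamma$. Applying the strengthened hypothesis to the pair $(g, \mathcal{U}_\gamma) \in \mathcal{M}_\gamma$ yields a finite normal iteration $\tilde\iota \colon \mathcal{M}_0 \to \mathcal{N}$ and a factor $k \colon \mathcal{N} \to \mathcal{M}_\gamma$ with $g, \mathcal{U}_\gamma \in \range(k)$. Letting $\bar g, \bar{\mathcal{U}} \in \mathcal{N}$ be their preimages, the elementarity of $k$ makes $\bar{\mathcal{U}}$ an internal normal measure in $\mathcal{N}$ on the cardinal $\bar\mu := k^{-1}(\mu_\gamma)$. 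I would then extend $\tilde\iota$ by one ultrapower step via $\bar{\mathcal{U}}$ to form $\mathcal{N}' := \Ult(\mathcal{N}, \bar{\mathcal{U}})$ and define a factor $k' \colon \mathcal{N}' \to \mathcal{M}_{\gamma+1}$ by the formula $k'([h]_{\bar{\mathcal{U}}}) := \iota_{\gamma, \gamma+1}(k(h))(\mu_\gamma)$, delivering $x = k'([\bar g]_{\bar{\mathcal{U}}})$ as required.

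The two points requiring care are the normality of the extended iteration and the well-definedness and elementarity of $k'$. The inductive construction maintains the invariant that the critical points of $\tilde\iota$ are sent by $k$ to a strictly increasing sequence $\mu_{\alpha_0} < \ldots < \mu_{\alpha_{m-1}}$ with $\alpha_i < \gamma$; the new critical point $\bar\mu$ then satisfies $k(\bar\mu) = \mu_\gamma > \mu_{\alpha_{m-1}}$, so by the order-preservation of $k$ it exceeds all previous critical points, keeping the extended iteration normal. Elementarity of $k'$ is a routine application of \L{}o\'s theorem, once one observes that $k(\bar{\mathcal{U}}) = \mathcal{U}_\gamma$ identifies the $\bar{\mathcal{U}}$-ultrapower on the $\mathcal{N}$-side with the $\mathcal{U}_\gamma$-computation on the $\mathcal{M}_\gamma$-side. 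The main obstacle is the apparent circularity of needing $\mathcal{U}_\gamma \in \range(k)$ before extending the iteration, which is precisely why the inductive hypothesis must be strengthened to handle finite tuples, ensuring that $\bar g$ and $\bar{\mathcal{U}}$ can be located in a common intermediate model $\mathcal{N}$.
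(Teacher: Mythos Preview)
Your argument is correct and follows the standard route for this kind of result. The paper itself does not prove the lemma here (it is stated with a \qed\ and attributed to \cite{HP}), so there is no proof to compare against; your induction on $\delta$, with the strengthening to finite tuples so that both the representing function $g$ and the measure $\mathcal{U}_\gamma$ can be placed in $\range(k)$ simultaneously, is exactly how such factorization lemmas are proved.

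One small remark on presentation: the invariant you maintain---that $k$ sends the critical points of $\tilde\iota$ to genuine critical points $\mu_{\alpha_0} < \cdots < \mu_{\alpha_{m-1}}$ of the original iteration with each $\alpha_i < \gamma$---is the right bookkeeping device, and your verification that it propagates through both the limit step (since $\mu_{\alpha_i} < \mu_\beta = \crit(\iota_{\beta,\delta})$ forces $\iota_{\beta,\delta}(\mu_{\alpha_i}) = \mu_{\alpha_i}$) and the successor step (via $k'([\id]_{\bar{\mathcal U}}) = \mu_\gamma$) is sound. You might state this invariant explicitly at the outset rather than introducing it mid-proof, but the logic is in order.
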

Working in $\mathcal{K}$ let $\kappa$ be measurable and $S\s\mathbb{S}_\kappa$  a generic filter over $\mathcal{K}$. 
\begin{lemma}\label{lem;lifting-S}
Suppose $j\colon \mathcal{K}[S] \to M$ is an arbitrary elementary embedding such that $\iota = j \restriction \mathcal{K}$ is a normal iteration of measures in $\mathcal{K}$, say $\langle \iota_{\alpha,\beta} \mid \alpha \leq \beta \leq \delta\rangle$.    
Then, there is an iteration $\langle j_{\alpha,\beta} \mid \alpha\leq\beta\leq\gamma\rangle$ such that each embedding $j_{\alpha,\beta}$ lifts the corresponding embedding $\iota_{\alpha,\beta}$. \qed
\end{lemma}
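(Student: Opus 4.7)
The plan is to construct, by transfinite recursion on $\alpha\leq \delta$, both an $\iota_{0,\alpha}(\mathbb{S}_\kappa)^{\mathcal{M}_\alpha}$-generic filter $T_\alpha$ over $\mathcal{M}_\alpha$ and an elementary lifting $j_{0,\alpha}\colon \mathcal{K}[S]\to \mathcal{M}_\alpha[T_\alpha]$ of $\iota_{0,\alpha}$, coherently enough that the naturally induced maps $j_{\alpha,\beta}\colon \mathcal{M}_\alpha[T_\alpha]\to \mathcal{M}_\beta[T_\beta]$ lift $\iota_{\alpha,\beta}$ and together form a linear iteration in the sense of Definition~\ref{Iteration}. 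The base case $j_{0,0}=\mathrm{id}$, $T_0=S$ is immediate, so the work lies at the successor and limit stages.

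For the successor step $\alpha\mapsto \alpha+1$, the ultrapower $\iota_{\alpha,\alpha+1}=j^{\mathcal{M}_\alpha}_{\mathcal{U}_\alpha}$ has critical point $\mu_\alpha$, and by Proposition~\ref{PropertiesofS}(4) applied inside $\mathcal{M}_{\alpha+1}$ its image forcing factors as $\mathbb{S}^{\mathcal{M}_{\alpha+1}}_{\mu_\alpha}\times\bigl(\mathbb{S}^{\mathcal{M}_{\alpha+1}}\setminus\mu_\alpha\bigr)$. The lower factor is preserved verbatim since $\iota_{\alpha,\alpha+1}$ fixes $V_{\mu_\alpha}^{\mathcal{M}_\alpha}$ pointwise, so the corresponding initial segment of $T_\alpha$ descends unchanged. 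For the upper factor, the pointwise image $\iota_{\alpha,\alpha+1}\image(T_\alpha\setminus\mu_\alpha)$ serves as a ``master condition'', which is a legitimate condition thanks to the non-stationary support clause of Definition~\ref{def: fast function forcing}. Using the $\theta_{\mu_\alpha}$-closure of the upper part in $\mathcal{M}_{\alpha+1}$ together with the $\mathrm{GCH}$-preservation granted by Proposition~\ref{PropertiesofS}(3), one diagonalizes through the dense open subsets of $\mathbb{S}^{\mathcal{M}_{\alpha+1}}\setminus\mu_\alpha$ belonging to $\mathcal{M}_{\alpha+1}$ below the master condition, producing a generic $T_{\alpha+1}^{\geq\mu_\alpha}$; concatenating with the preserved lower part yields $T_{\alpha+1}$, and $j_{0,\alpha+1}$ is then the canonical lifting of $\iota_{\alpha,\alpha+1}\circ j_{0,\alpha}$ along this generic.

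At a limit stage $\lambda\leq\delta$, set $T_\lambda=\bigcup_{\alpha<\lambda}\iota_{\alpha,\lambda}\image T_\alpha$ and let $j_{0,\lambda}$ be the embedding determined by the universal property of the direct limit applied to $\langle j_{0,\alpha}\mid\alpha<\lambda\rangle$. Genericity of $T_\lambda$ over $\mathcal{M}_\lambda$ follows by invoking Lemma~\ref{lem;representing-elements-in-direct-limit}: any dense open $D\in \mathcal{M}_\lambda$ arises as $k(\bar D)$ through a factor map $k\colon \mathcal{N}\to \mathcal{M}_\lambda$ originating from a finite sub-iteration $\tilde\iota\colon \mathcal{M}_0\to\mathcal{N}$, so genericity at $\lambda$ is reduced to the already-established genericity at a finite stage of the recursion.

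The principal technical obstacle lies in the limit step, where one must simultaneously secure (i) the genericity of $T_\lambda$, (ii) the coherence of the $j_{0,\alpha}$'s with the inverse system, and (iii) the fact that the resulting $j_{\alpha,\lambda}$ genuinely lift $\iota_{\alpha,\lambda}$ uniformly in $\alpha<\lambda$. All three hinge on the non-stationary support structure of fast function forcing, which is precisely what allows direct limits of conditions to remain inside the poset even at inaccessible accumulation points of the critical sequence $\langle\mu_\alpha\mid\alpha<\lambda\rangle$.
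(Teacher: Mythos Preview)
The paper does not supply its own proof of this lemma—it is quoted from \cite{HP} and closed with a \qed—so there is no in-paper argument to compare against directly. That said, your sketch has two genuine gaps.

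First, and most importantly, you never use the given $j$ in the construction. The lemma is invoked in the paper precisely to conclude that the \emph{given} embedding $j\restriction \mathcal{K}[S]$ is itself an iteration (see the sentence ``$j\restriction\mathcal{K}$ is a normal iteration and as a result so is $j\restriction V$'' at the start of \S\ref{sec:codinglemma}, and the footnote to Definition~\ref{def: codes} on uniqueness and definability of the lift). Your recursion manufactures \emph{some} iteration of liftings of $\langle\iota_{\alpha,\beta}\rangle$, but nothing ties $j_{0,\delta}$ back to $j$. The intended argument reads the data off $j$: at step $\alpha$ one takes $\bar a_\alpha$ to be the value that $j(\ell)$ assigns at $\mu_\alpha$, and the lifting at that step is the unique one determined by $\iota_{\alpha,\alpha+1}$, $T_\alpha$, and $\bar a_\alpha$. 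Uniqueness is exactly what forces the end result to equal $j$.

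Second, the diagonalization you invoke at the successor step does not go through as stated. The tail $\iota_{0,\alpha+1}(\mathbb{S}_\kappa)\setminus\mu_\alpha$ is only $\theta_{\mu_\alpha}$-closed in $\mathcal{M}_{\alpha+1}$—so, under $\GCH$, at best $\mu_\alpha^{+}$-closed—while a direct count gives $\mu_\alpha^{++}$ many dense open subsets of it lying in $\mathcal{M}_{\alpha+1}$: each such $D$ is $\iota_{\alpha,\alpha+1}(f)(\mu_\alpha)$ for some $f\colon\mu_\alpha\to\mathcal{P}(\iota_{0,\alpha}(\mathbb{S}_\kappa))$ in $\mathcal{M}_\alpha$, and there are $(2^{\mu_\alpha^{+}})^{\mu_\alpha}=\mu_\alpha^{++}$ of those. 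So raw diagonalization below the master condition is one cardinal short. What makes the successor step work in \cite{HP} is not a closure-versus-counting argument but the specific combinatorics of non-stationary-support fast-function forcing: once the single value $\bar a_\alpha\in H(\mu_\alpha^{+})$ is fixed, the filter generated by $\iota_{\alpha,\alpha+1}\image T_\alpha$ together with the one-point condition $\{\langle\mu_\alpha,\bar a_\alpha\rangle\}$ is already $\mathcal{M}_{\alpha+1}$-generic. This is established by a fusion/density argument inside $\mathcal{M}_\alpha$ (the paper alludes to it as ``the fusion properties of $\mathbb{S}_\alpha$'' in the remark following Definition~\ref{Themeasures}), using normality of $\mathcal{U}_\alpha$ together with the fact that $\dom(s)\notin\mathcal{U}_\alpha$ for every condition $s$. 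No separate construction of a tail generic is required, and this is also why the lift is \emph{unique} given $\bar a_\alpha$—a feature your diagonalization would destroy.

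Your limit-stage outline is closer to the mark; once the successor step is handled correctly (with the $T_\alpha$'s determined by $j$), the limit case simplifies to pulling back $j(S)$ along $\iota_{\lambda,\delta}$, and genericity follows via Lemma~\ref{lem;representing-elements-in-direct-limit} much as you indicate.
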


Quite often during this paper we will be invoking the so-called \emph{Mitchell's covering theorem up to $o(\kappa)=\kappa^{++}$} \cite{MitIter}:
\begin{theorem}\label{CoreModelTheorem}
Assume there is no inner model for $``\exists\alpha\,(o(\alpha)=\alpha^{++})$''. Then there is a unique 
inner model $\mathcal{K}$ with the following properties:
\begin{enumerate}
    \item\label{Maximality}  If $\mathcal{U}$ is a $\mathcal{K}$-normal and $\kappa$-complete $\mathcal{K}$-ultrafilter over $\kappa$ then $\mathcal{U}\in \mathcal{K}$.\footnote{$\mathcal{K}$-normal means that $\mathcal{U}$ is normal relative to every function $f\colon \kappa\rightarrow \kappa$ in $\mathcal{K}$.}
    \item\label{coremodelIterations} Any non-trivial elementary embedding $j\colon \mathcal{K}\rightarrow M$ into a transitive class $M$ is a normal linear iteration using normal measures in $\mathcal{K}$.
    \item\label{GenericAbsoluteness}  Let $\mathbb{P}$ be a set-sized forcing and  $G\s \mathbb{P}$ be generic. Then $\mathcal{K}^{V[G]}=\mathcal{K}$.
    \item\label{weakcovering} If $\delta$ is a singular strong limit cardinal then $\delta^+=(\delta^+)^{\mathcal{K}}$.
\end{enumerate}
\end{theorem}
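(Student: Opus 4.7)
The plan is to construct $\mathcal{K}$ bottom-up from fine-structural \emph{premice} carrying coherent sequences of measures of length below the anti-large-cardinal bound, and then to extract the four advertised properties from the comparison lemma together with iterability.

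First I would set up the class of countably iterable premice $\mathcal{M}$ of the form $J_\alpha^{\vec{U}}$, where $\vec{U}$ is a coherent sequence in the sense of the Definition in the excerpt, with the constraint $o^{\vec{U}}(\beta)<\beta^{++}$ for every $\beta<\alpha$ (this is where the hypothesis ``no inner model with $o(\alpha)=\alpha^{++}$'' is used to rule out the pathological premice that would obstruct comparison). Fine structure (acceptability, solidity of standard parameters, and Dodd-solidity of the top measure) is developed for these premice essentially as in Mitchell's work, and the central tool is the \textbf{Comparison Lemma}: any two iterable premice can be coiterated by iterated ultrapowers using measures on their sequences, and the iteration terminates with one side an initial segment of the other. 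I would then define $\mathcal{K}$ by letting its coherent sequence $\mathcal{U}^{\mathcal{K}}$ be, at each $\beta$, the common tail obtained by coiterating all ``sufficiently many'' premice agreeing up to $\beta$; concretely, one takes $\mathcal{K}$ to be the union of the \emph{mice} (iterable premice that are sound above a certain projectum), using Steel's characterisation via $\mathcal{K}^c$ under our anti-large-cardinal assumption.

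For \eqref{Maximality} (Maximality), given a $\mathcal{K}$-normal $\kappa$-complete ultrafilter $\mathcal{U}$ on $\kappa$, I would form $N = \Ult(\mathcal{K},\mathcal{U})$ (internally, using functions in $\mathcal{K}$) and compare $\mathcal{K}$ with $N$. Because $\mathcal{U}$ is $\mathcal{K}$-normal, the ultrapower map $j\colon \mathcal{K}\to N$ agrees with $\mathcal{K}$ on $V_\kappa$, and the comparison argument forces $N$ to be an iterate of $\mathcal{K}$ by measures on $\mathcal{U}^{\mathcal{K}}$; the first measure applied must then \emph{be} $\mathcal{U}$, so $\mathcal{U}\in\mathcal{K}$. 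Property \eqref{coremodelIterations} (iterations are normal and use internal measures) is proved by essentially the same comparison: given $j\colon \mathcal{K}\to M$, one compares $\mathcal{K}$ with $M$, uses the anti-large-cardinal hypothesis to rule out extender-type drops, and invokes the Dodd-Jensen lemma to conclude that the iteration of $\mathcal{K}$ leading to $M$ is linear, normal, and uses only measures from $\mathcal{U}^{\mathcal{K}}$.

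For \eqref{GenericAbsoluteness} (generic absoluteness), I would use that the mouse existence predicate is absolute between $V$ and $V[G]$ for set forcing: forcing cannot add new iterable premice (because iterability of countable hulls is downward absolute and upward absoluteness follows from the fact that any putative bad iteration tree in $V[G]$ can be absorbed into $V$ by a standard reflection argument), and no existing mouse can be destroyed. Hence the same premice are mice in $V$ and $V[G]$, yielding $\mathcal{K}^{V[G]}=\mathcal{K}^V$. Finally, for \eqref{weakcovering} (weak covering), I would argue by contradiction: if $\delta$ is singular strong limit but $\delta^+<(\delta^+)^{\mathcal{K}}$, then a Skolem-hull/covering argument \`a la Mitchell produces inside $\mathcal{K}$ a measure $\mathcal{U}$ with $o^{\mathcal{U}^{\mathcal{K}}}(\alpha)\geq \alpha^{++}$ at some $\alpha<\delta$, contradicting the anti-large-cardinal hypothesis; this is the hardest ingredient, and it is where the singular combinatorics of~$\delta$ and the fine-structural behaviour of $\mathcal{K}$ at $\delta$ genuinely interact.

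The main obstacle throughout is \emph{iterability}: one must show that the putative $\mathcal{K}$ is iterable enough for comparisons to converge, despite having no background supercompact. Under the anti-large-cardinal hypothesis this is available by Mitchell's original argument (iterating with normal measures of order $<\alpha^{++}$ is tractable), and weak covering at singulars is the delicate point where the construction narrowly succeeds.
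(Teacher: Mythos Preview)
The paper does not prove this theorem at all: it is stated as a known result of Mitchell, cited from \cite{MitIter}, and used throughout as a black box. There is therefore no ``paper's own proof'' to compare your proposal against.

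Your outline is a reasonable high-level sketch of the architecture behind Mitchell's core-model construction, and the ingredients you name (coherent-sequence premice, comparison, Dodd--Jensen, weak covering via a contradiction producing high Mitchell order) are the right ones. But this is a deep theorem whose actual proof occupies a substantial body of work, and your sketch glosses over the genuinely hard parts: the iterability argument you flag as ``the main obstacle'' is not a small lemma, and your account of weak covering (``a Skolem-hull/covering argument \`a la Mitchell produces \dots a measure with $o(\alpha)\geq\alpha^{++}$'') is not a proof but a restatement of the conclusion. For the purposes of this paper none of that matters: the correct thing to do here is exactly what the authors do, namely cite Mitchell and move on.
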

The above model $\mathcal{K}$ is the so-called \emph{core model up $o(\kappa)=\kappa^{++}$} but in this paper we shall refer to it simply as the \emph{core model}. Aside from the above-displayed properties, $\mathcal{K}$ has some other gentle combinatorial features - for instance, $\mathcal{K}\models \GCH$.\label{GCHinK} For more see Mitchell's handbook chapter \cite{Mithand2}. We will not need the precise definition of the core model in this paper.

\section{The Gluing Property}\label{sec: gluing}
A key property studied in this paper is the \emph{Gluing Property} \cite{HP}:
\begin{definition}[The Gluing Property]\label{def:gluing-property}
Let $\kappa$ be a measurable cardinal and $\alpha\geq \omega$ be an ordinal. We say that $\kappa$ has the \emph{$\alpha$-Gluing Property} ($\alpha$-$\gp$) if for every $\alpha$-sequence $\langle U_\beta \mid \beta < \alpha\rangle$ of non-principal $\kappa$-complete ultrafilters over $\kappa$  
     there is a (definable) elementary embedding $j \colon V \to M$   with $\crit(j) = \kappa$ and $M^\kappa \subseteq M$,
   and an increasing sequence of ordinals  $\langle \eta_\beta \mid \beta < \alpha\rangle$ such that
     $$U_\beta = \{X \subseteq \kappa \mid \eta_\beta \in j(X)\}.\footnote{
    $\eta_\beta\geq \kappa$ as a consequence of the non-principality of $U_\beta$.}$$

 We will say that   
    \emph{$\kappa$ has the ${<}\alpha$-$\gp$} provided $\kappa$ has the $\beta$-$\gp$ for all $\beta<\alpha.$
\end{definition}

\begin{notation}
    We will often refer to sequences $\langle \eta_\beta\mid \beta<\alpha\rangle$ as \emph{seeds} and say that $\langle \eta_\beta\mid \beta<\alpha\rangle$  and $j$ \emph{glue $\langle U_\beta\mid \beta<\alpha\rangle$} if the above property holds.
\end{notation}

 In \cite{HP} it was proved that strongly compact cardinals have the $\lambda$-$\gp$ for all cardinals $\lambda$.\footnote{Moreover, the $2^{\kappa}$-$\gp$ via ultrafilters holds. This is a consequence of a classical theorem of Kunen \cite[Theorem~2.3]{Ketonen} for he showed that if $\kappa$ is strongly compact then $\langle\mathfrak{U}_\kappa,\leq_{\mathrm{RK}}\rangle$ is $(2^\kappa)^+$-directed. This combined with our Theorem~\ref{thm: EquivalencewithRK} yields the  claim.} Refining this argument we were able to show  that $\lambda$-compact cardinals have the $(2^\lambda)$-$\gp$ but not necessarily the $(2^\lambda)^+$-$\gp$. In the same paper it was argued that strong cardinals in the core model $\mathcal{K}$ may not have the $\aleph_0$-$\gp$, yet by performing appropriate forcing one can turn a strong cardinal into a measurable with the $\aleph_0$-$\gp$. This was followed with a proof that $\mathrm{ZFC}+$``There exists a measurable cardinal  with the $\aleph_0$-$\gp$" is equiconsistent with  the theory $\mathrm{ZFC}+``\exists\kappa\,(\text{$\kappa$ is measurable and } o(\kappa)=\omega_1)$".
    
    In the present manuscript we refine our analysis to show that the theory $\mathrm{ZFC}+$``There exists a measurable $\kappa$ with the ${<}\lambda^+$-$\gp$" (for cardinals $\lambda<\kappa$) is equiconsistent with $\mathrm{ZFC}+``\exists\kappa\,(\text{$\kappa$ is measurable and } o(\kappa)=\lambda^+)$". Furthermore, we show that  $\mathrm{ZFC}+$``There exists a measurable $\kappa$ with  ${<}\kappa^+$-$\gp$" is equiconsistent with $\mathrm{ZFC}+``\exists\kappa\,(\text{$\kappa$ is measurable and } o(\kappa)=\kappa)$". These will be the matter of discussion of the forthcoming Section~\ref{sec: A model for gluing}.

    \smallskip

As part of our general analysis on the Gluing Property, we will examine the relationship between this notion and some of its natural variants. 
\begin{definition}
    Let  $\kappa$ be a  measurable cardinal and $\omega\leq \alpha$ an ordinal.
    \begin{enumerate}
         \item $\kappa$ has the \emph{super $\alpha$-$\gp$} if  it has the $\alpha$-$\gp$ and in addition one can take the sequence of seeds $\langle \eta_\beta\mid \beta<\alpha\rangle$  of Definition~\ref{def:gluing-property} inside $M.$
        \item  $\kappa$ has the \emph{$\alpha$-$\gp$ via ultrafilters} if it has the $\alpha$-$\gp$ and this is witnessed by ultrapower embeddings via  $\kappa$-complete ultrafilters  $W$ on $\kappa^{<\kappa}$. 
        \item $\kappa$ has the \emph{weak $\alpha$-$\gp$} if the conclusion of Definition~\ref{def:gluing-property} holds but $\langle \eta_\beta\mid \beta<\alpha\rangle$ is not necessarily increasing.
    \end{enumerate}
\end{definition}
\begin{remark}
If $W$ is a $\kappa$-complete ultrafilter on $\kappa^{<\kappa}$, $\beta<\alpha$ and $U_\beta=\{X\s\kappa\mid \eta_\beta\in j_W(X)\}$,  then we can assume that $\sup_{\beta<\alpha} \eta_\beta\leq \sup([\id]_W)$: Otherwise, let $W^*:=\{Y\s \kappa^{<\kappa}\mid [\id]_W{}^\smallfrown[\id]_{j_W(W)}\in j(Y)\}$ where $j$ is the composition $j_{j_W(W)}\circ j_W$.  Clearly, $W^*$ is a $\kappa$-complete ultrafilter with $\sup_{\beta<\alpha}\eta_\beta\leq \sup([\id]_{W^*})$, and it glues the ultrafilters $U_\beta$ for $\beta<\alpha$. 
\end{remark}

Next, we prove a result clarifying the implications between these notions.

\begin{theorem}\label{theorem: variantsofgp}
Let $\kappa$ be a measurable cardinal and $\omega \leq \alpha<\kappa^{++}$ an ordinal. Then, each of the following properties implies the next one in the list:
    \begin{enumerate}
        \item $\kappa$ has the super $\alpha$-$\gp$.
        \item $\kappa$ has the $\alpha$-$\gp$ via ultrafilters.
        \item $\kappa$ has the $\alpha$-$\gp$.
        \item $\kappa$ has the weak $\alpha$-$\gp$.
    \end{enumerate}
    Moreover, if $\alpha<\kappa^+$ then (1)--(3) are equivalent.
\end{theorem}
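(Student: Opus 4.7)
The plan is to prove the chain $(1)\Rightarrow(2)\Rightarrow(3)\Rightarrow(4)$ and then the reverse implication $(3)\Rightarrow(1)$ under the extra hypothesis $\alpha<\kappa^+$. The implications $(2)\Rightarrow(3)$ and $(3)\Rightarrow(4)$ are essentially immediate. For the first, any ultrapower $j_W\colon V\to M_W$ by a $\kappa$-complete ultrafilter $W$ on $\kappa^{<\kappa}$ has critical point $\kappa$ (each $U_\beta$ sitting below $W$ in $\leq_{\mathrm{RK}}$ is a non-$\kappa^+$-complete ultrafilter on $\kappa$, which forces the same for $W$) and is $\kappa$-closed, since a $\kappa$-sequence $\langle[f_\xi]_W\mid\xi<\kappa\rangle$ is recovered inside $M_W$ by restricting to the initial segment $\kappa$ the single class of $s\mapsto\langle f_\xi(s)\mid\xi<\kappa\rangle$. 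For $(3)\Rightarrow(4)$ one simply forgets the strict monotonicity requirement on seeds.

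For $(1)\Rightarrow(2)$, let $j\colon V\to M$ together with $\vec{\eta}=\langle\eta_\beta\mid\beta<\alpha\rangle\in M$ witness super $\alpha$-$\gp$. After composing $j$ with a further ultrapower embedding if necessary---replacing $\vec{\eta}$ by its image under the new map, which keeps gluing the same $U_\beta$'s---we may assume $\alpha<j(\kappa)$, and hence $\vec{\eta}$ lies in $j(\kappa^{<\kappa})$ inside $M$. Define
$$W:=\{Y\subseteq\kappa^{<\kappa}\mid\vec{\eta}\in j(Y)\},$$
a $\kappa$-complete non-principal ultrafilter on $\kappa^{<\kappa}$, and let $j_W\colon V\to M_W$ be its ultrapower with canonical factor $k\colon M_W\to M$ satisfying $k\circ j_W=j$ and $k([\mathrm{id}]_W)=\vec{\eta}$. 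For each $\beta<\alpha$ we pick a function $\pi_\beta\in V$ with $j(\pi_\beta)(\vec{\eta})=\eta_\beta$ (for $\beta<\kappa$ the projection $\pi_\beta(s)=s(\beta)$ works; for $\beta\geq\kappa$ one uses the super hypothesis, namely $\vec{\eta}\in M$, to craft a $V$-definable projection extracting $\eta_\beta$) and set $\xi_\beta:=[\pi_\beta]_W\in M_W$. Elementarity of $k$ transports the identity $U_\beta=\{X\mid\eta_\beta\in j(X)\}$ into $U_\beta=\{X\mid\xi_\beta\in j_W(X)\}$, and the strictly increasing order of $\langle\eta_\beta\rangle_{\beta<\alpha}$ lifts to $\langle\xi_\beta\rangle_{\beta<\alpha}$.

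For the moreover direction $(3)\Rightarrow(1)$ when $\alpha<\kappa^+$, given witnesses $j,\vec{\eta}$ of $\alpha$-$\gp$, every $\eta_\beta<j(\kappa)$ is an ordinal of $M$. If $\alpha\leq\kappa$, then $\vec{\eta}$ is a $\leq\kappa$-sequence of elements of $M$, so $\vec{\eta}\in M$ by $M^\kappa\subseteq M$. If $\kappa<\alpha<\kappa^+$, fix any bijection $\sigma\colon\kappa\to\alpha$ in $V$; both $\sigma$ and $\vec{\eta}\circ\sigma$ are $\kappa$-sequences of ordinals of $M$, hence lie in $M$, and $\vec{\eta}$ is recovered as $(\vec{\eta}\circ\sigma)\circ\sigma^{-1}\in M$. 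This promotes $\alpha$-$\gp$ to super $\alpha$-$\gp$.

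The main obstacle lies in $(1)\Rightarrow(2)$: one must guarantee that every $\eta_\beta$, including those indexed by $\beta\geq\kappa$, is realized as $j(\pi_\beta)(\vec{\eta})$ for some $\pi_\beta\in V$---equivalently, that every $\eta_\beta$ lies in the range of the factor map $k$. The low-index case is routine via the coordinate projections $s\mapsto s(\beta)$, whereas for higher indices the super gluing hypothesis (the fact that $\vec{\eta}$ itself sits in $M$) is precisely what provides the flexibility to design the required $V$-definable projections.
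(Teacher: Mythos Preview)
Your arguments for $(2)\Rightarrow(3)$, $(3)\Rightarrow(4)$, and the moreover clause $(3)\Rightarrow(1)$ when $\alpha<\kappa^+$ are correct and essentially match the paper. The gap is in $(1)\Rightarrow(2)$, and it lies precisely at the point you flag as the ``main obstacle'' but do not actually resolve.

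You assert that for $\beta\geq\kappa$ the super hypothesis $\vec\eta\in M$ ``provides the flexibility to design the required $V$-definable projections.'' This is not so: knowing $\vec\eta\in M$ is what lets you define $W$ (it guarantees $\vec\eta\in j(\kappa^{<\kappa})$), but it does not by itself put $\eta_\beta$ in $\range(k)$. Since $\vec\eta=k([\id]_W)\in\range(k)$ and $\vec\eta$ is injective, having $\eta_\beta\in\range(k)$ is equivalent to having the \emph{index} $\beta$ in $\range(k)$, and that is the real work. The paper handles it in two stages. For $\alpha\leq\kappa^+$ one first arranges $\eta_0=\kappa$ (prepending it if necessary), so that $\kappa\in\range(k)$ via $s\mapsto\min(s)$; then for $\kappa\leq\beta<\kappa^+$ the canonical function $c_\beta$ (satisfying $j(c_\beta)(\kappa)=\beta$ for any $j$ with $\crit(j)=\kappa$) yields $\beta\in\range(k)$ through $g_\beta(s):=s\bigl(c_\beta(\min(s))\bigr)$. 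For $\alpha\in(\kappa^+,\kappa^{++})$ canonical functions no longer reach, and the paper proceeds by induction on $\alpha$: at successors $\alpha=\bar\alpha+1$ one forms the product $W\otimes U_{\bar\alpha}$; at limits one first glues each proper initial segment by the inductive hypothesis, then glues the (${\leq}\kappa^+$ many) resulting ultrafilters using the base case, and finally combines the transported seed systems by setting $\eta_\beta:=\min_\xi\sigma_\beta^\xi$. Your uniform one-shot derivation supplies neither the canonical-function device nor this inductive architecture, and without them the implication does not go through.
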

\begin{proof}

    (2) $\Rightarrow$ (3) and (3) $\Rightarrow$ (4) are trivial. Also,   if $\alpha<\kappa^+$ then the closure  under $\kappa$-sequences of the models $M$ that are the target of a gluing embedding $j\colon V\rightarrow M$ yield (3) $\Rightarrow$ (1).  
    Therefore, it suffices to prove  (1) $\Rightarrow$ (2).

    \smallskip

    (1) $\Rightarrow$ (2). Let $\langle U_\beta\mid \beta<\alpha\rangle$ be $\kappa$-complete ultrafilters. Since $\kappa$ has the super $\alpha$-$\gp$ there is an elementary embedding $j\colon V\rightarrow M$ and an increasing sequence $\langle \eta_\beta\mid \beta<\alpha\rangle$ below $j(\kappa)$ such that $\langle \eta_\beta\mid \beta<\alpha\rangle\in M$ and
    $$U_\beta=\{X\s \kappa\mid \eta_\beta\in j(X)\}.$$
    Since $\eta_0\geq\kappa$ by plugging (if necessary) an extra element in the sequence $\langle \eta_\beta\mid \beta<\alpha\rangle$ we may assume without loss of generality that $\eta_0=\kappa$.
    
    Suppose first that $\alpha\leq \kappa^+.$ Define an ultrafilter  $W$ on $\kappa^{<\kappa}$ by stipulating that $X\in W$ if and only if $X$ is a subset of the increasing sequences on $\kappa^{<\kappa}$ such that $\langle \eta_\beta\mid \beta<\alpha\rangle\in j(X)$. Clearly, $W$ is $\kappa$-complete and the map $k\colon M_W\rightarrow M$ given by $j_W(f)([\id]_W)\mapsto j(f)([\id]_W)$ is elementary. {Moreover, $k$ has critical point greater than $\kappa$: To see that $\kappa\in \range(k)$ note that $\kappa=j(f_\kappa)([\id]_W)$ where $f_\kappa\colon \kappa^{<\kappa}\rightarrow \kappa$ is the map given by  $f_\kappa(s):=\min(s)$. 

 Since $W$ is $\kappa$-complete, $M_W$ is closed under $\kappa$-sequences of its elements.}

   \smallskip

   Note that the $\eta_\beta$'s are also in the range of $k$; that is, $\eta_\beta=j(g_\beta)([\id]_W)$ for some function $g_\beta\colon \kappa^{<\kappa}\rightarrow\kappa$: Indeed, $\eta_\beta$ is represented via the function $$\text{$g_\beta\colon s\mapsto$ ``The $(c_\beta\circ f_\kappa(s))^{\mathrm{th}}$-element of $s"$,}$$ where $c_\beta$ is the canonical function representing $\beta$ when $\beta\in (\kappa,\kappa^+)$, the identity when $\beta=\kappa$ and the constant function with value $\beta$ when $\beta< \kappa$.

   Thus, for each $\beta<\alpha$, $\eta_\beta=k(\bar{\eta}_\beta)$. By elementarity, $\bar{\eta}_\beta<\bar{\eta}_\gamma$ for all $\beta<\gamma<\alpha$. Also, it is easy to check that $U_\beta=\{X\s \kappa\mid \bar{\eta}_\beta\in j_W(X)\}.$

    \smallskip

    Suppose now that $\alpha\in (\kappa^+,\kappa^{++})$ and assume that $(1)\Rightarrow (2)$ has been proven for all ordinals $\beta<\alpha$. As usual, we distinguish among two cases:

    \smallskip

    \underline{Successor case:} Suppose that $\alpha=\bar{\alpha}+1$. By the induction hypothesis, $\kappa$ has the $\bar{\alpha}$-$\gp$ via ultrafilters. Thus, there is a $\kappa$-complete ultrafilter $W$ on $\kappa^{<\kappa}$ with ultrapower $j\colon V\rightarrow M_W$, $M_W$ is $\kappa$-closed and there is an increasing sequence $\langle \eta_\beta\mid \beta<\bar\alpha\rangle$ of ordinals such that $\sup_{\beta<\bar\alpha}\eta_\beta\leq \sup([\id]_W)$ and $U_\beta=\{X\s \kappa\mid \eta_\beta\in j_W(X)\}.$ Now, we consider the product ultrafilter $Z=W\otimes U_{\bar\alpha}$, noticing that $\{(s,\xi)\in \kappa^{<\kappa}\times \kappa\mid \sup(s)<\xi\}\in Z$.\footnote{This is  where the requirement $\sup_{\beta<\bar\alpha}\eta_\beta\leq \sup([\id]_W)$ is critical. Without it, we will not be able to ensure the existence of a seed for $U_{\bar\alpha}$ that is  above the previous $\eta_\beta.$}

    Let $s^\star, \xi^\star$ be such $[\id]_Z=(s^\star, \xi^\star).$ Thus, $k_W(\sup([\id])_W)=\sup(s^\star)<\xi^\star$.

       Consider the natural commutative diagrams 
 \[
\begin{minipage}{0.30\textwidth}
\centering
\begin{tikzcd}
    V \arrow[r, "{j_{Z}}"] \arrow[d, "j_{W}"'] 
     & M_{Z}  \\
 M_{W} \arrow[ru, "{k_{W}}"] &  
\end{tikzcd}
\end{minipage}
\qquad
\begin{minipage}{0.30\textwidth}
\centering
\begin{tikzcd}
    V \arrow[r, "{j_{Z}}"] \arrow[d, "j_{U_{\bar\alpha}}"'] 
     & M_{Z}  \\
 M_{U_{\bar\alpha}} \arrow[ru, "{k_{{U_{\bar\alpha}}}}"] &  .
\end{tikzcd}
\end{minipage}
\]

For $\beta<\bar\alpha$, denote $\sigma_\beta=k_W(\eta_\beta)$. Also, let  $\sigma_{\bar\alpha}=\xi^\star.$ 
\begin{claim}
    $j_{Z}$ and $\langle \sigma_\beta\mid \beta\leq \bar\alpha\rangle$ glue the sequence $\langle U_\beta\mid \beta\leq \bar\alpha\rangle$.
\end{claim}
\begin{proof}[Proof of claim]
    First, the sequence $\langle \sigma_\beta\mid \alpha\leq \bar\alpha\rangle$ {is increasing}: Indeed, this is clear for $\beta<\bar\alpha$ in that $\langle \eta_\beta\mid \beta<\bar\alpha\rangle$ was already increasing. Also, $$k_W(\eta_\beta)\leq k_{W}(\sup([\id]_W))=\sup(s^\star)<\xi^\star=\sigma_{\bar\alpha}.$$

  Second, for each $X\s \kappa$,
$$X\in U_\beta\Leftrightarrow \eta_\beta\in j_{W}(X)\Leftrightarrow k_W(\eta_\beta)\in j_{Z}(X)\Leftrightarrow \sigma_\beta\in j_{Z}(X),$$
$$X\in U_{\bar\alpha}\Leftrightarrow [\id]_{U_{\bar\alpha}}\in j_{U_{\bar\alpha}}(X)\Leftrightarrow k_{U_{\bar\alpha}}([\id]_{U_{\bar\alpha}})\in j_{Z}(X)\Leftrightarrow \sigma_{\bar\alpha}\in j_{Z}(X).$$
This yields the claim.
\end{proof}
Since $Z$ is isomorphic to a $\kappa$-complete ultrafilter on $\kappa^{<\kappa}$ we are done.

    \smallskip

    \underline{Limit case:} 
Suppose now that $\alpha$ is a limit ordinal. Let $\langle \alpha_\xi\mid \xi<\cf(\alpha)\rangle$ be a cofinal sequence in $\alpha$. For each $\xi<\cf(\alpha)$($\leq \kappa^+$), our inductive assumption allows us to glue the sequence of ultrafilters $\langle U_{\beta}\mid \beta<\alpha_\xi\rangle$ via a $\kappa$-complete ultrafilter $W_\xi$ on $\kappa^{<\kappa}$. This yields, for each $\xi<\cf(\alpha)$, an increasing sequence of seeds $\langle \eta^\xi_\beta\mid \beta<\alpha_\xi\rangle$ that together with $j_{W_\xi}$ glue  $\langle U_{\beta}\mid \beta<\alpha_\xi\rangle$.

\smallskip

 Let $\varphi\colon \kappa^{<\kappa}\leftrightarrow \kappa$ be a bijection and let $\bar{W}_\xi:=\varphi_*W_\xi.$
This is now a $\kappa$-complete ultrafilter over $\kappa$. Thus, since $\cf(\alpha)\leq \kappa^+$ our induction hypothesis allows us to find an ultrafilter $W^*$ on $\kappa^{<\kappa}$ that glues all the $\bar{W}_\xi$'s.

\smallskip

Finally, let us consider the commutative diagram
        \[
\begin{tikzcd}
   V \arrow[r, "{j_{W^*}}"] \arrow[d, "j_{U_\beta}"'] 
     & M_{W^*} \\
   M_{U_\beta} \arrow[r, "k_{\beta,\xi}"'] &  M_{W_\xi}  \arrow[u, "k_\xi"'].
\end{tikzcd}
\]   
$k_{\beta,\xi}$ is the natural embedding induced from the fact that $j_{W_\xi}$ glues the $U_\beta$'s\footnote{I.e., the map $k_{\beta,\xi}$ defined as $k_{\beta,\xi}(f)([\id]_{U_\beta}):=j_{W_\xi}(f)(\eta^\xi_\beta).$}; $k_\xi$ is the  embedding induced from $W_\xi\equiv_{\RK}\bar{W}_\xi$ and $j_{W^*}$ glues the $\bar{W}_\xi$'s.

Denote $\langle \sigma^\xi_\beta\mid \beta<\alpha_\xi\rangle:=\langle k_{\xi}(\eta^\xi_\beta)\mid \beta<\alpha_\xi\rangle$. It turns out that
$$U_\beta=\{X\s\kappa\mid \sigma^\xi_\beta\in j_{W^*}(X)\}$$
for all $\beta<\alpha_\xi$ and $\xi<\cf(\alpha).$

\smallskip

For each $\beta<\alpha$, let $\eta_\beta:=\min_{\xi<\cf(\alpha)}\{\sigma_\beta^\xi\mid \beta<\alpha_\xi\}$.  

Given ordinals $\bar\beta<\beta<\alpha$ we have
$$\eta_{\beta}=\sigma^{\xi_\beta}_\beta>\sigma^{\xi_{\beta}}_{\bar\beta}\geq \eta_{\bar\beta},$$ 
$$\eta_\beta=\sigma^{\xi_\beta}_\beta=k_\xi(\eta^{\xi_\beta}_\beta)\leq k_\xi(\sup[\id]_{W_\xi})= \sup([\id]_{W^*}).$$
So, $j_{W^*}$ and $\langle \eta_\beta\mid \beta<\alpha\rangle$ glue the ultrafilters $\langle U_\beta\mid \beta<\alpha\rangle.$ 
\end{proof}

The implications $(2)\Rightarrow (3) \Rightarrow (4)$ do hold for arbitrary ordinals $\alpha\geq \omega$. Later we will demonstrate that if $\alpha < \kappa^+$ then (1)--(4) are in fact equivalent.  The above theorem is sharp in the sense that the above implications do not hold if $\alpha\in\{\kappa^+,\kappa^{++}\}$. The first evidence of this is the next proposition:
\begin{prop}
    The super $\kappa^{++}$-$\gp$ implies the $\kappa^{++}$-$\gp$ yet the former does not imply the $\kappa^{++}$-$\gp$ via ultrafilters. 
\end{prop}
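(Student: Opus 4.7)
The first implication is immediate from the definitions: super $\kappa^{++}$-$\gp$ is literally $\kappa^{++}$-$\gp$ augmented with the requirement that the sequence of seeds lie inside the target model $M$.

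For the non-implication, the plan is to exhibit a universe where $\gch$ holds at a sufficiently large cardinal $\kappa$ (say, $\kappa^{++}$-supercompact) and to show that in such a universe the super $\kappa^{++}$-$\gp$ holds yet $\kappa^{++}$-$\gp$ via ultrafilters fails. The failure of the via-ultrafilters version reduces to a cardinality count: any $\kappa$-complete ultrafilter $W$ on $\kappa^{<\kappa}$ (a set of $V$-cardinality $\kappa$, using $2^{<\kappa}=\kappa$) has at most $|\kappa^{\kappa^{<\kappa}}|=2^\kappa$ Rudin--Keisler predecessors, because each such predecessor is $\pi_*(W)$ for some $\pi\in V$. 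Under $\gch$ this count is $\kappa^+<\kappa^{++}$. Since any measurable $\kappa$ carries $2^{2^\kappa}=\kappa^{++}$ pairwise distinct $\kappa$-complete ultrafilters on $\kappa$ (a classical result; see \cite[Ch.~1]{Kan}), one can enumerate $\kappa^{++}$ of them as $\langle U_\beta\mid \beta<\kappa^{++}\rangle$; this sequence cannot be simultaneously $\leq_{\RK}$-bounded by any such $W$, and so $\kappa^{++}$-$\gp$ via ultrafilters fails.

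The super $\kappa^{++}$-$\gp$, on the other hand, is witnessed via a $\kappa^{++}$-supercompact embedding $j\colon V\to M$ induced by a normal fine measure on $\mathcal{P}_\kappa(\kappa^{++})$, so that $M^{\kappa^{++}}\subseteq M$ and $j(\kappa)>\kappa^{++}$. Given any $\kappa^{++}$-sequence $\langle U_\beta\mid \beta<\kappa^{++}\rangle$ of $\kappa$-complete ultrafilters, a Kunen--Ketonen style amalgamation combining $j$ with the $U_\beta$'s (performed inside $M$, which accommodates the whole sequence by $\kappa^{++}$-closure) yields seeds $\eta_\beta<j(\kappa)$ with $U_\beta=\{X\s\kappa\mid \eta_\beta\in j(X)\}$. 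The sequence $\langle \eta_\beta\mid \beta<\kappa^{++}\rangle$ then lives in $M$ by closure, and a factor-embedding diagonalization analogous to the limit step in the proof of Theorem~\ref{theorem: variantsofgp} arranges the seeds to be increasing. The main technical obstacle is precisely this amalgamation step: one must produce a single embedding admitting seeds for \emph{every} $\kappa$-complete ultrafilter on $\kappa$; by the cardinality count above, this is genuinely impossible if the embedding arises from an ultrafilter on $\kappa^{<\kappa}$, so the use of a supercompact-level embedding and a non-ultrapower amalgamation is essential, and standard preparatory forcing is invoked to make this setup compatible with $\gch$ at $\kappa$.
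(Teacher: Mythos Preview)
Your argument for the failure of $\kappa^{++}$-$\gp$ via ultrafilters has a genuine gap. The claim that ``any measurable $\kappa$ carries $2^{2^\kappa}$ pairwise distinct $\kappa$-complete ultrafilters'' is false: the Posp\'{\i}\v{s}il-type result you have in mind counts \emph{all} ultrafilters, not $\kappa$-complete ones, and in $L[U]$ there are only $\aleph_0$ many $\kappa$-complete ultrafilters on $\kappa$ up to isomorphism. It happens that in your specific model the count \emph{is} $\kappa^{++}$ (one can deduce it from $\kappa$-compactness via Ketonen's $(2^\kappa)^+$-directedness and the absence of an RK-maximum), but that is not the result you cite, and it requires real work.

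The paper avoids this issue completely with a one-line cardinality argument that exploits the \emph{increasing} seeds rather than the distinctness of the ultrafilters. Take any $\kappa^{++}$-sequence of ultrafilters---even the constant sequence works. If $W$ on $\kappa^{<\kappa}$ glues them, the $\kappa^{++}$ many seeds are distinct ordinals below $j_W(\kappa)$, so $j_W(\kappa)\geq\kappa^{++}$; but $|j_W(\kappa)|\leq 2^\kappa$, whence $2^\kappa\geq\kappa^{++}$. Since super $\kappa^{++}$-$\gp$ is consistent with $2^\kappa=\kappa^+$ (e.g.\ at a $\kappa^{++}$-supercompact under $\gch$), the non-implication follows.

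On the super-$\gp$ side, your sketch is more elaborate than needed. For a $\kappa^{++}$-supercompact embedding $j$ (under $\gch$), each seed-set $S_\beta=\bigcap_{X\in U_\beta}j(X)$ lies in $j(U_\beta)$ and is therefore unbounded in $j(\kappa)>\kappa^{++}$; one simply picks $\eta_\beta\in S_\beta$ above $\sup_{\gamma<\beta}\eta_\gamma$ by recursion, and the resulting sequence is in $M$ by $\kappa^{++}$-closure. No amalgamation or factor-embedding diagonalization is necessary.
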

\begin{proof}
The first claim is obvious. For the second, observe that the super $\kappa^{++}$-$\gp$ is consistent with $2^\kappa=\kappa^{+}$ (as it holds, for instance,  if $\kappa$ is $\kappa^{++}$-supercompact) yet  the $\kappa^{++}$-$\gp$ via ultrafilters entails $2^\kappa\geq \kappa^{++}$: Let $\langle U_\alpha\mid \alpha<\kappa^{++}\rangle$ be a collection of (non-necessarily distinct) non-principal $\kappa$-complete ultrafilters on $\kappa$. By the $\kappa^{++}$-$\gp$ via ultrafilters there is an increasing sequence of seeds $\langle \eta_\alpha\mid \alpha<\kappa^{++}\rangle$ and a $\kappa$-complete ultrafilter $W$ on $\kappa^{<\kappa}$ which together glue $\langle U_\alpha\mid \alpha<\kappa^{++}\rangle$. In particular, $\kappa^{++}\leq j_W(\kappa)$. Since $W$ is an ultrafilter on $\kappa^{<\kappa}$  we deduce that $\kappa^{++}\leq |j_W(\kappa)|=2^\kappa$.
\end{proof}

Let us continue discussing the optimality of Theorem~\ref{theorem: variantsofgp}. In spite we have not been able to elucidate  if the  $\kappa^+$-$\gp$ entails  the $\kappa^+$-$\gp$ via ultrafilters (see Section~\ref{sec: open problems}),   we can show that the $\kappa^+$-$\gp$ via ultrafilters does not entail the super $\kappa^+$-$\gp$. To prove this,  
we connect the super $\kappa^+$-$\gp$ with  the ability of capturing arbitrary subsets of $\kappa^+$ in ultrapowers via $\kappa$-complete ultrafilters on $\kappa.$  The property we obtain is slightly weaker than the \emph{Local Capturing Property} $\mathrm{LC}(\kappa,\kappa^+)$ considered by Habi\v{c}-Honzik \cite{HavicHonzik} for our witnessing ultrafilter $W$ might not be normal.\footnote{Note that taking the witnessing ultrafilter to be non-normal indeed weakens the large-cardinal assumptions. For example, it is possible that there is a non-normal measure $U$ whose Rudin-Keisler projection to a normal measure $U_{nor}$ belongs to its ultrapower. This is the case for a $\mu$-measurable cardinal, \cite{Mitchell1979}.}

\begin{prop}\label{prop: capturing}
    Suppose that $\kappa$ has the super $\kappa^+$-$\gp$ and $2^\kappa=\kappa^+$. 
    Then, for each $a\s\kappa^+$ there is a $\kappa$-complete ultrafilter $W$ on $\kappa$ such that $a\in \Ult(V,W).$ In particular, if  $\kappa$ has the super $\kappa^+$-$\gp$ then $o^{\mathcal{K}}(\kappa)\geq (\kappa^{++})^{\mathcal{K}}$.
\end{prop}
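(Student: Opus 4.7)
The plan is to code $a$ by a $\kappa^+$-sequence of $\kappa$-complete ultrafilters, apply the super $\kappa^+$-$\gp$, and then read $a$ off as an element of the ultrapower by the derived seed.

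\emph{Coding, gluing, and defining $W$.} Using $2^\kappa = \kappa^+$, fix distinct $\kappa$-complete ultrafilters $U_0, U_1$ on $\kappa$ and a set $X \in U_0 \setminus U_1$. Define $U^*_\alpha := U_0$ if $\alpha \in a$, else $U^*_\alpha := U_1$. The super $\kappa^+$-$\gp$ supplies an embedding $j \colon V \to M$ (with $M^\kappa \subseteq M$) and an increasing seed sequence $\vec\eta = \langle \eta_\alpha \mid \alpha < \kappa^+\rangle \in M$ (normalized to $\eta_0 = \kappa$) gluing $\langle U^*_\alpha \rangle$, so that $a = \{\alpha < \kappa^+ : \eta_\alpha \in j(X)\} \in M$. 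Set
\[
    W := \{Y \subseteq \kappa^{<\kappa} : \vec\eta \in j(Y)\};
\]
since $\kappa$ is inaccessible, $|\kappa^{<\kappa}| = \kappa$, so $W$ transfers via a bijection to a $\kappa$-complete ultrafilter on $\kappa$. Let $k \colon \Ult(V, W) \to M$ denote the factor embedding $k([f]_W) = j(f)(\vec\eta)$.

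\emph{Reading $a$ off inside $\Ult(V,W)$.} Let $g(s) := \{\alpha < \lh(s) : s(\alpha) \in X\}$. Then $k([g]_W) = j(g)(\vec\eta) = a$, so $a \in \range(k)$. To upgrade this to $a \in \Ult(V,W)$ it suffices to show $\crit(k) > \kappa^+$: granted this, $k([\lh]_W) = \kappa^+$ and strict monotonicity of $k$ on ordinals force $[\lh]_W = \kappa^+$; hence $[g]_W \subseteq \kappa^+$ has rank below $\crit(k)$ and lies in $V_{\crit(k)}^{\Ult(V,W)}$, on which $k$ is the identity, so $[g]_W = a$. I would secure $\crit(k) > \kappa^+$ by enriching the coding sequence: fix a normal measure $U$ on $\kappa$ and, for each $\gamma \in [\kappa, \kappa^+)$, insert into $\langle U^*_\alpha\rangle$ the derived ultrafilter $V_\gamma := \{X \subseteq \kappa : \gamma \in j_U(X)\}$ together with its canonical-function representative; a careful bookkeeping, paralleling the limit-case argument in the proof of Theorem~\ref{theorem: variantsofgp} and refining the trick $f_\kappa(s) = \min(s)$ used there to place $\kappa$ in $\range(k)$, then arranges $\kappa^+ \subseteq \range(k)$, whence $\crit(k) > \kappa^+$ by a standard monotonicity argument. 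This critical-point bound is the main obstacle of the proof.

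\emph{The Mitchell-rank clause.} Since $\mathcal{K} \models \mathrm{GCH}$, we have $(2^\kappa)^{\mathcal{K}} = (\kappa^+)^{\mathcal{K}}$, so the capturing just established applies to each $a \in \mathcal{P}((\kappa^+)^{\mathcal{K}}) \cap \mathcal{K}$, yielding a $\kappa$-complete $W_a$ with $a \in \Ult(V, W_a)$. By Theorem~\ref{CoreModelTheorem}(2), $j_{W_a} \restriction \mathcal{K}$ is a normal iteration of measures in $\mathcal{K}$, and by Lemma~\ref{lem;representing-elements-in-direct-limit} the set $a$ is already captured by a finite initial segment. A counting argument then forces $o^{\mathcal{K}}(\kappa) \geq (\kappa^{++})^{\mathcal{K}}$: otherwise the fewer-than-$(\kappa^{++})^{\mathcal{K}}$ available finite iteration-types could not encode the $(2^{\kappa^+})^{\mathcal{K}} \geq (\kappa^{++})^{\mathcal{K}}$ subsets of $(\kappa^+)^{\mathcal{K}}$ in $\mathcal{K}$.
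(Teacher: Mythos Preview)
Your setup and first reduction are correct: defining $W$ via the seed $\vec\eta$ and observing $a \in \range(k)$ is exactly the paper's approach. However, you misidentify the difficulty of the step $\crit(k) > \kappa^+$. You call this ``the main obstacle'' and propose to enrich the coding sequence with derived ultrafilters $V_\gamma$---a plan you leave only sketched. In fact no enrichment is needed: once you know $\crit(k) > \kappa$ (which follows exactly as in Theorem~\ref{theorem: variantsofgp} via $f_\kappa(s) = \min(s)$), the bound $\crit(k) \geq (\kappa^{++})^{M_W}$ is automatic. Both $M_W$ and $M$ are $\kappa$-closed, so $(\kappa^+)^{M_W} = (\kappa^+)^M = \kappa^+$; elementarity gives $k(\kappa^+) = \kappa^+$, ruling out $\crit(k) = \kappa^+$. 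For $\kappa < \delta < \kappa^+$, any $M_W$-bijection $f\colon \kappa \to \delta$ is fixed pointwise by $k$ (its inputs and outputs lie below $\crit(k)$), forcing $k(\delta) = \range(k(f)) = \delta$. Running the same argument one cardinal higher yields $\crit(k) \geq (\kappa^{++})^{M_W}$. This is what the paper invokes, and it makes your detour through the $V_\gamma$'s unnecessary.

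Your counting argument for the Mitchell-rank clause does not work. The assertion that ``fewer than $(\kappa^{++})^{\mathcal{K}}$ finite iteration-types cannot encode $(\kappa^{++})^{\mathcal{K}}$ subsets'' ignores that a single ultrapower $\Ult(V, W)$ already contains a great many subsets of $\kappa^+$; distinct sets $a$ need not come from distinct iterations, so there is nothing to count. The paper instead chooses one specific $a$: a code for the entire sequence $\langle U_\gamma \mid \gamma < o^{\mathcal{K}}(\kappa)\rangle$ of normal measures on $\kappa$ in $\mathcal{K}$. Capturing this $a$ in some $\Ult(V,W)$, one analyzes the first step of $j_W \restriction \mathcal{K}$ (an extender $E$ on $\kappa$) and factors through its normal component $E_\kappa$; since the factor map has critical point $(\kappa^{++})$ of the ultrapower, the full sequence of measures lands in $\Ult(\mathcal{K}, E_\kappa)$. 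But $E_\kappa$ itself appears in that sequence and cannot belong to its own ultrapower---the desired contradiction.
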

\begin{proof}
    Fix $a\s\kappa^+$. For each $\alpha<\kappa^+$ we write $\alpha\in^0 a$ whenever $\alpha\in a$; otherwise, we write $\alpha\in^1 a.$
    Let $W_0, W_1$ be distinct $\kappa$-complete ultrafilters on $\kappa$ and let  $X_0\in W_0\setminus W_1$. Define $\langle U_\alpha\mid \alpha<\kappa^+\rangle$ by taking $U_\alpha=W_i$ if and only if $\alpha\in^i a$. By the super $\kappa^+$-$\gp$ there is $j\colon V\rightarrow M$ with $\crit(j)=\kappa$ and $M^\kappa\s M$, and an increasing sequence $\langle \eta_\alpha\mid \alpha<\kappa^+\rangle\in M$ such that $U_\alpha=\{X\s \kappa\mid \eta_\alpha\in j(X)\}$. Let $W$ be the $\kappa$-complete ultrafilter on $\kappa^{<\kappa}$ defined in the proof of Theorem~\ref{theorem: variantsofgp} and $k\colon M_W\rightarrow M$  the corresponding factor embedding. Since $\crit(k)\geq (\kappa^{++})^{M_W}$ and each $\eta_\alpha\in \range(k)$ we deduce that $\langle k^{-1}(\eta_\alpha)\mid \alpha<\kappa^+\rangle\in M_W$. Also, in the  lemma we showed  that $\langle k^{-1}(\eta_\alpha)\mid \alpha<\kappa^+\rangle$ is an increasing sequence of ordinals such that
    $$U_\alpha=\{X\s \kappa\mid k^{-1}(\eta_\alpha)\in j_W(X)\}.$$

   Note that $a\in M_W$ in that 
    $a=\{\alpha<\kappa^+\mid k^{-1}(\eta_\alpha)\in j_W(X_0)\}.$

    Since $W$ is isomorphic to a $\kappa$-complete ultrafilter on $\kappa$ the result follows. 

    \medskip

    For the latter assertion, suppose that for certain $\beta<\kappa^{++}$ we have a $\lhd$-increasing sequence $\langle U_\alpha\mid \alpha<\beta\rangle$ of $\kappa$-complete (non-necessarily normal) ultrafilters on $\kappa$. Since we can code $\langle U_\alpha\mid \alpha<\beta\rangle$ as a subset of $\kappa^+$ (by $2^\kappa=\kappa^+$) there is a $\kappa$-complete ultrafilter $W$ on $\kappa$ such that $\langle U_\alpha\mid \alpha<\beta\rangle\in M_W$. Alas, this is not enough to conclude that $o(\kappa)=\kappa^{++}$ as  $W$ might not be normal. 
    
    Let $\mathcal{K}$ be a core model satisfying maximality (i.e. Clauses (1) and (2) of Theorem \ref{CoreModelTheorem}) and linearity of the Mitchell order. For example, one can take Mitchell's core model for a strong cardinal under the hypothesis that there is no mouse with overlapping extenders or the Jensen-Steel core model under the hypothesis that there is no inner model with a Woodin cardinal. 

    \smallskip
    
    We have the following fact in the core model:
    \begin{claim}
        Assume that for every $a\subseteq \kappa^{+}$ there is a $\kappa$-complete measure $W$ on $\kappa$ such that $a\in \Ult(V,W)$ then $o^{\mathcal{K}}(\kappa)\geq (\kappa^{++})^{\mathcal{K}}$.
    \end{claim}

    \begin{proof}[Proof of claim]
       Assume, towards a contradiction, that $o^\mathcal{K}(\kappa) = \beta < (\kappa^{++})^{\mathcal{K}}$. Let  $\langle U_\gamma \mid \gamma < \beta\rangle$ be the sequence (ordered according to their Mitchell order) of all normal measures on $\kappa$ in $\mathcal{K}$. Let $a \subseteq \kappa^{+}$ be a set (in $V$) coding this sequence of measures.  By our assumption, there is a $\kappa$-complete ultrafilter $W$ such that $a \in \Ult(V,W) = M_W$. Let us look at $j_W\restriction \mathcal{K} \colon \mathcal{K} \to \mathcal{K}^{M_W}$. Then, by maximality, $j_W$ is a normal iteration of extenders in $\mathcal{K}$.\footnote{In general, this iteration might be a branch in an iteration tree, but we only care about the first step.}

        As the critical point of $j_W$ is $\kappa$, the first step is an extender $E\in \mathcal{K}$ on $\kappa$, and the strength of the extender is below the next critical point of the iteration (either by assuming that there are no overlapping extenders, or using the properties of branch in the iteration tree). In particular, $\langle U_\gamma\mid \gamma < \beta\rangle \in \Ult(\mathcal{K}, E)$. Factoring $E$ through its normal measure, $E_\kappa$, and noting that the critical point of the factor map is $(\kappa^{++})^{\mathcal{K}}$ of the ultrapower, we obtain that $\langle U_\gamma \mid \gamma < \beta\rangle \in \Ult(\mathcal K, E_\kappa)$. But this yields a contradiction with the fact that $o^{\mathcal{K}}(\kappa)=\beta<(\kappa^{++})^{\mathcal{K}}.$ 
    \end{proof}
  
    This completes the proof of the lemma.
\end{proof}
\begin{remark}
  The above proposition is akin to Mitchell's  theorem from \cite{MitIter} saying that if $\kappa$ is measurable and $2^{\kappa}= \kappa^{++}$ then there $o^{\mathcal{K}}(\kappa)=\kappa^{++}$.
\end{remark}

\begin{cor}
  Suppose that $\kappa$ is strongly compact and $2^\kappa=\kappa^+$. Then, there is a generic extension where the $\kappa^+$-$\gp$ via ultrafilters holds yet the super $\kappa^+$-$\gp$ fails. Moreover, in that generic extension  the $\lambda$-$\gp$ holds for all cardinals $\lambda$. 
  \end{cor}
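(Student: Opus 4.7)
The strategy is to pass to a forcing extension where $\kappa$ remains strongly compact (thus preserving the $\lambda$-$\gp$ for all $\lambda$ and, via the \textbf{Main Theorem}, the $\kappa^+$-$\gp$ via ultrafilters) but is no longer $\kappa^+$-supercompact. In such a model, the capturing property of Proposition~\ref{prop: capturing} will fail, and since $2^\kappa=\kappa^+$ will be maintained, the contrapositive of that proposition delivers the failure of the super $\kappa^+$-$\gp$.

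First, I would perform a Laver-style preparation making the strong compactness of $\kappa$ indestructible under $\kappa^+$-directed closed forcings; this preparation preserves $2^\kappa=\kappa^+$. On top of it, I would force with a suitable $\kappa^+$-directed closed iteration destroying any $\kappa^+$-supercompactness of $\kappa$ (e.g., by shooting clubs or collapsing in a way that precludes the existence of normal fine measures on $\mathcal{P}_\kappa(\kappa^+)$; Apter--Cummings-style techniques separating compactness from supercompactness are the relevant tool). In the resulting extension $V[G]$ one then has: (i) $\kappa$ is strongly compact, so the $\lambda$-$\gp$ holds for all $\lambda$ (delivering the ``moreover'' clause) and $\leq_{\mathrm{RK}}$ on $\mathfrak{U}_\kappa$ is $(2^\kappa)^+$-directed, whence the $\kappa^+$-$\gp$ via ultrafilters follows from the \textbf{Main Theorem}; (ii) $\kappa$ is not $\kappa^+$-supercompact; (iii) $2^\kappa=\kappa^+$.

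To conclude that the super $\kappa^+$-$\gp$ fails in $V[G]$, by Proposition~\ref{prop: capturing} and (iii) it suffices to exhibit $a\subseteq \kappa^+$ in $V[G]$ with $a\notin \Ult(V[G],W)$ for every $\kappa$-complete ultrafilter $W$ on $\kappa$ in $V[G]$. The main obstacle is producing such an $a$. The key idea is that the capturing of every subset of $\kappa^+$ by some $\kappa$-complete measure on $\kappa$ amounts, via an extender-theoretic analysis in the spirit of the claim inside the proof of Proposition~\ref{prop: capturing}, to a strong compactness-like principle at $\kappa^+$ incompatible with the failure of $\kappa^+$-supercompactness engineered in Step~1. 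Thus, after destroying $\kappa^+$-supercompactness, some $a\subseteq \kappa^+$ escapes capture, and the super $\kappa^+$-$\gp$ fails. The delicate point is making the implication ``capturing for all $a\subseteq\kappa^+ \Rightarrow$ some $\kappa^+$-supercompact-like property'' precise in the absence of an appropriate core model (since strong compactness in $V[G]$ places us beyond the reach of the $\mathcal{K}$ of Theorem~\ref{CoreModelTheorem}); this is where I expect the bulk of the technical effort to lie.
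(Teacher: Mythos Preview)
Your proposal has a genuine gap at exactly the point you flag as ``delicate.'' You want to argue that the capturing property of Proposition~\ref{prop: capturing} (every $a\subseteq\kappa^+$ lies in $\Ult(V,W)$ for some $\kappa$-complete $W$ on $\kappa$) implies enough $\kappa^+$-supercompactness-like strength that destroying the latter kills the former. But there is no such implication available: the ultrafilters $W$ witnessing capturing need not be normal, so the ultrapowers need not be closed under $\kappa^+$-sequences, and the core-model claim inside Proposition~\ref{prop: capturing} is unavailable above a strong cardinal. Your argument never gets off the ground because you cannot connect failure of $\kappa^+$-supercompactness to failure of capturing. (A secondary issue: a Laver-style indestructibility preparation for strong compactness under $\kappa^+$-directed closed forcing is itself not a routine tool.)

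The paper's argument is both simpler and uses the right invariant. It forces with Magidor's iteration making $\kappa$ simultaneously the least measurable and strongly compact (this preserves $2^\kappa=\kappa^+$). Strong compactness gives the $\lambda$-$\gp$ for all $\lambda$ and the $\kappa^+$-$\gp$ via ultrafilters. For the failure of the super $\kappa^+$-$\gp$: if it held, then by Proposition~\ref{prop: capturing} any $\kappa$-complete ultrafilter $U$ on $\kappa$ (coded as a subset of $\kappa^+$) would lie in $\Ult(V,W)$ for some $\kappa$-complete $W$; but then $\kappa$ is measurable in $\Ult(V,W)$, while $\Ult(V,W)$ believes $j_W(\kappa)>\kappa$ is the least measurable --- a contradiction. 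The point is that capturing forces a measurable \emph{below} $j_W(\kappa)$ in the ultrapower, and ``least measurable'' (not ``not $\kappa^+$-supercompact'') is the property this contradicts.
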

  \begin{proof}
      Force with Magidor's iteration from \cite{MagSuper} making $\kappa$ both the first measurable and first strongly compact. This iteration preserves  $2^\kappa=\kappa^+$.  

   In the generic extension, the $\kappa^+$-$\gp$ via ultrafilters holds precisely because $\kappa$ is strongly compact. However, $\kappa$ cannot have the super $\kappa^+$-$\gp$: Any $\kappa$-complete ultrafilter $U$ on $\kappa$ can be coded as a subset of $\kappa^+$ (as $2^\kappa=\kappa^+$) hence   there is a $\kappa$-complete ultrafilter $W$ such that $U\in \Ult(V,W)$ (by Proposition~\ref{prop: capturing}). This implies that $\kappa$ is not the first measurable because if it were, $\Ult(V,W)$ would think that $j_W(\kappa)$ is the first measurable, but $\kappa<j_W(\kappa)$ is itself measurable in $\Ult(V,W)$ – a contradiction.

  Since $\kappa$ is strongly compact the $\lambda$-$\gp$ holds  for all $\lambda$.
  \end{proof}

We now continue our discussion by proving yet another equivalence of the Gluing Property –– one that provides a canonical method for gluing ultrafilters. This will help us in Section~\ref{sec: A model for gluing} when we produce a model of the $\kappa$-$\gp$ from optimal assumptions. This is the first indicator that the Gluing Property is equivalent to the directedness of the Rudin-Keisler order:
\begin{lemma}[Canonical gluing of ultrafilters]\label{lemma:omega-gluing-by-a-measure}
Suppose that $\kappa$ is a measurable cardinal and $\omega\leq \alpha< \kappa^+$. Then
the following assertions are equivalent: 
\begin{enumerate}
     \item $\kappa$ has the $\alpha$-$\gp$.
    \item Every sequence $\langle U_\beta \mid \beta<\alpha\rangle$ of $\kappa$-complete ultrafilters over $\kappa$ admits a $\kappa$-complete ultrafilter $W$ on the increasing sequences of $\kappa^{\alpha}$ (if $\alpha<\kappa$) or $\kappa^{<\kappa}$ (if $\kappa\leq \alpha$) such that $U_\beta\leq_{\mathrm{RK}}{W}$. Furthermore, if $\alpha< \kappa$, the witnessing map $\pi_\alpha$  
   is the evaluation map $\eval_\alpha\colon \kappa^{<\kappa}\rightarrow \kappa$, $$\mathrm{eval}_\alpha\colon s \mapsto s(\alpha);$$ 
   otherwise, the witnessing map is  $\eval_\alpha^*\colon \kappa^{<\kappa}\rightarrow \kappa$ given by $$\eval^*_\alpha(s):=s(c_\alpha(|\min(s)|)),$$
   where $c_\alpha\colon \kappa\rightarrow \kappa$ is either the identity (if $\alpha=\kappa$) or the canonical function representing $\alpha$ (if $\kappa<\alpha)$.
\end{enumerate}
\end{lemma}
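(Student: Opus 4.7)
The plan is to prove the two implications separately. The forward direction is routine; the converse relies on the super $\alpha$-$\gp$ obtained from Theorem~\ref{theorem: variantsofgp}, combined with the standard calculus of canonical functions.

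For $(2)\Rightarrow(1)$, I would take $j := j_W\colon V \to M_W$ and set $\eta_\beta := [\eval_\beta]_W$ when $\alpha<\kappa$, respectively $[\eval^*_\beta]_W$ when $\kappa\leq\alpha$. The $\kappa$-completeness of $W$ gives $\crit(j_W)=\kappa$ and $M_W^\kappa\subseteq M_W$. Since $W$ concentrates on increasing sequences, {\L}o\'s's theorem yields $\eta_\beta<\eta_\gamma$ whenever $\beta<\gamma$ (using, in the $\kappa\leq\alpha$ case, that the canonical functions may be chosen so that $c_\beta<c_\gamma$ on a club). Unraveling the Rudin-Keisler projection,
\[
X\in U_\beta \iff \eval_\beta^{-1}(X)\in W \iff \eta_\beta\in j_W(X),
\]
so $j_W$ and $\langle\eta_\beta\mid\beta<\alpha\rangle$ glue the $U_\beta$'s.

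For $(1)\Rightarrow(2)$, since $\alpha<\kappa^+$, Theorem~\ref{theorem: variantsofgp} upgrades the $\alpha$-$\gp$ to the super $\alpha$-$\gp$. Let $j\colon V\to M$ with $\crit(j)=\kappa$, $M^\kappa\subseteq M$ and $\bar s:=\langle\eta_\beta\mid\beta<\alpha\rangle\in M$ be the resulting witness. To handle the case $\kappa\leq\alpha$ uniformly, I would prepend $\kappa$ to $\bar s$ (at the cost of enlarging the collection by the normal derived measure $U^*:=\{A\subseteq\kappa\mid\kappa\in j(A)\}$, which is gluable since $\alpha+1<\kappa^+$), so that after reindexing $\eta_0=\kappa$. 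I then define
\[
W:=\begin{cases}\{X\in\mathcal{P}(\kappa^\alpha)\cap V\mid \bar s\in j(X)\}&\text{if }\alpha<\kappa,\\ \{X\in\mathcal{P}(\kappa^{<\kappa})\cap V\mid \bar s\in j(X)\}&\text{if }\kappa\leq\alpha.\end{cases}
\]
Since $\alpha<\kappa^+\leq j(\kappa)$, $\bar s\in j(\kappa^{<\kappa})$, whence $W$ is a well-defined $\kappa$-complete ultrafilter concentrating on increasing sequences.

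For $\alpha<\kappa$, a direct unraveling via elementarity gives $\eval_\beta^{-1}(X)\in W \iff \bar s(\beta)=\eta_\beta\in j(X) \iff X\in U_\beta$, hence $(\eval_\beta)_*W=U_\beta$. For $\kappa\leq\alpha$, the analogous computation reduces $(\eval^*_\beta)^{-1}(X)\in W$ to $\bar s(j(c_\beta)(\kappa))\in j(X)$, using $\min\bar s=\kappa$ (so that $|\min\bar s|=\kappa$). The main obstacle is then the identity
\[
j(c_\beta)(\kappa)=\beta\quad\text{for all }\beta<\alpha, \tag{$\star$}
\]
which I would establish by factoring $j$ through the normal derived ultrapower: write $j=k\circ j_{U^*}$ with $k\colon M_{U^*}\to M$ elementary and $k(\kappa)=\kappa$. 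Since canonical functions satisfy $[c_\beta]_{U^*}=\beta$, one obtains $j(c_\beta)(\kappa)=k(j_{U^*}(c_\beta)(\kappa))=k(\beta)$, so $(\star)$ reduces to $\crit(k)>\alpha$. This last point follows from the $\kappa$-closure of $M_{U^*}$ (giving $(\kappa^+)^{M_{U^*}}=(\kappa^+)^V>\alpha$) together with a careful analysis of how $\bar s$ is captured by $k$. Once $(\star)$ is in hand, $(\eval^*_\beta)_*W=U_\beta$ as required.
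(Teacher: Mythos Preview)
Your proposal is correct and follows the same strategy as the paper. Both directions match: for $(2)\Rightarrow(1)$ you take the seeds to be the coordinates of $[\id]_W$, exactly as the paper does; for $(1)\Rightarrow(2)$ you derive $W$ from the super-$\alpha$-$\gp$ seed sequence $\bar s$, which is precisely the construction the paper points to inside the proof of Theorem~\ref{theorem: variantsofgp} (the maps $g_\beta$ there are your $\eval^*_\beta$). The paper simply says ``inspecting the proof of Theorem~\ref{theorem: variantsofgp} one realizes that the ultrafilter $W$ constructed therein fulfills the properties,'' whereas you spell out the verification.

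One remark on your handling of $(\star)$. Your instinct to factor $j$ through the normal ultrapower $j_{U^*}$ is right, and the conclusion $\crit(k)\geq\kappa^+$ already follows from $(\kappa^+)^{M_{U^*}}=\kappa^+$ alone; no further ``careful analysis of how $\bar s$ is captured by $k$'' is needed. Concretely: if $\xi:=\crit(k)$ were in $(\kappa,\kappa^+)$, pick a bijection $g\colon\kappa\to\xi$ in $M_{U^*}$; since $k\restriction\xi=\id$ and $\ran(g)\subseteq\xi$, one has $k(g)=g$, whence $k(\xi)=\ran(k(g))=\ran(g)=\xi$, a contradiction. So $\crit(k)\geq\kappa^+>\alpha$, and $(\star)$ follows immediately from $j(c_\beta)(\kappa)=k([c_\beta]_{U^*})=k(\beta)=\beta$. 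You can safely delete the hedging clause.
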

\begin{proof}
(1) $\Rightarrow$ (2). Since we are assuming that $\alpha<\kappa^+$  the $\alpha$-$\gp$ of $\kappa$ tantamount to the super $\alpha$-$\gp$. Inspecting the proof of Theorem~\ref{theorem: variantsofgp} one realizes that the ultrafilter $W$ constructed therein fulfills the properties described in the statement of the present lemma.

\medskip

(2) $\Rightarrow$ (1). Let $\langle U_\beta\mid \beta<\alpha\rangle$ be a sequence of $\kappa$-complete ultrafilters on $\kappa$. By our assumption, there is a $\kappa$-complete ultrafilter $W$ either over $\kappa^{\alpha}$ on over $\kappa^{<\kappa}$ that is $\leq_{\mathrm{RK}}$-above  the $U_\beta$'s. Let $\langle \eta_\beta\mid \beta<\alpha\rangle$ be the increasing sequence determining $[\id]_W.$ We claim that  $U_\beta=\{X\s \kappa\mid \eta_\beta\in j_W(X)\}$.

Let $X\s \kappa$. Then, $X\in U_\beta$ if and only if $\pi_\beta^{-1}``X\in W$. This is equivalent to $[\id]_W\in j_W(\pi_\beta^{-1}``X)\Leftrightarrow j_W(\pi_\beta)([\id]_W)\in j_W(X)\Leftrightarrow \eta_\beta\in j_W(X).$
\end{proof}

Next we show that if $\lambda$ is a sufficiently compact cardinal and $\kappa$ has the ${<}\lambda$-Gluing Property, then one can bootstrap this to obtain the ${<}\lambda^+$-Gluing Property. This will help us calculate the exact consistency strength of a cardinal $\kappa$ having the ${<}\kappa^+$-gluing property

\begin{lemma}\label{lemma:compactnessofGluing}
    Suppose that $\kappa$ and $\lambda$ are measurable cardinals. If  $\kappa$ has the ${<}\lambda$-$\gp$ then $\kappa$ has the ${<}\lambda^+$-$\gp.$

     In particular, if $\kappa$ has the ${<}\kappa$-$\gp$ then it also has the ${<}\kappa^+$-$\gp.$
\end{lemma}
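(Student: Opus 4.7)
The plan is to first establish the $\lambda$-$\gp$ at $\kappa$ --- the genuinely new case --- and then extend to $\alpha$-$\gp$ for every $\alpha<\lambda^+$ by transfinite induction, using the successor and limit constructions from the proof of Theorem~\ref{theorem: variantsofgp}(1)$\Rightarrow$(2).

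For the base case, fix a sequence $\langle U_\beta \mid \beta < \lambda\rangle$ of $\kappa$-complete ultrafilters on $\kappa$ and a normal measure $\mathcal{W}$ on $\lambda$. The ${<}\lambda$-$\gp$ together with Lemma~\ref{lemma:omega-gluing-by-a-measure} supplies, for each $\alpha<\lambda$, a $\kappa$-complete ultrafilter $W_\alpha$ concentrating on strictly increasing sequences in $\kappa^{<\kappa}$ such that, for every $\beta<\alpha$, $U_\beta\leq_{\mathrm{RK}} W_\alpha$ is witnessed by the evaluation map $\eval_\beta\colon s\mapsto s(\beta)$. I would then integrate this family against $\mathcal{W}$, setting
\[ W := \bigl\{ X \subseteq \lambda \times \kappa^{<\kappa} \;\big|\; \{\alpha<\lambda \mid X_\alpha \in W_\alpha\} \in \mathcal{W} \bigr\}, \]
where $X_\alpha = \{s \mid (\alpha,s)\in X\}$. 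A routine verification, using the $\kappa$-completeness of $\mathcal{W}$ and each $W_\alpha$, shows $W$ is a $\kappa$-complete ultrafilter, and via a canonical bijection between $\lambda\times\kappa^{<\kappa}$ and $\kappa^{<\kappa}$ preserving increasing sequences, $W$ is recast as an ultrafilter on $\kappa^{<\kappa}$.

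For each $\beta<\lambda$ set $\pi_\beta(\alpha,s):=s(\beta)$ when $\alpha>\beta$, and $\pi_\beta(\alpha,s):=0$ otherwise. Because $\mathcal{W}$ concentrates on the tail above any fixed $\beta<\lambda$ and $(\eval_\beta)_*W_\alpha=U_\beta$ for all $\alpha>\beta$, one checks $(\pi_\beta)_*W=U_\beta$. Setting $\eta_\beta:=[\pi_\beta]_W$ thus produces a seed for $U_\beta$ in the ultrapower $j_W\colon V\to M_W$. The seeds are automatically increasing: for $\beta<\beta'<\lambda$, every $W_\alpha$ with $\alpha>\beta'$ concentrates on strictly increasing sequences, whence $s(\beta)<s(\beta')$ holds $W_\alpha$-almost surely, and consequently $\eta_\beta<\eta_{\beta'}$ in $M_W$. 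This yields the $\lambda$-$\gp$ at $\kappa$ via ultrafilters.

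Finally, to obtain $\alpha$-$\gp$ for $\lambda<\alpha<\lambda^+$, I would proceed by induction on $\alpha$, reproducing essentially verbatim the successor and limit constructions in the proof of Theorem~\ref{theorem: variantsofgp}(1)$\Rightarrow$(2). Each inductive step requires only $\mu$-$\gp$ for certain $\mu<\alpha$ (namely $\mu=\bar\alpha$ in the successor case and $\mu=\cf(\alpha)\leq\lambda$ in the limit case), and the hypothesis ${<}\lambda$-$\gp$ combined with the just-proved $\lambda$-$\gp$ supplies every such gluing. The main obstacle lies in securing the strictly-increasing character of the seeds at the base case; this is precisely what the $\mathcal{W}$-integration design delivers, thanks to each $W_\alpha$ being supported on increasing sequences.
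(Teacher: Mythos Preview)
Your approach differs from the paper's. The paper takes the ultrapower $j\colon V\to M$ by a $\lambda$-complete measure on $\lambda$, observes that $\langle j(U_\alpha)\mid\alpha<\gamma\rangle\in M$ and that $M\models``j(\kappa)$ has the ${<}j(\lambda)$-$\gp$'', glues inside $M$ to obtain $i\colon M\to M^*$ with seeds $\langle\eta_\alpha\rangle$, and uses the composite $i\circ j$ as the witnessing embedding. You instead integrate the family of gluing ultrafilters $W_\alpha$ (supplied by Lemma~\ref{lemma:omega-gluing-by-a-measure}) against a normal measure $\mathcal{W}$ on $\lambda$.

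There is a genuine gap when $\lambda<\kappa$. You assert that $W$ is $\kappa$-complete ``using the $\kappa$-completeness of $\mathcal{W}$'', but $\mathcal{W}$ is a measure on $\lambda$ and is only $\lambda$-complete. In fact the first-coordinate projection $(\alpha,s)\mapsto\alpha$ witnesses $\mathcal{W}\le_{\mathrm{RK}}W$, which forces $\crit(j_W)\le\lambda<\kappa$; so $j_W$ cannot witness the $\lambda$-$\gp$ at $\kappa$ in the sense of Definition~\ref{def:gluing-property}, and $M_W$ need not be closed under $\kappa$-sequences. Concretely, a partition of $\lambda$ into $\lambda$-many $\mathcal{W}$-null pieces yields $\lambda$-many sets in $W$ with empty intersection, so $W$ is not even $\lambda^+$-complete.

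For the case $\lambda=\kappa$ --- the only one the paper actually invokes --- your construction is correct: then $\mathcal{W}$ really is $\kappa$-complete, $W$ is $\kappa$-complete, and your evaluation-seed computation delivers the $\kappa$-$\gp$ via ultrafilters; your Step~2 is then subsumed by Lemma~\ref{lemma: cardinal dependence of gluing}. It is worth noting that the paper's own proof has a parallel soft spot here: the line ``$M$ is closed under $\kappa$-sequences in $V$'' fails when $\lambda<\kappa$ (for instance, the measure on $\lambda$ itself is a ${<}\kappa$-sized object absent from $M$).
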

\begin{proof}
    Let $\langle U_\alpha\mid \alpha<\gamma\rangle$ be $\kappa$-complete ultrafilters on $\kappa$ where $\gamma<\lambda^+$. Let $j\colon V\rightarrow M$ be the ultrapower by a $\lambda$-complete ultrafilter on $\lambda$. Then, $\langle j(U_\alpha)\mid \alpha<\gamma\rangle\in M$ and $M\models``j(\kappa)$ has the ${<}j(\lambda)$-$\gp$". So, we can find an elementary embedding $i\colon M\rightarrow M^*$ and an increasing sequence $\langle \eta_\alpha\mid \alpha<\gamma\rangle$ of ordinals below $i(j(\kappa))$  such that the ultrafilter $j(U_\alpha)$ can be presented as $$j(U_\alpha)=\{X\in \mathcal{P}^{M}(j(\kappa))\mid \eta_\alpha\in i(X)\}.$$
   Therefore, for each $\alpha<\gamma$, we deduce that $$U_\alpha=\{X\in \mathcal{P}(\kappa)\mid \eta_\alpha\in i(j(X))\}.$$
   Since $M^*$ was closed under $j(\kappa)$-sequences in $M$, $M^*$ is internally definable in $M$ and $M$ is closed under $\kappa$-sequences in $V$, we deduce that $M^*$ is closed under $\kappa$-sequences in $V$. This shows that $\kappa$ has the $\gamma$-$\gp$, as claimed. 
\end{proof}

To conclude our preliminary analysis, notice that the gluing property is, \emph{prima facie},  dependent on the order type of the collection of measures being glued – this is because the sequence of seeds $\langle \eta_\beta\mid \beta<\alpha\rangle$ is required to be increasing. 
 However, the inductive argument presented in Theorem~\ref{theorem: variantsofgp} demonstrates that this dependence is just apparent: 
    \begin{lemma}[No order-type dependence]\label{lemma: cardinal dependence of gluing}
        Let $\kappa$ be a measurable cardinal. Then the following two statements are equivalent for all  cardinals:
        \begin{enumerate}
            \item  $\kappa$ has the $\lambda$-$\gp$ via ultrafilters.
            \item $\kappa$ has the ${<}\lambda^+$-$\gp$ via ultrafilters.
        \end{enumerate}
        In particular, for any ordinal $\gamma$, the following two are equivalent:
        \begin{enumerate}
            \item[(1')] $\kappa$ has the $\gamma$-$\gp$ via ultrafilters;
            \item[(2')] $\kappa$ has the $|\gamma|$-$\gp$  via ultrafilters.\qed
        \end{enumerate}
    \end{lemma}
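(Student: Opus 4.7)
The direction (2)$\Rightarrow$(1) is immediate since $\lambda<\lambda^+$. For (1)$\Rightarrow$(2), I will argue by transfinite induction on $\gamma<\lambda^+$ that every $\gamma$-sequence of $\kappa$-complete ultrafilters over $\kappa$ can be glued by some $\kappa$-complete ultrafilter on $\kappa^{<\kappa}$ via an increasing sequence of seeds. When $\gamma\leq\lambda$, pad the sequence to length $\lambda$ by repeating a fixed $\kappa$-complete ultrafilter, invoke (1), and truncate the resulting increasing sequence of seeds to its first $\gamma$ entries; truncation preserves monotonicity, so this delivers the $\gamma$-$\gp$ via ultrafilters.

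For $\gamma\in(\lambda,\lambda^+)$, the plan is to replicate the successor/limit dichotomy used in the proof of Theorem~\ref{theorem: variantsofgp}. In the successor case $\gamma=\bar\gamma+1$, the inductive hypothesis produces a $\kappa$-complete ultrafilter $W$ on $\kappa^{<\kappa}$ gluing $\langle U_\beta\mid\beta<\bar\gamma\rangle$ via an increasing sequence $\langle\eta_\beta\mid\beta<\bar\gamma\rangle$; the Remark preceding Theorem~\ref{theorem: variantsofgp} allows us to further assume $\sup_{\beta<\bar\gamma}\eta_\beta\leq\sup([\id]_W)$. Forming the product $Z:=W\otimes U_{\bar\gamma}$ and identifying $\kappa^{<\kappa}\times\kappa$ with $\kappa^{<\kappa}$ via a bijection, the computation from the successor case of Theorem~\ref{theorem: variantsofgp} exhibits $\langle k_W(\eta_\beta)\mid\beta<\bar\gamma\rangle$ followed by $\xi^\star:=k_{U_{\bar\gamma}}([\id]_{U_{\bar\gamma}})$ as an increasing sequence of seeds under $j_Z$ gluing the entire sequence $\langle U_\beta\mid\beta<\gamma\rangle$.

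In the limit case, fix a strictly increasing cofinal sequence $\langle\gamma_\xi\mid\xi<\cf(\gamma)\rangle$ in $\gamma$; crucially, $\cf(\gamma)\leq|\gamma|\leq\lambda$. By the induction hypothesis, for each $\xi$ there is an ultrafilter $W_\xi$ with increasing seeds $\langle\eta^\xi_\beta\mid\beta<\gamma_\xi\rangle$ gluing $\langle U_\beta\mid\beta<\gamma_\xi\rangle$. Using a bijection $\varphi\colon\kappa^{<\kappa}\leftrightarrow\kappa$, transfer each $W_\xi$ to an ultrafilter $\bar W_\xi$ on $\kappa$; since $\cf(\gamma)\leq\lambda$, the base case of the induction supplies an ultrafilter $W^*$ gluing $\langle\bar W_\xi\mid\xi<\cf(\gamma)\rangle$. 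Following the limit case of Theorem~\ref{theorem: variantsofgp}, set $\sigma^\xi_\beta:=k_\xi(\eta^\xi_\beta)$ (with $k_\xi$ the factor embedding induced by $W^*$ together with $W_\xi\equiv_{\RK}\bar W_\xi$) and $\eta_\beta:=\min\{\sigma^\xi_\beta\mid\beta<\gamma_\xi,\ \xi<\cf(\gamma)\}$; the $\eta_\beta$'s are then seeds for $W^*$ witnessing the $\gamma$-$\gp$. The main obstacle lies in verifying that $\langle\eta_\beta\mid\beta<\gamma\rangle$ is increasing and bounded by $\sup([\id]_{W^*})$—this is precisely the delicate computation carried out in Theorem~\ref{theorem: variantsofgp}, and it transfers verbatim to our setting.

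Finally, the ``moreover'' clause drops out immediately: (1')$\Rightarrow$(2') by truncation (pad any $|\gamma|$-sequence to length $\gamma$, apply $\gamma$-$\gp$, and truncate the resulting seed sequence), and (2')$\Rightarrow$(1') by applying the main equivalence with $\lambda:=|\gamma|$, noting $\gamma<|\gamma|^+$.
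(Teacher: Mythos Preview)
Your proposal is correct and follows essentially the same approach as the paper: the paper's proof is simply a pointer to the inductive argument (successor step via product ultrafilters, limit step via gluing the gluing ultrafilters) carried out in Theorem~\ref{theorem: variantsofgp}, and you have faithfully reproduced that argument with the base case handled by padding/truncation. The key structural observation you make explicit---that $\gamma\in(\lambda,\lambda^+)$ forces $\cf(\gamma)\leq\lambda$, so the limit step can always fall back on the base case---is exactly what makes the induction go through for arbitrary $\lambda$.
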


\section{The Gluing Property and the Rudin-Keisler order}\label{sec: SectionRK}

In this section  we demonstrate that the gluing property via ultrafilters is equivalent to the directedness of the Rudin-Keisler order on $\kappa$. 

\smallskip

Let $\kappa$ be a measurable cardinal and denote by $\mathfrak{U}_\kappa$ the set of all $\kappa$-complete ultrafilters over $\kappa$. Recall that $\langle \mathfrak{U}_\kappa,\leq_{\mathrm{RK}}\rangle$ is a preorder set, so it makes sense to talk about it being directed. The main goal of this section is to show that the statement $``\kappa$ has the $\lambda$-$\gp$" can be characterized in terms of the $\lambda$-directedness of  $\langle \mathfrak{U}_\kappa,\leq_{\mathrm{RK}}\rangle$. Namely, we prove the following:
\begin{theorem}[GP and the RK-order]\label{thm: EquivalencewithRK}
     Suppose that $\kappa$ is a measurable cardinal. Then, the following are equivalent for any cardinal $\lambda\leq 2^\kappa$:
    \begin{enumerate}
       \item $\kappa$ has the ${<}\lambda^+$-$\gp$ via ultrafilters.
        \item $\kappa$ has the $\lambda$-$\gp$ via ultrafilters.
         \item $\kappa$ has the weak $\lambda$-$\gp$ via ultrafilters.
           \item $\langle \mathfrak{U}_\kappa, \leq_{\mathrm{RK}}\rangle$ is $\lambda^+$-directed.
    \end{enumerate}
   Moreover, if $\lambda\leq \kappa$, (1) and (2) and (3) are, respectively, equivalent to
      \begin{enumerate}
        \item[(1')] $\kappa$ has the ${<}\lambda^+$-$\mathrm{GP}$,
        \item[(2')] $\kappa$ has the $\lambda$-$\mathrm{GP}$.
          \item[(3')] $\kappa$ has the weak $\lambda$-$\mathrm{GP}$.
     \end{enumerate}
\end{theorem}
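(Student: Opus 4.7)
The plan is to run the cycle $(2)\Rightarrow(3)\Rightarrow(4)\Rightarrow(2)$. The equivalence $(1)\Leftrightarrow(2)$ is immediate from Lemma~\ref{lemma: cardinal dependence of gluing}, and $(2)\Rightarrow(3)$ is trivial since the weak variant merely drops the monotonicity of seeds.

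For $(3)\Rightarrow(4)$, fix $\mu\leq\lambda$ and $\langle U_\beta\mid\beta<\mu\rangle\subseteq\mathfrak{U}_\kappa$. The weak $\lambda$-$\gp$ via ultrafilters furnishes a $\kappa$-complete ultrafilter $W$ on $\kappa^{<\kappa}$ and ordinals $\eta_\beta<j_W(\kappa)$ with $U_\beta=\{X\subseteq\kappa\mid\eta_\beta\in j_W(X)\}$. Writing $\eta_\beta=j_W(g_\beta)([\id]_W)$ for suitable $g_\beta\colon\kappa^{<\kappa}\to\kappa$, one checks $U_\beta=(g_\beta)_*(W)$, whence $U_\beta\leq_{\mathrm{RK}}W$. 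Since $\kappa$ is measurable, hence inaccessible, we have $|\kappa^{<\kappa}|=\kappa$, and pushing $W$ forward along any bijection $\kappa^{<\kappa}\leftrightarrow\kappa$ gives an $\equiv_{\mathrm{RK}}$-equivalent element of $\mathfrak{U}_\kappa$ that dominates all the $U_\beta$'s.

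For $(4)\Rightarrow(2)$, which is the core of the argument, apply $\lambda^+$-directedness to obtain $U\in\mathfrak{U}_\kappa$ with $U_\beta=(\pi_\beta)_*(U)$ for some $\pi_\beta\colon\kappa\to\kappa$. Then $\eta_\beta:=j_U(\pi_\beta)([\id]_U)$ are seeds for the $U_\beta$'s, but they need not be strictly increasing. Mirroring the construction in Theorem~\ref{theorem: variantsofgp}(1)$\Rightarrow$(2), I will define a map $F\colon\kappa\to\kappa^{<\kappa}$ whose value $F(\alpha)$ is an increasing sequence whose $(c_\beta\circ f_\kappa)$-th coordinate records $\pi_\beta(\alpha)$, where $c_\beta$ is the canonical function representing $\beta$ and $f_\kappa(s)=\min(s)$. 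The ultrafilter $W:=F_*(U)$ then lives on $\kappa^{<\kappa}$, concentrates on increasing sequences, and admits strictly increasing seeds $\bar\eta_\beta$ extracted as the corresponding coordinates of $[\id]_W$. When $\lambda>\kappa$ the canonical functions representing $\beta<\lambda$ are available only in iterated ultrapowers, so one passes from $U$ to a finite-iteration product $U\otimes j_U(U)\otimes\cdots$ (as in the Remark following the definition of the $\gp$ via ultrafilters) to secure enough room to label all $\lambda$-many indices.

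For the moreover clause ($\lambda\leq\kappa$), Theorem~\ref{theorem: variantsofgp} already yields $(2)\Leftrightarrow(2')$; combined with Lemma~\ref{lemma: cardinal dependence of gluing} this gives $(1)\Leftrightarrow(1')$, and a parallel argument (using the $\kappa$-closure of the ultrapower $M$ to import the seed sequence into $M$ and then applying the Kunen-style factoring of Theorem~\ref{theorem: variantsofgp}) yields $(3)\Leftrightarrow(3')$. The principal obstacle is the $(4)\Rightarrow(2)$ step: strictly ordering the seeds $\bar\eta_\beta$ while preserving their $\beta$-labels requires canonical functions of rank up to $\lambda$ inside the ultrapower, which for $\lambda>\kappa$ is available only after iterating $U$ with itself sufficiently many times.
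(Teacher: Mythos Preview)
Your handling of $(1)\Leftrightarrow(2)$, $(2)\Rightarrow(3)$, and $(3)\Rightarrow(4)$ is correct and matches the paper.

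The gap is in $(4)\Rightarrow(2)$. Your proposed map $F\colon\kappa\to\kappa^{<\kappa}$ is supposed to send $\alpha$ to an increasing sequence whose $\beta$-labeled coordinate equals $\pi_\beta(\alpha)$. But for fixed $\alpha$, the values $\{\pi_\beta(\alpha)\mid\beta<\lambda\}$ are arbitrary ordinals below $\kappa$ with no relation to one another; there is no way to place them into a single increasing sequence while preserving the $\beta$-indexing. The obstacle is not a matter of ``room'' or of having canonical functions available in an iterate---even for $\lambda=\omega$ the construction fails, since $\pi_0(\alpha),\pi_1(\alpha),\ldots$ need not be increasing. Your remark about passing to a finite iteration for $\lambda>\kappa$ does not address this, and the sketch is not a proof.

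The paper's argument for $(4)\Rightarrow(2)$ is by induction on $\lambda$. In the base case $\lambda=\aleph_0$, one forms the finite products $\bar U_n=\bigotimes_{m\le n}U_m$ on $\kappa^{n+1}$; the point is that such a product \emph{automatically} concentrates on increasing tuples, so its seed in any further ultrapower is an increasing $(n{+}1)$-tuple $\langle\eta^n_m\mid m\le n\rangle$. Pushing each $\bar U_n$ to $\kappa$ via a bijection and applying $\aleph_1$-directedness yields a single $W^*$ above all of them; unwinding gives, for each $m$, a family of seeds $\{\eta^n_m\mid n\ge m\}$ for $U_m$ inside $j_{W^*}$, and setting $\eta_m:=\min_{n\ge m}\eta^n_m$ produces a strictly increasing sequence (since $\eta_m\le\eta^{n_{m+1}}_m<\eta^{n_{m+1}}_{m+1}=\eta_{m+1}$). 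The inductive step repeats this pattern: for limit $\lambda$ one glues each initial segment $\langle U_\beta\mid\beta<\alpha\rangle$ via some $W_\alpha$ (by the induction hypothesis), pushes these to $\kappa$, uses $\lambda^+$-directedness to find $W^*$ above all of them, and again takes the pointwise minimum over the resulting seed-families; the successor case is similar. The essential device you are missing is this \emph{minimum trick}, which converts a family of increasing finite (or short) seed-tuples into a single increasing $\lambda$-sequence.

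Your treatment of the moreover clause is adequate.
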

A prequel of this was  proved in \cite[Lemma~2.11]{HP} where the $\aleph_0$-$\gp$ was characterized in terms of the $\aleph_1$-directedness of $\langle \mathfrak{U}_\kappa, \leq_{\mathrm{RK}}\rangle$.
It was recently called to our attention that after \cite{HP}  was published Benhamou–Goldberg 
 independently, and around the same time, observed a similar fact.

\begin{proof}[Proof of Theorem~\ref{thm: EquivalencewithRK}]
(1) $\Leftrightarrow$ (2) is Lemma~\ref{lemma: cardinal dependence of gluing} and (2) $\Rightarrow$ (3) is clear. 

\smallskip

(3) $\Rightarrow$ (4):  Let $\langle U_\alpha \mid \alpha < \lambda \rangle$ be a sequence of $\kappa$-complete ultrafilters. By assumption, there exists a $\kappa$-complete ultrafilter $W$ and a sequence of seeds $\langle \eta_\alpha \mid \alpha < \lambda \rangle$ such that for each $\alpha < \lambda$, 
$U_\alpha = \{ X \subseteq \kappa \mid \eta_\alpha \in j_W(X) \}$. Let $\pi_\alpha \colon \kappa^{<\kappa} \to \kappa$ be  such that $[\pi_\alpha]_W = \eta_\alpha$. This $\pi_\alpha$ witnesses that $U_\alpha \leq_{\mathrm{RK}} W$.

\smallskip

    (4) $\Rightarrow$ (2). Let us argue this by induction on the cardinal $\lambda$. 

    \smallskip

    \underline{Case $\lambda=\aleph_0$:} Let $\langle U_n\mid n<\omega\rangle$ be $\kappa$-complete ultrafilters on $\kappa$. For each $n<\omega$ we consider auxiliary ultrafilters on $\kappa^{n+1}$ by taking products: $$\textstyle \bar{U}_n:=\bigotimes_{m\leq n} U_m.$$

   For each $n<\omega$ let $\varphi_n\colon \kappa^{n+1}\leftrightarrow \kappa$ be a bijection and let $W_n:=(\varphi_n)_{*}(\bar{U}_n)$.

\smallskip

   Since $\langle \mathfrak{U}_\kappa,\leq_{\RK}\rangle$ is $\aleph_0$-directed we can find $W^*$ and functions $\pi_n\colon \kappa\rightarrow\kappa$ witnessing $W_n\leq_{\RK}W^*.$ For each $n<\omega$ let $\sigma_n:=j_{W^*}(\pi_n)([\id]_{W^*}).$  Note that $j_{W^*}(\varphi_n)^{-1}(\sigma_n)$ is an increasing sequence  of ordinals $\langle \eta^n_m\mid m\leq n\rangle$ below $j_{W^*}(\kappa).$ Moreover, for each $m\leq n$, $U_m$ can be presented as
   $$U_m=\{X\s\kappa\mid \eta^n_m\in j_{W^*}(X)\}.$$
Define $\eta_m:=\min\{\eta^n_m\mid m\leq n<\omega\}.$ It is clear that $\eta_m< \eta_{m+1}$ for all $m<\omega$. Therefore, we conclude that $\kappa$ has the $\aleph_0$-$\gp$.


\medskip

\underline{Inductive step:} Suppose that $\langle\mathfrak{U}_\kappa,\leq_{\RK}\rangle$ is $\lambda$-directed.

\smallskip

$\br_1$ Suppose first that $\lambda$ is a limit cardinal. Let $\langle U_\alpha\mid \alpha<\lambda\rangle$ be $\kappa$-complete ultrafilters. By our induction hypothesis, $\kappa$ has the ${<}\lambda$-$\gp$ via ultrafilters. 

For each $\omega\leq \alpha<\lambda$ use the $\alpha$-$\gp$ via ultrafilters  to find a $\kappa$-complete ultrafilter $W_\alpha$ on $\kappa^{<\kappa}$ and an increasing sequence of seeds $\langle \eta^\alpha_\beta\mid \beta<\alpha\rangle$ such that $U_\beta=\{X\s \kappa\mid \eta^\alpha_\beta\in j_{W_\alpha}(X)\}$ for all $\beta<\alpha.$ For each $\alpha<\lambda$ let a bijection $\varphi\colon \kappa^{<\kappa}\leftrightarrow \kappa$ and induce the  ultrafilter
$\bar{W}_\alpha:=\varphi_* W_\alpha.$ These $\bar{W}_\alpha$ are now ultrafilters on $\kappa$, so that we can appeal to the $\lambda$-directedness of $\langle \mathfrak{U}_\kappa,\leq_{\RK}\rangle$ to find $W^*\in\mathfrak{U}_\kappa$ that is $\leq_{\RK}$-above all the $\bar{W}_\alpha$. 

The above discussion yields a commutative diagram:
       \[
\begin{tikzcd}
   V \arrow[r, "{j_{W^*}}"] \arrow[d, "j_{W_\alpha}"'] 
     & M_{W^*} \\
   M_{W_\alpha} \arrow[r, "\bar{k}_\alpha"'] &  M_{\bar{W}_\alpha}  \arrow[u, "k_\alpha"'].
\end{tikzcd}
\]
For each $\beta<\alpha<\lambda$ denote $\sigma^\alpha_\beta:=(k_\alpha\circ\bar{k}_\alpha)(\eta^\alpha_\beta)$. Since $\langle \eta^\alpha_\beta\mid \beta<\alpha\rangle$ was an  increasing sequence so is $\langle \sigma^\alpha_\beta\mid \beta<\alpha\rangle$ as well.

Chasing the diagram one shows that
$$U_\beta=\{X\s \kappa\mid \sigma^\alpha_\beta\in j_{W^*}(X)\}.$$
Given $\beta<\lambda$ the above holds true for all $\alpha\in (\beta,\lambda)$. Thus, as before, it suffices to show that one can pick a sequence $\langle \sigma^{\alpha_\beta}_\beta\mid \beta<\lambda\rangle$ that is increasing. But this is easily guaranteed taking $\eta_\beta:=\min\{\sigma_\beta^\alpha\mid \beta<\alpha<\omega_1\}$. 

\medskip

$\br_2$ Suppose that $\lambda$ is a successor cardinal, $\mu^+$. By the induction hypothesis, $\kappa$ has the $\mu$-$\gp$ via ultrafilters and {thus the ${<}\mu^+$-$\gp$ via ultrafilters.}  Using this  one can argue exactly as before to show that $\kappa$ has  the $\mu^+$-$\gp$. 
\end{proof}
Kunen \cite[Theorem~2.3]{Ketonen} and, independently,  Comfort–Negrepontis \cite[Theorem~4.3]{ComfortNegrepontis} showed that if $\kappa$ is $\kappa$-compact then $\langle \mathfrak{U}_\kappa,\leq_{\mathrm{RK}}\rangle$ is $(2^\kappa)^+$-directed, and thus it has the $2^\kappa$-$\gp$ via ultrafilters. This observation improves the conclusion of \cite[Theorem~4.2]{HP} which established the $2^\kappa$-$\gp$ for this type of cardinals. Also, note that this implies that a $\kappa$-compact cardinal cannot provide a counter-example to our conjecture that the $\kappa^+$-$\gp$ does not imply the $\kappa^+$-$\gp$ via ultrafilters.

\section{The Gluing Poset}\label{sec: the gluing iteration}
\begin{setup}
   For the scope of this section we assume that  $\kappa$ is a measurable cardinal and that $\mathcal{U}=\langle U(\alpha,\zeta)\mid \alpha\leq  \kappa,\, \zeta<o^{\mathcal{U}}(\alpha)\rangle$ is a  coherent sequence of  measures such that {$o(\alpha)< \alpha$} for all $\alpha< \kappa$.  
\end{setup}

Let $\mathbb{S}$ be the forcing adding a fast function  $\ell\colon \kappa\rightarrow H(\kappa)$ using conditions whihc are partial functions $s\colon \kappa\rightarrow H(\kappa)$ with non-stationary support (Definition~\ref{def: fast function forcing}). In what follows we shall work inside $V[S]$ where $S\s \mathbb{S}$ is a $V$-generic filter and $\ell:=\bigcup S$ is the induced  generic fast function. For each $\alpha<\kappa$ inaccessible, we denote by $S_\alpha$ the $\mathbb{S}_\alpha$-generic filter over $V$ induced by $S$.  We define, by induction on  $\alpha<\kappa$, a non-stationary support iteration of Prikry-type forcings $\langle \mathbb{P}_\alpha, \dot{\mathbb{Q}}_\beta\mid \beta<\alpha\leq \kappa\rangle$  as follows. Suppose that $\mathbb{P}_\alpha$ was defined. If $\alpha$ is non measurable in $V$ then we let $\dot{\mathbb{Q}}_\alpha$ to be the standard $\mathbb{P}_\alpha$-name for the trivial forcing.  Otherwise, we define $\dot{\mathbb{Q}}_\alpha$ by recursion on the ``order" of $\alpha$ in the style of Gitik \cite{ChangingCofinalities} (see also \cite[\S4]{BenUng}). The precise meaning of the   ``order of $\alpha$" that is appropriate for our purposes is a  variation of the usual \emph{Mitchell order} of $\alpha$. To be in a position to present this notion, first we recall the notion of a \emph{code for a measure on $\alpha$}  introduced in \cite[Theorem~6.21]{HP}. 
\begin{definition}[Codes]\label{def: codes}
    Working in $V[S_\alpha]$ we say that a $5$-tuple $$c=\langle \vec \rho, \vec \zeta, \vec a, f, g\rangle$$ is a \emph{code for a measure on $\alpha$} if there is $n\geq 1$ for which the following hold (using the notations of Definition \ref{Iteration}):
\begin{enumerate}
    \item $\vec\rho=\langle \rho_0,\dots, \rho_{n-1}\rangle$  where $\rho_i \colon \alpha^i \to (\alpha+1)$ and $\rho_0(\langle\rangle):=\alpha$.
    \item $\vec\zeta=\langle \zeta_0,\dots, \zeta_{n-1}\rangle$ is a sequence of ordinals ${\leq}\alpha$. 
    \item $\vec a =\langle a_0,\dots, a_{n-1}\rangle$ is a sequence of functions $a_i \colon \alpha^i \to H(\alpha^{+})$.
    \item $f \colon \alpha^n \to  \mathbb{P}_\alpha$. 
    \item $g \colon \alpha^n \to \alpha$.
    \item For each $i < n$, $\iota_i(\rho_i)(\langle \mu_j \mid j < i\rangle) = \mu_i$ and $\zeta_i < o^{\iota_i(\mathcal{U})}(\mu_i)$. $$(\text{Here } \iota_{i,i+1}\colon \mathscr{N}_i \to \mathscr{N}_{i+1} \cong \Ult(\mathscr{N}_i, \iota_i({\mathcal{U}})(\mu_i, \zeta_i)),\,  \mathcal{N}_0 := V\,\text{and}\,\iota_0 := \id.)$$ 
\item Let $\iota^*_n$ be \textbf{the} lifting of $\iota_n$ to $V[S_\alpha]$ determined by $\langle \bar a_i\mid i<n\rangle$, where $\bar a_i=\iota_i(a_i)(\langle \mu_j\mid j<i\rangle)$.\footnote{\label{Footnoteliftings}Such lifting exists by Lemma~\ref{lem;lifting-S} – it is unique and definable in $V[S_\alpha]$. Indeed, such lifting is determined by $\iota_n$, the $\mathbb{S}_\alpha$-generic $S_\alpha$ and a sequence $\langle \bar{a}_i\mid i<n\rangle \in \prod_{i<n}H(\mu_i^+)^{V}$. Here we take $\bar{a}_i$ to be $\iota_i(a_i)(\langle \mu_j \mid j<i\rangle)$.} Set $$\text{$r = \iota^*_n(f)(\langle \mu_i \mid i < n\rangle),$  $\varepsilon = \iota^*_n(g)(\langle \mu_i \mid i < n\rangle)$,}$$ and let $\dot{U}$ be the $\mathbb{P}_\alpha$-name (over $V[S_\alpha]$) for the set 
\[\begin{matrix} \dot{U} &= &\{\dot{X}_{S_\alpha \ast \dot{G}_\alpha} \subseteq \alpha \mid & \exists p \in \dot{G}_\alpha \; \exists s\in \iota^*_n(\mathbb{P}_\alpha), \\ & & & s \leq^*_{\Gamma_{\iota_n}} \iota^*_n(p) \wedge r, s\in \mathscr{N}_n, \\ & & & \mathscr{N}_n \models s \Vdash_{\iota^*_n(\mathbb{P}_\alpha)} \varepsilon \in \iota^*_n(\dot{X})\}.\end{matrix}\]
Then, it is the case that
$$\one\forces^{V[S_\alpha]}_{\mathbb{P}_\alpha}\text{$``\dot{U}$ is an $\alpha$-complete ultrafilter on $\alpha$''.}$$
\end{enumerate}

 In the above-described situation we will say that $\dot{U}$ is \emph{coded} by $\langle \vec\rho, \vec\zeta, \vec a, f, g\rangle$. For the sake of a more concise language, we will say that $c=\langle \vec\rho, \vec\zeta, \vec a, f, g\rangle$ is an \emph{$\alpha$-code} rather than a code for a measure on $\alpha$.
\end{definition}

\begin{remark}
    Note that an $\alpha$-code $c=\langle \vec \rho, \vec \zeta, \vec a, f, g\rangle$ always belongs to $H(\alpha^+)^V$. Therefore, $c$ is a legitimate choice for $s(\alpha)$, being $s$ a condition in the fast function  forcing $\mathbb{S}$.  {Also, notice that codes are absolute in the following sense: If $\mathscr{M}\s V[S_\alpha]$ is a $\kappa$-closed inner model for which $\mathscr{M}\models ``c$ is a code" then the measure decoded from $c$ is the same in $V[S_\alpha\ast G_\alpha]$ and in $\mathscr{M}[G_\alpha]$.} The forthcoming \textbf{Coding Lemma} (see Lemma~\ref{lemma: coding lemma}) will show that every $\alpha$-complete ultrafilter on $\alpha$ in $V[S_\alpha\ast G_\alpha]$ admits a code. This will provide a complete characterization of the $\alpha$-complete ultrafilters (so, including non necessarily normals) in the generic extension by $V[S_\alpha\ast G_\alpha].$
\end{remark}

With the concept of code in place, we  present our variation of the Mitchell order for a measurable cardinal $\alpha$. Recall that $\ell\colon \kappa\rightarrow H(\kappa)$ is fixed.
\begin{definition}\label{ellorder}
Let $\Omega$ be a non-zero ordinal.
We write \emph{$o^\ell(\alpha)=\Omega$} if 
$$\one\forces^{V[S_\alpha]}_{\mathbb{P}_\alpha}\text{``$\ell(\alpha)=\langle \langle c_\beta\mid \beta<\omega^\Omega\rangle, \langle \zeta^\rho_\beta\mid 1\leq\rho<\Omega\,\wedge\,\omega^\rho\cdot\beta<\omega^\Omega \rangle\rangle$''},$$ and the following properties hold:
\begin{enumerate}
    \item $\langle c_\beta\mid \beta<\omega^\Omega\rangle$ is forced to be a sequence of $\alpha$-codes.

\item $\zeta^\rho_\beta<o^{V}(\alpha)$ and if $\iota_{\alpha,\zeta^\rho_\beta}\colon V\rightarrow \mathscr{N}_{\alpha, \zeta^\rho_\beta}\simeq\Ult(V, U(\alpha,\zeta^\rho_\beta))$ then:

\smallskip

\begin{enumerate}
 \item  $\mathscr{N}_{\alpha, \zeta^\rho_\beta}[\iota_{\alpha, \zeta^\rho_\beta}(S_\alpha)]\models ``\one\forces_{\mathbb{P}_\alpha}\text{$\vec{c}^{\;\rho}_\beta$ are $\alpha$-codes''}$.

 \item $\iota_{\alpha, \zeta^\rho_\beta}(\ell)(\alpha)=\langle \vec{c}^{\,\rho}_\beta,\vec{\zeta}^{\;\rho}_\beta\rangle.$

\item $\langle U(\alpha,\zeta)\mid \zeta\in \vec{\zeta}^{\,\rho}_\beta \rangle\in \mathscr{N}_{\alpha, \zeta^\rho_\beta}$.
\end{enumerate}

\end{enumerate}
Above we used the following notation:
\begin{itemize}
    \item $\vec{c}^{\;\rho}_\beta:=\langle c_\gamma\mid \gamma\in [\omega^\rho\cdot\beta,\omega^\rho\cdot(\beta+1))\rangle$.
    \item $\vec{\zeta}^{\,\rho}_\beta:=\langle \zeta^\sigma_\gamma\mid 1\leq \sigma<\rho\,\wedge\, \gamma\in[\omega^\sigma\cdot \beta, \omega^\sigma\cdot(\beta+1)) \rangle\rangle$.
\end{itemize}

We say that $\alpha$ has \emph{$\ell$-order $\Omega$} if $o^\ell(\alpha)=\Omega$. Similarly, we say that $\alpha$ has  \emph{$\ell$-order $0$} (in symbols, $o^\ell(\alpha)=0$)  if for no ordinal $\Omega> 0$, $o^\ell(\alpha)=\Omega$. 
\end{definition}

    So, for instance, $o^\ell(\alpha)=1$ means that $\mathbb{P}_\alpha$ forces (over $V[S_\alpha]$) that $\ell(\alpha)=\langle c_n\mid n<\omega\rangle$ is a sequence of $\alpha$-codes. Similarly, $o^\ell(\alpha)=2$ means that $\mathbb{P}_\alpha$ forces (over $V[S_\alpha]$) that $\ell(\alpha)=\langle \langle c_\beta\mid \beta<\omega^2\rangle, \langle \zeta_\beta\mid \beta<\omega \rangle\rangle$ – the first being a sequence of $\alpha$-codes and the second witnessing that $$\mathscr{N}_{\alpha,\zeta_\beta}[\iota_{\alpha,\zeta_\beta}(S_\alpha)]\models``\text{$\alpha$ has $\iota_{\alpha, \zeta_\beta}(\ell)$-order  $1$"}.$$
    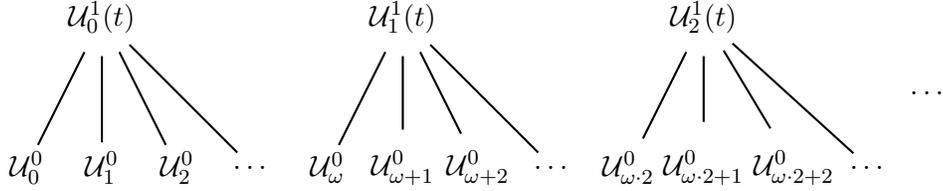
\begin{figure}[h]
\centering
\begin{tikzpicture}
    [every node/.style={draw=none, circle, inner sep=0.3pt},
    every path/.style={thick}]

    \node (U10) at (0,2) {$\mathcal{U}_0^1(t)$};
    \node (U11) at (4,2) {$\mathcal{U}_1^1(t)$};
    \node (U12) at (8,2) {$\mathcal{U}_2^1(t)$};
    
    \node (U00) at (-1,0) {$\mathcal{U}_0^0$};
    \node (U01) at (0,0) {$\mathcal{U}_1^0$};
    \node (U02) at (1,0) {$\mathcal{U}_2^0$};
    \node (U03) at (2,0) {$\dots$};

    \node (U0W1) at (3,0) {$\mathcal{U}_\omega^0$};
    \node (U0W2) at (4,0) {$\mathcal{U}_{\omega+1}^0$};
    \node (U0W3) at (5,0) {$\mathcal{U}_{\omega+2}^0$};
    \node (U0W4) at (6,0) {$\dots$};
    
    \node (U0W5) at (7,0) {$\mathcal{U}_{\omega\cdot2}^0$};
    \node (U0W6) at (8,0) {$\mathcal{U}_{\omega\cdot2+1}^0$};
    \node (U0W7) at (9.2,0) {$\mathcal{U}_{\omega\cdot2+2}^0$};
     \node (U0W8) at (10.2,0) {$\dots$};
    
    \draw (U10) -- (U00);
    \draw (U10) -- (U01);
    \draw (U10) -- (U02);
    \draw (U10) -- (U03);
    
    \draw (U11) -- (U0W1);
    \draw (U11) -- (U0W2);
    \draw (U11) -- (U0W3);
    \draw (U11) -- (U0W4);

    \draw (U12) -- (U0W5);
    \draw (U12) -- (U0W6);
    \draw (U12) -- (U0W7);
    \draw (U12) -- (U0W8);
    
    \node[draw=none, fill=none] (dots) at (11,1) {\dots};
\end{tikzpicture}
\caption{$o^\ell(\alpha)=2.$}
\label{fig:order}
\end{figure}

 Figure~\ref{fig:order} illustrates the situation for $o^\ell(\alpha)=2$. The ultrafilters at the bottom, $\mathcal{U}^0_\beta$, are those decoded in $V[S_\alpha\ast G_\alpha]$ from the given codes $c_\beta$. For each $\omega$-block of measures at the bottom, the $V$-measure $U(\alpha,\zeta_\beta)$ uniquely extends (in $V[S_\alpha]$) to  an $\alpha$-complete ultrafilter $\mathcal{U}^1_\beta$ --- recall that thanks to the non-stationary support, this lifting is fully determined by $\iota_{\alpha,\zeta_\beta}``S_\alpha$ and $\iota_{\alpha,\zeta_\beta}(\ell)(\alpha)=\langle \vec{c}^{\,1}_\beta,\varnothing\rangle$ (Lemma~\ref{lem;lifting-S}). We will extend each $\mathcal{U}^1_\beta$ to an $\alpha$-complete ultrafilter $\mathcal{U}^1_\beta(t)$ inside $V[S_\alpha\ast G_\alpha]$. As in \cite{ChangingCofinalities}, these extensions will depend upon finite increasing sequences $t$ that are appropriate to the Tree Prikry forcing relative to the measures $\langle \mathcal{U}^0_{\omega\cdot\beta+m}\mid m<\omega\rangle$. Then we  define a poset $\mathbb{G}_\alpha^2$, based on the measures of ``order $1$" $\mathcal{U}^1_\beta(t)$ and the measures of ``order $0$" $\mathcal{U}^0_\beta$. After forcing with  $\mathbb{G}_\alpha^2$ we will show that $\langle \mathcal{U}^0_\beta\mid \beta<\omega^2\rangle$ are glued via an $\alpha$-complete ultrafilter on $\alpha^{\omega^2}.$ 

 \begin{remark}
     Let us point out two differences with  Gitik's setup from \cite{ChangingCofinalities}. First, Gitik has a coherent sequence of measures $\mathcal{U}$ all the way up to $\kappa$ and therefore one measure  per order. Instead, here we imagine  the given ultrafilters $\mathcal{U}^0_\beta$ as if they all  had ``Mitchell order" $0$. The second most substantial difference is the following: {The ultrafilters to be glued (i.e., $\mathcal{U}^0_\beta$) may not be normal and as a result their extensions $\mathcal{U}^1_\beta(\varnothing)$ may not be either. This is a key difference with Gitik's iteration from \cite{GitikNonStaI}. Normality is critical in the proof of Claim~\ref{claim: bounding ordinals}, which in turn is instrumental to establish the failure of the $\aleph_0$-$\gp$ (Theorem~\ref{thm: no gluing in gitiks}).} 
 \end{remark}

The $\alpha$-stage  of our  iteration is defined according to the $\ell$-order of $\alpha$ as
  $$\one\forces_{\mathbb{P}_\alpha}^{V[S_\alpha]}\text{$``\dot{\mathbb{Q}}_\alpha=\dot{\mathbb{G}}^{o^\ell(\alpha)}_\alpha$''},$$
 where  $\mathbb{G}^{o^\ell(\alpha)}_\alpha$ denotes the \emph{gluing poset}, to be defined by recursion on $o^\ell(\alpha)$.
  \begin{definition}
       Working in $V[S_\alpha]$, if   $o^\ell(\alpha)\in \{0,1\}$ then we set
  $$\dot{\mathbb{G}}^{o^\ell(\alpha)}_\alpha:=\begin{cases}
     \{\one\}, & \text{if $o^\ell(\alpha)=0$;}\\
      \mathbb{T}(\langle \dot{\mathcal{U}}_n\mid n<\omega\rangle), & \text{if $o^\ell(\alpha)=1$,}
  \end{cases}$$
  where $\langle \dot{\mathcal{U}}_n\mid n<\omega\rangle$ is the standard $\mathbb{P}_\alpha$-name (over $V[S_\alpha]$) for the $\omega$-sequence of measures decoded from each $\alpha$-code $c_n$ mentioned in $\ell(\alpha)=\langle c_n\mid n<\omega\rangle.$
  \end{definition}
  \begin{remark}
        Let us stress that the definition of $\mathbb{G}^{o^\ell(\alpha)}_\alpha$ not only depends on the $\ell$-order of $\alpha$ but also on the information carried by  $\ell(\alpha)$ (i.e., the codes.). However, since $\ell$ is fixed from the outset, we shall refrain from emphasizing the dependence on $\langle\ell(\alpha)_n\mid n<\omega\rangle$.
  \end{remark}

\smallskip

When $o^\ell(\alpha)=1$,  $\mathbb{G}^1_\alpha$ is just the garden variety \emph{Tree Prikry forcing} used in \cite[Theorem~6.21]{HP}. In the cases where $o^\ell(\alpha)\geq 2$ the definition of the gluing poset  is obtained as the outcome of a recursion in the style of \cite{ChangingCofinalities}. To make the construction more transparent we  spell out the details of the case $o^\ell(\alpha)= 3$ and then we take over the general case. For each measurable $\bar\alpha<\alpha$ let us denote by $c_{\bar\alpha}$ the club that is $\mathbb{G}^{o^\ell(\bar\alpha)}_\alpha$-generic  over $V[S_{\bar\alpha}\ast G_{\bar\alpha}]$.

\smallskip

We commence introducing the notion of $\ell$-coherency, which is the natural adaptation of Gitik's coherency from \cite[\S3]{ChangingCofinalities}: 
\begin{definition}[$\ell$-coherency]
Let $\rho\leq o^{\ell}(\alpha)$ be an ordinal.
An increasing sequence of ordinals $t=\langle \alpha_0,\dots, \alpha_{k-1}\rangle\in [\alpha]^{<\omega}$ is called \emph{$(\rho, \ell)$-coherent} if: 
\begin{enumerate}
    \item $t$ is the empty sequence, $\varnothing$, whenever $\rho=0$.
    \item For each index $i<k$, $o^\ell(\alpha_i)<\rho$,
    \item For each $0<i<k$, let $n_i\leq i$ denote  the first index such that $o^\ell(\alpha_j)<o^\ell(\alpha_i)$ for all $j\in [n_i, i)$. We have two options:

    \smallskip
    
\begin{enumerate}
    \item \underline{$n_i<i$}: In this case we require
    $$\textstyle \bigcup_{j\in [n_i, i)}(b_{\alpha_j}\cup\{\alpha_j\})\sq b_{\alpha_i}.$$
    \item \underline{$n_i=i$}: Then we require the generic club on $\alpha_i$ start  above $\alpha_{i-1}$:
    $$\min(b_{\alpha_i})>\alpha_{i-1}.\footnote{Note that in this case $o^{\ell}(\alpha_{i-1})\geq o^{\ell}(\alpha_i)$.}$$
\end{enumerate}

\end{enumerate}
\end{definition}
To streamline the presentation let us refer to $(\rho,\ell)$-coherent sequences just as $\rho$-coherent sequences. Notice that $t$ is $1$-coherent if and only if $t$ is an increasing sequence of ordinals below $\alpha$.

\begin{definition}[Projections]
    Let $t=\langle \alpha_0,\dots, \alpha_{k-1}\rangle$ be a $\rho$-coherent sequence  and $\sigma<\rho$. The \emph{projection of $t$ to $\sigma$} is the sequence $$t\restriction \sigma:=\langle \alpha_{n_\sigma},\dots, \alpha_{k-1}\rangle$$ where $n_\sigma\leq k$ is the least index such that $o^\ell(\alpha_i)<\sigma$ for all $i\in [n_\sigma,k)$.
\end{definition}
Since we allow $n_\sigma=k$ the restriction $t\restriction\sigma$ might possibly be empty. Also, a moment's reflection makes clear that  $t\restriction\sigma$ is  $\sigma$-coherent.
\begin{definition}\label{CharacteristicsOfT}
    Let $t$ be a $\rho$-coherent sequence.
    \begin{enumerate}
        \item The \emph{Magidor/Gitik generic sequence of $t$}, $b_t$, is $\bigcup_{i<|t|}(b_{\alpha_i}\cup\{\alpha_i\})$.
        \item $t$ is \emph{maximal} if the sequence $\langle o^\ell(\alpha_i)\mid i<|t|\rangle$ is $\leq$-weakly decreasing.
        \item If $\sigma<\rho$, the collection of ordinals of $\ell$-order $\sigma$ in $t$  is $$t_\sigma:=\{\alpha\in t\mid o^{\ell}(\alpha)=\sigma\}.$$
        \item Given another $\rho$-coherent sequence $s$, we say that $t$ and $s$ are \emph{equivalent} if they yield the same Gitik generic sequence – to wit, $b_t=b_{s}.$
    \end{enumerate}
\end{definition}
\begin{remark}
    Every $\rho$-coherent sequence is equivalent to a maximal $\rho$-coherent sequence. The idea  is the following: For each $i<|t|$ one can ``collapse" the block of ordinals $b=\langle \alpha_{n_i},\dots, \alpha_i\rangle$ in $t$ to $\langle \alpha_i\rangle$ and yet maintain $\rho$-coherency. The point is that the Gitik sequence carried by $\alpha_i$ is the same as $c_b$. Thus, the information carried by the ordinals $\langle \alpha_{n_i},\dots, \alpha_{i-1}\rangle$ is redundant as far as the generic object is concerned.
\end{remark}


Let $G_\alpha\s \mathbb{P}_\alpha$ be $\mathbb{P}_\alpha$-generic filter over $V[S_\alpha]$. Then, $$V[S_\alpha\ast G_\alpha]\models ``\ell(\alpha)=\langle \langle c_\beta\mid \beta<\omega^3\rangle, \langle \zeta^1_\beta\mid \beta<\omega^2\rangle, \langle \zeta^2_\beta\mid \beta<\omega\rangle\rangle"$$ 
where the first is a sequence of $\alpha$-codes and the other two  are sequences of orders for $V$-measures $U(\alpha,\zeta^\rho_\beta)$ for which the following hold: 
\begin{equation}\label{notationcodes}
    \tag{$\dagger$} \mathscr{N}_{\alpha, \zeta^\rho_\beta}[\iota_{\alpha, \zeta^\rho_\beta}(S_\alpha)\ast G_\alpha]\models \text{$``\vec{c}^{\;\rho}_\beta$ are $\alpha$-codes''},
    \end{equation}
  \begin{equation*}
      \iota_{\alpha,\zeta^1_\gamma}(\ell)(\alpha)=\langle \vec{c}^{\,1}_\gamma,\varnothing\rangle\;\text{and}\; \iota_{\alpha, \zeta^2_\beta}(\ell)(\alpha)=\langle \vec{c}^{\,2}_\beta,\vec{\zeta}^{\,2}_\beta\rangle,
  \end{equation*}
  and $\langle U(\alpha, \zeta)\mid \zeta\in \vec{\zeta}^{\,1}_\beta\rangle\in \mathscr{N}_{\alpha,\zeta^2_\beta}$ for all $\beta<\omega.$
  
  (Look at Definition~\ref{ellorder} to recover the notations $\vec{c}^{\,\rho}_\beta$ and $\vec{\zeta}^{\,\rho}_\beta$.)
\begin{definition}[The measures]\label{Themeasures}\hfill
\begin{enumerate}
    \item For each $\beta<\omega^3$,  denote by $\mathcal{U}^0_\beta$ the $\alpha$-complete measure over $\alpha$ decoded (in $V[S_\alpha\ast G_\alpha]$) from the code $c_\beta$.
    \item For each $\rho\in\{1,2\}$ and $\beta$ be such that $\omega^\rho\cdot\beta<\omega^3$, let   
    $$\mathcal{U}^\rho_\beta:=\{X\s \alpha\mid \alpha\in {\iota}_{\alpha,\zeta^\rho_\beta}(X)\},$$
    where ${\iota}_{\alpha, \zeta^\rho_\beta}\colon V[S_\alpha]\to \mathscr{N}_{\alpha,\zeta^\rho_\beta}[H_\alpha]$ is the unique ($V[S_\alpha]$-internal)  lifting of the ultrapower embedding by $U(\alpha,\zeta^\rho_\beta)$ for which 
    $$\iota_{\alpha,\zeta^\rho_\beta}(\ell)(\alpha)=\langle \vec{c}^{\;\rho}_\beta,\vec{\zeta}^{\,\rho}_\beta\rangle.$$
\end{enumerate}
\end{definition}
\begin{remark}
    Since $\iota_{\alpha, \zeta^\rho_\beta}$ is the ultrapower embedding by the normal measure $U(\alpha,\zeta^\rho_\beta)$ and $\mathbb{S}_\alpha$ is a forcing whose conditions have non-stationary support,  any lifting of $\iota_{\alpha, \zeta^\rho_\beta}$  is fully determined by $\iota_{\alpha,\zeta_\beta}``S_\alpha$ and  $\iota_{\alpha,\zeta_\beta}(\ell)(\alpha).$ 
    In addition, one can show that $\mathscr{N}_{\alpha, \zeta^\rho_\beta}[\iota_{\alpha, \zeta^\rho_\beta}(S_\alpha)]$ is an $\alpha$-closed inner model of $V[S_\alpha]$ – to prove this one uses the fusion properties of $\mathbb{S}_\alpha$ (\cite[Claim~6.21.1]{HP}).
\end{remark}

 Let us stress that the measures $\mathcal{U}^0_\beta$'s  are decoded the same way in the ambient  universe $V[S_\alpha\ast G_\alpha]$ and in the inner models $\mathscr{N}_{\alpha, \zeta^\rho_\beta}[\iota_{\alpha, \zeta^\rho_\beta}(S_\alpha)\ast G_\alpha]$. (The reason is that
 $\mathscr{N}_{\alpha, \zeta^\rho_\beta}[\iota_{\alpha, \zeta^\rho_\beta}(S_\alpha)]$
 is an $\alpha$-closed inner model of $V[S_\alpha]$.)   Also,  note that while $\mathcal{U}^0_\beta$ are measures in $V[S_\alpha\ast G_\alpha]$ their companions $\mathcal{U}^\rho_\beta$'s are measures in $V[S_\alpha]$. Thus, in the forthcoming recursive construction of the gluing poset $\mathbb{G}^3_\alpha$ we keep $\mathcal{U}^0_\beta$ unaltered and lift the $\mathcal{U}^\rho_\beta$'s to  $V[S_\alpha\ast G_\alpha]$. 

 \smallskip
 
 The poset $\mathbb{G}^3_\alpha$ will be designed to glue $\langle \mathcal{U}^0_\beta\mid \beta<\omega^3\rangle$ to a measure on $\alpha^{\omega^3}$.

 \smallskip

\underline{$\ell$-order $0$:} For each $\beta<\omega^3$ let $\mathbb{G}^{0,\beta}_\alpha$ be  the trivial forcing. 

\smallskip

\underline{$\ell$-order $1$:} For each $\beta<\omega^2$ define  (in $V[S_\alpha\ast G_\alpha]$) $\mathbb{G}^{1,\beta}_\alpha$ as follows.  
\begin{definition}\label{Babycase}
    Conditions in $\mathbb{G}^{1,\beta}_\alpha$ are pairs $\langle t, T\rangle$ where $t\in [\alpha]^{<\omega}$ is a $1$-coherent sequence and $T$ is a tree on $[\alpha]^{<\omega}$ with the following properties:
    \begin{enumerate}
        \item $\varnothing\in T$ and every $s\in T$ is $1$-coherent;
        \item  $t^\smallfrown s$ is $1$-coherent (i.e., increasing) for all $s\in T$;
        \item for each $s\in T$, $\mathrm{Succ}_T(s)\in \mathcal{U}^0_{\omega\cdot\beta+|t^\smallfrown s|}$.
    \end{enumerate}
        The order between conditions in $\mathbb{G}_\alpha^{1,\beta}$ is the natural one.
\end{definition}  

Thus, $\mathbb{G}^{1,\beta}_\alpha$ is  the tree Prikry forcing relative to the $\beta$th-block of measures 
$$\langle \mathcal{U}^0_{\omega\cdot\beta+m}\mid m<\omega\rangle.$$  For each $1$-coherent sequence $t$ 
let us extend $\mathcal{U}^1_\beta$ to $\mathcal{U}^1_\beta(t)$, a measure in the generic extension $V[S_\alpha\ast G_\alpha]$. 
For this note that  
our choice of $\zeta^1_\beta$ guarantees that the $\iota_{\alpha,\zeta^1_\beta}(\ell)$-order of $\alpha$ in $\mathscr{N}_{\alpha, \zeta^1_\beta}[\iota_{\alpha,\zeta^1_\beta}(S_\alpha)]$ is $1$ – this is witnessed by the codes in $\vec{c}^{\;1}_\beta$  (see \eqref{notationcodes} on page~\pageref{notationcodes}). In particular, $\{\nu<\alpha\mid o^\ell(\nu)=1\}\in \mathcal{U}^1_\beta.$ 

By elementarity, $\mathscr{N}_{\alpha, \zeta^1_\beta}[\iota_{\alpha,\zeta^1_\beta}(S_\alpha)\ast G_\alpha]$ thinks that the $\alpha$th-stage of the iteration $\iota_{\alpha, \zeta^1_\beta}(\mathbb{P}_\alpha)$ is precisely the tree Prikry forcing relative to the measures decoded from the $\alpha$-codes in $\vec{c}^{\,1}_\beta$. More explicitly, 
$$\mathscr{N}_{\alpha,\zeta^1_\beta}[\iota_{\alpha,\zeta^1_\beta}(S_\alpha)\ast G_\alpha]\models ``\mathbb{Q}_\alpha=\mathbb{G}^{1,\beta}_\alpha".$$
($\mathbb{G}^{1,\beta}_\alpha$ is the same as computed in $\mathscr{N}_{\alpha,\zeta^1_\beta}[\iota_{\alpha,\zeta^1_\beta}(S_\alpha)\ast G_\alpha]$ and in $V[S_\alpha\ast G_\alpha]$.)

\begin{definition}\label{ExtendingTheMeasure1}
Working in $V[S_\alpha\ast G_\alpha]$, let  $\mathcal{U}^1_\beta(t)$ be the collection of all $\dot{X}_{G_\alpha}\s \alpha$ for which there is $p\in G_\alpha$ and $\langle t, T\rangle\in {\mathbb{G}}^{1,\beta}_\alpha$ such that
    $$\mathscr{N}_{\alpha,\zeta^1_\beta}[\iota_{\alpha,\zeta^1_\beta}(S_\alpha)]\models ``p^\smallfrown \langle t , T\rangle^\smallfrown \iota_{\alpha, \zeta^1_\beta}(p)\setminus (\alpha+1)\forces_{\mathbb{P_\alpha}}\alpha\in \iota_{\alpha,\zeta^1_\beta}(\dot{X})".$$
\end{definition}
The next fact is routine:
\begin{lemma}
In $V[S_\alpha\ast G_\alpha]$, $\mathcal{U}^1_\beta(t)$ is an $\alpha$-complete ultrafilter over $\alpha$   extending $\mathcal{U}^1_\beta$. In addition, $\mathcal{U}^1_\beta(t)\ni \{\nu<\alpha\mid o^\ell(\nu)=1\,\wedge\, {t}\sq b_\nu\}.$ 
\qed
\end{lemma}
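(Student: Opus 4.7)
The plan is to verify the four properties in sequence: well-definedness, ultrafilterhood, $\alpha$-completeness, extension of $\mathcal{U}^1_\beta$, and finally the ``genericity'' clause that $\{\nu<\alpha\mid o^\ell(\nu)=1\wedge t\sq b_\nu\}\in\mathcal{U}^1_\beta(t)$. The underlying intuition is that $\mathcal{U}^1_\beta(t)$ is precisely the measure one would obtain by lifting the embedding $\iota_{\alpha,\zeta^1_\beta}$ to $V[S_\alpha\ast G_\alpha]$ via a generic for $\iota_{\alpha,\zeta^1_\beta}(\mathbb{P}_\alpha)$ whose $\alpha$-th coordinate is a tree-Prikry generic with stem $t$; the seed witnessing the measure is $\alpha$ itself.

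First, I would observe that although $\mathcal{U}^1_\beta(t)$ is defined via names, the condition $p^\smallfrown\langle t,T\rangle^\smallfrown\iota_{\alpha,\zeta^1_\beta}(p)\setminus(\alpha+1)$ is a legitimate element of $\iota_{\alpha,\zeta^1_\beta}(\mathbb{P}_\alpha)$ computed inside $\mathscr{N}_{\alpha,\zeta^1_\beta}[\iota_{\alpha,\zeta^1_\beta}(S_\alpha)]$: the first block lives below $\alpha$, the $\alpha$-th coordinate is a condition in $\mathbb{G}^{1,\beta}_\alpha$ (which computes the same in $\mathscr{N}_{\alpha,\zeta^1_\beta}[\iota_{\alpha,\zeta^1_\beta}(S_\alpha)\ast G_\alpha]$ and in $V[S_\alpha\ast G_\alpha]$), and the tail is the $\iota_{\alpha,\zeta^1_\beta}$-image of $p\setminus(\alpha+1)$ restricted above $\alpha$. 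Consistency of membership then follows immediately from the fact that $G_\alpha$ is a filter, so if $p,q\in G_\alpha$ and $\langle t,T\rangle,\langle t,T'\rangle\in \mathbb{G}^{1,\beta}_\alpha$ witness $\dot{X}_{G_\alpha}\in\mathcal{U}^1_\beta(t)$ and its complement, one can find a common refinement and derive a contradiction using the Prikry/forcing theorem in the $\iota_{\alpha,\zeta^1_\beta}$-image.

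For ultrafilterhood, given any name $\dot{X}$ for a subset of $\alpha$ and any $p\in G_\alpha$, I would apply the Prikry property of $\iota_{\alpha,\zeta^1_\beta}(\mathbb{P}_\alpha)$ inside $\mathscr{N}_{\alpha,\zeta^1_\beta}[\iota_{\alpha,\zeta^1_\beta}(S_\alpha)]$ to the condition $p^\smallfrown\langle t,T_0\rangle^\smallfrown\iota_{\alpha,\zeta^1_\beta}(p)\setminus(\alpha+1)$ (where $T_0$ is a suitable largest tree with stem $t$) to produce a direct extension deciding ``$\alpha\in\iota_{\alpha,\zeta^1_\beta}(\dot{X})$''. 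Pulling back below $\alpha+1$ yields $p'\leq^* p$ and $\langle t,T'\rangle\leq\langle t,T_0\rangle$ witnessing that either $\dot{X}_{G_\alpha}$ or its complement is in $\mathcal{U}^1_\beta(t)$. Genericity of $G_\alpha$ ensures the deciding condition lies in $G_\alpha$.

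For $\alpha$-completeness, given $\delta<\alpha$ and $\{\dot{X}_i\mid i<\delta\}$ all in $\mathcal{U}^1_\beta(t)$, I would use that $\mathscr{N}_{\alpha,\zeta^1_\beta}[\iota_{\alpha,\zeta^1_\beta}(S_\alpha)]$ is $\alpha$-closed in $V[S_\alpha]$ to collect the witnessing conditions into that inner model and, invoking the closure of the $\leq^*$-order together with Lemma~\ref{lemma:bnu}, produce a single direct extension witnessing $\bigcap_{i<\delta}\dot{X}_i\in\mathcal{U}^1_\beta(t)$ (using $\delta<\alpha=\crit(\iota_{\alpha,\zeta^1_\beta})$ to identify $\iota_{\alpha,\zeta^1_\beta}(\bigcap_i X_i)=\bigcap_i \iota_{\alpha,\zeta^1_\beta}(X_i)$). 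That $\mathcal{U}^1_\beta(t)\supseteq\mathcal{U}^1_\beta$ is now trivial: if $X\in\mathcal{U}^1_\beta$, then $\alpha\in\iota_{\alpha,\zeta^1_\beta}(X)$ in $\mathscr{N}_{\alpha,\zeta^1_\beta}[\iota_{\alpha,\zeta^1_\beta}(S_\alpha)]$, so any condition forces it and any $p\in G_\alpha$ together with the trivial tree works.

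Finally, for the genericity clause, I would unwind the definitions: the target set $Y:=\{\nu<\alpha\mid o^\ell(\nu)=1\wedge t\sq b_\nu\}$ belongs to $\mathcal{U}^1_\beta(t)$ iff some $p\in G_\alpha$ together with some $\langle t,T\rangle$ forces (in the $\iota_{\alpha,\zeta^1_\beta}$-image) that $\alpha\in\iota_{\alpha,\zeta^1_\beta}(\dot{Y})$. But $o^{\iota_{\alpha,\zeta^1_\beta}(\ell)}(\alpha)=1$ holds by the very choice of $\zeta^1_\beta$ (since $\iota_{\alpha,\zeta^1_\beta}(\ell)(\alpha)=\langle\vec{c}^{\,1}_\beta,\varnothing\rangle$), and any condition of the form $p^\smallfrown\langle t,T\rangle^\smallfrown\iota_{\alpha,\zeta^1_\beta}(p)\setminus(\alpha+1)$ forces that the $\alpha$-th generic club begins with $t$, i.e., $t\sq b_\alpha$. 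The main (mild) obstacle is the bookkeeping of the Prikry arguments inside the inner model $\mathscr{N}_{\alpha,\zeta^1_\beta}[\iota_{\alpha,\zeta^1_\beta}(S_\alpha)]$, specifically ensuring that the direct extensions one produces stay inside that model and satisfy the non-stationary support requirements; this is handled via the $\alpha$-closure of $\mathscr{N}_{\alpha,\zeta^1_\beta}[\iota_{\alpha,\zeta^1_\beta}(S_\alpha)]$ in $V[S_\alpha]$ together with Lemma~\ref{lem;lifting-S}, exactly as in the analogous argument in \cite[Theorem~6.21]{HP}.
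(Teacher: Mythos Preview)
Your proposal is correct and follows the standard approach; the paper itself gives no proof at all, marking the lemma as ``routine'' with a \qed. Your detailed verification of well-definedness, ultrafilterhood via the Prikry property, $\alpha$-completeness via the $\alpha$-closure of $\mathscr{N}_{\alpha,\zeta^1_\beta}[\iota_{\alpha,\zeta^1_\beta}(S_\alpha)]$, and the genericity clause is exactly the argument the paper omits, and mirrors the analogous computation in \cite[Theorem~6.21]{HP} that the paper is building on.
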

Next we prove an additional property of the measures $\mathcal{U}^1_\beta(t)$ which will play a role in the proof of the consistency of the gluing property. 
\begin{lemma}\label{lemma: disjointifying levels}
    Let $\beta, m<\omega$ and $A\in \mathcal{U}^0_{\omega\cdot \beta+m}$. Then, for each $t$ with $|t|=m$,
    $$\{\nu<\kappa\mid \min(b_\nu)\setminus (\max(t)+1) \in A \}\in \mathcal{U}^1_\beta(t).$$
\end{lemma}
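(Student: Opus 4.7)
My plan is to unpack Definition~\ref{ExtendingTheMeasure1} directly and exhibit a witnessing condition by shrinking a Prikry tree at its root. Write $X := \{\nu<\kappa\mid \min(b_\nu\setminus(\max(t)+1))\in A\}$ and let $\dot X\in V[S_\alpha]$ be a $\mathbb{P}_\alpha$-name for $X$. By Definition~\ref{ExtendingTheMeasure1}, it suffices to produce some $p\in G_\alpha$ and a condition $\langle t,T\rangle\in \mathbb{G}^{1,\beta}_\alpha$ such that, in $\mathscr{N}_{\alpha,\zeta^1_\beta}[\iota_{\alpha,\zeta^1_\beta}(S_\alpha)]$, the condition $p^\smallfrown\langle t,T\rangle^\smallfrown \iota_{\alpha,\zeta^1_\beta}(p)\setminus(\alpha+1)$ forces $\alpha\in\iota_{\alpha,\zeta^1_\beta}(\dot X)$.

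The first step is to translate that forcing statement into something concrete. By elementarity, $\iota_{\alpha,\zeta^1_\beta}(X)$ is the set of $\nu'$ in the target model for which $\min(b_{\nu'}\setminus(\max(\iota_{\alpha,\zeta^1_\beta}(t))+1))\in\iota_{\alpha,\zeta^1_\beta}(A)$. Because $t\in[\alpha]^{<\omega}$ and $\crit(\iota_{\alpha,\zeta^1_\beta})=\alpha$, we have $\iota_{\alpha,\zeta^1_\beta}(t)=t$; similarly $A\subseteq\alpha$ implies $A\subseteq \iota_{\alpha,\zeta^1_\beta}(A)$. Thus $\alpha\in\iota_{\alpha,\zeta^1_\beta}(\dot X)$ reduces to the requirement that the first entry strictly above $\max(t)$ of the $\alpha$-stage Prikry sequence $b_\alpha$ added by $\mathbb{G}^{1,\beta}_\alpha$ lies in $A$.

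The second step is the construction of $T$. Pick any $p\in G_\alpha$ and choose $T$ to be any tree such that $\langle t,T\rangle\in\mathbb{G}^{1,\beta}_\alpha$ and $\Succ_T(\varnothing)\subseteq A$. Such a $T$ exists: by Definition~\ref{Babycase}(3) we must have $\Succ_T(\varnothing)\in \mathcal{U}^0_{\omega\cdot\beta+|t|}=\mathcal{U}^0_{\omega\cdot\beta+m}$, and since $A$ belongs to this ultrafilter we may intersect any candidate successor set with $A$ to get a legal tree. Now every generic filter containing $\langle t,T\rangle$ produces a Prikry sequence $b_\alpha$ that begins with $t$ and whose next entry is drawn from $\Succ_T(\varnothing)\subseteq A\subseteq\iota_{\alpha,\zeta^1_\beta}(A)$, so $\min(b_\alpha\setminus(\max(t)+1))\in\iota_{\alpha,\zeta^1_\beta}(A)$, as desired.

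The argument is short because all of the combinatorial work is already bundled into Definition~\ref{ExtendingTheMeasure1}: a set is in $\mathcal{U}^1_\beta(t)$ precisely when its name is forced by a stem-$t$ condition to contain $\alpha$ at stage $\alpha$ of the lifted iteration. So the only genuine point is the standard observation that the root-level branching of the tree Prikry forcing $\mathbb{G}^{1,\beta}_\alpha$ with stem $t$ is controlled by $\mathcal{U}^0_{\omega\cdot\beta+m}$. The sole subtlety to double-check is that $\iota_{\alpha,\zeta^1_\beta}$ fixes $t$ pointwise, so that the ordinal threshold $\max(t)$ survives the elementary image; this is immediate from $t\in[\alpha]^{<\omega}$ and $\crit(\iota_{\alpha,\zeta^1_\beta})=\alpha$.
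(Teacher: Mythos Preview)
Your proof is correct and takes essentially the same approach as the paper: both rest on shrinking the root-level successor set of the tree into $A$, which is possible precisely because that set lives in $\mathcal{U}^0_{\omega\cdot\beta+m}$. The paper phrases this by contradiction (starting from a witness for the complement and shrinking it), while you argue directly; the only slip is that ``pick any $p\in G_\alpha$'' should be ``pick $p\in G_\alpha$ forcing $\mathrm{Succ}_{\dot T}(\varnothing)\subseteq\dot A$'' (and note that writing $A\subseteq\iota_{\alpha,\zeta^1_\beta}(A)$ is informal, since $\iota_{\alpha,\zeta^1_\beta}$ acts on $V[S_\alpha]$ rather than $V[S_\alpha\ast G_\alpha]$---the precise statement is about names), which is exactly what the paper's ``by extending $p\in G$ if necessary'' accomplishes.
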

\begin{proof}
Suppose that the thesis was false and let $p\in G$ and $\dot{T}$ witnessing it. Working in $V[S_\alpha\ast G_\alpha]$, since $\mathrm{Succ}_{T}(\varnothing)\in \mathcal{U}^0_{\omega\cdot\beta+m}$, we can shrink $T$  to another tree $T^*$  with $\mathrm{Succ}_{T^*}(\varnothing)\s A$. By extending $p\in G$ if necessary, $$p\forces_{\mathbb{P}_\kappa}``\langle t,\dot{T}^*\rangle\leq^*\langle t, \dot{T}\rangle".$$ It is clear then that $p{}^\smallfrown \langle t,\dot{T}^*\rangle{}^\smallfrown \iota_{\alpha,\zeta^1_\beta}(p)\setminus (\alpha+1)$ forces $\alpha$ in the $\iota_{\alpha,\zeta^1_\beta}$-image of the set of interests – this yields a contradiction with our assumption.
\end{proof}


This completes the definition of the gluing poset for $\ell$-order $1$.

\medskip

\underline{$\ell$-order $2$:} For each $\beta<\omega$ define (in $V[S_\alpha\ast G_\alpha]$) the poset $\mathbb{G}^{2,\beta}_\alpha:$
\begin{definition}\label{forcingatstage2}
  Conditions in $\mathbb{G}^{2,\beta}_\alpha$ are pairs $\langle t, T\rangle$ where $t\in [\alpha]^{<\omega}$ is a $2$-coherent sequence and $T$ is a tree on $[\alpha]^{<\omega}$ with the following properties:
    \begin{enumerate}
        \item $\varnothing\in T$ and every $s\in T$ is $2$-coherent;
        \item  $t^\smallfrown s$ is $2$-coherent  for all $s\in T$;
        \item for each node $s\in T$, $$\mathrm{Succ}_T(s)\in 
        \mathcal{U}^0_{\omega^2\cdot\beta+\omega\cdot|(t^\smallfrown s)_1|+|(t^\smallfrown s)_0|}\cap \mathcal{U}^1_{\omega\cdot\beta+|(t^\smallfrown s)_1|}((t^\smallfrown s)\restriction 1),$$
        where $(t^\smallfrown s)_i$ denotes the collection of ordinals in $t^\smallfrown s$ of $\ell$-order $i.$
        \end{enumerate}
\end{definition}

\begin{definition}
    Given $\langle t,T\rangle\in \mathbb{G}^{2,\beta}_\alpha$ and $\langle \nu\rangle\in T$ we denote by $\langle t,T\rangle\cat \langle\nu\rangle$ the pair $\langle t{}^\smallfrown\langle \nu\rangle, T_{\langle\nu\rangle}\rangle$ where $T_{\langle \nu\rangle}:=\{s\in [\alpha]^{<\omega}\mid \langle\nu\rangle{}^\smallfrown s\in T\}.$

   For general $\vec\nu\in T$ one defines $\langle t,T\rangle\cat\vec\nu$ by recursion in the obvious fashion.
\end{definition}

\begin{remark}
    Note that $\langle t{}^\smallfrown\langle \nu\rangle, T_{\langle\nu\rangle}\rangle$ is a condition in $\mathbb{G}^{2,\beta}_\alpha.$
\end{remark}

\begin{definition}\label{forcingorder}
     For $q=\langle t,T\rangle$ and $q^*=\langle t^*,T^*\rangle$ in $\mathbb{G}^{2,\beta}_\alpha$ we say that:
    \begin{enumerate}
       \item $q$ and $q^*$ are \emph{equivalent} if $t$ and $t^*$ are equivalent\footnote{See Definition~\ref{CharacteristicsOfT}.} and $T=T^*$.
        \item $q^*\leq^* q$ if $t=t^*$ and $T^*\s T$.
        \item $q^*\leq q$ if there is $\vec\nu\in T$ and $\bar{q}$ equivalent to $q\cat\vec\nu$  such that $q^*\leq^*\bar{q}$. 
    \end{enumerate}
  
\end{definition}

For each $2$-coherent sequence $t$ let us extend $\mathcal{U}^2_\beta$ to $\mathcal{U}^2_\beta(t)$. 
\begin{lemma}\label{lemmacoherency}
    $\mathscr{N}_{\alpha,\zeta^2_\beta}[\iota_{\alpha,\zeta^2_\beta}(S_\alpha)]\models``\text{$\alpha$ has $\iota_{\alpha, \zeta^2_\beta}(\ell)$-order $2$"}$.
    
    In particular,   $\iota_{\alpha,\zeta^2_\beta}(\mathbb{P}_\alpha)$ opts at stage $\alpha$ for the poset
    $\mathbb{G}^{2,\beta}_\alpha.$
\end{lemma}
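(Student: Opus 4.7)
The lemma is essentially a bookkeeping verification that the data packaged into $\iota_{\alpha,\zeta^2_\beta}(\ell)(\alpha)$ fits Definition~\ref{ellorder} with $\Omega=2$. The plan is to check each clause of that definition inside the inner model $\mathscr{N}_{\alpha,\zeta^2_\beta}[\iota_{\alpha,\zeta^2_\beta}(S_\alpha)]$ and then read off from the recursive definition of $\mathbb{P}$ that the stage-$\alpha$ poset must be $\mathbb{G}^{2,\beta}_\alpha$. The key point, granted by the choice of $\zeta^2_\beta$, is the identity
\[
\iota_{\alpha,\zeta^2_\beta}(\ell)(\alpha)\;=\;\langle \vec c^{\,2}_\beta,\vec\zeta^{\,2}_\beta\rangle,
\]
which already has the correct shape for an order-$2$ specification: $\omega^2$-many codes followed by $\omega$-many ordinals $\zeta^1_\gamma$ (for $\gamma\in[\omega\cdot\beta,\omega\cdot(\beta+1))$).

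First, I would verify clause~(1) of Definition~\ref{ellorder}: that $\one\Vdash_{\iota_{\alpha,\zeta^2_\beta}(\mathbb{P}_\alpha)\restriction\alpha}$ (in the inner model) that $\vec c^{\,2}_\beta$ is a sequence of $\alpha$-codes. Since $\iota_{\alpha,\zeta^2_\beta}(\mathbb{P}_\alpha)\restriction\alpha=\mathbb{P}_\alpha$ (the iteration to stage $\alpha$ is fixed by the critical-point hypothesis), this is precisely the assertion \eqref{notationcodes} recorded above, applied with $\rho=2$. Next, I would verify clause~(2) for $\rho=1$ with the sequence $\vec\zeta^{\,2}_\beta$: for each $\gamma\in[\omega\cdot\beta,\omega\cdot(\beta+1))$ one must check (a) $\mathscr{N}_{\alpha,\zeta^1_\gamma}[\iota_{\alpha,\zeta^1_\gamma}(S_\alpha)]\models \vec c^{\,1}_\gamma$ are $\alpha$-codes; (b) $\iota_{\alpha,\zeta^1_\gamma}(\ell)(\alpha)=\langle \vec c^{\,1}_\gamma,\varnothing\rangle$; and (c) the trivial condition $\langle U(\alpha,\zeta)\mid \zeta\in\vec\zeta^{\,1}_\gamma\rangle\in \mathscr{N}_{\alpha,\zeta^1_\gamma}$, which holds vacuously because $\vec\zeta^{\,1}_\gamma=\varnothing$ (there is no $\rho<1$ with $\rho\geq 1$). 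All three are restatements of the data already assumed in the paragraph preceding Definition~\ref{Themeasures}, which encoded the hypothesis $o^\ell(\alpha)=3$ in $V[S_\alpha]$.

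The one subtlety worth being explicit about is the \emph{absoluteness} of the properties (a)--(c) between $V[S_\alpha]$ and $\mathscr{N}_{\alpha,\zeta^2_\beta}[\iota_{\alpha,\zeta^2_\beta}(S_\alpha)]$: since $\mathscr{N}_{\alpha,\zeta^2_\beta}[\iota_{\alpha,\zeta^2_\beta}(S_\alpha)]$ is an $\alpha$-closed inner model of $V[S_\alpha]$, and since the relevant liftings $\iota^*_{\alpha,\zeta^1_\gamma}$ are uniquely determined by $\iota_{\alpha,\zeta^1_\gamma}\!\restriction\! S_\alpha$ and $\iota_{\alpha,\zeta^1_\gamma}(\ell)(\alpha)$ (Lemma~\ref{lem;lifting-S}), the ultrapowers and their $\mathbb{S}_\alpha$-liftings are computed identically inside and outside the inner model. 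Thus each clause (a)--(c), which is a statement about the $\mathscr{N}_{\alpha,\zeta^1_\gamma}[\iota_{\alpha,\zeta^1_\gamma}(S_\alpha)]$'s, is absolute.

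Finally, for the ``in particular'' assertion: by the recursive recipe defining $\mathbb{P}$, stage $\alpha$ of $\iota_{\alpha,\zeta^2_\beta}(\mathbb{P}_\alpha)$ is dictated by the $\iota_{\alpha,\zeta^2_\beta}(\ell)$-order of $\alpha$ in $\mathscr{N}_{\alpha,\zeta^2_\beta}[\iota_{\alpha,\zeta^2_\beta}(S_\alpha)]$, namely $2$, together with the codes $\vec c^{\,2}_\beta$. Because $\mathbb{G}^{2,\beta}_\alpha$ is defined purely from the measures decoded from $\vec c^{\,2}_\beta$ (i.e.\ the $\mathcal{U}^0_{\omega^2\beta+\omega m+k}$ and $\mathcal{U}^1_{\omega\beta+m}(\cdot)$), and decoding is absolute between the two models, the stage-$\alpha$ poset in the inner model coincides with $\mathbb{G}^{2,\beta}_\alpha$ as defined in $V[S_\alpha\ast G_\alpha]$. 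There is no conceptual obstacle; the only thing to be careful with is the index shift produced by $\iota_{\alpha,\zeta^2_\beta}$, which realigns the block $[\omega^2\cdot\beta,\omega^2\cdot(\beta+1))$ of codes of the original $\ell(\alpha)$ with the initial block $[0,\omega^2)$ of codes witnessing order $2$ at $\alpha$ in the target model.
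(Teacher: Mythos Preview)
Your plan is correct and follows the same line as the paper's proof: verify the clauses of Definition~\ref{ellorder} for $\Omega=2$ inside the inner model, then identify the stage-$\alpha$ poset via absoluteness of the defining data.

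One point deserves to be made sharper. When you assert that ``the ultrapowers and their $\mathbb{S}_\alpha$-liftings are computed identically inside and outside the inner model,'' $\alpha$-closure alone does not give this; you need that the $V$-measure $U(\alpha,\zeta^1_\gamma)$ itself belongs to $\mathscr{N}_{\alpha,\zeta^2_\beta}$. This is exactly what clause~2(c) of $o^\ell(\alpha)=3$ records (``$\langle U(\alpha,\zeta)\mid \zeta\in\vec{\zeta}^{\,2}_\beta\rangle\in\mathscr{N}_{\alpha,\zeta^2_\beta}$''), and the paper singles it out explicitly. Once that membership is noted, the inner model can form $\iota_{\alpha,\zeta^1_\gamma}$ and then its unique $\mathbb{S}_\alpha$-lifting, and your argument goes through. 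Similarly, for the ``in particular'' part, the measures $\mathcal{U}^1_{\omega\beta+m}(t)$ are not literally decoded from codes but are $t$-liftings of the $\mathcal{U}^1_\gamma$'s; their absoluteness again rests on the inner model having access to $\iota_{\alpha,\zeta^1_\gamma}$ and $G_\alpha$, which follows from the same membership fact.
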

\begin{proof}
Let us check the clauses of Definition~\ref{ellorder}.

\smallskip

  \textbf{Clause (1):}  By our assumption upon $\zeta^2_\beta$, we know that $\mathbb{P}_\alpha$ forces (over the model $\mathscr{N}_{\alpha,\zeta^2_\beta}[\iota_{\alpha,\zeta^2_\beta}(S_\alpha)]$) that $\vec{c}^{\;2}_\beta=\langle c_\gamma\mid \gamma\in [\omega^2\cdot \beta, \omega^2\cdot(\beta+1))\rangle$ is a sequence of $\alpha$-codes. 

  \smallskip
  
  \textbf{Clause 2(a):}  We have to show that $\mathscr{N}_{\alpha,\zeta^2_\beta}[\iota_{\alpha,\zeta^2_\beta}(S_\alpha)]$ thinks that, for each $\omega\cdot\beta\leq \gamma<\omega\cdot(\beta+1)$, 
   \begin{equation}\label{coherencyeq}
\tag{$\otimes$}\mathscr{N}_{\alpha,\zeta^1_\gamma}[\iota_{\alpha,\zeta^1_\gamma}(S_\alpha)]\models``\one\forces_{\mathbb{P}_\alpha}\text{$``\iota_{\alpha,{\zeta^1_\gamma}}(\ell)(\alpha)=\vec{c}^{\;1}_\gamma$ are $\alpha$-codes''}.
   \end{equation}
   Note that \eqref{coherencyeq} is true in $V[S_\alpha]$ by our assumption that $o^\ell(\alpha)=3.$ The question is whether or not \eqref{coherencyeq} also holds in the inner model $\mathscr{N}_{\alpha,\zeta^2_\beta}[\iota_{\alpha,\zeta^2_\beta}(S_\alpha)]$.

   \smallskip

Fix any of such $\gamma$'s. Then, $U(\alpha,\zeta^1_\gamma)\in \mathscr{N}_{\alpha,\zeta^2_\beta}$ by the very choice of $\zeta^2_\beta$. Recall that $\mathcal{U}^1_\gamma$ is the unique lifting of $U(\alpha,\zeta^1_\gamma)$ where value of the generic object $\iota^\alpha_{\zeta^1_\gamma}(\ell)$ at coordinate $\alpha$ is $\vec{c}^{\;1}_\gamma$. More pedantically,  a set $X\in\mathcal{P}(\alpha)^{V[S_\alpha]}$  belongs to $\mathcal{U}^1_\gamma$ if and only if  there is a condition $p\in S_\alpha$ such that 
$$\iota_{\alpha,\zeta^1_\gamma}(p)\cup \{\langle\alpha,\vec{c}^{\;1}_\gamma\rangle\}\forces_{\iota_{\alpha,\zeta^1_\gamma}(\mathbb{S}_\alpha)}\alpha\in \iota_{\alpha,\zeta^1_\beta}(\dot{X}).$$
Since this lifting is uniquely determined by $\iota_{\alpha,\zeta^1_\gamma}``S_\alpha$ and $\vec{c}^{\,1}_\gamma$, and  the model $\mathscr{N}_{\alpha,\zeta^2_\beta}[\iota_{\alpha,\zeta^2_\beta}(S_\alpha)]$ knows about this parameters, it follows that 
$$\mathcal{U}^1_\gamma=(\mathcal{U}^1_\gamma)^{\mathscr{N}_{\alpha,\zeta^2_\beta}[\iota_{\alpha,\zeta^2_\beta}(S_\alpha)]}\in \mathscr{N}_{\alpha,\zeta^2_\beta}[\iota_{\alpha,\zeta^2_\beta}(S_\alpha)].$$ 

Since $\mathscr{N}_{\alpha,\zeta^1_\gamma}[\iota_{\alpha,\zeta^1_\gamma}(S_\alpha)]$ is the ultrapower by $\mathcal{U}^1_\gamma$, this is computed the same way inside $\mathscr{N}_{\alpha,\zeta^2_\beta}[\iota_{\alpha,\zeta^2_\beta}(S_\alpha)]$. Finally, as \eqref{coherencyeq} is true in the universe $V[S_\alpha]$, it must be also  true in $\mathscr{N}_{\alpha,\zeta^2_\beta}[\iota_{\alpha,\zeta^2_\beta}(S_\alpha)]$, as needed.

\smallskip

\textbf{Clauses 2(b) and 2(c):} Immediate from equation~\eqref{notationcodes} on page~\pageref{notationcodes}.

\medskip

From all the above we infer that 
$$\mathscr{N}_{\alpha,\zeta^2_\beta}[\iota_{\alpha,\zeta^2_\beta}(S_\alpha)]\models``\text{$\alpha$ has $\iota_{\alpha, \zeta^2_\beta}(\ell)$-order $2$"},$$
as witnessed by 
$\iota_{\alpha, \zeta^2_\beta}(\ell)(\alpha)=\langle \vec{c}^{\,2}_\beta,\vec{\zeta}^{\,2}_\beta\rangle.$

In particular, $\iota_{\alpha, \zeta^2_\beta}(\mathbb{P}_\alpha)$ opts at stage $\alpha$ for the poset $\mathbb{G}^{2}_\alpha$ as computed in $\mathscr{N}_{\alpha,\zeta^2_\beta}[\iota_{\alpha,\zeta^2_\beta}(S_\alpha)]$. This is the gluing poset defined using:
\begin{enumerate}
    \item The measures $\langle \mathcal{U}^0_{\omega^2\cdot\beta+\omega\cdot m+n}\mid n,m<\omega\rangle$ decoded from $\vec{c}^{\,2}_\beta$.
    \item The ``$t$-liftings" of the measures $\langle \mathcal{U}^1_\gamma\mid \omega\cdot\beta\leq \gamma<\omega\cdot(\beta+1)\rangle$.
\end{enumerate}
Note that the ``$t$-liftings" of each $\mathcal{U}^1_\gamma$ are uniquely determined by $\iota_{\alpha,\zeta^1_\gamma}``G_\alpha$ and $t$, and since $\mathscr{N}_{\alpha,\zeta^2_\beta}[\iota_{\alpha,\zeta^2_\beta}(S_\alpha)\ast G_\alpha]$ knows about all of these 
$$\mathcal{U}^1_\gamma(t)=(\mathcal{U}^1_\gamma(t))^{\mathscr{N}_{\alpha,\zeta^2_\beta}[\iota_{\alpha,\zeta^2_\beta}(S_\alpha)\ast G_\alpha]}.$$
As a result, $\mathbb{G}^{2}_\alpha$ as computed in  $\mathscr{N}_{\alpha,\zeta^2_\beta}[\iota_{\alpha,\zeta^2_\beta}(S_\alpha)\ast G_\alpha]$ is exactly the poset  $\mathbb{G}^{2,\beta}_\alpha$ described in  Definition~\ref{forcingatstage2}.
\end{proof}
Let $t\in [\alpha]^{<\omega}$ be $2$-coherent. We lift $\mathcal{U}^2_\beta$ to $\mathcal{U}^2_\beta(t)$ as follows: 

\begin{definition}\label{ExtendingTheMeasure2}
Working in $V[S_\alpha\ast G_\alpha]$, let  $\mathcal{U}^2_\beta(t)$ be the collection of all $\dot{X}_{G_\alpha}\s \alpha$ for which there is $p\in G_\alpha$ and $\langle t, T\rangle\in {\mathbb{G}}^{2,\beta}_\alpha$ such that
    $$\mathscr{N}_{\alpha,\zeta^2_\beta}[\iota_{\alpha,\zeta^1_\beta}(S_\alpha)]\models ``p^\smallfrown \langle t , T\rangle^\smallfrown \iota_{\alpha, \zeta^2_\beta}(p)\setminus (\alpha+1)\forces_{\mathbb{P_\alpha}}\alpha\in \iota_{\alpha,\zeta^2_\beta}(\dot{X})".$$
\end{definition}
\begin{lemma}\label{lemma: concentarting on orderd 2}
In $V[S_\alpha\ast G_\alpha]$, $\mathcal{U}^2_\beta(t)$ is an $\alpha$-complete ultrafilter over $\alpha$   extending $\mathcal{U}^2_\beta$. In addition, $\mathcal{U}^2_\beta(t)\ni \{\nu<\alpha\mid o^\ell(\nu)=2\,\wedge\, {t}\sq b_\nu\}.$ 
\qed
\end{lemma}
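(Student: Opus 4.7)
The plan is to follow exactly the template used for $\mathcal{U}^1_\beta(t)$ (the ``routine'' lemma right before Definition~\ref{forcingatstage2}), only replacing Tree Prikry data with the data of the order-$2$ gluing poset $\mathbb{G}^{2,\beta}_\alpha$ and using Lemma~\ref{lemmacoherency} to guarantee that the lifting $\iota_{\alpha,\zeta^2_\beta}\colon V[S_\alpha]\to \mathscr{N}_{\alpha,\zeta^2_\beta}[\iota_{\alpha,\zeta^2_\beta}(S_\alpha)]$ opts at stage $\alpha$ for precisely $\mathbb{G}^{2,\beta}_\alpha$. In particular, for any $p\in G_\alpha$ and $\langle t,T\rangle\in \mathbb{G}^{2,\beta}_\alpha$, the object $p^\smallfrown\langle t,T\rangle^\smallfrown\iota_{\alpha,\zeta^2_\beta}(p)\setminus(\alpha+1)$ is a genuine condition in $\iota_{\alpha,\zeta^2_\beta}(\mathbb{P}_\alpha)$ (as computed in $\mathscr{N}_{\alpha,\zeta^2_\beta}[\iota_{\alpha,\zeta^2_\beta}(S_\alpha)]$), and hence the definition of $\mathcal{U}^2_\beta(t)$ is not vacuous. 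Well-definedness on the name used to represent a set in $V[S_\alpha\ast G_\alpha]$ follows from the standard argument: two names $\dot X,\dot Y$ with $\dot X_{G_\alpha}=\dot Y_{G_\alpha}$ agree below some $p\in G_\alpha$, and the definition then gives the same answer.

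The six properties I would check, in order, are: (i) $\alpha\in \mathcal{U}^2_\beta(t)$ and $\varnothing\notin\mathcal{U}^2_\beta(t)$; (ii) closure under supersets; (iii) closure under finite intersections; (iv) $\alpha$-completeness; (v) ultrafilter property; and (vi) that $\mathcal{U}^2_\beta(t)$ extends $\mathcal{U}^2_\beta$. For (i)--(iii) the witnessing conditions can be combined by intersecting trees and strengthening the initial $\mathbb{P}_\alpha$-segment; this uses that the two stem-preserving directions in $\mathbb{G}^{2,\beta}_\alpha$ (strengthen $p$ in $G_\alpha$, and shrink $T$ while keeping $t$) are both closed under finite meets. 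For (iv), given $\langle X_\gamma\mid\gamma<\delta\rangle$ with $\delta<\alpha$ in $\mathcal{U}^2_\beta(t)$, I would pull the witnesses $(p_\gamma,\langle t,T_\gamma\rangle)$ down to a single $p\in G_\alpha$ below all the $p_\gamma$'s (using $\delta$-directedness of $G_\alpha$ in the relevant sense) and diagonalize the trees using the $\alpha$-closure of the direct-extension order of $\mathbb{G}^{2,\beta}_\alpha$, which is inherited from the $\alpha$-closure of the $\leq^*$-ordering on $\iota_{\alpha,\zeta^2_\beta}(\mathbb{Q}_\alpha)$ together with standard tree-intersection. For (vi), if $X\in \mathcal{U}^2_\beta$ then $X\in V[S_\alpha]$ and $\alpha\in\iota_{\alpha,\zeta^2_\beta}(X)$ is already true in $\mathscr{N}_{\alpha,\zeta^2_\beta}[\iota_{\alpha,\zeta^2_\beta}(S_\alpha)]$, so any $p^\smallfrown\langle t,T\rangle^\smallfrown \iota_{\alpha,\zeta^2_\beta}(p)\setminus(\alpha+1)$ forces $\check\alpha\in \iota_{\alpha,\zeta^2_\beta}(\check X)$.

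The main technical step will be (v), the ultrafilter property. Here I would argue as follows. Given $X\subseteq \alpha$ in $V[S_\alpha\ast G_\alpha]$ with a name $\dot X$ and $p\in G_\alpha$, consider the forcing statement ``$\check\alpha\in \iota_{\alpha,\zeta^2_\beta}(\dot X)$'' in the forcing language of $\iota_{\alpha,\zeta^2_\beta}(\mathbb{P}_\alpha)$ over $\mathscr{N}_{\alpha,\zeta^2_\beta}[\iota_{\alpha,\zeta^2_\beta}(S_\alpha)]$. The direct extension order on this iteration is Prikry-type, and since the stem of the $\alpha$-coordinate is pinned to $t$, I can apply the Prikry Property to the condition $p^\smallfrown\langle t,T\rangle^\smallfrown \iota_{\alpha,\zeta^2_\beta}(p)\setminus(\alpha+1)$ (for some initial choice of $T$) to obtain a $\leq^*$-extension that decides the statement without changing $t$. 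Pulling the $\alpha$-coordinate of the decision back yields a witness $\langle t,T^*\rangle$ and a $p^*\leq p$ with $p^*\in G_\alpha$ (by genericity) that place $X$ or $\alpha\setminus X$ into $\mathcal{U}^2_\beta(t)$, as desired. The subtlety here—the analogue of the usual density argument in Tree Prikry—is that we must preserve $t$ as the stem throughout, but this is exactly what the direct extension order of $\mathbb{G}^{2,\beta}_\alpha$ guarantees by Definition~\ref{forcingorder}(2).

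Finally, for the ``In addition'' clause, I would note that the condition $\langle t,T\rangle\in \mathbb{G}^{2,\beta}_\alpha$ forces (over $V[S_\alpha\ast G_\alpha]$) that $t$ is an initial segment of the Magidor/Gitik generic sequence at $\alpha$, and by Lemma~\ref{lemmacoherency} the model $\mathscr{N}_{\alpha,\zeta^2_\beta}[\iota_{\alpha,\zeta^2_\beta}(S_\alpha)]$ thinks that $\alpha$ itself has $\iota_{\alpha,\zeta^2_\beta}(\ell)$-order $2$. Thus, letting $Y:=\{\nu<\alpha\mid o^\ell(\nu)=2\wedge t\sqsubseteq b_\nu\}$, the statement ``$\check\alpha\in \iota_{\alpha,\zeta^2_\beta}(\dot Y)$'' is forced by any $p^\smallfrown\langle t,T\rangle^\smallfrown\iota_{\alpha,\zeta^2_\beta}(p)\setminus(\alpha+1)$, giving $Y\in \mathcal{U}^2_\beta(t)$. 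This completes the proof plan.
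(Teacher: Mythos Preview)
Your proposal is correct and follows exactly the template the paper has in mind; indeed, the paper gives no proof at all for this lemma (the \qed\ marks it as routine, just as for the analogous statement about $\mathcal{U}^1_\beta(t)$), so you are simply spelling out what the authors deem obvious. Your use of Lemma~\ref{lemmacoherency} to ensure that the $\alpha$th stage of $\iota_{\alpha,\zeta^2_\beta}(\mathbb{P}_\alpha)$ is $\mathbb{G}^{2,\beta}_\alpha$, the Prikry Property for the ultrafilter dichotomy, and the $\leq^*$-closure for $\alpha$-completeness are exactly the ingredients one expects.
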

The next is the analogue of Lemma~\ref{lemma: disjointifying levels}:
\begin{lemma}\label{lemma: least member of bnu enters A}
    Let  $A$ be a large set with respect to $\mathcal{U}^0_{\omega^2\cdot \beta+\omega\cdot m +n}$. Then, for each $2$-coherent sequence  $t$ with $|t_1|=m$ and $|t_0|=n$, 
    $$\{\nu<\kappa\mid \min(b_\nu)\setminus (\max(t)+1) \in A \}\in \mathcal{U}^2_\beta(t).$$
\end{lemma}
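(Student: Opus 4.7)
The plan follows the blueprint of Lemma~\ref{lemma: disjointifying levels}, combining a direct shrinking of $\mathrm{Succ}_T(\varnothing)$ at the root with an inductive application of that very lemma to handle the branches of $\ell$-order~$1$.

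Arguing by contradiction, unwinding Definition~\ref{ExtendingTheMeasure2} for the complementary set yields $p\in G_\alpha$ and $\langle t,T\rangle\in\mathbb{G}^{2,\beta}_\alpha$ such that, in $\mathscr{N}_{\alpha,\zeta^2_\beta}[\iota_{\alpha,\zeta^2_\beta}(S_\alpha)]$,
\[
p^\smallfrown\langle t,T\rangle^\smallfrown\iota_{\alpha,\zeta^2_\beta}(p)\setminus(\alpha+1)\Vdash_{\iota_{\alpha,\zeta^2_\beta}(\mathbb{P}_\alpha)}\min(b_\alpha\setminus(\max(t)+1))\notin A.
\]
Replacing $t$ by an equivalent maximal $2$-coherent sequence one may assume $t=\langle\alpha^1_0,\dots,\alpha^1_{m-1},\alpha^0_0,\dots,\alpha^0_{n-1}\rangle$, so that $t\restriction 1=\langle\alpha^0_0,\dots,\alpha^0_{n-1}\rangle$ and $\max(t\restriction 1)=\max(t)$ (the degenerate case $n=0$ is handled analogously via case~(b) of $2$-coherency, where $\min(b_{\nu^*})>\max(t)$ is forced).

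Applying Lemma~\ref{lemma: disjointifying levels} to $\mathcal{U}^1_{\omega\cdot\beta+m}(t\restriction 1)$—whose indexing matches ours thanks to the identity $\omega\cdot(\omega\cdot\beta+m)+n=\omega^2\cdot\beta+\omega\cdot m+n$ and $|t\restriction 1|=n$—one obtains
\[
B:=\{\nu<\alpha\mid \min(b_\nu\setminus(\max(t)+1))\in A\}\in\mathcal{U}^1_{\omega\cdot\beta+m}(t\restriction 1).
\]
Setting $Y:=(A\cap\{\nu<\alpha\mid o^\ell(\nu)=0\})\cup B$, the inclusion $Y\supseteq B$ gives $Y\in\mathcal{U}^1_{\omega\cdot\beta+m}(t\restriction 1)$, while $Y\in\mathcal{U}^0_{\omega^2\cdot\beta+\omega\cdot m+n}$ follows once one uses that the level-$0$ ultrafilters of our construction concentrate on the non-measurables, i.e., $\{\nu<\alpha\mid o^\ell(\nu)=0\}\in\mathcal{U}^0$, whence $A\cap\{o^\ell=0\}\in\mathcal{U}^0$. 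Shrinking $\mathrm{Succ}_T(\varnothing)$ to $\mathrm{Succ}_T(\varnothing)\cap Y$ produces a tree $T^*$ still satisfying the measure-one requirement of Definition~\ref{forcingatstage2}, so $\langle t,T^*\rangle\leq^*\langle t,T\rangle$.

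After extending $p$ so that $p\Vdash\langle t,\dot T^*\rangle\leq^*\langle t,\dot T\rangle$, the refined condition $p^\smallfrown\langle t,\dot T^*\rangle^\smallfrown\iota_{\alpha,\zeta^2_\beta}(p)\setminus(\alpha+1)$ forces $\min(b_\alpha\setminus(\max(t)+1))\in A$, contradicting the choice of $p,\langle t,T\rangle$. Indeed, any first stem extension must choose $\nu^*\in\mathrm{Succ}_{T^*}(\varnothing)\subseteq Y$: if $o^\ell(\nu^*)=0$ the first new element of $b_\alpha$ above $\max(t)$ equals $\nu^*\in A$; if $o^\ell(\nu^*)=1$ it equals $\min(b_{\nu^*}\setminus(\max(t)+1))\in A$, since $\nu^*\in B$. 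A routine case analysis via $2$-coherency shows subsequent stem entries $\alpha_i$ with $i>k$ cannot inject a smaller new element above $\max(t)$, since either $\min(b_{\alpha_i})>\alpha_{i-1}$ (case~(b)) or $b_{\alpha_i}\sqsupseteq b_{\nu^*}\cup\{\nu^*\}$ as initial segment (case~(a)). The main hurdle I expect is the verification that $\{o^\ell=0\}\in\mathcal{U}^0_{\omega^2\cdot\beta+\omega\cdot m+n}$; this should be built into how the codes at the base of the hierarchy are produced by the fast function $\ell$ in the Gluing Poset construction, but it demands a careful appeal to the coding setup to make fully rigorous.
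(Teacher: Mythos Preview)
Your argument is essentially the paper's own: both proceed by contradiction, invoke Lemma~\ref{lemma: disjointifying levels} to place $A^*=\{\nu:\min(b_\nu\setminus(\max(t)+1))\in A\}$ in $\mathcal{U}^1_{\omega\cdot\beta+m}(t\restriction 1)$, split $\mathrm{Succ}_T(\varnothing)$ by $\ell$-order, shrink the order-$0$ piece into $A$ and the order-$1$ piece into $A^*$, and declare that the refined condition forces the opposite. The paper glosses over precisely the point you flag.

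Your hoped-for resolution, however, is incorrect. The measures $\mathcal{U}^0_\gamma$ are decoded from \emph{arbitrary} $\alpha$-codes and may be any $\alpha$-complete ultrafilter on $\alpha$ in $V[S_\alpha\ast G_\alpha]$; nothing in the coding machinery forces $\{o^\ell=0\}\in\mathcal{U}^0_{\omega^2\cdot\beta+\omega\cdot m+n}$. If this particular $\mathcal{U}^0$ happens to concentrate on $\{o^\ell=1\}$, then your set $Y$ need not lie in $\mathcal{U}^0$, and $\langle t,T^*\rangle$ is not a condition in $\mathbb{G}^{2,\beta}_\alpha$.

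The issue dissolves once you notice that you never needed a condition with stem $t$ that forces the positive statement; to contradict ``$p^\smallfrown\langle t,T\rangle^\smallfrown\cdots$ forces $\alpha$ into the complement'' it suffices to exhibit \emph{any} extension forcing the positive statement, and that extension may lengthen the stem. Pick $\nu^*\in\mathrm{Succ}_T(\varnothing)\cap A^*\cap\{o^\ell=1\}$ (nonempty, since each factor is $\mathcal{U}^1_{\omega\cdot\beta+m}(t\restriction1)$-large) and pass to $p^\smallfrown\langle t^\smallfrown\langle\nu^*\rangle,T_{\langle\nu^*\rangle}\rangle^\smallfrown\iota(p)\setminus(\alpha+1)$. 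Your own coherency analysis already shows this extension decides $\min(b_\alpha\setminus(\max(t)+1))=\min(b_{\nu^*}\setminus(\max(t)+1))\in A$, giving the contradiction without ever needing $Y\in\mathcal{U}^0$.
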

\begin{proof}
    The proof is analogue to that of Lemma~\ref{lemma: disjointifying levels}: Suppose the conclusion of the lemma was false and let $p,\dot{T}$ witnessing it. By definition, $$\mathrm{Succ}_T(\varnothing)\in \mathcal{U}^0_{\omega^2\cdot \beta+\omega\cdot m +n}\cap \mathcal{U}^1_{\omega\cdot \beta+m}(t\restriction 1).$$ The argument in Lemma~\ref{lemma: disjointifying levels} shows that
    $$A^*:=\{\nu<\kappa\mid \min(b_\nu)\setminus (\max(t)+1) \in A \}\in \mathcal{U}^1_{\omega\cdot \beta+m}(t\restriction 1).$$
(Note that, $t\restriction 1$ corresponds to the ordinals of $\ell$-order $0$ and thus $|t\restriction 1|=n$.)

\smallskip
    
    If $S_0\uplus S_1$ is a partition of $\mathrm{Succ}_T(\varnothing)$ according to the $\ell$-order, we can shrink $S_0$ (resp. $S_1$) so that $S_0\s A$ (resp. $S_1\s A^*$). Let $T^*$ be the corresponding shrinkage of $T$. It is clear that $p{}^\smallfrown \langle t, T^*\rangle{}^\smallfrown \iota_{\alpha,\zeta^2_\beta}(p)\setminus (\alpha+1)$ forces the opposite of what we originally assumed.
\end{proof}

\underline{$\ell$-order $3$:} The time is now ripe to define the intended gluing poset $\mathbb{G}^{3}_\alpha:$

\begin{definition}\label{forcingatstage3}
       Conditions in $\mathbb{G}^{3}_\alpha$ are pairs $\langle t, T\rangle$ where $t\in [\alpha]^{<\omega}$ is a $3$-coherent sequence and $T$ is a tree on $[\alpha]^{<\omega}$ with the following properties:
    \begin{enumerate}
        \item $\varnothing\in T$ and every $s\in T$ is $3$-coherent;
        \item  $t^\smallfrown s$ is $3$-coherent  for all $s\in T$;
        \item for each node $s\in T$, $\mathrm{Succ}_T(s)$ belongs to $$
        \mathcal{U}^0_{\omega^2\cdot|(t^\smallfrown s)_2|+\omega\cdot|(t^\smallfrown s)_1|+|(t^\smallfrown s)_0|}\cap\, \mathcal{U}^1_{\omega\cdot|(t^\smallfrown s)_2|+|(t^\smallfrown s)_1|}((t^\smallfrown s)\restriction 1)\cap\, \mathcal{U}^2_{|(t^\smallfrown s)_2|}((t^\smallfrown s)\restriction 2),$$
        where $(t^\smallfrown s)_i$ denotes the collection of ordinals in $t^\smallfrown s$ or $\ell$-order $i.$
    \end{enumerate}
    The order between conditions is defined  as in Definition~\ref{forcingorder}.
\end{definition}


\smallskip

The above completes the description of the gluing poset when $o^\ell(\alpha)=3$. Let us now briefly discuss the general case in which $o^\ell(\alpha)=\rho<\alpha$.

\smallskip

Suppose that we have already defined 
$$\langle \mathbb{G}^{\sigma,\beta}_\alpha, \mathcal{U}^\sigma_\beta(t)\mid \sigma<\rho\,\wedge\, \omega^\sigma\cdot\beta<\omega^\rho\,\wedge\, t\in [\alpha]^{<\omega}\, \text{is $\sigma$-coherent}\rangle$$
where 

\smallskip

   $\br$ $\mathcal{U}^0_\beta$ is the measure decoded from $c_\beta$ in $V[S_\alpha\ast G_\alpha]$;
  \smallskip
  
  $\br$ $\mathcal{U}^{\sigma}_\beta$ is the lift of $U(\alpha,\zeta^{\sigma}_\beta)$ to $V[S_\alpha]$ determined by $\iota_{\alpha,\zeta^\sigma_\beta}(\ell)(\alpha)=\langle\vec{c}^{\;\sigma}_\beta,\vec{\zeta}^\sigma_\beta\rangle.$
\begin{definition}
      Conditions in $\mathbb{G}^{\rho}_\alpha$ are pairs $\langle t, T\rangle$ where $t\in [\alpha]^{<\omega}$ is a $\rho$-coherent sequence and $T$ is a tree on $[\alpha]^{<\omega}$ such that:
    \begin{enumerate}
        \item $\varnothing\in T$ and every $s\in T$ is $\rho$-coherent;
        \item  $t^\smallfrown s$ is $\rho$-coherent for all $s\in T$;
        \item for each $s\in T$, $\mathrm{Succ}_T(s)\in \bigcap_{\sigma<\rho}\mathcal{U}^{\sigma}_{j_{\sigma}}((t^\smallfrown s)\restriction \sigma)$ 
       where $$j_\sigma:=\otp\{\nu\in c_{t^\smallfrown s}\mid o^\ell(\nu)=\sigma\}.$$ 
        
    \end{enumerate}
   The orders $\leq$ and $\leq^*$ are the same as in Definition~\ref{forcingorder}.
\end{definition}
\begin{remark}\label{remark:compatibility}
    Any two conditions of the form $\langle t, T\rangle, \langle t, S\rangle$ are $\leq^*$-compatible.
    Also, the analogues of Lemmas~\ref{lemma: concentarting on orderd 2} and \ref{lemma: least member of bnu enters A} do hold for $\mathbb{G}^\rho_\alpha$.
\end{remark}

The above completes the definition of the gluing poset for all possible $\ell$-orders. So, we are now in a position to present the gluing iteration:

\begin{definition}[Gluing Iteration]  Working in $V[S]$,  the \emph{gluing iteration}  is the non-stationary supported iteration  
$\mathbb{P}_\kappa=\varinjlim \langle \mathbb{P}_\alpha;\dot{\mathbb{Q}}_\alpha\mid \alpha< \kappa\rangle$ where
$$V[S_\alpha]\models ``\one\forces_{\mathbb{P}_\alpha}\dot{\mathbb{Q}}_\alpha=\dot{\mathbb{G}}_\alpha^{o^\ell(\alpha)}"$$
for each measurable cardinal $\alpha<\kappa$.
\end{definition}
In Theorem~\ref{Theoremlambdagluing} we show that if $o(\kappa)\leq \kappa$ is an uncountable regular cardinal then, after forcing over $V[S]$ with the gluing iteration, one gets a model $V[S][G]$ for the ${<}o(\kappa)$-$\mathsf{GP}$. One of the key points of this proof consists in ensuring that every $\kappa$-complete ultrafiler (over $\kappa$) in $V[S][G]$ admits a code in the sense of Definition~\ref{def: codes}. This will be proved in the  forthcoming \S\ref{sec:codinglemma}.

\smallskip

To simplify notations let us denote by  $\mathbb{G}_\alpha$ the gluing poset $\mathbb{G}_\alpha^{o^{\ell(\alpha)}}$. 

\begin{lemma}[Existence of Meets]\label{lemma: existence of meets} Suppose that $1\leq o^\ell(\alpha)<\alpha$. Then:
\begin{enumerate}
    \item If $p,q\in \mathbb{G}_\alpha$ are compatible conditions  then $$\text{$b_{\mathrm{stem}(p)}\sq b_{\mathrm{stem}(q)}$ or $b_{\mathrm{stem}(q)}\sq b_{\mathrm{stem}(p)}.$}$$
    \item There is a $\leq^*$-dense set $D$ of conditions in $\mathbb{G}_\alpha$ with the following property: For each $p, q \in D$, if $p$ and $q$ are compatible conditions and 
$b_{\stem(q)}\sq b_{\stem(p)}$ then $\stem(p)\setminus(\max(\stem(q))+1) \in T^q$ and $$p \wedge q = \langle \stem(p), T^p \cap (T^q)_{\stem(p)\setminus(\max(\stem(q))+1)}\rangle.$$ 
\end{enumerate}
\end{lemma}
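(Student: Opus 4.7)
\emph{Part (1).} I would unpack the definition of the order on $\mathbb{G}_\alpha$. Given a common refinement $r \leq p,q$, Definition~\ref{forcingorder} (extended to arbitrary $\ell$-order $o^\ell(\alpha)$) supplies sequences $\vec\nu_p \in T^p$ and $\vec\nu_q \in T^q$ such that $\stem(r)$ is \emph{equivalent} (in the sense of Definition~\ref{CharacteristicsOfT}) to both $\stem(p) \cat \vec\nu_p$ and $\stem(q) \cat \vec\nu_q$. Since equivalent coherent sequences share the same Gitik generic sequence, both $b_{\stem(p)}$ and $b_{\stem(q)}$ embed as $\sq$-initial segments of $b_{\stem(r)}$, and are therefore $\sq$-comparable.

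\emph{Part (2).} My plan is to take $D$ to consist of those conditions $\langle t, T\rangle$ whose trees have been $\leq^*$-refined so that, for every node $\vec\mu \in T$, the concatenation $t \cat \vec\mu$ appears in the \emph{canonical maximal form} of its equivalence class (orders weakly decreasing, successive blocks starting fresh above the preceding maximum). Density of $D$ in $\leq^*$ is obtained by a recursive shrinking procedure: at each node $s \in T$, one intersects $\Succ_T(s)$ with the ``canonical-extension'' set at that stage, exploiting the $\alpha$-completeness of the filter $\bigcap_{\sigma<\rho}\mathcal{U}^\sigma_{j_\sigma}(\cdot)$ (with $\rho=o^\ell(\alpha)$) and the concentration of each $\mathcal{U}^\sigma_{j_\sigma}(\cdot)$ on $\{\nu : o^\ell(\nu)=\sigma \wedge (\cdot)\sq b_\nu\}$ supplied by Lemma~\ref{lemma: concentarting on orderd 2} and its higher-order analogues.

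Given $p,q \in D$ compatible with $b_{\stem(q)}\sq b_{\stem(p)}$, fix $r\leq p,q$ and $\vec\nu_q \in T^q$ with $\stem(r)\equiv \stem(q)\cat\vec\nu_q$; since $q\in D$, this concatenation appears in maximal form. The key combinatorial fact is the \emph{uniqueness} of the maximal representative in each equivalence class, which, combined with $b_{\stem(p)}\sq b_{\stem(q)\cat\vec\nu_q}$, forces $\stem(p)$ (or its maximal representative supplied by the $D$-structure of $p$) to be a literal prefix of $\stem(q)\cat\vec\nu_q$; consequently $\stem(p)\setminus(\max(\stem(q))+1)$ is an initial segment of $\vec\nu_q$ and hence a node of $T^q$. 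The formula for $p\wedge q$ then follows by a direct verification using Remark~\ref{remark:compatibility}: the proposed tree has successor sets in the relevant measure intersections (by $\alpha$-completeness), $\leq^*$-refines $p$, and $\leq$-refines $q$ via the stem extension. The main obstacle I anticipate is the uniqueness/alignment step — that two coherent sequences in maximal form sharing the same Gitik generic sequence must be literally equal — which is proved by induction on the $\ell$-order hierarchy using the fresh-start clause $\min(b_\nu)>\max(t\cat s)$ of $\rho$-coherency to recover the top-of-block elements from the Gitik sequence alone; a secondary delicacy is the density step, requiring simultaneous successor-set shrinkage compatible with the measure constraints at every $\sigma<\rho$, for which $\alpha$-completeness is indispensable.
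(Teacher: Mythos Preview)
Your Part~(1) is correct and matches the paper's argument. For Part~(2), however, your proposed $D$ is not $\leq^*$-dense. A direct extension cannot alter the stem, so any condition $\langle t,T\rangle$ whose stem $t$ is not already in maximal form --- for instance $t=\langle\alpha_0,\alpha_1\rangle$ with $o^\ell(\alpha_0)<o^\ell(\alpha_1)$, a perfectly legal $\rho$-coherent sequence --- has no direct extension in your $D$: the concatenation $t\cat\vec\mu$ can never have weakly decreasing orders if $t$ itself does not (and already $t\cat\varnothing=t$ fails). Your uniqueness-of-maximal-representative observation is correct as combinatorics, but it cannot be invoked here because you are handed stems you cannot normalise. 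There is also a secondary mismatch: the lemma asks that the \emph{literal} sequence $\stem(p)\setminus(\max(\stem(q))+1)$ be a node of $T^q$, whereas your argument (even if density held) would only place the corresponding tail of the \emph{maximal representative} of $\stem(p)$ into $T^q$, which may be a strictly shorter sequence.

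The paper's $D$ avoids the stem altogether: it takes $D=\{\langle s,S\rangle : \forall\nu\in\bigcup S,\ b_\nu\subseteq\bigcup S\}$, a closure condition purely on the tree. Density is obtained by recursively shrinking successor sets, using the lifting definition of the measures $\mathcal{U}^\sigma_{j_\sigma}(\cdot)$ to show that $\{\nu\in\bigcup S : b_\nu\subseteq\bigcup S\}$ remains large in each of them. For the meet, one fixes $\vec\eta\in T^q$ witnessing compatibility; every ordinal of $\stem(p)$ above $\max(\stem(q))$ then lies in some $b_{\vec\eta(i)}$, and the $D$-closure places it in $\bigcup T^q$, from which the desired node is recovered.
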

\begin{proof}
(1). Let $p=\langle t, T\rangle$ and $q=\langle s, S\rangle$ be compatible conditions in $\mathbb{G}_\alpha$. By definition, there are finite sequences $\vec\nu\in T$ and $\vec\eta\in S$ such that ${t^\smallfrown \vec\nu}$ and ${s^\smallfrown \vec\eta}$ are equivalent – that is, $b_{t^\smallfrown \vec\nu}=b_{s^\smallfrown \vec\eta}$. Suppose, for instance, that $\max(\stem(t))\geq \max(\stem(s))$. Since $b_s\sq b_{s^\smallfrown \vec\eta}=b_{t^\smallfrown \vec\nu} $ and $b_{t}\sq b_{t^\smallfrown \vec\nu}$ it must be the case that $b_s \sq b_t.$ Thus, clause (1) follows.

\smallskip

(2). Let $D$ be the collection of conditions $\langle s, S\rangle\in \mathbb{G}_\alpha$ such that all points in the Prikry/Magidor sequence $b_{\nu}$ (for $\nu \in S$)  belong to the supplying tree $S$; namely, we want to look at $D=\{\langle s, S\rangle \in \mathbb{G}_\alpha\mid \forall  \nu \in \bigcup S\, (b_{\nu}\s \bigcup S)\}$. 
\begin{claim}
    $D$ is $\leq^*$-dense.
\end{claim}
\begin{proof}[Proof of claim]
    Let $\langle s, S\rangle $ be a  in $\mathbb{G}_\alpha$. We want to find $\langle s, S^*\rangle\leq^* \langle s, S\rangle$ such that $\bigcup S^*\s \{\nu\in \bigcup S\mid b_\nu\s \bigcup S\}$. To secure this property, we inductively shrink the levels of $S$ as follows: By definition, $\mathrm{Succ}_S(\varnothing)$ belongs to $\bigcap_{\sigma<o^\ell(\alpha)}\mathcal{U}^\sigma_{j_\sigma}(s\restriction\sigma).$ Since we are assuming that $o^\ell(\alpha)<\alpha$, this set admits a decomposition of the form $\bigcup_{\sigma<o^\ell(\alpha)} S_\sigma$ where each $S_\sigma$ lives in $\mathcal{U}^\sigma_{j_\sigma}(s\restriction\sigma)$ and concentrates on measurables  $\beta<\alpha$ with $o^\ell(\beta)=\sigma$. 
    
    Given $\sigma<o^\ell(\alpha)$ there are $p\in G$ and a $\mathbb{P}_\alpha$-name for a tree $\dot{R}$ such that
    $$p{}^\smallfrown \langle s\restriction\sigma, \dot{R}\rangle^\smallfrown \iota_{\alpha,\zeta^\sigma_{j_\sigma}}(p)\setminus (\alpha+1)\forces \alpha\in \iota_{\alpha,\zeta^\sigma_{j_\sigma}}(\dot{S}_\sigma).$$
    Work in $V[G]$. Denote  $S\restriction\sigma\s S$ the subtree of $S$ where the splitting sets are required to be in the intersection of the  ultrafilters $\langle \mathcal{U}^{\rho}_{j_\rho}(\ast)\mid \rho<\sigma\rangle$. Then, $$\langle s\restriction\sigma, \dot{R}_G\cap (S\restriction\sigma)\rangle\leq^*_{\mathbb{G}_\alpha^\sigma}\langle s\restriction\sigma, \dot{R}_G\rangle.$$ 
    
    Let $r\leq p$ be in $G$ forcing this. 
    It then follows that 
     $$r{}^\smallfrown \langle s\restriction\sigma, \dot{R}_G\cap (S\restriction\sigma)\rangle^\smallfrown \iota_{\alpha,\zeta^\sigma_{j_\sigma}}(r)\setminus (\alpha+1)\forces \alpha\in \iota_{\alpha,\zeta^\sigma_{j_\sigma}}(\{\nu\in \bigcup S\mid b_\nu\s \bigcup S\}),$$
     hence $\{\nu\in \bigcup S\mid b_\nu\s \bigcup S\}\in \mathcal{U}^\sigma_{j_\sigma}(s\restriction\sigma).$ Since $\sigma$ was arbitrarily chosen $$\textstyle
     \{\nu\in \bigcup S\mid b_\nu\s \bigcup S\}\in \bigcap_{\sigma<o^\ell(\alpha)}\mathcal{U}^\sigma_{j_\sigma}(s\restriction\sigma).$$
     
     Thus, we can shrink the original $\mathrm{Succ}_S(\varnothing)$ to this set and still get a $\bigcap_{\sigma<o^\ell(\alpha)}\mathcal{U}^\sigma_{j_\sigma}(s\restriction\sigma)$-large set. Carrying out this filtration  on all the levels of the tree we finally obtain the desired $S^*.$
\end{proof}
Let $p,q\in D$ be compatible conditions with $b_{\stem(q)}\sq b_{\stem(p)}$. 

\smallskip

$\br$ If $\max(\stem(p))=\max(\stem(q))$ then $\stem(p)\setminus (\max(\stem(q))+1)$ is empty and thus it belongs to $ T^q$. Hence it must be that $b_{\stem(q)}= b_{\stem(p)}$, so that $\langle \stem(p), T^p\cap T^q\rangle$ is the $\leq$-greatest extension of both $p$ and $q$.

\smallskip

$\br$ Suppose otherwise. Showing that  $\stem(p)\setminus (\max(\stem(q))+1)\in T^q$  would imply that $\langle \stem(p), T^p \cap (T^q)_{\stem(p)\setminus(\max(\stem(q))+1)}\rangle$ is the meet of $p$ and $q$. So, let $\langle \alpha_0,\dots, \alpha_n\rangle$ be the increasing enumeration of $\stem(p)\setminus (\max(\stem(q))+1)$. Since $q$ and $p$ are compatible, there is $\vec\eta\in T^q$ such that $\{ \alpha_0,\dots,\alpha_{n}\}\s b_{t}\sq b_{s\cat \vec\eta}=b_s\cup \bigcup_{i\leq |\vec\eta|}b_{\vec\eta(i)} $. For each $j\leq n$, there is $i<|\vec\eta|$ such that $\alpha_j\in b_{\vec\eta(i)}$. However, $q\in D$, which yields $\alpha_j\in b_{\vec\eta(i)}\s \bigcup T^q$. All in all, $\langle \alpha_0,\dots, \alpha_n\rangle\in T^q$, as needed.
\end{proof}

\begin{remark}
    In what follows we identify $\mathbb{G}_\alpha$ with the $\leq^*$-dense set identified in the previous lemma – that is, the one for which any two compatible conditions admit a $\leq$-greatest lower bound.
\end{remark}

\begin{lemma}[Properties of $\mathbb{G}_\alpha$]\label{lemma: properties of gluing poset}
Suppose $o^\ell(\alpha)<\alpha$. Then, the following  hold in the generic extension $V[S_\alpha\ast G_\alpha]$:
\begin{enumerate}
    \item $\mathbb{G}_\alpha$ is $\alpha^+$-cc.
    \item $\langle\mathbb{G}_\alpha,\leq^*\rangle$ is $\alpha$-closed. 
    \item {If $p,q\in\mathbb{G}_\alpha$ are compatible then they admit a meet, $p\wedge q$.}
    \item $\mathbb{G}_\alpha$ introduces a club $c\s \alpha$ of order-type $\omega^{o^\ell(\alpha)}$.
\end{enumerate}
\end{lemma}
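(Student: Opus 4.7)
The plan is to handle (1)--(3) as immediate consequences of the structural lemmas already in place, and devote the bulk of the argument to (4). For (1), Remark~\ref{remark:compatibility} tells us that any two conditions sharing a common stem are $\leq^*$-compatible; hence distinct elements of an antichain must have distinct stems. Since stems lie in $[\alpha]^{<\omega}$, there are only $\alpha$-many of them, and every antichain has size at most $\alpha$, yielding $\alpha^+$-cc. For (2), given a $\leq^*$-decreasing sequence $\langle\langle t, T^\xi\rangle\mid \xi<\gamma\rangle$ with $\gamma<\alpha$, the stem $t$ is fixed by the definition of $\leq^*$, and the natural candidate lower bound is $\langle t, T^*\rangle$ with $T^*:=\bigcap_{\xi<\gamma}T^\xi$. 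For each $s\in T^*$, the successor set $\mathrm{Succ}_{T^*}(s)$ is the $\gamma$-fold intersection of members of the $\alpha$-complete measures $\mathcal{U}^\sigma_{j_\sigma}((t^\smallfrown s)\restriction\sigma)$ for $\sigma<o^\ell(\alpha)$; by $\alpha$-completeness this intersection remains large, so $\langle t, T^*\rangle$ is the required $\leq^*$-lower bound. Clause (3) is exactly Lemma~\ref{lemma: existence of meets}(2), now that $\mathbb{G}_\alpha$ has been identified with the $\leq^*$-dense subset $D$ from the remark following it.

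For (4), I would set $c:=\bigcup\{b_{\stem(p)}\mid p\in G_\alpha\}$. By Lemma~\ref{lemma: existence of meets}(1) this family is $\sqsubseteq$-linearly ordered, so $c\subseteq\alpha$ is well-defined. Unboundedness is a standard density argument: given $\nu<\alpha$, the set of conditions whose stems exceed $\nu$ is $\leq$-dense, since each measure $\mathcal{U}^\sigma_{j_\sigma}(\cdot)$ concentrates on ordinals above any prescribed bound. Closedness is established by induction on $o^\ell(\alpha)$: a limit $\bar\nu<\alpha$ of $c$-elements is approached either through the Gitik sequence $b_{\alpha_j}$ of a single stem member $\alpha_j$---whence $\bar\nu\in b_{\alpha_j}\subseteq c$ by the inductive closedness of $b_{\alpha_j}$ in $\alpha_j$---or through the stem ordinals themselves, in which case a genericity argument produces a subsequent stem member $\alpha_k$ whose own Gitik sequence absorbs $\bar\nu$. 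The order-type computation is likewise inductive: working with the maximal representation of stems (whose $\ell$-orders are weakly decreasing), each peak $\mu$ of $\ell$-order $\sigma$ contributes a Gitik sequence $b_\mu$ of order type $\omega^\sigma$ by the inductive hypothesis. In the successor case $o^\ell(\alpha)=\tau+1$, genericity produces an $\omega$-sequence of peaks of $\ell$-order $\tau$, yielding total order type $\omega\cdot\omega^\tau=\omega^{\tau+1}$; in the limit case, the generic produces peaks of cofinally high orders below $o^\ell(\alpha)$, and taking the supremum yields $\omega^{o^\ell(\alpha)}$.

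The principal technical obstacle will be making the density arguments in (4) precise. One must verify that for each $\sigma<o^\ell(\alpha)$, it is $\leq$-dense to extend a given stem by a new peak of $\ell$-order $\sigma$ whose generic club absorbs any prescribed initial segment; this requires combining the concentration property that $\mathcal{U}^\sigma_{j_\sigma}(\cdot)$ concentrates on $\ell$-order-$\sigma$ measurables whose Gitik sequences extend beyond any prescribed bound (in the spirit of Lemmas~\ref{lemma: disjointifying levels} and \ref{lemma: least member of bnu enters A}) with the verification that the coherency requirements for extending the stem are genuinely satisfied by the chosen peak. A parallel subtlety is that the ``stem-member'' case of closedness must exclude the possibility of a spurious limit point $\bar\nu$ that is approached by stem members but never absorbed by a subsequent peak; ruling this out uses that the cofinality of $\bar\nu$ is below the measurability threshold, forcing density to eventually catch it.
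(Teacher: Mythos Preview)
Your treatment of (1)--(3) matches the paper's exactly. For (4) your inductive scheme is also the paper's, but the paper makes a simplifying move you do not: in the successor case $o^\ell(\alpha)=\rho+1$ it observes (by density) that every condition extends to one whose stem consists \emph{entirely} of ordinals of $\ell$-order $\rho$. The generic then yields an $\omega$-sequence $\langle\beta_n\mid n<\omega\rangle$ of such ordinals, and coherency clause~(3)(b) (equal orders) forces $\beta_n<\min(b_{\beta_{n+1}})$, so $c=\bigcup_n (b_{\beta_n}\cup\{\beta_n\})$ is immediately a club of order type $\omega^\rho\cdot\omega=\omega^{\rho+1}$. In the limit case it uses singleton-stem conditions $\langle\langle\delta_\sigma\rangle,S_\sigma\rangle$ of cofinally high order; compatibility forces the $b_{\delta_\sigma}$'s into a $\sqsubseteq$-chain whose union is the desired club. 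With this normalization the ``spurious limit point'' worry you flag simply does not arise: limit points of $c$ below $\alpha$ are either interior to some $b_{\beta_n}$ (handled by induction) or equal to some $\beta_n$ (explicitly in $c$). Your direct attack via arbitrary maximal stems and a separate case analysis for closedness works in principle, but it is exactly this generality that generates the obstacle you identify at the end.

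One genuine slip: you write the order type as $\omega\cdot\omega^\tau$, but for infinite $\tau$ this equals $\omega^{1+\tau}=\omega^\tau$, not $\omega^{\tau+1}$. The correct expression for ``$\omega$ copies of $\omega^\tau$'' is $\omega^\tau\cdot\omega=\omega^{\tau+1}$.
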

\begin{proof}
(1) This follows from Remark~\ref{remark:compatibility}.

(2) If $\langle \langle t, T_\beta\rangle\mid \beta<\theta\rangle$ is a $\leq^*$-decreasing sequence with $\theta<\alpha$ then $\langle t, \bigcap_{\beta<\theta} T_\beta\rangle$ is a well-formed condition and incidentally a $\leq^*$-lower bound.

(3) This was the purpose of  Lemma~\ref{lemma: existence of meets}. 

(4) The result is clear for all measurable $\alpha$ with $o^\ell(\alpha)=1$. 

\smallskip

Let us argue the general result by induction. Suppose that
$$\forall \beta<\alpha\,\forall \sigma<o^\ell(\alpha)\; (o^\ell(\beta)=\sigma\;\rightarrow\;\text{$\mathbb{G}^\sigma_\beta$ adds a club of order-type $\omega^\sigma$ to $\beta$}).$$
 Let us assume that $o^\ell(\alpha)=\rho+1$. Notice that every condition $p$ extends to another condition $q=\langle t,T\rangle$ where all $\beta\in t$ have $o^\ell(\beta)=\rho$. In particular, if $G\s \mathbb{G}_\alpha^{\rho+1}$ is a generic filter,
$$\{\beta<\alpha\mid \exists \langle t, T\rangle\in G\, (\beta\in t\, \wedge\, o^\ell(\beta)=\rho)\}$$
is unbounded in $\alpha.$ 

Let $\langle \beta_n\mid n<\omega\rangle$ be the increasing enumeration of the above set. It follows, by the induction hypothesis, that each $\beta_n$ carries a club $c_{n}$ with order-type $\omega^\rho$. Since for each $n<\omega$ there is $\langle t, T\rangle\in G$ with $\beta_n,\beta_{n+1}\in t$,  coherency  yields that $\beta_{n}<\min(c_{\beta_{n+1}})$ and, consequently, that
$$\textstyle c=\bigcup_{n<\omega} c_{n}$$
is a club on $\alpha$. Clearly, $c$ has order-type $\omega^{\rho+1}$, as needed.

Assume that $o^\ell(\alpha) = \rho$ for a limit ordinal $\rho$. For each order $\sigma<\rho$ the set
$$D_{\sigma}:=\{\langle \langle \delta\rangle, S\rangle\in \mathbb{G}^\rho_\alpha\mid\exists \sigma^*<\rho\, (\sigma<\sigma_*\,\wedge\, o^\ell(\delta)=\sigma_*)\}$$
is dense in $\mathbb{G}^\rho_\alpha$.
Thus, for each $\sigma<\rho$ there is a condition  $\langle \langle \delta_\sigma\rangle,  S_\sigma\rangle\in G\cap D_\sigma$. In particular, $\{\langle \langle \delta_\sigma\rangle,  S_\sigma\rangle\mid \sigma<\rho\}$ is a set of compatible conditions in $\mathbb{G}^{\rho}_\alpha$. Therefore, it must be the case that either  $b_{\delta_{\sigma}}$ is an initial segment of $b_{\delta_{\sigma'}}$ or vice-versa (by Lemma~\ref{lemma: existence of meets}). By induction hypothesis, each of these $b_{\delta_\sigma}$ is a club on $\delta_\sigma$ with order-type in $[\omega^{\sigma},\omega^{\rho})$. Therefore, $\bigcup_{\sigma<\rho} b_{\delta_\sigma}$ is a club on $\alpha$ with order-type $\omega^{\rho}$. This completes the proof.
\end{proof}

The following lemma can be proved as the forthcoming Lemma \ref{lemma: SPP for Motis}.
\begin{lemma}
$\langle \mathbb{G}_\alpha,\leq,\leq^*\rangle$ is a Prikry-type forcing. 
\qed
\end{lemma}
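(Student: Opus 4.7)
The plan is to establish the Prikry property for $\mathbb{G}_\alpha=\mathbb{G}^{o^\ell(\alpha)}_\alpha$ by a Mathias-style argument, proceeding by induction on $\rho=o^\ell(\alpha)$. The base cases $\rho\in\{0,1\}$ are immediate: for $\rho=0$ the poset is trivial, and for $\rho=1$ the forcing $\mathbb{G}^1_\alpha$ is the tree Prikry forcing relative to $\langle \mathcal{U}^0_n\mid n<\omega\rangle$, whose Prikry property is classical. For the inductive step, fix $\rho\geq 2$, a condition $p=\langle t,T\rangle\in\mathbb{G}^\rho_\alpha$, and a statement $\varphi$ in the forcing language. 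Using the $\alpha$-closure of $\leq^*$ (Lemma~\ref{lemma: properties of gluing poset}(2)), I would associate to each $\vec\nu\in T$ a direct extension $q_{\vec\nu}=\langle t^\smallfrown\vec\nu,T^{\vec\nu}\rangle\leq^* p\cat\vec\nu$ that either decides $\varphi$ or has the property that \emph{no} direct extension of $p\cat\vec\nu$ decides $\varphi$. This yields a coloring $c\colon T\to\{0,1,\ast\}$ recording whether $q_{\vec\nu}\forces\varphi$, $q_{\vec\nu}\forces\neg\varphi$, or we are in the undecided case.

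Next, I would thin $T$ level-by-level to make $c$ homogeneous. For each $\vec\nu\in T$ the set $\Succ_T(\vec\nu)$ belongs to $\bigcap_{\sigma<\rho}\mathcal{U}^\sigma_{j_\sigma}((t^\smallfrown\vec\nu)\restriction\sigma)$, and each of these ultrafilters is $\alpha$-complete. Partitioning $\Succ_T(\vec\nu)$ into the three $c$-colored pieces and selecting, for each $\sigma<\rho$, the monochromatic piece of full $\mathcal{U}^\sigma_{j_\sigma}$-measure, then intersecting across $\sigma$, produces a successor set on which $c$ is constant and which still lies in the required intersection of measures. Using $\alpha$-closure of $\leq^*$, I then diagonally combine the trees $T^{\vec\nu}$ into a single tree $\tilde T$ so that $\tilde T_{\vec\nu}\subseteq T^{\vec\nu}$ for every $\vec\nu\in \tilde T$. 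This yields $\tilde q=\langle t,\tilde T\rangle\leq^* p$.

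The crux is to show $c(\varnothing)\neq\ast$, i.e.\ that $\tilde q$ decides $\varphi$. Suppose, for contradiction, $c(\varnothing)=\ast$. If some $\vec\nu\in\tilde T$ had $c(\vec\nu)\in\{0,1\}$, take $\vec\nu$ of shortest length and let $\vec\nu'\prec\vec\nu$ be its immediate predecessor; by homogeneity, $c$ takes the same value (say $0$) on a measure-one subset of $\Succ_{\tilde T}(\vec\nu')$. Amalgamating the trees $T^{\vec\nu'{}^\smallfrown\langle\beta\rangle}$ beneath $\vec\nu'$ gives a $\leq^*$-extension of $p\cat\vec\nu'$ forcing $\varphi$: any extension of it factors through some $\langle\beta\rangle$ into a condition below $q_{\vec\nu'{}^\smallfrown\langle\beta\rangle}$, which already forces $\varphi$. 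This contradicts $c(\vec\nu')=\ast$. Hence ``badness'' propagates and $c\equiv\ast$ on $\tilde T$. But then \emph{no} extension of $\tilde q$ decides $\varphi$: any $r\leq\tilde q$ has the form $\langle t^\smallfrown\vec\nu,R\rangle$ with $\vec\nu\in\tilde T$, and is thus a $\leq^*$-extension of $p\cat\vec\nu$; were $r$ to decide $\varphi$, we would contradict $c(\vec\nu)=\ast$. This contradicts the fact that any generic filter below $\tilde q$ must meet a condition deciding $\varphi$.

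The main technical obstacle I expect is the simultaneous amalgamation of the $\alpha$-many direct extensions $q_{\vec\nu}$ into a single well-formed tree $\tilde T$ while preserving, at every node, both the $\rho$-coherency conditions and the measure-one requirements of Definition~\ref{forcingatstage3} and its general-order analogue. Because the lifted measures $\mathcal{U}^\sigma_\beta(t)$ are stem-dependent and the successor sets must live in an intersection of measures of different orders, the fusion has to be carried out with respect to the $\leq^*$-dense set isolated in Lemma~\ref{lemma: existence of meets}, and one has to verify inductively that thinning preserves membership in $\bigcap_{\sigma<\rho}\mathcal{U}^\sigma_{j_\sigma}(\cdot)$. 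The inductive hypothesis enters implicitly, through the availability of the posets $\mathbb{G}^\sigma_\alpha$ and the lifted measures $\mathcal{U}^\sigma_\beta(t)$ for $\sigma<\rho$ used to define $\mathbb{G}^\rho_\alpha$.
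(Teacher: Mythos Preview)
Your homogenization step contains a genuine gap. You write that after ``selecting, for each $\sigma<\rho$, the monochromatic piece of full $\mathcal{U}^\sigma_{j_\sigma}$-measure, then intersecting across $\sigma$,'' one obtains a successor set on which $c$ is constant. But the measures $\mathcal{U}^\sigma_{j_\sigma}((t^\smallfrown\vec\nu)\restriction\sigma)$ concentrate on pairwise \emph{disjoint} sets (ordinals of $\ell$-order exactly $\sigma$), so the $\sigma$-indexed monochromatic pieces are themselves disjoint; intersecting them yields nothing, and taking their union yields a successor set that is large for every measure but on which $c$ need not be constant---the color may (and typically will) depend on the $\ell$-order of the successor. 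Consequently your amalgamation step fails: when you find $\vec\nu$ minimal with $c(\vec\nu)\neq\ast$ and write $\vec\nu=\vec\nu'{}^\smallfrown\langle\beta\rangle$ with $o^\ell(\beta)=\sigma$, homogeneity only tells you that $c=0$ on the order-$\sigma$ successors of $\vec\nu'$. Restricting the first level of the tree to those successors does \emph{not} produce a condition in $\mathbb{G}^\rho_\alpha$, since the resulting successor set is $\mathcal{U}^{\sigma'}$-null for every $\sigma'\neq\sigma$; and keeping the other successors ruins the claim that the amalgam forces $\varphi$.

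The paper's remedy (via the technique of Lemma~\ref{lemma: SPP for Motis}) is precisely to track the sequence of $\ell$-orders: one homogenizes so that $c(\vec\nu)$ depends only on $\langle o^\ell(\vec\nu(i))\mid i<|\vec\nu|\rangle$, not on $\vec\nu$ itself. The $T_\omega$-construction there (iterated shrinking using $\alpha$-completeness to stabilize the indices $i_\varepsilon(\vec\delta)$ as functions of orders alone) is exactly what is needed, and the final density argument produces a specific sequence of orders $\vec\alpha$ for which every $\vec\alpha$-extension of the thinned condition lies in the target dense set. To pass from this Strong Prikry statement to the Prikry property one uses, in addition, that two one-point extensions by ordinals of different orders are frequently \emph{compatible} (cf.\ Lemma~\ref{lemma: least member of bnu enters A} and Remark~\ref{remark:compatibility}); this rules out the possibility that the order-dependent colors take both values $0$ and $1$ among the immediate successors of a single node, which is the case your minimal-counterexample argument cannot handle.
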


\section{Characterizing ultrafilters in the gluing extension }\label{sec:codinglemma}
For the rest of this section we work under the following assumptions:
\begin{equation}\label{BlanketAssumption}
 \tag{$\mathscr{H}$}   \text{$``\rm{V}=\mathcal{K}$'' + ``There is no inner model for ${\exists \alpha\, (o(\alpha)\geq \alpha)}$''.}\footnote{In some technical lemmas in this section, the weaker anti–large cardinal assumption
“There is no inner model satisfying $\exists \alpha\,(o(\alpha)=\alpha^{++})$”
will suffice. The stronger assumption \eqref{BlanketAssumption} is needed to ensure that the Gluing Iteration $\mathbb{P}_\kappa$ is well defined.}
\end{equation}
 Let $S\s \mathbb{S}$ be generic over $\mathcal{K}$ and $\mathbb{P}_\kappa=\varinjlim\langle  \mathbb{P}_{\alpha}, \dot{\mathbb{G}}_{\beta}\mid \alpha<\beta < \kappa\rangle$ be the gluing iteration of \S\ref{sec: the gluing iteration}. Since the GCH holds in $\mathcal{K}[S]$ (see \cite[Proposition~6.2]{HP}) it follows that $|\mathbb{P}_\alpha|\leq 2^\alpha$. Also, by Lemma~\ref{lemma: properties of gluing poset}, the gluing poset $\langle \mathbb{G}_\alpha,\leq^*\rangle$ is $\alpha$-closed and {every two compatible  conditions  $p,q$ admit a meet $p\wedge q.$}

\smallskip

Extending the work of \cite{HP} in this section we prove the \textbf{Coding Lemma} for the gluing iteration $\mathbb{P}_\kappa$. This lemma provides a complete cartography of the $\kappa$-complete ultrafilters in the generic extension $\mathcal{K}[S\ast G]$ and it becomes critical for the future verification of the gluing property (Theorem~\ref{Theoremlambdagluing}).
    \begin{lemma}[Coding Lemma]\label{lemma: coding lemma} 
Let  $G\s \mathbb{P}_\kappa$ be a generic filter over $\mathcal{K}[S]$. Then,  there are $\kappa^{+}$-many $\kappa$-complete ultrafilters in $\mathcal{K}[S][G]$. Moreover, every such ultrafilter $U\in \mathcal{K}[S][G]$ admits a code in the sense that there are
\begin{enumerate}
  \item[$(\alpha)$] a finite sub-iteration $\iota\colon \mathcal{K}[S] \to \mathcal{K}^M[\iota(S)]$ of $j_U\restriction\mathcal{K}[S]$, 
     \item[$(\beta)$] an ordinal $\bar{\epsilon} < \iota(\kappa)$ with $\bar{\epsilon}\in \range(k)$, 
\item[$(\gamma)$] $r \in \iota(\mathbb{P}_\kappa)$ with finite support such that for every $p\in G$, $\iota(p)\wedge r$ exists,
\end{enumerate}
which together have the following property: Working in $\mathcal{K}[S]$, for each $p\in G$, $p\forces_{\mathbb{P}_\kappa}``\dot{X}\in \dot{{U}}$'' if and only if there is a condition $q\in \iota(\mathbb{P}_\kappa)$  such that
\[(k(q)\in j_{{U}}(G),\; q\leq^ *\iota(p) \wedge r,\;\, \supp(q)=\supp(\iota(p)\wedge r),\;\, q\Vdash_{\iota(\mathbb{P}_\kappa)}  \bar{\epsilon} \in \iota (\dot{X})).\]
\end{lemma}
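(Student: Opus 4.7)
The plan is to fix a $\kappa$-complete ultrafilter $U\in\mathcal{K}[S][G]$, form the ultrapower $j_U\colon\mathcal{K}[S][G]\to M$, and set $\epsilon:=[\id]_U$; writing $\mathcal{K}^M$ for the core model of $M$, one has $M=\mathcal{K}^M[j_U(S)][j_U(G)]$. By the blanket assumption~\eqref{BlanketAssumption} together with clause~\ref{coremodelIterations} of Theorem~\ref{CoreModelTheorem}, the restriction $j_U\restriction\mathcal{K}$ is a normal linear iteration of $\mathcal{K}$-measures. Since $\epsilon$ is a single ordinal, Lemma~\ref{lem;representing-elements-in-direct-limit} yields a \emph{finite} normal sub-iteration $\tilde\iota\colon\mathcal{K}\to\mathcal{N}$ and a factor embedding $k_{\mathcal{K}}\colon\mathcal{N}\to\mathcal{K}^M$ with $k_{\mathcal{K}}\circ\tilde\iota=j_U\restriction\mathcal{K}$ and $\epsilon\in\range(k_{\mathcal{K}})$. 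By Lemma~\ref{lem;lifting-S}, $\tilde\iota$ lifts canonically to $\iota\colon\mathcal{K}[S]\to\mathcal{N}[\iota(S)]$ and $k_{\mathcal{K}}$ lifts to a factor $k\colon\mathcal{N}[\iota(S)]\to\mathcal{K}^M[j_U(S)]$ with $k\circ\iota=j_U\restriction\mathcal{K}[S]$. Setting $\bar\epsilon:=k^{-1}(\epsilon)$ secures items $(\alpha)$ and $(\beta)$.

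Next, let $\mu_0<\dots<\mu_{n-1}$ enumerate the critical points of $\tilde\iota$. The plan is to construct $r\in\iota(\mathbb{P}_\kappa)$ with $\supp(r)\subseteq\{\iota(\mu_i)\mid i<n\}$ (hence finite) such that $k(r)\in j_U(G)$, by selecting, at each coordinate $\iota(\mu_i)$ in turn, a condition in the gluing step whose $k$-image lies in $j_U(G)\restriction(\iota(\mu_i)+1)$; the $\iota(\mu_i)$-closure of the direct-extension orders (Lemma~\ref{lemma: properties of gluing poset}) together with the genericity of $j_U(G)$ permits this diagonal synthesis. For every $p\in G$, the conditions $k(\iota(p))=j_U(p)$ and $k(r)$ both belong to $j_U(G)$ and are therefore compatible in $j_U(\mathbb{P}_\kappa)$; by elementarity of $k$, $\iota(p)$ and $r$ are compatible in $\iota(\mathbb{P}_\kappa)$, and Lemma~\ref{lemma: existence of meets} (transferred through $\iota$) guarantees that the meet $\iota(p)\wedge r$ exists. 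This establishes $(\gamma)$.

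To verify the coding equivalence, fix $p\in G$ and a $\mathbb{P}_\kappa$-name $\dot X\s\kappa$. If $p\Vdash\dot X\in\dot U$, then $\epsilon\in j_U(\dot X_G)$, so some $q^*\leq j_U(p)\wedge k(r)$ in $j_U(G)$ forces $\epsilon\in j_U(\dot X)$; applying the Prikry-type reduction lemmas (Lemma~\ref{lemma:bnu} and Corollary~\ref{cor: reducingnames}) one may further require $q^*\leq^* j_U(p)\wedge k(r)$ with $\supp(q^*)=\supp(j_U(p)\wedge k(r))$. By elementarity of $k$, $q^*=k(q)$ for some $q\in\iota(\mathbb{P}_\kappa)$ satisfying $q\leq^*\iota(p)\wedge r$, $\supp(q)=\supp(\iota(p)\wedge r)$ and $q\Vdash\bar\epsilon\in\iota(\dot X)$. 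The reverse direction is immediate upon applying $k$ and recovering $U$ from $j_U$ and $\epsilon$. For the cardinality count, since $\mathcal{K}[S]\models\GCH$ there are at most $\kappa^+$-many admissible triples $(\iota,\bar\epsilon,r)$, each determining a unique ultrafilter; the matching lower bound is obtained by varying $\ell(\kappa)$ to manufacture $\kappa^+$-many distinct normal-type codes.

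The main obstacle is the construction of $r$: one must pick $r$ with \emph{finite} support and prove that $\iota(p)\wedge r$ exists \emph{simultaneously} for every $p\in G$. This threads the needle between the meets property (Lemma~\ref{lemma: existence of meets}), the $\alpha$-closure of the direct-extension orderings that is used to synthesize $r$ one coordinate at a time, and the elementarity of $k$ used to transport compatibility from $j_U(G)$ back down to $\iota(\mathbb{P}_\kappa)$. Once $r$ is in hand, the coding equivalence is a routine Prikry-property argument paralleling the one in \cite[\S6]{HP}.
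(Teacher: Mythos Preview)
Your overall architecture matches the paper's, but two steps are genuine gaps, and both lie at the heart of why the paper devotes an entire subsection to this lemma.

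First, the assertion ``By elementarity of $k$, $q^*=k(q)$'' is false: $k$ is not surjective. A condition $q^*\in j_U(G)$ that happens to force $\epsilon\in j_U(\dot X)$ is just some element of $\mathcal{K}^M[j_U(S)]$; nothing places it in $\range(k)$. The entire content of Lemmas~\ref{lem;conditions-in-k-inv-H} and~\ref{MainLemma2} is precisely that one can \emph{find} a witness $q$ living in $\iota(\mathbb{P}_\kappa)$ with $k(q)\in H$, and this does not come for free. Relatedly, Lemma~\ref{lemma:bnu} and Corollary~\ref{cor: reducingnames} are about reducing names to bounded stages of the iteration; they do not convert $\leq$-extensions into $\leq^*$-extensions, so they cannot do what you invoke them for.

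Second, your ``diagonal synthesis'' of $r$ skips the hardest part of the argument. The paper's $r_\iota$ (Definition~\ref{definitionriota}) carries at each coordinate of $A_\iota\subseteq\Gamma_\iota$ a \emph{maximal} stem, one that dominates every stem appearing in $k^{-1}(H)$ at that coordinate. That such a maximum exists is the Key Claim (Lemma~\ref{lem;maximal-stem}), and proving it requires all of Subsection~6.3: constructing an internal universal iteration $\tilde k$ of $\mathcal{K}^N$ (Lemma~\ref{lemma: universal iteration}), showing $k``\mu\subseteq\mu$ for every $\mu\in\Gamma_\iota$ (Lemma~\ref{lemma:imagesofmu}), and proving that each such $\mu$ remains regular in $M[H]$ (Lemma~\ref{lem:mu-remains-regular}). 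Your compatibility-in-$j_U(G)$ argument does give that $\iota(p)\wedge r$ exists for each $p\in G$, but without the maximal-stem property you have no leverage to convert an arbitrary $q\in k^{-1}(H)$ into a $\leq^*_{\Gamma_\iota}$-extension of some $\iota(p)\wedge r_\iota$, which is exactly what is needed to push the forward implication through.
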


\[
\begin{tikzcd}
     \mathcal{K}[S] \arrow[r, "{j_{\mathcal{U}}\restriction\mathcal{K}[S]}"] \arrow[d, "\iota"'] 
     & M_{{U}}[j_{{U}}(S)]  \\
    \mathcal{K}^{M_{\mathcal{U}}}[\iota(S)] \arrow[ru, "{k}"] & 
\end{tikzcd}
\]
\begin{remark}
 In the terminology of  Definition~\ref{def: codes}, the Coding Lemma says that the $5$-tuple $\langle \vec{\rho}, \vec{\zeta},\vec{a}, f, g\rangle$ defined as follows is a code for $\dot{U}$ in $\mathcal{K}[S]$. 

 First, $\iota^0:=j_U\restriction\mathcal{K}$ is a finite iteration of measures in $\mathcal{K}$ and as such it is determined by a  collection of functions  $\vec\rho=\langle \rho_0,\dots, \rho_{n-1}\rangle$ and ordinals  $\vec{\zeta}=\langle \zeta_0,\dots, \zeta_{n-1}\rangle$ as described in Definition~\ref{def: codes}. This yields, in particular, the collection of critical points $\langle \mu_0,\dots, \mu_{n-1}\rangle$ of $\iota^0$ by stipulating $\mu_i=\iota^0_i(\rho_i)(\langle \mu_j\mid j<i\rangle).$ By Lemma~\ref{lem;lifting-S}, $\iota\colon \mathcal{K}[S]\rightarrow \mathcal{K}^{M}[\iota(S)]$ is the unique lifting of $\iota^0$ determined by $S$ and a vector $\langle \bar a_i,\dots, \bar a_{n-1}\rangle\in \prod_{i<n}H(\mu^+_i)^{\mathcal{K}_i}$. These parameters themselves can be coded via $\vec{a}$ as  $\bar a_i=\iota^0_i(a_i)(\langle \mu_j\mid j<i\rangle).$ Finally, $\bar\epsilon$ and $r$ can be respectively presented as $\iota^0(f)(\langle \mu_i\mid i<n\rangle)$ and $\iota^0(g)(\langle \mu_i\mid i<n\rangle)$ for functions $f\colon \kappa^n\rightarrow \mathbb{P}_\kappa$ and $g\colon \kappa^n\rightarrow\kappa$ in $\mathcal{K}[S]$.
\end{remark}

\subsection{Preliminary comments and technical lemmas}

\begin{setup}\label{setupcoding}
Our ground model $V$ during this section is $\mathcal{K}[S]$, a generic extension by the fast function forcing $\mathbb{S}$. Similarly, $\mathbb{P}_\kappa$ denotes the gluing iteration (defined in $\mathcal{K}[S]$)  as described  in \S\ref{sec: the gluing iteration}.  We fix a $V$-generic filter $G\s\mathbb{P}_\kappa$  and a  ultrapower embedding  $j:=j_{U}\colon  V[G]\to M[H]$  via a $\kappa$-complete (non-necessarily normal) ultrafilter  on $\kappa$ in $V[G].$
\end{setup}

In light of our anti-large-cardinal assumptions (\eqref{BlanketAssumption} on page~\pageref{BlanketAssumption}), $j\restriction\mathcal{K}$ is a normal iteration and as a result so  is $j\restriction V$ (by Lemma~~\ref{lem;lifting-S}) – alas, notice that these iterations might possibly be infinite. By Lemma~\ref{lem;representing-elements-in-direct-limit}, for each $x\in M$, there is a finite normal iteration $\iota\colon V\rightarrow N$ such that $k\circ \iota=j\restriction V$ and $x\in \mathrm{range}(k)$, where $k\colon N\rightarrow M$ is the factor map between $\iota$ and $j$ – to wit, $k$ is the inverse of the Mostowski collapse of the structure with universe $$\{j(f)(\mu_{0},\dots, \mu_{n-1})\mid f\colon \kappa^n\to V\},$$
where $\bar\mu_0=k^{-1}(\mu_{0}),\dots,\bar\mu_{n-1}=k^{-1}(\mu_{n-1})$ are the critical points of the iteration $\iota.$ In these circumstances we say that the iteration \emph{$\iota$ factors $j\restriction V$}.

The above comments suggest  a natural ordering between iterations that factor $j\restriction V$, according to the $k$-images of their critical points.
\begin{definition}
   Let if $\iota\colon V\rightarrow N$ and $\bar{\iota}\colon V\rightarrow \bar{N}$ be iterations factoring $j\restriction V$ as witnessed by embeddings $k$ and $\bar{k}$ respectively.  Then, we write $$\iota\leq_j\bar{\iota}\,:\Leftrightarrow k(\crit(\iota))\s \bar{k}(\crit(\bar{\iota})).\label{eq: ordering between iterations}$$
Here $\crit(\iota)$ and $\crit(\bar\iota)$ denote the critical points of $\iota$ and $\bar\iota$, respectively.  
\end{definition}

\begin{remark}
    $\leq_j$ is transitive and reflexive. In the forthcoming Lemma~\ref{lem;maximal-stem} we will need to appeal to some sort of 'density' in the $\leq_j$-ordering. 
\end{remark}

\begin{definition}
  Given a finite iteration $\iota\colon V\rightarrow N$ that factors $j\restriction V$ through another elementary embedding $k\colon N\rightarrow M$, we define:
  \begin{enumerate}
      \item $\label{gammaiota}
\Gamma_\iota := \{\iota_{n,m}(\mu_n) \mid n \leq m\leq \ell\},$
\item $\mathcal{C}_{\iota} := \{k(\mu_i) \mid i < n\},$
  \end{enumerate}
where $\iota:= \langle \iota_{n,m} \mid n \leq m \leq \ell \rangle$ and, for each $n<\ell$, $\mu_n:=\crit(\iota_{n,n+1})$. 
\end{definition}


   The set $\Gamma_\iota$ played a crucial role in the analysis of \cite[\S6]{HP} leading to the \textbf{Coding Lemma} – it does it here as well. The key idea is: the only coordinates where we may detect discrepancies between conditions $r \in \iota(\mathbb{P}_\kappa)$, that are sent to $H$ via $k$, and conditions in $j``G$ are among the elements of $\Gamma_\iota$. The set of ordinals where these  discrepancies arise, denoted $A_\iota$ (see Definition~\ref{def:Aiota}), will be shown to be contained in $\Gamma_\iota$ (Lemma~\ref{AsubsetGamma}). 

   It will also be important for later purposes to characterize those 
sets $\mathcal{C}\s \crit(j)$ for which there is $\iota\colon V\rightarrow N$ so that $\mathcal{C} = \mathcal{C}_{\iota}$. Similar characterizations to the one given here appeared in work of Gitik–Kaplan \cite{GitikKaplan-nonstationary2022}.
\begin{lemma}\label{lemma: characterizing Ciota}
Let $\mathcal{C}\s \crit(j)$ be finite. Then, the following are equivalent:
\begin{enumerate}
    \item There is a normal finite iteration $\iota$ factoring  $j\restriction V$ such that $\mathcal{C} = \mathcal{C}_{\iota}$.
    \item For each $\mu \in \mathcal{C}$, if $\mathcal{U}$ is the normal measure which was used to obtain the critical point $\mu$ in the iteration $j$ (so $j = j_{end} \circ j_{\mathcal{U}} \circ j_{begin}$), then there is $n<\omega$ and a function $g\colon [\kappa]^n\rightarrow V$ in $V$ such that $$\mathcal{U} \in j_{begin}(g)([\mathcal{C} \cap \mu]^n).$$ 
\end{enumerate}
\end{lemma}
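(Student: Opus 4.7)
My strategy is to reduce both directions to the standard representation theorem for finite normal iterations using internal measures: in such an iteration $\iota_{0,n}\colon V\to\mathcal{N}_n$ with critical points $\bar\mu_0<\cdots<\bar\mu_{n-1}$, every element of each intermediate model $\mathcal{N}_i$---in particular, the measure $\bar{\mathcal{U}}_i$ applied at stage $i$---can be written in the form $\iota_{0,i}(g)(\bar\mu_0,\dots,\bar\mu_{i-1})$ for some $g\in V$. Both directions will proceed by transferring this representation between the $\iota$-side and the $j$-side through the factor map $k$ and its natural intermediate restrictions $k_i\colon\mathcal{N}_i\to V'_{\mu_i^*}$, where $V'_{\mu_i^*}$ denotes the intermediate $j$-model just before the stage at which the measure at $\mu_i^*$ is applied.

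For $(1)\Rightarrow(2)$ I would fix $\mu=k(\bar\mu_i)\in\mathcal{C}$, represent $\bar{\mathcal{U}}_i=\iota_{0,i}(g)(\bar\mu_0,\dots,\bar\mu_{i-1})$ via the above theorem, and exploit the fact that $\iota_{0,i}$ is exactly the sub-iteration of $j_{begin,\mu}$ obtained by restricting to the critical points of $j$ that lie in $\mathcal{C}\cap\mu$. This yields a factor $k_i\colon\mathcal{N}_i\to V'_\mu$ with $k_i\circ\iota_{0,i}=j_{begin,\mu}$, $k_i(\bar\mu_j)=k(\bar\mu_j)\in\mathcal{C}\cap\mu$ for $j<i$, and $k_i(\bar{\mathcal{U}}_i)=\mathcal{U}$. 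Applying $k_i$ to the displayed representation delivers $\mathcal{U}=j_{begin,\mu}(g)(k(\bar\mu_0),\dots,k(\bar\mu_{i-1}))$, which is precisely clause $(2)$ with $n:=i$.

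For $(2)\Rightarrow(1)$ I would enumerate $\mathcal{C}=\{\mu_0^*<\cdots<\mu_{n-1}^*\}$ and build $\iota$ by recursion on $i\leq n$, maintaining at each stage the invariants $k_i\circ\iota_{0,i}=j_{begin,\mu_i^*}$ and $k_i(\bar\mu_j)=\mu_j^*$ for $j<i$. At step $i$, clause $(2)$ supplies $g_i\in V$ and $s_i\in[\mathcal{C}\cap\mu_i^*]^{m_i}$ with $\mathcal{U}_{\mu_i^*}=j_{begin,\mu_i^*}(g_i)(s_i)$. Using the invariant I would take $\bar s_i$ to be the $k_i$-preimage of $s_i$ (which lives in $[\{\bar\mu_j:j<i\}]^{m_i}$) and set $\bar{\mathcal{U}}_i:=\iota_{0,i}(g_i)(\bar s_i)\in\mathcal{N}_i$. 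Elementarity of $k_i$, together with $k_i(\bar{\mathcal{U}}_i)=\mathcal{U}_{\mu_i^*}$ being a normal measure at $\mu_i^*$, will force $\bar{\mathcal{U}}_i$ to be a normal measure on some $\bar\mu_i$ with $k_i(\bar\mu_i)=\mu_i^*$. I would then declare $\iota_{i,i+1}$ to be the ultrapower of $\mathcal{N}_i$ by $\bar{\mathcal{U}}_i$, and define $k_{i+1}$ by combining the universal property of the ultrapower (applied to $j_{\mu_i^*}\circ k_i$) with the segment of $j$ running from the model immediately after $\mu_i^*$ to $V'_{\mu_{i+1}^*}$. After $n$ steps, setting $k:=k_n$ and $\iota:=\iota_{0,n}$ will yield $k\circ\iota=j\restriction V$ and $\mathcal{C}_\iota=\mathcal{C}$.

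The main obstacle will be the construction of the factor maps $k_i$ and the verification that they cohere through each successive ultrapower step---equivalently, that the iteration being assembled is genuinely a sub-iteration of $j$ that uses \emph{internal} measures in the sense of Definition~\ref{Iteration}. This is where clause $(2)$ is indispensable: the representation $\mathcal{U}_{\mu_i^*}=j_{begin,\mu_i^*}(g_i)(s_i)$ with $g_i\in V$ and $s_i\in[\mathcal{C}\cap\mu_i^*]^{m_i}$ is exactly what allows the measure at $\mu_i^*$ to be pulled back from $V'_{\mu_i^*}$ to a genuinely internal measure of $\mathcal{N}_i$, without which the construction would fall outside the regime of normal iterations. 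Secondary bookkeeping concerns---the base case $i=0$ (possibly with $\mu_0^*>\kappa$, where $j_{begin,\mu_0^*}$ is a nontrivial iteration already) and keeping track of the arities $m_i$---should be routine once the coherence of the $k_i$'s is established.
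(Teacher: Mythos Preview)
Your plan for $(2)\Rightarrow(1)$ is correct and is essentially the paper's own argument: pull back the measure via the representation supplied by clause~(2), take the ultrapower, and extend the factor map using the universal property. No objection there.

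The gap is in $(1)\Rightarrow(2)$. You assert that the factor map $k_i\colon\mathcal{N}_i\to V'_{\mu}$ satisfies $k_i(\bar{\mathcal{U}}_i)=\mathcal{U}$, but this is exactly the non-trivial content of that direction and it does not follow from the mere existence or elementarity of $k_i$. What one gets for free (chasing $k\circ\iota=j$ and the fact that later stages fix the relevant ordinals) is only the \emph{pointwise} agreement
\[
X\in\bar{\mathcal{U}}_i\ \Longleftrightarrow\ k_i(X)\in\mathcal{U}\qquad\text{for }X\in\mathcal{N}_i,
\]
i.e., that $k_i``\bar{\mathcal{U}}_i\subseteq\mathcal{U}$. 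To upgrade this to $k_i(\bar{\mathcal{U}}_i)=\mathcal{U}$ one must show that every $Y\in\mathcal{P}^{V'_\mu}(\mu)$ is decided by some set in $k_i``\mathcal{P}^{\mathcal{N}_i}(\bar\mu_i)$ modulo $\subseteq^*$; equivalently, that the filter generated by $k_i``\bar{\mathcal{U}}_i$ is already an ultrafilter in $V'_\mu$. This is not automatic, because $j_{begin,\mu}$ may use many critical points outside $\mathcal{C}\cap\mu$, so $\mathcal{P}^{V'_\mu}(\mu)$ is genuinely larger than the range of $k_i$.

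The paper closes this gap with a normality/diagonal-intersection argument: writing an arbitrary $Y\in\mathcal{P}^{V'_\mu}(\mu)$ as $j_{begin}(f)(\nu_0,\dots,\nu_k)$ for generators $\nu_j<\mu$, one replaces $f(s)$ by whichever of $f(s)$, $\kappa\setminus f(s)$ lies in $\bar{\mathcal{U}}_i$, takes the diagonal intersection (using normality of $\bar{\mathcal{U}}_i$), and observes that the image of this set decides $Y$ modulo a bounded set. Your phrase ``$\iota_{0,i}$ is exactly the sub-iteration of $j_{begin,\mu}$ restricted to $\mathcal{C}\cap\mu$'' already presupposes that the measures line up stage-by-stage, which is what has to be proved; the diagonalization is where the work happens.
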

\begin{proof}

   $(1)\Rightarrow (2).$ Suppose $\mathcal{C} = \mathcal{C}_\iota$ for some finite normal iteration $\iota\colon V\rightarrow N$ factoring $j\restriction V$. Let $k\colon N\rightarrow M$ be the  factor embedding and $\langle\mu_0,\dots, \mu_{n-1}\rangle=\langle k(\bar\mu_0),\dots, k(\bar\mu_{n-1})\rangle$  be the increasing enumeration of $\mathcal{C}$.

   Let $\mathcal{U}_{\alpha_0}$ be the measure used by  the iteration $j\restriction V$ to hit the critical point $\mu_0$. Since $\mu_0=k(\bar\mu_0)$ it is routine to show that the diagram
             \[
\begin{tikzcd}[column sep=large, row sep=large] V \arrow[r, "j_{\alpha_0}"] \arrow[d, "\iota_1"] & M_{\alpha_0} \arrow[r, "j_{\alpha_0,\alpha_0+1}"] & M_{\alpha_0+1} \arrow[r, "j_{\alpha_0+1, \Omega}"] & M \\ N_1 \arrow[rru,  "k_1"] \arrow[rrr, "\iota_{1,n}"'] & & & N \arrow[u, "k"]  \end{tikzcd}
\]   
commutes after stipulating that  $k_1(\iota_1(f)(\bar\mu_0)):=j_{\alpha_0+1}(f)(\mu_0)$.

\smallskip

Notice that $\crit(\iota_1)=\crit(j)=\kappa$. Thus, $\bar\mu_0=\kappa$ and $k_1(\kappa)=\mu_0$.

Let $\mathcal{V}_1$ be the measure whose ultrapower embedding is $\iota_1$. Given $X\s \kappa$, $$X\in \mathcal{V}_1\;\Leftrightarrow\; \kappa\in \iota_1(X)\;\Leftrightarrow\;\mu\in j_{\alpha_0+1}(X)\;\Leftrightarrow\; j_{\alpha_0}(X)\in \mathcal{U}_{\alpha_0}.$$
This shows that  $j_{\alpha_0}``\mathcal{V}_1$ is included in the measure $\mathcal{U}_{\alpha_0}$.

 Note that this implies that $j_{\alpha_0}(\kappa)=\mu$, by uniformity of $\mathcal{U}_{\alpha_0}$. 

We claim that the filter generated by $j_{\alpha_0}``\mathcal{V}_1$ in $M_{\alpha_0}$, modulo $\s^*$, (call it $\mathcal{F}$)\footnote{Namely, $\mathcal{F}=\{X\in \mathcal{P}^{M_{\alpha_0}}(j_{\alpha_0}(\kappa))\mid \exists Y\in \mathcal{V}_1\, (j_{\alpha_0}(Y)\s^* X)\}.$} is in fact an $M_{\alpha_0}$-ultrafilter. This shows that $j_{\alpha_0}(\mathcal{V}_1)=\mathcal{F}=\mathcal{U}_{\alpha_0}$, thus confirming the validity of our claim for the first measure under discussion. 

\smallskip

Let $X\in \mathcal{P}^{M_{\alpha_0}}(j_{\alpha_0}(\kappa))$. Then, $X=j_{\alpha_0}(f)(\mu^{\alpha_0}_0,\dots, \mu^{\alpha_0}_{k})$ for an increasing sequence of critical points of $j$ below $j_{\alpha_0}(\kappa)$. Without loss of generality, $f\colon [\kappa]^{k}\rightarrow \mathcal{P}(\kappa)$. For each $s\in [\kappa]^k$, let $A_{s}$ be $f(s)$ if this latter set is $\mathcal{V}_1$-large; otherwise, set $A_s:=\kappa\setminus f(s)$. By normality of $\mathcal{V}_1$,
$$Y:=\{\alpha<\kappa\mid \forall s\in [\kappa]^k\, (\max(s)<\alpha\,\Rightarrow\, \alpha\in f(s))\}\in \mathcal{V}_1.$$
It is easy to check that $j_{\alpha_0}(Y)\setminus (\mu^{\alpha_0}_k+1)\s j_{\alpha_0}(A)_{\langle \mu^{\alpha_0}_0,\dots, \mu^{\alpha_0}_k\rangle}$.

Since $j_{\alpha_0}(Y)\in \mathcal{F}$  we conclude that  $j_{\alpha_0}(A)_{\langle \mu^{\alpha_0}_0,\dots, \mu^{\alpha_0}_k\rangle}$ is in $ \mathcal{F}$ as well. Then either $X\in\mathcal{F}$ or $j_{\alpha_0}(\kappa)\setminus X\in\mathcal{F}$, as needed.

\smallskip

Let us show that something similar occurs with $\mathcal{U}_{\alpha_i}$, the measure used by $j$ to produce $\mu_i$. To prevent cumbersome notations we concentrate on $i=1$, and since the argument is pretty similar to the previous  we just sketch it.

Once again, we have a commutative diagram of elementary embeddings
\[
\begin{tikzcd}[column sep=1.8em, row sep=large, font=\small]  V \arrow[r, "j_{\alpha_0}"] \arrow[d, "\iota_1"] & M_{\alpha_0} \arrow[r, "j_{\alpha_0,\alpha_0+1}"] & M_{\alpha_0+1} \arrow[r, "j_{\alpha_0+1, \alpha_1}"] & M_{\alpha_1} \arrow[r, "j_{\alpha_1, \alpha_1+1}"] & M_{\alpha_1+1} \arrow[r, "j_{\alpha_1+1, \Omega}"] & M \\  N_1 \arrow[rru, "k_1"] \arrow[r, "\iota_{1,2}"'] & N_2 \arrow[to=1-5, "k_2"] \arrow[rrrr, "\iota_{2,n}"'] & & & & N \arrow[u, "k"] \end{tikzcd}
\]
after stipulating $k_2(\iota_2(f)(\bar\mu_0,\bar\mu_1)):=j_{\alpha_1+1}(f)(\mu_0,\mu_1)$.

\smallskip

We want to show that $j_{\alpha_0+1,\alpha_1}(k_1(\mathcal{V}_2))=\mathcal{U}_{\alpha_1}$. This suffices to verify our claim because $\mathcal{V}_2$ can be expressed as
$\mathcal{V}_2=\iota_1(g)(\bar\mu_0)$ and as a result
$$\mathcal{U}_{\alpha_1}=j_{\alpha_0+1,\alpha_1}(j_{\alpha_0+1}(g)(\mu_0))=j_{\alpha_1}(g)(\mu_0).$$

Arguing exactly as before one shows that $(j_{\alpha_0+1,\alpha_1}\circ k_1)``\mathcal{V}_2\s \mathcal{U}_{\alpha_1}.$ In particular, $\mu_{1}=j_{\alpha_0+1,\alpha_1}(k_1(\bar\mu_1))$. We show $(j_{\alpha_0+1,\alpha_1}\circ k_1)\mathcal{V}_2=\mathcal{U}_{\alpha_1}.$

Let $X\s \mathcal{P}^{M_{\alpha_1}}(\mu_1)$ be arbitrary. Then,
$X=j_{\alpha_0+1,\alpha_1}(f)(\mu^{\alpha_1}_0,\dots, \mu^{\alpha_1}_k)$
where $f\colon [k_1(\bar\mu_1)]^k\rightarrow \mathcal{P}^{M_{\alpha_0+1}}(k_1(\bar \mu_1))$ is in $M_{\alpha_0+1}$. For each $s\in [k_1(\bar\mu_1)]^k$ let $A_s$ be either $f(s)$ or its complement, depending on which of them is $k_1(\mathcal{V}_2)$-large. We define $\mathcal{F}$ and $Y$  as before replacing $\kappa$ by $k_1(\bar\mu_1)$. Then, we note that $j_{\alpha_0+1,\alpha_1}(Y)\setminus (\mu^{\alpha_1}_k+1)\s j_{\alpha_0+1,\alpha_1}(A)_{\langle \mu^{\alpha_1}_0,\dots, \mu^{\alpha_1}_k\rangle}$ and conclude that either $X\in \mathcal{F}$ or its complement is in $\mathcal{F}$.

    \smallskip

    $(2) \Rightarrow (1).$ Let $\langle \mu_0,\dots, \mu_{n-1}\rangle$ be the increasing enumeration of  $\mathcal{C}$.

   Borrowing the notations from the previous proof,  our assumption gives $\mathcal{U}_{\alpha_0}=j_{\alpha_0}(\mathcal{V}_1)$ where $\mathcal{V}_1$ is a normal measure on $\kappa$ in $V$. In particular, $\mu_0=j_{\alpha_0}(\kappa)$. Let $\iota_1\colon V\rightarrow N_1\simeq \Ult(V,\mathcal{V}_1)$ and define $k_1\colon N_1\rightarrow M_{\alpha_0+1}$ by 
    $$k_1(\iota_1(f)(\kappa)):=j_{\alpha_0+1}(f)(j_{\alpha_0}(\kappa)).$$
    This map is easily seen to be elementary.

   By our assumption,  $\mathcal{U}_{\alpha_1}$ can be presented as $j_{\alpha_1}(f)(\mu_0)$. Hence,
   $$\mathcal{U}_{\alpha_1}=j_{\alpha_0+1,\alpha_1}(j_{\alpha_0}(f)(\mu_0))=j_{\alpha_0+1,\alpha_1}(k_1(\iota_1(g)(\kappa))).$$
   Let $\mathcal{V}_2\in N_1$ be the unique normal measure such that $j_{\alpha_0+1,\alpha_1}(k_1(\mathcal{V}_2))=\mathcal{U}_{\alpha_1}$. In particular the critical point $\bar\mu_1$ of $\mathcal{V}_2$ satisfies $j_{\alpha_0+1,\alpha_1}(k_1(\bar\mu_1))=\mu_1$.

   Let $\iota_{1,2}\colon N_1\rightarrow N_2\simeq \Ult(N_1, \mathcal{V}_2)$ and define a map $k_2\colon N_2\rightarrow M_{\alpha_1+1}$ by $k_2(\iota_2(f)(\bar\mu_0,\bar\mu_1)):=j_{\alpha_1+1}(f)(\mu_0,\mu_1)$. Clearly, $k_2$ is elementary.

   Proceeding in this fashion we produce a diagram like the one on  previous pages being $\iota\colon V\rightarrow N$ the composition of the various ultrapowers $\iota_i\colon N_i\rightarrow N_{i+1}$ and $k\colon N\rightarrow M$ the factor embedding defined by $$k(\iota(f)(\bar\mu_0,\dots, \bar\mu_{n-1})):=j(f)(\mu_0,\dots, \mu_{n-1}).$$
   By construction, it is clear that $\mathcal{C}=\mathcal{C}_\iota$.
\end{proof}


For later purposes it will be important  to specify where a condition strictly extends another. This motivates the following general definition:
 \begin{definition}
Let $\mathbb{A}_\kappa=\langle \mathbb{A}_{\alpha}, \mathbb{B}_{\beta} \mid \beta < \alpha < \kappa\rangle$ be an iteration of forcings. For $p, q \in \mathbb{A}_\kappa$ and $\Gamma \subseteq \kappa$, write $q \leq_\Gamma p$ if $q \leq p$ and 
\[\{\alpha<\kappa \mid q\restriction\alpha \not\Vdash_{\mathbb{A}_\alpha} \dot{p}(\alpha) = \dot{q}(\alpha)\} \subseteq \Gamma.\]
\end{definition}

In our case $\mathbb{P}_\kappa$ is the non-stationary-supported iteration of the gluing posets. Thus, a typical condition $p\in \mathbb{P}_\kappa$ is a sequence  $\langle p(\alpha)\mid \alpha\in \supp(p)\rangle$ where $p(\alpha)$ is a $\mathbb{P}_\alpha$-name for a condition  $\langle \dot{t}_\alpha, \dot{T}_\alpha\rangle$ and $\supp(p)$ is a nowhere stationary set.  Since $G_\alpha:=G\cap\mathbb{P}_\alpha$ is $V$-generic we can interpret  $\dot{t}_\alpha$ and $\dot{T}_\alpha$, and this interpretation yields a \emph{stem} $t_\alpha$ and a tree $T_\alpha$. Thus,  given $p\in\mathbb{P}_\kappa$ and $\alpha\in \supp(p)$, we will denote $\stem(p(\alpha))_{G_\alpha}:=(\dot{t}_\alpha)_{G_\alpha}.$ 

Notice that for $p,q\in\mathbb{P}_\kappa$, if $q \leq_\Gamma p$ then $\supp(q) \subseteq \supp(p) \cup \Gamma$.

\smallskip

The following technical lemma will become handy later on:
\begin{lemma}\label{lemma:densityconditions}
    The following collection of conditions $p\in \mathbb{P}_\kappa$ is $\leq^*$-dense in $\mathbb{P}_\kappa$: For each $\beta \in \supp(p)$ there is $\bar\beta<\beta$ and a $\mathbb{P}_{\bar\beta}$-name $\sigma$ so that $$p\restriction\beta\forces_{\mathbb{P}_\beta}\stem(p(\beta))=\sigma.$$
\end{lemma}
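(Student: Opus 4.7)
The plan is to iterate the name-reduction provided by Corollary~\ref{cor: meeting-dense-open-bounded} along the coordinates of $\supp(p)$, treating one coordinate at a time and amalgamating the partial $\leq^*$-extensions via the closure of the non-stationary support iteration.

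Fix $p\in\mathbb{P}_\kappa$ and enumerate $\supp(p)=\{\beta_\xi\mid \xi<\theta\}$ in increasing order. The goal is to construct a $\leq^*$-decreasing sequence $\langle p_\xi\mid \xi\leq\theta\rangle$ with $p_0=p$ maintaining the invariant: for each $\eta<\xi$, there exist $\bar\beta_\eta<\beta_\eta$ and a $\mathbb{P}_{\bar\beta_\eta}$-name $\tau_\eta$ such that $p_\xi\restriction\beta_\eta\Vdash\stem(p_\xi(\beta_\eta))=\tau_\eta$. Then $p^*:=p_\theta$ witnesses the density claim.

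At a successor step $\xi\mapsto\xi+1$, apply Corollary~\ref{cor: meeting-dense-open-bounded} with $\sigma:=\stem(p_\xi(\beta_\xi))$, which is a $\mathbb{P}_{\beta_\xi}$-name forced to lie in $\check V$ since stems are finite sequences of ordinals, and $\bar\varrho:=\beta_\xi$. This produces $p_{\xi+1}\leq^* p_\xi$ together with a $\mathbb{P}_{\bar\beta_\xi}$-name $\tau_\xi$, for some $\bar\beta_\xi<\beta_\xi$, satisfying $p_{\xi+1}\restriction\beta_\xi\Vdash\sigma=\tau_\xi$. The invariant at earlier coordinates $\eta<\xi$ is preserved because $\leq^*$ on each gluing poset keeps the stem unchanged (by the very definition of $\leq^*$; see Definition~\ref{forcingorder}), so $\stem(p_{\xi+1}(\beta_\eta))=\stem(p_\xi(\beta_\eta))$; and forcing statements established by $p_\xi\restriction\beta_\eta$ persist upon passage to the stronger $p_{\xi+1}\restriction\beta_\eta$.

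The main obstacle is the limit step: for a limit $\xi\leq\theta$ we must construct $p_\xi$ as a pointwise $\leq^*$-lower bound of $\langle p_\eta\mid\eta<\xi\rangle$. Coordinate-wise such a lower bound exists because each $\leq^*_\alpha$ is $\alpha$-closed (Definition~\ref{Gitikiteration}); what requires verification is that the assembled condition retains non-stationary support. This is a standard feature of non-stationary supported iterations of Prikry-type forcings developed in \cite{BenUng}: the union of the supports of the $p_\eta$ stays within a non-stationary set thanks to the careful control over support additions at each application of the reduction Corollary. Should a direct recursion prove delicate, one may instead package the reductions as $\mathbb{P}_{\alpha+1}$-names $e(\alpha)$ for $\leq^*$-dense subsets of the tail iteration $\mathbb{P}_\kappa\setminus(\alpha+1)$—namely, tail conditions whose stem at the least support coordinate has been reduced to an earlier name—and invoke Lemma~\ref{lemma:bnu} to extract $p^*$ via a single diagonal extension, arguing separately that limit points of $\supp(p^*)$ are handled by a second pass of the same procedure.
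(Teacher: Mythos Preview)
Your successor step is correct: applying Corollary~\ref{cor: meeting-dense-open-bounded} at $\beta_\xi$ reduces the stem name, and since $\leq^*$ fixes stems coordinate-wise the earlier reductions persist. The gap is entirely at the limit step, which you acknowledge but do not resolve. Two concrete problems:

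\begin{enumerate}
\item You never specify the parameter $\nu$ in your invocations of Corollary~\ref{cor: meeting-dense-open-bounded}. Without it, each passage $p_\eta\to p_{\eta+1}$ may $\leq^*$-extend at \emph{every} coordinate $\beta$, so at a limit $\xi$ the sequence $\langle p_\eta(\beta)\mid\eta<\xi\rangle$ may genuinely have length $\xi$. The $\beta$-closure of $\leq^*_\beta$ only gives a lower bound when $\xi<\beta$; for $\beta\leq\xi$ (which can occur, e.g.\ when $\beta$ is a limit point of $\supp(p)$) there is no lower bound in general.
\item You assert that $\bigcup_{\eta<\xi}\supp(p_\eta)$ remains non-stationary ``thanks to the careful control over support additions'', but you supply no such control. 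Each application of the corollary can enlarge the support, and without bookkeeping the union over $\kappa$-many steps need not stay non-stationary.
\end{enumerate}

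These two points are precisely the content of the fusion argument the paper carries out. The paper does not enumerate $\supp(p)$; instead it argues by induction on the length $\kappa$ of the iteration (successor, singular, regular cases). In the regular case it runs a fusion: one maintains a sequence of ``freeze points'' $\gamma_\xi$ together with clubs $C_\xi$ disjoint from $\supp(p^*_\xi)$, choosing $\gamma_\xi\in\bigcap_{\bar\xi<\xi}C_{\bar\xi}$ and invoking the inductive hypothesis only on the bounded interval $[\gamma,\gamma_\xi)$ while fixing everything below $\gamma_\xi$ at later stages. The diagonal intersection of the $C_\xi$'s witnesses that the final support is non-stationary, and the freeze points guarantee that at each coordinate only boundedly many changes occur. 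Your sentence ``This is a standard feature of non-stationary supported iterations'' is pointing at exactly this machinery, but invoking it is not the same as executing it.

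Your fallback via Lemma~\ref{lemma:bnu} is closer to a viable shortcut, but the one-line sketch (``stem at the least support coordinate has been reduced'', then a ``second pass'' for limit points) does not make clear why the resulting $e(\alpha)$'s are $\leq^*$-dense in the tail, nor how the least-support coordinate of $p^*\setminus(\alpha+1)$ stabilizes after the diagonal extension (it can move when the support grows). If you want to pursue this route you must spell out the definition of $e(\alpha)$ precisely and verify that a single application of Lemma~\ref{lemma:bnu} actually handles every $\beta\in\supp(p^*)$, including limit points of the support.
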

\begin{proof}
By induction on $\kappa$. The idea is to combine the standard fusion argument with the reduction of dense open sets  described in Lemma~\ref{lemma: reducingdensesets}. 

\smallskip

Our induction hypothesis reads as follows:
\[
(\mathrm{IH}_\kappa) \quad \text{Assume $\gamma + 1 <\kappa$ and $p\in \mathbb{P}_\kappa$ there is $q\leq^* p$ with $q\restriction\gamma = p\restriction\gamma$ }
\]
\[
\text{such that for each $\beta \in \supp(q) \setminus \gamma$ there is $ \bar\beta < \beta$ and a  $\mathbb{P}_{\bar \beta}$-name $\sigma$}
\]
\[
q\restriction\beta \forces_{\mathbb{P}_\beta} \stem(q(\beta)) = \sigma.
\]

\smallskip


Suppose that $(\mathrm{IH}_{\bar\kappa})$ holds for all $\bar\kappa<\kappa$. We distinguish three cases.

\medskip

\underline{\textbf{Successor case:}} Suppose that $\kappa=\bar\kappa+1$. Let $\gamma\leq \bar\kappa$ and $p\in \mathbb{P}_\kappa.$

\smallskip

$\br$ Suppose $\gamma=\bar\kappa$. Then we have nothing to do. 

\smallskip

$\br$ 
Assume $\gamma<\bar\kappa$. If $\gamma+1=\bar\kappa$ then  the forcing at stage $\bar\kappa$ is trivial – in this case it suffices to take $q=p$. Otherwise, $\gamma + 1 < \bar\kappa$ and  we apply the inductive hypothesis to $p\restriction\bar\kappa$ thus obtaining $\bar q\leq^* p\restriction\bar\kappa$ such that $\bar q\restriction\gamma= p\restriction\gamma$ and $\bar q$ witnessing the property displayed in $(\mathrm{IH}_{\bar\kappa})$. Once we have disposed with ordinals $\beta<\bar\kappa$ we handle the  case $\beta=\bar\kappa$. This is accomplished by applying Corollary~\ref{cor: meeting-dense-open-bounded} to condition $\bar q^\smallfrown\dot{p}(\bar\kappa)$. Thus, we obtain $q\leq^* \bar q^\smallfrown\dot{p}(\bar\kappa)$ such that $q\restriction\bar\kappa =\bar q$ and a $\mathbb{P}_\beta$-name $\sigma$ (with $\beta<\bar\kappa$) such that  $\bar{q}\forces_{\mathbb{P}_{\bar\kappa}}``\sigma=\stem(\dot{p}(\bar\kappa))".$ The sought condition is $\bar{q}^\smallfrown \dot{p}(\bar\kappa)$.


\medskip

\textbf{\underline{Singular  case:}} Let $\gamma<\kappa$ and $p\in \mathbb{P}_\kappa$. Fix $\langle \kappa_\xi\mid \xi<\cf(\kappa)\rangle$ a cofinal, increasing, continuous sequence in $\kappa$ such that $\gamma, \cf(\kappa)<\kappa_0.$ Applying the induction hypothesis $(\mathrm{IH})_{\kappa_0}$, we find $p_0\leq^* p\restriction \kappa_0$ such that $p_0\restriction\gamma=p\restriction \gamma$ and $p_0$ witnesses the property described in $(\mathrm{IH})_{\kappa_0}$. Set $p^*_0:=p_0{}^\smallfrown p\restriction [\kappa_0,\kappa)$.  

Proceeding in this fashion, we construct a $\leq^*$-decreasing sequence $$\langle p^*_\xi\mid \xi<\cf(\kappa)\rangle$$
of conditions in $\mathbb{P}_\kappa$ such that $p^*_\xi\restriction \kappa_0=p^*_0\restriction \kappa_0$ for all $\xi<\cf(\kappa)$. The key reason for why this sequence can be constructed is that the closure degree of the $\leq^*$-order of $\mathbb{P}_\kappa\setminus \kappa_0$ is forced to be $\cf(\kappa)^{+}$-closed. (As  $\cf(\kappa)<\kappa_0$.)

Let $p^*$ be a $\leq^*$-lower bound for the above-displayed sequence. It is clear that $p^*\leq^* p$, $p^*\restriction \gamma=p\restriction\gamma$ and that $p^*$ witnesses the property in $(\mathrm{IH}_\kappa)$.



\medskip

\textbf{\underline{Regular case:}} We tackle this case utilizing the standard fusion-type arguments presented in \cite{BenUng}. Fix an ordinal $\gamma<\kappa$ and a condition $p\in \mathbb{P}_\kappa$.

Let $C_0\s \kappa$ be a club disjoint from $\supp(p)$ and let $\gamma_0:=\min(C_0\setminus \gamma+1).$ Invoking $(\mathrm{IH}_{\gamma_0})$ we find $p_0\leq^* p\restriction \gamma_0$ such that $p_0\restriction (\gamma+1)=p\restriction (\gamma+1)$ such that for each $\beta\in \supp(p\restriction\gamma_0)\setminus (\gamma+1)$ the name for the stem $\stem(\dot{p}(\beta))$ can be reduced to some $\bar\beta<\beta$. Finally, set $p^*_0:=p_0{}^\smallfrown p\restriction [\gamma_0,\kappa).$

\smallskip

By induction on $\eta\leq \kappa$, suppose we have defined a sequence $$\langle (p^*_\xi, \gamma_\xi, C_\xi)\mid \xi<\eta\rangle$$
for which the following properties hold, for each $\sigma<\xi<\eta$:

\begin{enumerate}
    \item  $p^*_\xi \restriction \gamma_\sigma + 1 = p^*_\sigma \restriction \gamma_\sigma + 1$.
     \item $\supp(p^*_\xi)\cap C_\xi= \emptyset$.
    \item $\gamma_\xi \in \bigcap_{\bar\xi <\xi} C_{\bar\xi}$.
    \item For all $\beta \in \supp(p^*_\xi) \cap (\gamma,\gamma_\xi)$, there is $\bar{\beta} < \beta$ and $\tau\in V^{\mathbb{P}_{\bar\beta}}$ such that
    $$p^*_\xi\restriction\beta\forces_{\mathbb{P}_\beta}``\stem(p^*_\xi(\beta))=\tau".$$
\end{enumerate}
At successor stages $\xi+1$ we proceed exactly as before: Let $C_{\xi+1}\s C_\xi$ be disjoint from $\supp(p^*_\xi)$ and set $\gamma_{\xi+1}:=\sup(C_{\xi+1}\setminus \gamma_\xi+1).$ Apply $(\mathrm{IH}_{\gamma_{\xi+1}})$ and obtain $p_{\xi+1}\leq^* p_{\xi}\restriction\gamma_{\xi+1}$ such that $p_{\xi+1}\restriction \gamma_\xi+1=p_\xi\restriction \gamma_\xi+1$ and property (4) above holds. Finally, set $p^*_{\xi+1}:=p_{\xi+1}{}^\smallfrown p\restriction [\gamma_{\xi+1}, \kappa)$. (Note that $\supp(p^*_{\xi+1})$ is nowhere stationary – so, this is a legitimate condition in $\mathbb{P}_\kappa$.)

\smallskip

At limit stages $\eta<\kappa$ we take $C_\eta$ to be the limit points of the club $\bigcap_{\xi<\eta} C_{\xi}$ and set $\gamma_\eta:=\sup_{\xi<\eta}\gamma_\xi$. In particular, $\gamma_\eta\in C_\eta$ and $\gamma_\eta\notin \bigcup_{\xi<\eta}\supp(p^*_\xi).$

We amalgamate the conditions constructed thus far as follows. First, set
$$\textstyle p^*_\eta\restriction \gamma_\eta+1:=(\bigcup_{\xi<\eta} p^*_\xi\restriction\gamma_\xi)^\smallfrown \one_{\dot{\mathbb{Q}}_{\gamma_\eta}}.$$
Since the map $\xi\mapsto \gamma_\xi$ is normal, it follows that $\eta\leq \gamma_\eta$. Therefore, 
$$p^*_\eta\restriction \gamma_\eta+1\forces_{\mathbb{P}_{\gamma_\eta}+1}\text{$``\mathbb{P}_\kappa/\dot{G}_{\gamma_\eta+1}$ is $(\gamma_\eta)^+$-closed"},$$
which allows us to take a name $\dot{r}$ for a $\leq^*$-lower bound for the $\leq^*$-decreasing sequence $\langle p^*_\xi\setminus \gamma_\eta+1\mid \xi<\eta\rangle$. Finally, set
$p^*_\eta:=(p^*_\eta\restriction\gamma_\eta+1)^\smallfrown \dot{r}.$ A moment's reflection makes clear that this is a legitimate condition in $\mathbb{P}_\kappa.$

\smallskip

Finally, if $\eta = \kappa$, we proceed as before. The only difference is that we now define $C_\kappa$ as the set of limit points of $\diagonal_{\xi < \kappa} C_\xi$. The sequence $p^*_\kappa$, constructed analogously, forms a well-qualified condition—i.e., it has nowhere stationary support—as witnessed by the club $C_\kappa$. Clearly, $p^*_\kappa$ is a $\leq^*$-extension of $p$  witnessing the statement of $(\mathrm{IH}_\kappa)$
\end{proof}

\subsection{Proving the coding lemma}
In this section we show how to prove the \textbf{Coding Lemma} conditioned to some technical lemmas whose proof  will be deferred to the next section. Our proof closely follows  \cite[\S6]{HP} so we recommend our readers to have a copy of \cite{HP}  at hand.

As in the simpler case of \cite{HP}, we claim that we can code any $\kappa$-complete ultrafilter on $\kappa$ using finitely many ``stems" in the finite iteration of measures. Observe that in the gluing forcing there are different stems that represent the same information. Thus, in this context when comparing stems, we will imaging a stem $s$ extending another stem $t$ if and only if $b_s \sq b_t$. Note that this information might depend on the generic information from previous steps of the iteration, so we must exercise caution with it.
\smallskip

Let $\iota\colon V\rightarrow N$ be a finite normal iteration factoring $j\restriction V$. In analogy to the analysis of  \cite[\S6]{HP}, we will be preoccupied with the coordinates $\alpha\in [\kappa,\iota(\kappa))$ for which there is  $r\in \iota(\mathbb{P}_\kappa)$ mapped into the $M$-generic $H$ and the stem of $k(r)(k(\alpha))$ is larger than the stems of $j(p)(k(\alpha))$, for $p\in G.$

\begin{definition}\label{def:Aiota}
   Let $\iota\colon V\rightarrow N$ be a finite iteration factoring the embedding $j\restriction V.$ 
 We shall denote by $A_\iota$ the set of ordinals defined as follows: 
\[\{\alpha\in [\kappa,\iota(\kappa)) \mid \exists r\in k^{-1}(H)\,\forall p\in G\, (k(r(\alpha))_{H\restriction k(\alpha)}<j(p)(k(\alpha))_{H\restriction k(\alpha)})\}.\]
(Here $p<q$ stands for $p\leq q$ and $b_{\mathrm{stem}(q)} \sqsubset b_{\mathrm{stem}(q)}$.)
\end{definition}

As in \cite[Lemma~6.17]{HP} we obtain the following result:
\begin{lemma}\label{AsubsetGamma}
Then $A_\iota \subseteq \Gamma_\iota$. In particular, $A_\iota$ is finite.\footnote{For the definition of $\Gamma_\iota$ see page~\pageref{gammaiota}.} \qed 
\end{lemma}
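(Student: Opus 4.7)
The plan is to closely mirror the proof of \cite[Lemma~6.17]{HP}, adapted to the more intricate stem structure of the gluing poset. Suppose, towards a contradiction, that $\alpha \in A_\iota \setminus \Gamma_\iota$, and fix $r \in k^{-1}(H)$ witnessing $\alpha \in A_\iota$. Let $\langle \bar\mu_0, \dots, \bar\mu_{\ell-1}\rangle$ be the critical points of the iteration $\iota$, so that every element of $N$ has the form $\iota(f)(\vec{\bar\mu})$ for some $f \in V$. In particular, we may write $\alpha = \iota(h)(\bar\mu_{i_1}, \dots, \bar\mu_{i_n})$ for a function $h$ and a subsequence of the critical points. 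The non-membership $\alpha \notin \Gamma_\iota$ guarantees that $h$ does not canonically output one of its inputs at the relevant intermediate stage, which is the key combinatorial fact we will exploit.

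First, I will replace $r$ by a direct extension (still in $k^{-1}(H)$) whose stems have been ``reduced'' in the sense of Lemma~\ref{lemma:densityconditions}, applied inside $N$. Concretely, for each $\beta \in \supp(r)$ at which we need control, the stem $\stem(r(\beta))$ is forced to equal a $\iota(\mathbb{P}_{\bar\beta})$-name coming from some $\bar\beta < \beta$. This is needed because the stems in the gluing poset are not merely finite sequences of ordinals but encode (via the $b_{\stem(\cdot)}$ sequences) the generic clubs decided at earlier stages; ensuring the stem depends only on bounded information lets us factor through $\iota$ cleanly.

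Next, I will pull back the datum at coordinate $\alpha$ to $V$. Choose functions $f, g \in V$ and a finite tuple of critical points $\vec{\bar\mu}$ so that $r(\alpha) = \iota(g)(\vec{\bar\mu})$, the stem of $r(\alpha)$ is $\iota(f)(\vec{\bar\mu})$, and $\alpha$ itself is represented by $\iota(h)(\vec{\bar\mu})$ as above. Define, in $V$, the set
\[
D := \{ p \in \mathbb{P}_\kappa \mid \exists \vec\nu\, (p\restriction h(\vec\nu) \Vdash_{\mathbb{P}_{h(\vec\nu)}} h(\vec\nu) \in \supp(p) \wedge b_{\stem(p(h(\vec\nu)))} \supseteq b_{f(\vec\nu)})\}.
\]
Using elementarity of $\iota$, together with the reduced-name form of $r$ and the fact that $h(\vec{\bar\mu}) = \alpha \notin \Gamma_\iota$ (so the output of $h$ at $\vec{\bar\mu}$ lies outside the set of critical-point images and is therefore ``generic'' from $V$'s perspective), one shows that $D$ is dense below every condition in $G$; the essential point is that the witnesses provided by $r$ in $N$ descend to witnesses in $V$ exactly because $\alpha$ is not itself a critical point of the iteration. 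Consequently there is $p \in G \cap D$, and by the definition of $D$ together with the factorization $j = k \circ \iota$, we obtain $b_{j(p)(k(\alpha))_{H\restriction k(\alpha)}} \supseteq b_{k(r(\alpha))_{H\restriction k(\alpha)}}$, contradicting the strict inequality provided by $\alpha \in A_\iota$.

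The main obstacle I anticipate is the bookkeeping for stems in the gluing poset: unlike in \cite{HP}, where $\aleph_0$-gluing meant stems were just finite sequences, here the equivalence of stems via $b_{\stem(\cdot)}$ and the interaction with the $\ell$-order levels (and the auxiliary measures $\mathcal{U}^\sigma_\beta(t)$) must be tracked carefully when arguing that the set $D$ is dense. The finiteness of $\Gamma_\iota$ asserted at the end follows immediately since the iteration has finite length $\ell$, so $|\Gamma_\iota| \leq \binom{\ell+1}{2}$.
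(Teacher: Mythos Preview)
The paper does not supply its own argument here: the lemma is stated with a bare $\qed$ after citing \cite[Lemma~6.17]{HP}, so the intended proof is simply that the earlier argument transfers unchanged to the present iteration. Your outline follows exactly that route—represent $\alpha$, $r$, and $\stem(r(\alpha))$ by functions evaluated at the critical points, reflect to a density statement in $V$, and contradict $\alpha\in A_\iota$—so at the level of strategy you are aligned with the paper.

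Where your write-up falls short is the density step, and this is precisely where the hypothesis $\alpha\notin\Gamma_\iota$ does its work. As formulated, the set $D$ carries an unconstrained existential $\exists\vec\nu$, so membership in $D$ says nothing about the particular coordinate $\alpha=\iota(h)(\vec{\bar\mu})$: a condition $p\in G\cap D$ merely has long stem at \emph{some} $h(\vec\nu)$, and you cannot conclude that $\iota(p)(\alpha)$ has long stem. What is actually needed is a \L o\'s-type statement: one must produce $p\in G$ such that for a set of $\vec\nu$ large with respect to the iteration measures, $h(\vec\nu)\in\supp(p)$ and $b_{\stem(p(h(\vec\nu)))}\supseteq b_{f(\vec\nu)}$; only then does elementarity give $b_{\stem(\iota(p)(\alpha))}\supseteq b_{\stem(r(\alpha))}$. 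The reason this is possible is that when $\alpha$ is not a generator, the support and stem data at $\alpha$ are genuinely reflected by the functions $h,f$ on a measure-one set, whereas at a point of $\Gamma_\iota$ the coordinate is ``new'' and no $V$-side condition can see it. Your phrase ``generic from $V$'s perspective'' gestures toward this, but it is not a proof; spelling it out is exactly the content of \cite[Lemma~6.17]{HP}.
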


In order to prove the \textbf{Coding Lemma} we need to prove the analogue of \cite[Lemma~6.15]{HP} in the current, more complicated, scenario:

\begin{lemma}[Key claim]\label{lem;maximal-stem}
Suppose that $\iota\colon V \to N$ is a finite iteration  factoring $j\restriction V$. Then, $\iota$ admits  a $\leq_j$-extension to a finite iteration $\iota\colon V\to N$ which factors $j\restriction V$ via $k\colon N\rightarrow M$ and such  that for each $\bar\mu\in A_\iota$ the set
\[\bar{H}^{\iota, k}_{\bar\mu} = \{b_{\mathrm{stem}(k(r(\bar\mu))_{H\restriction k(\bar\mu)})} \mid \text{$r$ witnesses $\bar\mu\in A_\iota$}\}\]
has a $\sq$-maximal element.
\end{lemma}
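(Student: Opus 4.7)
The plan is to prove the lemma by a finite recursion on the coordinates in $A_\iota$. Since $A_\iota\subseteq\Gamma_\iota$ is finite by Lemma~\ref{AsubsetGamma}, it suffices to describe how to extend $\iota$ to attain the maximum at a single coordinate $\bar\mu\in A_\iota$ and then argue that the overall procedure closes off.

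Fix $\bar\mu\in A_\iota$. Every element of $\bar H^{\iota,k}_{\bar\mu}$ is of the form $b_t$ for $t=\mathrm{stem}(k(r(\bar\mu))_{H\restriction k(\bar\mu)})$, where $r$ witnesses $\bar\mu\in A_\iota$. All such $b_t$ are initial segments of the single Magidor/Gitik generic sequence added by $H$ at stage $k(\bar\mu)$; hence the set is $\sqsubseteq$-linearly ordered, and $b^*:=\bigcup \bar H^{\iota,k}_{\bar\mu}$ is a well-defined initial segment of that generic sequence in $M[H]$. If $b^*$ coincides with $b_{t_0}$ for some witness $r$, the maximum is already attained and no extension is needed at $\bar\mu$. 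Otherwise the chain has limit length, and the task is to extend $\iota$ so that this supremum is realized by a witness inside the extended iteration.

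The main technical step is to append one further ultrapower. Applying Lemma~\ref{lemma: characterizing Ciota} in $M$ to the set $\mathcal{C}_\iota\cup\{k(\bar\mu)\}$ — or, equivalently, reading off from the factorization of $j\restriction V$ the normal measure $\mathcal{V}\in N$ on $\bar\mu$ that $j$ uses to hit $k(\bar\mu)$ — we form $\iota_{end}\colon N\to N'$ and the new factor $k'\colon N'\to M$ with $k=k'\circ\iota_{end}$. Set $\iota':=\iota_{end}\circ\iota$; this is plainly a $\leq_j$-extension of $\iota$. In the extended iteration the coordinate $\bar\mu$ becomes a critical point, so that the local data at stage $k(\bar\mu)$ — in particular the initial segment $b^*$ — becomes representable as a function of $\bar\mu$, and hence lies in the range of $k'$. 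Combining this with the name-reduction Lemma~\ref{lemma:densityconditions} and with the existence of meets established in Lemma~\ref{lemma: existence of meets}, one produces, by a standard density argument, a witness $r'\in (k')^{-1}(H)$ whose stem at $\bar\mu$ has $b$-value exactly $b^*$. Thus $\bar H^{\iota',k'}_{\bar\mu}$ acquires a $\sqsubseteq$-maximum.

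Finally, one has to control the new coordinates. By Lemma~\ref{AsubsetGamma}, $A_{\iota'}\subseteq\Gamma_{\iota'}$ is still finite; the only coordinates that can appear in $A_{\iota'}\setminus A_\iota$ are those attached to the newly introduced critical points, and by the choice of $\mathcal{V}$ these correspond to "fresh" portions of the generic whose associated chain is immediately maximized (either trivial or a single-point stem). Iterating this construction finitely many times — once per coordinate in $A_\iota$ together with any newly produced coordinates — we obtain the desired $\leq_j$-extension. The principal obstacle is showing that the recursion terminates; this is handled by equipping the pairs $(\iota,\bar\mu)$ with a well-founded rank (for instance, the finite multi-set of lengths of the chains $\bar H^{\iota,k}_{\bar\mu}$ ordered lexicographically, or alternatively by tracking $\mathcal{C}_{\iota}$ inside the finite set $\mathcal{C}_{j\restriction V}\cap k(\bar\mu)$) and checking that each step strictly decreases it. The key combinatorial input making this work is the Prikry property of each $\langle\mathbb{G}_\alpha,\le,\le^*\rangle$ together with the closure of the intermediate models $\mathscr{N}_{\alpha,\zeta}[\iota_{\alpha,\zeta}(S_\alpha)]$ under $\alpha$-sequences, which ensures that the extended factor map $k'$ genuinely sees the supremum $b^*$ and that the witness $r'$ lives in $k^{\prime-1}(H)$ as required.
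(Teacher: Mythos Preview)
Your approach has the right general shape but misses the essential analytic input, and as a result the central step is a genuine gap.

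You take $b^*=\bigcup\bar H^{\iota,k}_{\bar\mu}$ and assert that, after extending so that $\bar\mu$ becomes a critical point, ``the initial segment $b^*$ becomes representable as a function of $\bar\mu$, and hence lies in the range of $k'$.'' But you never explain why $b^*$ is the $b$-value of a \emph{finite} stem at all. Each element of $\bar H^{\iota,k}_{\bar\mu}$ is of the form $b_t$ for a finite $t$, but an increasing union of such sets can have arbitrarily long order type inside the generic club $b_\mu$; there is no a priori reason it should equal $b_s$ for some finite $s$. The paper resolves exactly this point via Lemmas~\ref{lemma:imagesofmu} and~\ref{lem:mu-remains-regular}: since $k``\bar\mu\subseteq\bar\mu$ and $\bar\mu$ remains \emph{regular} in $M[H]$, one shows that $b_\mu\cap\bar\mu$ is \emph{bounded} in $\bar\mu$, hence equals $b_{\langle\delta_0,\dots,\delta_{n-1}\rangle}$ for some finite stem already in $H$. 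One then extends $\iota$ so that each $\delta_i$ lies in the range of the new factor map, and uses Lemma~\ref{lemma:densityconditions} together with $k_1``\bar\mu\subseteq\bar\mu$ to argue that every witness $r$ has $b_{\stem(k_1(r(\bar\mu)))_{H\restriction\mu}}\subseteq b_\mu\cap\bar\mu$. You invoke neither of these lemmas, and without them the boundedness of $b^*$ is simply unproven.

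Your termination argument is also not a proof. The proposed rank via ``multi-set of lengths of chains'' is not well-founded (the chains can be countably infinite), and the alternative via ``the finite set $\mathcal{C}_{j\restriction V}\cap k(\bar\mu)$'' fails because $j\restriction V$ may be an infinite iteration, so this set need not be finite. Likewise, the claim that coordinates in $A_{\iota'}\setminus A_\iota$ are ``immediately maximized (either trivial or a single-point stem)'' is asserted without justification. The paper avoids all of this by arguing by contradiction: take the least $\bar\mu$ for which no extension works, apply the regularity argument above to produce an explicit extension $\iota_1$ at which $\bar H^{\iota_1,k_1}_{\bar\mu}$ does have a maximum, and derive the contradiction directly --- no recursion and no termination bookkeeping are needed.
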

The proof of this will be deferred to the next section. Assuming this is true, next we describe how to establish the \textbf{Coding Lemma}.

\smallskip

By Lemma~\ref{lem;maximal-stem} we can associate to $\leq_j$-densely-many {finite iterations $\iota\colon V\rightarrow N$} factoring $j\restriction V$ a finite list of names for stems $\vec{\eta}:=\langle \dot{\eta}_\alpha\mid \alpha\in A_\iota\rangle$ where $b_{k(\dot{\eta}_\alpha)_{H\restriction k(\alpha)}}$ is the $\sq$-maximal element of $\bar{H}_\alpha$.  We may assume that $\dot{\eta}_\alpha$ is forced by the weakest condition of $\iota(\mathbb{P}_\kappa)_\alpha$ to be a stem at coordinate $\alpha$. \begin{definition}\label{definitionriota}
By $r_\iota$ we denote the condition with $\supp(r_\iota)= A_\iota$ such that $r_\iota (\alpha)$ is a $\iota(\mathbb{P}_\kappa)_\alpha$-name for a maximal tree with stem $\dot{\eta}_\alpha$.
\end{definition}
\begin{remark}
Note that $r_\iota\in \iota(\mathbb{P}_\kappa)$ is a condition such that $k(r_\iota)\in H$. Indeed, working in $M[H]$, $k(r_\iota)$ is the unique condition with (finite) support $k(A_\iota)$ such that for each $\alpha\in A_\iota$, $k(r_\iota)(k(\alpha))=\langle k(\dot{\eta}_\alpha)_{H\restriction k(\alpha)},k(\dot{T}_\alpha)_{H\restriction k(\alpha)}\rangle$. Let $p\in H$ be a condition forcing this. Then, it must be that $p\leq k(r_\iota)$ for otherwise $p$ will not be able to decide that piece of the generic sequence induced by $H$. By upwards closure of $H$ we deduce that $k(r_\iota)\in H.$
\end{remark}
The next results are Lemmas~6.18, 6.19 of \cite{HP}. Their proofs are abstract enough so as to apply to our current,  more complicated, iteration:
\begin{lemma}\label{lem;conditions-in-k-inv-H}
Let $\iota\colon V\rightarrow N$ be a finite iteration factoring $j\restriction V$ via $k$. 
For every $q \in k^{-1}(H)$, there is $p \in G$ and $q'\leq q$ such that  $q' \leq^*_{\Gamma_\iota} 
\iota(p) \wedge r_\iota$. 

In particular, for every $\alpha \notin \Gamma_{\iota}$, $(\iota(p) \wedge r_\iota) \restriction \alpha \Vdash \iota(p)(\alpha) \leq q(\alpha)$.\qed
\end{lemma}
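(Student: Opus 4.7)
The plan is to produce $p \in G$ and $q' \leq q$ by handling coordinates inside and outside $\Gamma_\iota$ separately, using the defining property of $A_\iota$, the meet construction in $\mathbb{G}_\alpha$ from Lemma~\ref{lemma: existence of meets}, and a fusion/genericity argument to achieve uniformity across $\supp(q)$.

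For coordinates $\alpha \in \supp(q) \setminus \Gamma_\iota$, Lemma~\ref{AsubsetGamma} gives $\alpha \notin A_\iota$, so $q$ cannot witness $\alpha \in A_\iota$: for each such $\alpha$ there is $p_\alpha \in G$ for which $k(q(\alpha))_{H\restriction k(\alpha)}$ is not a strict $\sqsubset$-extension of $j(p_\alpha)(k(\alpha))_{H\restriction k(\alpha)}$ at the stem level. Since $k(q)$ and $j(p_\alpha)$ both sit in the generic $H$ they are compatible, and Lemma~\ref{lemma: existence of meets}(1) forces $b_{\stem(j(p_\alpha)(k(\alpha)))} \sqsupseteq b_{\stem(k(q(\alpha)))}$. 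Hence, after strengthening the tree of $p_\alpha$ at coordinate $\alpha$ to lie inside the appropriate branch of $q(\alpha)$ (a $\leq^*$-refinement, available by the $\alpha$-closure of $\leq^*$ on $\mathbb{G}_\alpha$, Lemma~\ref{lemma: properties of gluing poset}), one obtains $\iota(p_\alpha)(\alpha) \leq q(\alpha)$ in $\iota(\mathbb{P}_\kappa)$. To consolidate the family $\{p_\alpha\}$ into a single $p \in G$ working simultaneously at every such $\alpha$, I would verify that the set of $p \in \mathbb{P}_\kappa$ satisfying $\iota(p)(\alpha) \leq q(\alpha)$ for all $\alpha \in \supp(q) \setminus \Gamma_\iota$ is $\leq$-dense below every condition of $G$, combining the pointwise analysis above with a fusion built on the $\alpha$-closure of each $\mathbb{G}_\alpha$ and the reduction lemma (Lemma~\ref{lemma:densityconditions}).

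For coordinates $\alpha \in \Gamma_\iota$ I would distinguish two cases. If $\alpha \notin A_\iota$ then $r_\iota$ is trivial at $\alpha$ by Definition~\ref{definitionriota}, so $(\iota(p) \wedge r_\iota)(\alpha) = \iota(p)(\alpha)$ and the same stem comparison yields $b_{\stem(q(\alpha))} \sqsubseteq b_{\stem(\iota(p)(\alpha))}$. If $\alpha \in A_\iota$, the maximality clause in Definition~\ref{definitionriota} gives that $\stem(r_\iota(\alpha))$ $\sqsubseteq$-dominates $\stem(k(q(\alpha)))$, hence $\stem((\iota(p) \wedge r_\iota)(\alpha))$ dominates $\stem(q(\alpha))$. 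In either case Lemma~\ref{lemma: existence of meets}(2) furnishes the meet $q(\alpha) \wedge (\iota(p) \wedge r_\iota)(\alpha)$, which is simultaneously a $\leq$-extension of $q(\alpha)$ and a $\leq^*$-extension of $(\iota(p) \wedge r_\iota)(\alpha)$. Setting $q'(\alpha) := \iota(p)(\alpha)$ off $\Gamma_\iota$ and $q'(\alpha) := q(\alpha) \wedge (\iota(p) \wedge r_\iota)(\alpha)$ on $\Gamma_\iota$, a coordinate-wise check establishes $q' \leq q$ and $q' \leq^*_{\Gamma_\iota} \iota(p) \wedge r_\iota$. The ``in particular'' assertion is then immediate, since $\supp(r_\iota) \subseteq \Gamma_\iota$ forces $(\iota(p) \wedge r_\iota)(\alpha) = \iota(p)(\alpha) = q'(\alpha) \leq q(\alpha)$ for $\alpha \notin \Gamma_\iota$.

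The principal technical difficulty will be the uniformity step: consolidating the pointwise strengthenings $p_\alpha$ into a single $p \in G$ handling every coordinate of $\supp(q) \setminus \Gamma_\iota$ at once. Since $\supp(q)$ can be unbounded in $\iota(\kappa)$, this cannot be achieved by a finite intersection; here I expect the Prikry-type fusion machinery developed in Section~\ref{sec: the gluing iteration}, together with the tight control on stems provided by Lemma~\ref{lemma:densityconditions}, to play the decisive role.
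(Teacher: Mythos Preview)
The paper does not prove this lemma in-text; it cites \cite{HP} (Lemma~6.18 there) and asserts that the argument carries over verbatim. Your overall strategy---split on/off $\Gamma_\iota$, use the definition of $A_\iota$ and the meet lemma---matches that argument, but two points deserve correction.

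\textbf{The consolidation step is far simpler than you propose, and your proposed route would not work.} For any $p\in G$, since $\iota(p)$ and $q$ are compatible in $\iota(\mathbb{P}_\kappa)$ (both map into $H$ under $k$, and $k$ is elementary), any common extension differs from $\iota(p)$ at the stem level on only finitely many coordinates---this is built into the definition of $\leq$ in a Gitik-style iteration (Definition~\ref{Gitikiteration}, clause (3)). Hence the set $B_p:=\{\alpha: b_{\stem(q(\alpha))}\sqsupset b_{\stem(\iota(p)(\alpha))}\}$ is already finite. Moreover $B_{p'}\subseteq B_p$ whenever $p'\leq p$ in $G$, and $\bigcap_{p\in G}B_p\subseteq A_\iota$ directly from Definition~\ref{def:Aiota} (if $\alpha$ survives every $p$, then $q$ itself witnesses $\alpha\in A_\iota$). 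A downward-directed family of finite sets whose intersection lies in $\Gamma_\iota$ must stabilize inside $\Gamma_\iota$: choose $p$ minimizing $|B_p\setminus\Gamma_\iota|$ and observe the minimum must be $0$. No fusion is needed---indeed fusion as you describe it cannot be run, because the ``dense set'' you want to meet is defined using the $N$-object $q$ and hence is not a subset of $\mathbb{P}_\kappa$ in $V$; genericity of $G$ over $V$ gives you no traction on it.

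\textbf{Your tree-strengthening step confuses levels.} You cannot ``strengthen the tree of $p_\alpha$'' in $V$ to lie inside a branch of the $N$-condition $q(\alpha)$. What actually happens is done entirely on the $N$-side: once the right $p\in G$ is fixed, set $q':=(\iota(p)\wedge r_\iota)\wedge q$ computed in $N$ (the meet exists by Lemma~\ref{lemma: existence of meets}). Off $\Gamma_\iota$, since $\iota(p)$'s stem dominates $q$'s, the meet at $\alpha$ has stem equal to $\stem(\iota(p)(\alpha))$ and tree $T^{\iota(p)(\alpha)}\cap (T^{q(\alpha)})_{\stem(\iota(p)(\alpha))\setminus(\max\stem q(\alpha)+1)}$; this is a $\leq^*$-extension of $\iota(p)(\alpha)$ and a $\leq$-extension of $q(\alpha)$, giving $q'\leq^*_{\Gamma_\iota}\iota(p)\wedge r_\iota$ and $q'\leq q$ as required.
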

\begin{lemma}\label{MainLemma2}
For each $\epsilon < j(\kappa)$ 
 there is a finite iteration $\iota \colon V \to N$ factoring $j\restriction V$ such that  $\epsilon\in \range(k)$ 
and for which the following holds: 

For every $\mathbb{P}_\kappa$-nice name $\dot{X}$ for a subset of $\kappa$,$$\text{$M[H] \models \epsilon \in j(\dot{X})_H\; \Longleftrightarrow\; \exists q \in \iota(\mathbb{P}_\kappa)\, (k(q) \in H\, \wedge\, q \Vdash_{\iota(\mathbb{P_\kappa})}  k^{-1}(\epsilon) \in \iota(\dot{X}))$}.$$

Moreover, letting $r_\iota$ be the condition of Definition~\ref{definitionriota}, we may take $q$ so that, for some $p\in G$, $q\leq^*_{\Gamma_\iota}\iota(p) \wedge r_\iota$ and $\supp q= \supp (\iota(p) \wedge r_\iota)$. \qed 
\end{lemma}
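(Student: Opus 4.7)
The plan is to first fix the iteration $\iota$ using Lemma~\ref{lem;representing-elements-in-direct-limit} to capture $\epsilon$ in the range, then apply Lemma~\ref{lem;maximal-stem} to secure well-definedness of $r_\iota$, and finally prove the equivalence by combining elementarity of $k$ with the antichain representation of a nice name, upgrading the witness via Lemma~\ref{lem;conditions-in-k-inv-H} and the existence of meets.

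\emph{Setting up $\iota$.} By Lemma~\ref{lem;representing-elements-in-direct-limit}, pick a finite normal iteration $\iota_0 \colon V \to N_0$ factoring $j\restriction V$ via $k_0$ with $\epsilon \in \range(k_0)$. Apply Lemma~\ref{lem;maximal-stem} to $\leq_j$-extend $\iota_0$ to $\iota \colon V \to N$ (with factor map $k\colon N\to M$) so that for every $\bar\mu\in A_\iota$ the set $\bar{H}^{\iota,k}_{\bar\mu}$ has a $\sq$-maximal element. The $\leq_j$-extension preserves $\epsilon \in \range(k)$ since $k_0^{-1}(\epsilon)$ can be pushed through the extra ultrapower steps; set $\bar\epsilon := k^{-1}(\epsilon)$. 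With these choices, $r_\iota$ is well-defined as in Definition~\ref{definitionriota}.

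\emph{Direction $(\Leftarrow)$ and basic equivalence $(\Rightarrow)$.} The $(\Leftarrow)$ direction is immediate from elementarity of $k$: $q\Vdash_{\iota(\mathbb P_\kappa)}\bar\epsilon\in\iota(\dot X)$ in $N$ yields $k(q)\Vdash_{j(\mathbb P_\kappa)}\epsilon\in j(\dot X)$ in $M$, and $k(q)\in H$ concludes via the forcing theorem. For the $(\Rightarrow)$ direction at the level of the basic equivalence (without the moreover clause), I will exploit the antichain representation of a nice name. Writing $\dot X$ as $\{(\check\alpha,q')\mid \alpha<\kappa,\, q'\in A_\alpha\}$ with $A_\alpha$ the maximal antichain of conditions forcing $\check\alpha\in\dot X$, elementarity of $k$ yields $j(A)_\epsilon = k(\iota(A)_{\bar\epsilon}) = \{k(q')\mid q'\in\iota(A)_{\bar\epsilon}\}$. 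Therefore $\epsilon \in j(\dot X)_H$ unpacks as: there exists $q'\in \iota(A)_{\bar\epsilon}$ with $k(q')\in H$; such $q'$ lies in $\iota(\mathbb P_\kappa)$ and automatically satisfies $q'\Vdash \bar\epsilon\in\iota(\dot X)$.

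\emph{Moreover clause.} Given such $q'$, since $q'\in k^{-1}(H)$, Lemma~\ref{lem;conditions-in-k-inv-H} delivers $p\in G$ and $q''\leq q'$ with $q''\leq^*_{\Gamma_\iota}\iota(p)\wedge r_\iota$; $q''$ still forces $\bar\epsilon\in\iota(\dot X)$. To guarantee $k(q'')\in H$, replace $q''$ by the meet $q:=q'\wedge(\iota(p)\wedge r_\iota)$ computed on the $\leq^*$-dense set $D$ of Lemma~\ref{lemma: existence of meets}; this meet exists in $\iota(\mathbb P_\kappa)$ by elementarity, because $k(q')$ and $k(\iota(p)\wedge r_\iota)$ are both in $H$ and hence compatible. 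As $H$ is a filter, $k(q)=k(q')\wedge k(\iota(p)\wedge r_\iota)$ lies in $H$. A final $\leq^*$-strengthening (using Lemma~\ref{lemma:densityconditions} and Corollary~\ref{cor: reducingnames} to reduce names) arranges $q\leq^*_{\Gamma_\iota}\iota(p)\wedge r_\iota$ with matching support, since outside $\Gamma_\iota$ the stems of $q'$ and $\iota(p)\wedge r_\iota$ are forced to agree by the choice of $A_\iota\subseteq\Gamma_\iota$ (Lemma~\ref{AsubsetGamma}) and the maximal-stem construction of $r_\iota$.

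\emph{Main obstacle.} The delicate step is Step~4: reconciling $k(q)\in H$ (which pushes toward taking $\leq$-meets inside $H$) with the demand that $q$ be a \emph{direct} extension $\leq^*_{\Gamma_\iota}\iota(p)\wedge r_\iota$ on exactly the same support. The combinatorial content that makes this possible is precisely the existence of $\sq$-maximal elements in $\bar H^{\iota,k}_{\bar\mu}$ for $\bar\mu\in A_\iota$ secured by Lemma~\ref{lem;maximal-stem}: this ensures that any discrepancy in stems between the $k$-image of a condition and the generic $H$ can be confined to coordinates in $\Gamma_\iota$, so that meeting with $\iota(p)\wedge r_\iota$ collapses the $\leq$-extension into a $\leq^*_{\Gamma_\iota}$-extension without losing membership in $H$.
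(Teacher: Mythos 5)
Your setup (choosing $\iota$ via Lemma~\ref{lem;representing-elements-in-direct-limit} and then $\leq_j$-extending via Lemma~\ref{lem;maximal-stem}, observing that $\leq_j$-extensions only enlarge $\range(k)$) and your $(\Leftarrow)$ direction are fine, but the forward direction has a genuine gap, and it sits exactly where you call the argument ``basic.'' You assert that $j(A)_\epsilon = k(\iota(A)_{\bar\epsilon}) = \{k(q')\mid q'\in\iota(A)_{\bar\epsilon}\}$, and hence that $\epsilon\in j(\dot X)_H$ hands you a witness $q'\in\iota(A)_{\bar\epsilon}$ with $k(q')\in H$. The first equality is correct; the second is false in general. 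Here $\iota(A)_{\bar\epsilon}$ is a maximal antichain of $\iota(\mathbb{P}_\kappa)$, of size up to $\iota(2^\kappa)$, whereas $\crit(k)$ is only the next generator of the (possibly infinite) iteration $j\restriction V$ beyond $\mathcal{C}_\iota$; so $k(\iota(A)_{\bar\epsilon})$, computed in $M$, properly contains the pointwise image. The unique condition in $j(A)_\epsilon\cap H$ witnessing $\epsilon\in j(\dot X)_H$ therefore need not lie in $\range(k)$ and cannot be pulled back to $N$. Producing \emph{some} $q\in\iota(\mathbb{P}_\kappa)$ with $k(q)\in H$ that decides $\bar\epsilon\in\iota(\dot X)$ is precisely the nontrivial content of the lemma; it does not follow from the nice-name representation.

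The intended argument (this is Lemma~6.19 of \cite{HP}, to which the paper defers without reproducing a proof) runs through the Prikry property rather than through antichains: for $p\in G$ one uses the Prikry/fusion machinery of $\iota(\mathbb{P}_\kappa)$ below $\iota(p)\wedge r_\iota$, together with Lemma~\ref{lem;conditions-in-k-inv-H}, Lemma~\ref{AsubsetGamma} and the maximal stems of Lemma~\ref{lem;maximal-stem}, to obtain a $q\leq^*_{\Gamma_\iota}\iota(p)\wedge r_\iota$ with $k(q)\in H$ that \emph{decides} $\bar\epsilon\in\iota(\dot X)$; the decision must then agree with the truth value in $M[H]$, since otherwise $k(q)$ would be incompatible with the member of $H$ forcing the opposite. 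Your Step~4 meet manipulations are in the right spirit for upgrading a witness once one exists (though note that Lemma~\ref{lem;conditions-in-k-inv-H} as stated does not give $k(q'')\in H$, and your meet $q'\wedge(\iota(p)\wedge r_\iota)$ still needs an argument that its support equals $\supp(\iota(p)\wedge r_\iota)$ rather than merely containing it). But with Step~3 broken there is nothing to upgrade, so as written the proof does not establish the lemma.
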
 
Using the above, the  \textbf{Coding Lemma} follows. The proof is the same as in \cite{HP} but (as it is brief enough) we reproduce it for completeness:
\begin{proof}[Proof of the \textbf{Coding Lemma},  Lemma~\ref{lemma: coding lemma}]
Let $\mathcal{U}\in V[G]$ be a $\kappa$-complete ultrafilter on $\kappa$ and let $j\colon V[G]\rightarrow M[H]$ be the induced ultrapower. 
By our setup considerations on page~\pageref{setupcoding} there is a finite iteration $\iota\colon V\rightarrow N$ such that $k\circ\iota=j\restriction V$ and $\epsilon:=[\id]_{\mathcal{U}}$ belongs to $\range(k)$. Invoke Lemma~\ref{MainLemma2} with respect to  these inputs and define $\mathcal{V}$ to be the set of all $X\in \mathcal{P}(\kappa)^{V[G]}$ such that there are $q\in k^{-1}(H)$ and $p\in G$ such that 
 $$q\leq^*_{\Gamma_\iota}(\iota(p)\wedge r_\iota)\;\text{ and }\; q\forces_{\iota(\mathbb{P}_\kappa)}k^{-1}(\epsilon)\in \iota(\dot{X}).$$
Note that any two conditions $q, q'$ witnessing the above admit an explicit $\leq^*_{\Gamma_\iota}$-extension. In particular, $\mathcal{V}$ is a filter. Also, by the moreover part of Lemma~\ref{MainLemma2}, $\mathcal{V}$ contains the $\kappa$-complete ultrafilter $\mathcal{U}$. Thus, $\mathcal{V}=\mathcal{U}.$ \label{proofofcoding}
\end{proof}

Therefore, all what is missing to rigorously prove the \textbf{Coding Lemma} is Lemma~\ref{lem;maximal-stem}, which regards  the existence of maximal stems. 

\subsection{Proving the Key claim}
This section, of eminently technical nature,  proves  Lemma~\ref{lem;maximal-stem} (a.k.a., \textbf{Key claim}). This will be accomplished after showing that the critical points of  any  finite normal iteration $i\colon V\rightarrow N$ factoring $j\restriction V$ remain regular in the extension $M[H]$ (see Lemma~\ref{lem:mu-remains-regular}). 

In order to prove the above, we will need to analyze where the images of $\mu\in \Gamma_\iota$ go after applying the factor embedding $k\colon N\rightarrow M$. One potential hurdle to this analysis is that  $k\colon N\rightarrow M$ may not be internal to $N$. Fortunately, under our anti-large cardinal hypothesis, any normal iteration  $j\colon V\rightarrow M$ extending to an elementary embedding $j\colon V[G]\rightarrow M[H]$, with $M[H]^{\omega}\cap V[G]\s M[H]$,  can be completed to a somewhat universal iteration $\tilde{k}\colon N\rightarrow \tilde{M},$ which is \textbf{internally definable} in $N$. This idea  was first considered in Gitik's work \cite{Gitik1988Ordering}, and variants of it play an instrumental role in obtaining lower bounds for consistency results.

\smallskip

  Recall that $j\colon V[G]\rightarrow M[H]$ is the ultrapower by a $\kappa$-complete ultrafilter in $V[G]$ (see \textbf{Setup} on page~\pageref{setupcoding}), so  $M[H]$ is a generic extension of $M$ that is $\kappa$-closed in $V[G]$. Until the moment of proving the \textbf{Key claim} we shall work under the following anti-large cardinal assumption, weaker than the one subsumed in \eqref{BlanketAssumption} on page~\pageref{BlanketAssumption}:
  \begin{equation}\label{weakerantilargecardinals}
 \tag{$\mathscr{H}^-$}   \text{ ``There is no inner model of ${\exists \alpha\, (o(\alpha)=\alpha^{++})}$''.}
\end{equation}

\begin{lemma}[Universal iteration]\label{lemma: universal iteration} \label{claim ktilde}
Assume \eqref{weakerantilargecardinals} holds. Let  $\iota\colon \mathcal{K}\rightarrow \mathcal{K}^N$  be a finite normal iteration factoring $j\restriction \mathcal{K}$ and  $k\colon \mathcal{K}^N\to \mathcal{K}^M$ be  a  factor elementary embedding. Then,  there is an \textbf{internal}, normal iteration $\tilde{k}\colon \mathcal{K}^N\rightarrow \tilde{\mathcal{K}}$, with ordinal length, and an elementary embedding $\ell\colon \mathcal{K}^M \to \tilde{\mathcal{K}}$ making the following diagram commute:
        \[
\begin{tikzcd}
    \mathcal{K} \arrow[r, "{j}"] \arrow[d, "\iota"'] 
     & \mathcal{K}^M \arrow[d, "\ell"'] \\
   \mathcal{K}^N \arrow[ru, "{k}"]\arrow[r, "\tilde{k}"'] &  \tilde{\mathcal{K}}.
\end{tikzcd}
\]   
\end{lemma}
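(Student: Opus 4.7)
The plan is to construct $\tilde{k}$ as a canonical, proper-class, internally definable iteration of $\mathcal{K}^N$, and to obtain $\ell$ via a coiteration argument. The starting observation is that, by applying Theorem~\ref{CoreModelTheorem}(2) inside $\mathcal{K}^N$ (via elementarity of the core-model construction), the factor embedding $k\colon \mathcal{K}^N \to \mathcal{K}^M$ is itself a normal linear iteration using $\mathcal{K}^N$-internal measures, say with critical points $\langle \mu_\beta \mid \beta<\delta\rangle$ and measures $\langle \mathcal{V}_\beta \mid \beta<\delta\rangle$. The snag is that $\delta$ may be transfinite and the sequence of measures is chosen ``from outside'' $\mathcal{K}^N$, so the iteration is not \emph{a priori} a definable class of $\mathcal{K}^N$.

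To internalize the picture, I would define $\tilde{k}\colon \mathcal{K}^N \to \tilde{\mathcal{K}}$ by transfinite recursion inside $\mathcal{K}^N$: at each stage $\gamma$, take the ultrapower of the current iterate by the Mitchell-least normal measure on its least measurable cardinal. Under \eqref{weakerantilargecardinals} the Mitchell order on normal measures in the core model is linear and well-founded, so this ``bottom-up'' recursion produces a canonical proper-class iteration of $\mathcal{K}^N$ that is definable as a class of $\mathcal{K}^N$, as required. I would then construct $\ell\colon \mathcal{K}^M \to \tilde{\mathcal{K}}$ by coiterating $\mathcal{K}^M$ (viewed as an iterate of $\mathcal{K}^N$ via $k$) against $\tilde{\mathcal{K}}$. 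Mitchell-style comparison results, valid under our anti-large-cardinal assumption, guarantee the coiteration terminates, and the bottom-up character of $\tilde{k}$ ensures that every measure used along $k$ gets eventually iterated by $\tilde{k}$, so that the $\tilde{\mathcal{K}}$-side wins and $\ell$ is produced as the resulting factor embedding.

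Finally, commutativity $\ell \circ k = \tilde{k}$ (which automatically yields $\tilde{k} \circ \iota = \ell \circ j$ since $k\circ\iota=j\restriction\mathcal{K}$) would follow from uniqueness of normal linear iterations in the core model: both $\ell\circ k$ and $\tilde{k}$ are normal iterations of $\mathcal{K}^N$ producing the same target $\tilde{\mathcal{K}}$, and by Theorem~\ref{CoreModelTheorem}(2) such iterations are uniquely determined by their target. The main obstacle I expect is the bookkeeping required to match the externally chosen sequence of measures used by $k$ with the Mitchell-least prescription driving $\tilde{k}$, since $k$ may visit measures in a different order than the canonical one. Here the anti-large-cardinal hypothesis \eqref{weakerantilargecardinals} is essential: it guarantees linearity of the Mitchell order, the termination of comparisons, and (via the weak covering clause of Theorem~\ref{CoreModelTheorem}(4)) control of the critical points arising along $k$ in terms of $V$-cardinals, so that a sufficiently long bottom-up iteration $\tilde{k}$ absorbs $k$.
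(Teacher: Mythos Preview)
Your proposal has the right overall architecture—define $\tilde{k}$ internally and then factor $k$ through it—but the specific prescription you give for $\tilde{k}$ does not work, and the way you propose to obtain $\ell$ is too vague to succeed.

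First, the phrase ``ordinal length'' in the statement means the iteration has \emph{set} length, not that it is a proper class. The paper bounds the iteration below $(\lambda^+)^{\mathcal{K}^N}$ for $\lambda=(\sup k(\Gamma_\iota))^+$. More importantly, your rule ``always apply the Mitchell-least measure on the least measurable'' only ever uses measures of order~$0$. By coherence, after such a step the critical point is no longer measurable in the iterate, so your $\tilde{k}$ never applies a higher-order measure to any cardinal. But $k$ may well use measures of positive Mitchell order, and for $\ell\colon\mathcal{K}^M\to\tilde{\mathcal{K}}$ to exist, each measure used along $k$ must eventually be matched by a step of $\tilde{k}$ on (the image of) the same cardinal with the same order. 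Your $\tilde{k}$ cannot do this. The paper's $\tilde{k}$ is more delicate: at stages of $\mathcal{K}^N$-cofinality~$\omega$ it is allowed to apply a measure of positive order, chosen minimally so that each order below $o(\rho)$ is hit cofinally often before the iteration passes $\rho$. This is exactly what is needed to absorb arbitrary choices made by $k$.

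Second, ``coiteration'' does not hand you an embedding $\mathcal{K}^M\to\tilde{\mathcal{K}}$; it produces a common iterate of both, and there is no reason the $\tilde{\mathcal{K}}$-side stays fixed. The paper instead constructs $\ell$ explicitly: it builds a function $h$ assigning to each stage $\alpha$ of $k$ a stage $h(\alpha)$ of $\tilde{k}$ at which the (image of the) same measure is applied, and checks inductively that $\ell_\alpha(k_\alpha(f)(\vec\kappa))=\tilde{k}_{\bar\alpha}(f)(\tilde\kappa_{h(\vec\kappa)})$ is elementary. The hard step is at limit $\alpha$ of cofinality~$\omega$, where one must rule out that the set of earlier stages whose measures map to $(\kappa_\alpha,\zeta_\alpha)$ is unbounded. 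This is where the external hypothesis enters: the $\sigma$-closure of $M[H]$ in $V[G]$ (coming from $j$ being a $\kappa$-complete ultrapower) together with Theorem~\ref{CoreModelTheorem} forces such a set to be bounded, since otherwise the measure $k_\alpha(\mathcal{U})(\kappa_\alpha,\zeta_\alpha)$ would be reconstructible in $\mathcal{K}^M$, contradicting coherence. You do not invoke this closure anywhere, and without it the matching at limit stages breaks down.

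Finally, your commutativity argument via ``uniqueness of normal iterations with the same target'' is not what Theorem~\ref{CoreModelTheorem}(2) says, and is in any case circular: you would first need $\ell$ to land in $\tilde{\mathcal{K}}$, which is the point at issue.
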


\begin{proof}
    Let $\mathcal{U}$ be the coherent sequence of measures in $\mathcal{K}^N$ and $\lambda:=(\sup k(\Gamma_{\iota}))^+$.

         We define the iteration $\tilde{k}$ in a \emph{Mitchell-style fashion}, internally to $\mathcal K^N$.
         
         We shall denote by $\tilde{M}_{\alpha}$ the iterates of  $\mathcal{K}^{N}$ leading to  $\tilde{\mathcal{K}}$. 
        
        Suppose by induction that the subiteration of $\tilde{k}$, $$\langle \tilde{k}_{\alpha,\beta}\colon \tilde{M}_\alpha\to \tilde{M}_\beta\mid \alpha\leq \beta<\Omega\in \Ord\cup\{\Ord\}\rangle$$ has been already defined. As usual, we stipulate $\tilde{k}_{0}:=\id$ and $\tilde{M}_0:=\mathcal{K}^N$. 
        
        Denote by $\tilde{k}_\alpha(\mathcal{U})(\tilde{\kappa}_\alpha,\tilde{\zeta}_\alpha)$ the measure in $\tilde{M}_\alpha$ used to define $\tilde{k}_{\alpha,\alpha+1}.$ If $\beta<\Omega$ is a limit ordinal then, for each $\alpha<\beta$, we let $\tilde{k}_{\alpha,\beta}$ the direct limit of $\langle \tilde{k}_{\alpha,\xi}\mid \alpha\leq \xi<\beta\rangle$.  Otherwise,  $\beta$  is of the form $\alpha+1$ and we have two cases:

        \smallskip
      
      ${(\aleph)}$: \label{eq: aleph of the iteration}If $\cf^{\mathcal{K}^N}(\alpha)\neq \omega$ 
      then we let (if it exists) $\tilde{\kappa}_\alpha < \tilde{k}_\alpha(\lambda)$  the least $\tilde{M}_\alpha$-measurable  in $[\sup_{\xi<\alpha}(\tilde{\kappa}_\xi+1), \tilde{k}_\alpha(\lambda))$ and we iterate $\tilde{k}_\alpha(\mathcal{U})(\tilde{\kappa}_\alpha,0).$

      \smallskip

      ${(\beth)}$: \label{eq: beth of the iteration} If $\cf^{\mathcal{K}^N}(\alpha)=\omega$ then, if there is a $\tilde{M}_\alpha$-measurable cardinal $\rho$ in $[\sup_{\xi<\alpha}\tilde{\kappa}_\xi, \tilde{k}_\alpha(\lambda))$ and a Mitchell order $\zeta<o^{\tilde{M}_\alpha}(\rho)$ for which the set
       $$\{\xi < \alpha \mid \tilde{k}_{\xi,\alpha}(\tilde\kappa_\xi) = \rho \text{ and } \tilde{k}_{\xi,\alpha}(\tilde\zeta_\xi) = \zeta\}$$
       is bounded in $\alpha$, we let $\tilde{\kappa}_\alpha$ the first $\tilde{M}_\alpha$-measurable in  $[\sup_{\xi<\alpha}\tilde{\kappa}_\xi,\tilde{k}_\alpha(\lambda))$ and ${\zeta}_\alpha<o^{\tilde{M}_\alpha}(\rho)$ the first order witnessing this. 
  Then, we iterate $\tilde{k}_\alpha(\mathcal{U})(\tilde{\kappa}_\alpha,\tilde{\zeta}_\alpha)$.\footnote{If $(\beth)$ holds because $\{\xi < \alpha \mid \tilde{k}_{\xi,\alpha}(\tilde\kappa_\xi) = \rho\}$ is bounded for some $\rho$, then $\tilde{\zeta}_\alpha=0.$} 

  \smallskip

  $(\gimel)$: If $(\aleph)$ nor $(\beth)$ are fulfilled,  we stipulate that the iteration halts.


        \begin{claim}\label{subclaim:Mitchells}
            The iteration halts after less than  $(\lambda^+)^{\mathcal K^N}$-many steps. 
        \end{claim}
        \begin{proof}[Proof of claim]
         This is a classical argument due to Mitchell which we reproduce both for completeness and for the benefit of our readers. 
         
         Work in $\mathcal K^N$. Suppose that the iteration does not stop after $(\lambda^+)^{\mathcal{K}^N}$-many steps. Thus, we have formed the sequence $\langle (\tilde{\kappa}_\xi, \tilde{\zeta}_\xi)\mid \xi<\lambda^+\rangle.$ Let $S$ denote the ordinals ${<}\lambda^+$ of $\mathcal{K}^N$-cofinality $\omega$. Certainly, this set is stationary.

         For each $\alpha\in S$ there is $\beta_\alpha<\alpha$ such $\tilde{\kappa}_\alpha,\tilde{\zeta}_\alpha\in \range(\tilde{k}_{\beta_\alpha,\alpha}).$ By Fodor's lemma, we find $S_0\s S$ stationary and $\beta^*<\lambda^+$ such that $\beta_\alpha=\beta^*$ for all $\alpha\in S_0$. Similarly, every ordinal ${<}\tilde{k}_{\beta^*}(\lambda)$ in $M_{\beta^*}$ is of the form $\tilde{k}_\beta(f)(\tilde{\kappa}_{\alpha_0}, \dots, \tilde{\kappa}_{\alpha_{n-1}})$ for some $f\colon \tilde{\kappa}_0^n\rightarrow \lambda$ in $\mathcal{K}^N$. Since $\mathcal{K}^N\models \mathrm{GCH}$ and $\lambda$ is a regular  above $\tilde{\kappa}_0$, there are only $\lambda$-many of such functions and consequently at most $\lambda$-many possible representations for ordinals below $\tilde{k}_{\beta^*}(\lambda).$
         
          So, we find another stationary set $S_1\s S_0$  and ordinals $\rho, \zeta$ such that for each $\alpha\in S_1$, $\tilde{\kappa}_\alpha=\tilde{k}_{\beta^*,\alpha}(\rho)$ and $\tilde{\zeta}_\alpha=\tilde{k}_{\beta^*,\alpha}(\zeta)$.  Therefore,
         $$\text{$\tilde{k}_{\alpha,\alpha'}(\tilde{\kappa}_\alpha)=\tilde{\kappa}_{\alpha'}$ and $\tilde{k}_{\alpha,\alpha'}(\tilde{\zeta}_\alpha)=\tilde{\zeta}_{\alpha'}$ for all $\alpha<\alpha'$ in $S_1$.}$$

         This is going to give a contradiction: Indeed, let $\alpha_\omega$ be the $\omega$th-member of $S_1$. Since $\cf^{\mathcal{K}^N}(\alpha_\omega)=\omega$ we were supposed to hit a measure which was not the image of $\tilde{k}_{\alpha_n}(\mathcal{U})(\tilde{\kappa}_{\alpha_n}, \tilde{\zeta}_{\alpha_n})$ for $n<\omega$, but the above contradicts this.   
        \end{proof}




 The above yields $\Omega\in \Ord$, thus  determining completely the iteration $\tilde{k}$.
 By maximality of $\Omega$, it should be clear that  $\cf^{\mathcal{K}^N}(\Omega)=\omega$.

 \begin{claim}\label{claim: things work as expected}
      For each $\alpha<\Omega$ and a $\tilde{M}_\alpha$-measurable cardinal $\rho{\geq \tilde{\kappa}_\alpha}$, there is an stage of the iteration $\tilde{k}$,  ${\alpha_*} \in (\alpha, \Omega]$, for which the following hold:
      $$
      \begin{cases}
        \tilde{k}_{\alpha,\beta}(\rho)\geq \tilde{\kappa}_\beta, & \text{if $\beta<{\alpha_*}$};\\
       \tilde{k}_{\alpha,\beta}(\rho)<\tilde{\kappa}_\beta, & \text{if $\beta\in [{\alpha_*},\Omega)$.}\footnote{Note that $\alpha_*$ in fact depends on $\rho$ as well, but we suppress this dependence to avoid cluttered notation. We also allow $\alpha_* = \Omega$, in which case the second case may be vacuous.
}  
      \end{cases}
      $$  
      Moreover, $\cf^{\mathcal{K}^N}({\alpha_*}) = \omega$ and for every $\zeta < o^{\tilde{M}_{\alpha_*}}(\tilde{k}_{\alpha,{\alpha_*}}(\rho))$ the set 
      $$\{\gamma<{\alpha_*}\mid\gamma\geq \alpha\,\wedge\, \tilde{k}_{\gamma,{\alpha_*}}(\tilde{\kappa}_\gamma)=\tilde{k}_{\alpha,{\alpha_*}}(\rho)\,\wedge\, \tilde{k}_{\gamma,{\alpha_*}}(\tilde{\zeta}_\gamma)=\zeta\}$$
      is unbounded in ${\alpha_*}$.
  \end{claim}
  \begin{proof}
 Let $\alpha<\Omega$ and $\rho$ be a  $\tilde{M}_\alpha$-measurable cardinal. 
 
In case it exists, let ${\alpha_*}$ be the least ordinal in $(\alpha,\Omega)$ for which $\tilde{k}_{\alpha,{\alpha_*}}(\rho) < \tilde{\kappa}_{{\alpha_*}}$. Otherwise, if that ordinal does not exist, we stipulate that  ${\alpha_*}:=\Omega$. 

  If ${\alpha_*}=\Omega$ then the claim is valid by our very choice of ${\alpha_*}$. So, let us assume that ${\alpha_*} < \Omega$. Let $\beta<\Omega$. If $\beta<{\alpha_*}$ then, clearly, $\tilde{k}_{\alpha,\beta}(\rho)\geq \tilde{\kappa}_\alpha.$ 
  Otherwise, $\beta\geq {\alpha_*}$ and then we  verify,  by induction on $\beta\in [{\alpha_*},\Omega)$,  that 
  \[\tilde{k}_{\alpha,{\alpha_*}}(\rho) = \tilde{k}_{\alpha,\beta}(\rho) < \tilde{\kappa}_\beta.\]
 For $\beta = {\alpha_*}$ the above is true by the very definition of ${\alpha_*}$. For a successor ordinal $\beta = {\bar\beta} + 1$, we combine the induction hypothesis (i.e., $\tilde{k}_{\alpha,{\alpha_*}}(\rho)=\tilde{k}_{\alpha,{\bar\beta}}(\rho)<\tilde{\kappa}_{\bar\beta}$) with the fact that $\tilde{k}_{\alpha,{\bar\beta}}(\rho) = \tilde{k}_{\alpha,\beta}(\rho)$ (as  $\crit(\tilde{k}_{{\bar\beta},\beta})=\tilde{\kappa}_{\bar\beta}$). 
 For limit steps, we use  the fact that the critical points $\tilde\kappa_{\bar\beta}$'s are increasing. 

 \smallskip

  Let us now address the second part of our claim.

  $\br$ For the sake of a contradiction, let us suppose that $\cf^{\mathcal{K}^N}({\alpha_*})\neq\omega$. Then, it must be that ${\alpha_*}<\Omega$. Since the iteration {has not halted,}  $\tilde{\kappa}_{{\alpha_*}}$ is the first $\tilde{M}_{{\alpha_*}}$-measurable in the interval $[\sup_{\xi<{\alpha_*}}(\tilde{\kappa}_\xi+1),\tilde{k}_{{\alpha_*}}(\lambda))$. {From our choice of ${\alpha_*}$ and elementarity of $k_{\alpha,{\alpha_*}}$, }$\tilde{k}_{\alpha,{\alpha_*}}(\rho)$ is a $\tilde{M}_{{\alpha_*}}$-measurable in that interval, ergo $\tilde{\kappa}_{{\alpha_*}}\leq \tilde{k}_{\alpha,{\alpha_*}}(\rho)$, contradicting the  definition of ${\alpha_*}$. 

  $\br$ Suppose  that for some Mitchell order $\zeta<o^{\tilde{M}_\beta}(\tilde{k}_{\alpha,{\alpha_*}}(\rho))$
   $$\{\gamma<{\alpha_*}\mid\gamma\geq \alpha\,\wedge\, \tilde{k}_{\gamma,{\alpha_*}}(\tilde{\kappa}_\gamma)=\tilde{k}_{\alpha,{\alpha_*}}(\rho)\,\wedge\, \tilde{k}_{\gamma,{\alpha_*}}(\tilde{\zeta}_\gamma)=\zeta\}$$
   was bounded in ${\alpha_*}$.  Since $\tilde{k}_{\alpha,{\alpha_*}}(\rho)$ is in $[\sup_{\xi < {\alpha_*}} \tilde\kappa_{\xi} , \tilde{k}_{{\alpha_*}}(\lambda))$ and the iteration abides in this case by Clause~$(\beth)$, we have, by minimality of $\tilde{\kappa}_{{\alpha_*}}$, that   $\tilde\kappa_{{\alpha_*}} \leq \tilde{k}_{\beta,{\alpha_*}}(\rho)$ --- a contradiction to the definition of ${\alpha_*}$.
  
\end{proof}
 
Let us record a couple of facts which, although we will not use them directly, provide some insight into the structure of the iteration $\tilde{k}$:
\begin{claim}[Some useful facts about $\tilde{k}$]\label{subclaim: Mitchell}
\hfill
    \begin{enumerate}
        \item For ordinals $\beta < \gamma < \Omega$, such that $\tilde{\kappa}_\gamma < \tilde{k}_{\beta,\gamma}(\tilde\kappa_\beta)$, we have $\gamma_* < {\beta_*}$ where $\gamma_*, {\beta_*}$ are as in Claim~\ref{claim: things work as expected} taking $\rho_\beta:=\tilde{\kappa}_\beta$ and  $\rho_\gamma:=\tilde{\kappa}_\gamma$.
        \item For an ordinal $\alpha<\Omega$, $\tilde{\zeta}_\alpha$ is $0$ unless $\cf^{\mathcal{K}^N}(\alpha)=\omega$ and the set $A=\{\beta<\alpha\mid \tilde{k}_{\beta,\alpha}(\tilde{\kappa}_\beta)=\tilde{\kappa}_\alpha\}$ is bounded in $\alpha$, in which case 
         \[\tilde\zeta_\alpha = \limsup (\{\tilde{k}_{\beta,\alpha}(\tilde\zeta_\beta) + 1\mid \beta\in A\}).\]
    \end{enumerate}
\end{claim}
\begin{proof}[Proof of claim]
 (1).  On the one hand, one can easily conclude that $\gamma_* \leq {\beta_*}$ by noting that for each $\alpha \in [\gamma, \gamma_*)$, \[\tilde\kappa_\alpha \leq \tilde{k}_{\gamma,\alpha}(\tilde\kappa_\gamma) < {\tilde{k}_{\gamma,\alpha} (\tilde{k}_{\beta,\gamma} (\tilde\kappa_\beta)}) = \tilde{k}_{\beta,\alpha}(\tilde\kappa_\beta),\] by elementarity of $\tilde{k}_{\gamma,\alpha}$.   On the other hand, in order to argue that equality is impossible, let us assume towards a contradiction that $\gamma_*= {\alpha_*}$ and note that the same computation shows that for every $\alpha \in [\gamma,\gamma_*)$, 
 \[\tilde{k}_{\alpha,{\beta_*}}(\tilde\kappa_\alpha) < \tilde{k}_{\alpha,{\beta_*}}(\tilde{k}_{\beta,\alpha}(\tilde\kappa_\beta)) = \tilde{k}_{\beta,{\beta_*}}(\tilde\kappa_\beta)\] 
 and in particular - not equal to it. This is impossible by Claim~\ref{claim: things work as expected}. 

 \smallskip

 (2). Since the iteration does not halt at stage $\alpha$, $\tilde\zeta_\alpha$ is the least Mitchell order $\zeta<o^{\tilde{M}_\alpha}(\tilde{\kappa}_\alpha)$ for which the following set is bounded: 
     \[B:=\{\beta \in A \mid \tilde{k}_{\beta,\alpha}(\tilde{\zeta_\beta})=\zeta\}.\]

     Let $\beta < \alpha$ be the minimal ordinal containing  $B$ and for which the Mitchell order $\tilde\zeta_\alpha$ is in $\range (\tilde{k}_{\beta,\alpha})$. 
     Let $\zeta$ be the pre-image of $\tilde{\zeta}_\alpha$ under  $\tilde{k}_{\beta,\alpha}$.
     
     Let us claim that for every $\gamma \in A \setminus (\beta+1)$, we have that $\tilde{k}_{\gamma,\alpha}(\tilde{\zeta}_\gamma) < \tilde{\zeta}_\alpha$. Assume that this is not the case. Let $\gamma\in A\setminus (\beta+1)$ be the least such that
     \begin{equation}\label{eq: Mitchell}
\tag{*}\tilde{k}_{\gamma,\alpha}(\tilde{\zeta}_\gamma)\geq \tilde{\zeta}_\alpha. 
     \end{equation}

    Since $\gamma\notin B$, necessarily $\tilde{k}_{\gamma,\alpha}(\tilde{\zeta}_\gamma)>\tilde{\zeta}_\alpha=\tilde{k}_{\beta,\alpha}(\bar\zeta)$. Therefore, $$\tilde{\zeta}_\gamma>\tilde{k}_{\beta,\gamma}(\bar\zeta).$$ 
    By commutativity of the iteration and since $\gamma\in A$ we have that 
     \[A \cap \gamma = \{\beta < \gamma \mid \tilde{k}_{\beta,\gamma}(\tilde\kappa_\beta) = \tilde\kappa_\gamma\}.\]
      $A\cap \gamma$ has to be unbounded in $\gamma$ for otherwise, by the induction hypothesis,  $\tilde{\zeta}_\gamma$ would be $0$. By the induction hypothesis, we conclude that  \[\tilde\zeta_\gamma = \limsup(\{ \tilde{k}_{\bar\gamma,\gamma}(\tilde\zeta_{\bar\gamma})+ 1\mid \bar\gamma \in A \cap \gamma\}).\]
     In particular, there are unboundedly many $\bar{\gamma} < \gamma$ such that $$\tilde{k}_{\bar\gamma,\gamma}(\tilde{\zeta}_{\bar\gamma}) \geq \tilde{k}_{\beta,\gamma}(\bar\zeta).$$ Every such ordinal $\bar\gamma$ must be a limit ordinal (otherwise, $\tilde{\zeta}_{\bar\gamma}=0$), so $\gamma$ is a limit of limit ordinals, and cofinally many of them are larger than $\beta$. In particular  $\tilde{\zeta}_{\bar\gamma} \geq \tilde{k}_{\beta,\bar\gamma}(\bar\zeta)$ for cofinally many $\beta<\gamma$. However, this poses  a contradiction to the assumption that $\gamma$ is the least ordinal in $A\setminus (\beta+1)$ witnessing \eqref{eq: Mitchell}. Therefore,  for co-boundedly many $\gamma \in A$, $\tilde{k}_{\gamma, \alpha}(\tilde{\zeta}_\gamma) < \tilde{\zeta}_\alpha$. So, 
     $$\tilde\zeta_\alpha \geq \limsup (\{\tilde{k}_{\beta,\alpha}(\tilde\zeta_\beta) + 1\mid \beta \in A\}).$$

Notice that this inequality cannot be strict, for otherwise letting \[\bar\zeta = \limsup (\{\tilde{k}_{\beta,\alpha}(\tilde\zeta_\beta) + 1 \mid \beta \in A\}),\]
 we would get a Mitchell order ${<}\tilde{\zeta}_\alpha$ for which $\{\beta \in A \mid \tilde{k}_{\beta,\alpha}(\tilde{\zeta}_{\beta})=\bar\zeta\}$ is bounded in $\alpha$, thus contradicting the minimality of $\tilde{\zeta}_\alpha$ required in $(\beth)$.
\end{proof}


 \smallskip

 Let us verify that one can define a quotient elementary embedding $$\ell\colon \mathcal{K}^N\rightarrow \tilde{\mathcal{K}}.$$ This will  essentially be the comparison map between the iterations $k$ and $\tilde{k}$.
 
 Let us look at the iteration $k\colon \mathcal{K}^N\to \mathcal{K}^M$. This is  a normal iteration $\langle k_{\alpha, \beta} \mid \alpha \leq \beta \leq \alpha_*\rangle$, so it makes sense to talk about the pair $(\kappa_\alpha, \zeta_\alpha)$ for which  $k_\alpha(\mathcal{U})(\kappa_\alpha,\zeta_\alpha)$ is the measure hitted at stage $\alpha$ of this iteration.

 \smallskip

 Aiming to define the elementary embedding $\ell\colon \mathcal{K}^M\rightarrow\mathcal{K}^{\tilde{M}}$, we need to assign to each $\alpha < \alpha_*$ an ordinal $h(\alpha)$ so that the next equation is fulfilled \[\ell(k(f)(\kappa_{\alpha_0},\dots, \kappa_{\alpha_{m-1}})) = \tilde{k}(f)(\tilde\kappa_{h(\alpha_0)}, \dots, \tilde\kappa_{h(\alpha_{m-1})}).\]

 We need to show that there is such a  function $h$ so that $\ell$ is elementary, and we do it by recursively defining \emph{approximations} to $\ell$. 
  Formally, for each $\alpha < \alpha_*$, we define an elementary   embedding $\ell_\alpha \colon M_\alpha \to \tilde{M}_{\bar\alpha}$ 
 \begin{equation}\label{eq: embedding}
\tag{$\oplus$} \ell_\alpha(k_\alpha(f)(\kappa_{\beta_0},\dots, \kappa_{\beta_{n-1}})):=\tilde{k}_{{\bar\alpha}}(f)(\tilde{\kappa}_{h(\beta_0)},\dots, \tilde{\kappa}_{h(\beta_{n-1})}),    
 \end{equation}
where $\bar\alpha$ is defined as follows: 
$$\bar\alpha:=\begin{cases}
    \sup h``\alpha, & \text{if $\alpha$ is limit;}\\
     h(\alpha - 1) +1, & \text{if $\alpha$ is successor.}\footnote{Note that for successor ordinals $\alpha$, $\sup h\image \alpha = h(\alpha - 1)$.}
\end{cases}$$

Eventually we shall set $\ell := \ell_{\alpha_*}$, thus producing a commutative diagram
\begin{equation}\label{eq: complete diagram}
    \tag{$\square$}
\begin{tikzcd}
\mathcal{K}^N \arrow[d, "{\id}"] \arrow[r, "{k_{0,\beta}}"] 
& M_\beta \arrow[d, "{\ell_\beta}"] \arrow[r, "{k_{\beta,\alpha}}"] 
& M_{\alpha} \arrow[d, "{\ell_\alpha}"] \arrow[r, "{k_{\alpha,\alpha_*}}"]  
& \mathcal{K}^M \arrow[d, "{\ell_{\alpha_*}}"] \\
\mathcal{K}^N \arrow[r, "{\tilde{k}_{0,\bar\beta}}"] 
& \tilde{M}_{\bar\beta} \arrow[r, "{\tilde{k}_{\bar\beta, \bar\alpha}}"] 
& \tilde{M}_{\bar\alpha} \arrow[r, "{\tilde{k}_{\bar\alpha, \bar\alpha_*}}"] 
& \mathcal{K}^{\tilde{M}},
\end{tikzcd}
\end{equation}
being  $0 \leq \beta \leq \alpha \leq \alpha_*$.

\smallskip

For each $\alpha<\alpha_*$, $h(\alpha)$ will be the least $\beta\geq \bar\alpha$ for which 
$\tilde{\kappa}_\beta=\tilde{k}_{\bar\alpha,\beta}(\ell_\alpha(\kappa_\alpha))$ and 
$\tilde{\zeta}_\beta=\tilde{k}_{\bar\alpha,\beta}(\ell_\alpha(\zeta_\alpha))$. 
In other words, we will show that for each $\alpha<\alpha_*$ there is always a stage  
$\beta\geq \bar\alpha$ in the iteration $\tilde{k}$ where the measure used at that stage is exactly 
$\tilde{k}_{\bar\alpha,\beta}(\ell_\alpha(k_\alpha(\mathcal{U})(\kappa_\alpha,\zeta_\alpha)))$. 

Assuming that $\ell_\alpha\colon M_\alpha\rightarrow\tilde{M}_{\bar\alpha}$ is elementary, the above observation will guarantee that $\ell_{\alpha+1}$ as defined in \eqref{eq: embedding} is elementary and that the diagram
 \[
\begin{tikzcd}
 M_{\alpha} \arrow[d, "{\ell_\alpha}"] \arrow[r, "{k_{\alpha,\alpha+1}}"]  
& M_{\alpha+1} \arrow[d, "{\ell_{\alpha+1}}"] \\
 \tilde{M}_{\bar\alpha} \arrow[r, "{\tilde{k}_{\bar\alpha, h(\alpha)+1}}"] 
& \tilde{M}_{h(\alpha)+1}
\end{tikzcd}
\]
commutes.  At limit stages $\alpha\leq \alpha_*$, we will conclude that $\ell_\alpha$ is elementary as well using the inductive hypothesis  ($``\ell_\beta$ is elementary for all $\beta < \alpha$") and appealing to  general facts about direct limits of elementary embeddings.


\smallskip

Let us assume that $\ell_\gamma$ was defined in a way that the values of $h\restriction\alpha$ were chosen at each ordinal to be as small as possible so that for each $\gamma<\alpha$,
$$\tilde{\kappa}_{h(\gamma)}=\tilde{k}_{\bar\gamma,h(\gamma)}(\ell_\gamma(\kappa_\gamma))\,\wedge\,\tilde{\zeta}_{h(\gamma)}=\tilde{k}_{\bar\gamma,h(\gamma)}(\ell_\gamma(\zeta_\gamma)).$$

We will maintain the following inductive hypothesis for $\alpha\leq \alpha_*$:
\begin{enumerate}[label=(\Roman*)]
    \item The embeddings $\langle \ell_\gamma\mid \gamma\leq  \alpha\rangle$ exist, are elementary and yield a commutative system with the $k_{\gamma,\alpha}$'s and $\tilde{k}_{\gamma,\bar\alpha}$'s, as described in \eqref{eq: complete diagram}.
    \item $\tilde{\kappa}_{\bar\alpha}\leq \ell_\alpha(\kappa_\alpha)$.
   
  \item There is an ordinal  $h(\alpha) \in (\bar \alpha, \bar\alpha_*)$ for which $$\text{$\tilde{\kappa}_{h(\alpha)} = \tilde{k}_{\bar\alpha,h(\alpha)}(\ell_\alpha(\kappa_\alpha))$ and
    $\tilde{\zeta}_{h(\alpha)} = \tilde{k}_{\bar\alpha,h(\alpha)}(\ell_\alpha(\zeta_\alpha))$.}$$
    where $\bar\alpha_*$ is the ordinal given by Claim~\ref{claim: things work as expected} when applied with respect to $\ell_\alpha(\kappa_\alpha).$
\end{enumerate} 
\begin{claim}
$\mathrm{(I)}$--$\mathrm{(III)}$ hold for every $\alpha\leq \alpha_*$.
\end{claim}
\begin{proof}[Proof of claim]
We prove this by induction on $\alpha\leq \alpha_*.$ 

\smallskip

As explained above, Clause~(I) follows right away from Clause~(III), and in order to prove the latter we will bear on the auxiliary Clause~(II). As a result, it suffices to prove Clauses~(II) and (III).

     \smallskip

     \underline{\textbf{Clause (II)}:} 
   
     \smallskip
     
     $\br$ If $\alpha=0$ then $\ell_0(\kappa_0)=\kappa_0>\tilde{\kappa}_{\bar 0}$ as, by definition, $\tilde{\kappa}_{\bar 0}=0.$  

     $\br$ If $\alpha=\gamma+1$ is a successor then, by induction hypothesis, $$\tilde{\kappa}_{h(\gamma)}=\tilde{k}_{\bar\gamma, h(\gamma)}(\ell_\gamma(\kappa_\gamma))\; \text{and}\; \tilde{\zeta}_{h(\gamma)}=\tilde{k}_{\bar\gamma, h(\gamma)}(\ell_\gamma(\zeta_\gamma)).$$
     As a result, we can produce a commutative diagram
      \[
\begin{tikzcd}
 M_{\gamma} \arrow[d, "{\ell_{\gamma}}"] \arrow[r, "{k_{\gamma,\alpha}}"]  
& M_{\alpha} \arrow[d, "{\ell_{\alpha}}"] \\
 \tilde{M}_{\bar{\gamma}} \arrow[r, "{\tilde{k}_{\bar\gamma,\bar\alpha}}"] 
& \tilde{M}_{\bar\alpha}
\end{tikzcd}
\]
being 
$\ell_\alpha(k_{\gamma,\alpha}(f)(\kappa_\gamma)):=\tilde{k}_{\bar\gamma,\bar\alpha}(f)(\tilde{\kappa}_{h(\gamma)})$.

Since $k$ is a normal iteration, $\kappa_\gamma< \kappa_\alpha$, hence $\ell_\gamma(\kappa_\gamma)<\ell_\alpha(\kappa_\alpha)$   
 and thus  $\ell_\alpha(\kappa_\alpha)>\tilde{\kappa}_{h(\gamma)}$. By elementarity, $\ell_\alpha(\kappa_\alpha)$ is a $\tilde{M}_{\bar \alpha}$-measurable cardinal, ergo by the way the iteration $\tilde{k}$ was defined (see Clause~$(\aleph)$) we conclude that $$\ell_\alpha(\kappa_\alpha)\geq \tilde{\kappa}_{h(\gamma)+1}:=\tilde{\kappa}_{\bar\alpha}.$$
     

     $\br$ If $\alpha$ is a limit then, we have two cases: either $\cf^{\mathcal{K}}(\alpha)\neq \omega$ or  $\cf^{\mathcal{K}}(\alpha)=\omega$.

\smallskip

$\br\br$ Suppose $\cf^{\mathcal{K}^N}(\alpha)\neq \omega$. In this case, the iteration $\tilde{k}$ has been defined according to Clause~$(\aleph)$ on page~\pageref{eq: beth of the iteration} and as a result $\tilde{\kappa}_{\bar\alpha}$ is the first $\tilde{M}_{\bar\alpha}$-measurable cardinal greater than or equal to $\sup_{\beta<\bar\alpha}\tilde{\kappa}_{\beta}=\sup_{\beta<\alpha}\tilde{\kappa}_{h(\beta)}$. 

By normality of the iteration $k$, $\kappa_\beta<\kappa_\alpha$ for all $\beta<\alpha$. 
With the aid of the induction hypothesis we can define an elementary  embedding $\ell_\alpha\colon M_\alpha\rightarrow \tilde{M}_{\bar\alpha}$ that abides by equation \eqref{eq: embedding}. Therefore, for $\beta<\alpha$, 
$$\ell_\alpha(\kappa_\alpha)>\ell_\alpha(\kappa_\beta)=\ell_\alpha(k_\alpha(\id)(\kappa_\beta))=\tilde{k}_{\bar\alpha}(\id)(\tilde{\kappa}_{h(\beta)})=\tilde{\kappa}_{h(\beta)}.$$
Thereby,  $\ell_\alpha(\kappa_\alpha)\geq \sup_{\beta<\alpha}\tilde{\kappa}_{h(\beta)}$. Since $\ell_\alpha(\kappa_\alpha)$ is $\tilde{M}_{\bar\alpha}$-measurable we conclude, appealing to the minimality of $\tilde{\kappa}_{\bar\alpha}$, that $\tilde{\kappa}_{\bar\alpha}\leq \ell_\alpha(\kappa_\alpha).$

\smallskip

$\br\br$ Suppose $\cf^{\mathcal{K}^N}(\alpha)=\omega$. In this case, $\cf^{\mathcal{K}^N}(\bar\alpha)=\omega$ and the iteration $\tilde{k}$ abides by Clause~$(\beth)$ on page~\pageref{eq: beth of the iteration}.  Let us assume towards a contradiction that $\ell_\alpha(\kappa_\alpha)<\tilde{\kappa}_{\bar\alpha}$. We have already showed that $\ell_\alpha(\kappa_\alpha)\geq \sup_{\beta<\alpha}\tilde{\kappa}_\beta$ so,  by minimality of $\tilde{\kappa}_{\bar\alpha}$,
 $$A=\{\beta<\bar\alpha\mid \tilde{k}_{\beta,\bar\alpha}(\tilde{\kappa}_\beta)=\ell_\alpha(\kappa_\alpha)\,\wedge\,\tilde{k}_{\beta,\bar\alpha}(\tilde{\zeta}_\beta)=\ell_\alpha(\zeta_\alpha) \}$$
 is unbounded in $\bar\alpha$. We will show that this is impossible.
 \begin{subclaim}\label{subclaim sigma closure}
    The set
     \[\{\beta<\alpha\mid (k_{\beta,\alpha}(\kappa_\beta)>\kappa_\alpha)\;\; \text {or}\;\; (k_{\beta,\alpha}(\kappa_\beta)=\kappa_\alpha\,\wedge\, k_{\beta,\alpha}(\zeta_\beta)> \zeta_\alpha)\}\] is bounded in $\alpha$.
 \end{subclaim}
 \begin{proof}[Proof of subclaim]
Let $<_{\mathrm{lex}}$ denote the lexicographic order on $\Ord\times \Ord$.

We claim that for each choice  $R\in\{=,>_{\mathrm{lex}}\}$ the set 
\begin{equation}\label{eq: critical and orders}
 \{\beta<\alpha\mid k_{\beta,\alpha}(\langle\kappa_\beta, \zeta_\beta\rangle) R\, \langle \kappa_\alpha,\zeta_\alpha\rangle\}  
\end{equation}
is bounded in $\alpha$. 
We prove this arguing for the sake of a contradiction.

In case $R$ is $=$  our   assumption yields
$$k_\alpha(\mathcal{U})(\kappa_\alpha,\zeta_\alpha)=\{X\in \mathcal{P}^{M_\alpha}(\kappa_\alpha)\mid \langle \kappa_{\beta_n}\mid n<\omega\rangle\s^* X\}.$$
We claim that this is impossible. Since $M[H]$ is closed under $\omega$-sequences in $V[G]$ it follows that $k_{\alpha}(\mathcal{U})(\kappa_\alpha,\zeta_\alpha)$ belongs to $ M[H]$. Our anti-large cardinal assumption\footnote{I.e., ''There is no inner model for $\exists\alpha\,(o(\alpha)=\alpha^{++})$".} ensures that \emph{Mitchell's Covering Theorem} (Theorem~\ref{CoreModelTheorem}) applies within $M[H]$. As $k_{\alpha}(\mathcal{U})(\kappa_\alpha,\zeta_\alpha)$ is $\mathcal{K}^{M[H]}$-normal, by maximality, $$k_{\alpha}(\mathcal{U})(\kappa_\alpha,\zeta_\alpha)\cap \mathcal{K}^{M[H]}\in \mathcal{K}^{M[H]}.$$ By generic absoluteness of $\mathcal K$,
$k_{\alpha}(\mathcal{U})(\kappa_\alpha,\zeta_\alpha)\cap \mathcal{K}^{M}\in \mathcal{K}^{M}.$
But this is impossible: On the one hand, $\mathcal{P}^{M_\alpha}(\kappa_\alpha)=\mathcal{P}^{\mathcal{K}^M}(\kappa_\alpha)$ (by normality of the iteration $k$) so that $k_\alpha(\mathcal{U})(\kappa_\alpha,\zeta_\alpha)\cap \mathcal{K}^M=k_\alpha(\mathcal{U})(\kappa_\alpha,\zeta_\alpha)\in \mathcal{K}^M$; on the other hand, the measure $k_\alpha(\mathcal{U})(\kappa_\alpha,\zeta_\alpha)$ does not belong to $\mathcal{K}^M$ as, by coherency, once this  is hitted it does not belong to any of the subsequent ultrapowers in $k$.

\smallskip

In case $R$ is  $>_{\mathrm{lex}}$ analogous considerations give a contradiction: 

Let $\langle \beta_n\mid n<\omega\rangle$ be a cofinal sequence in $\alpha$ consisting of ordinals from the set displayed in \eqref{eq: critical and orders} and assume without loss of generality that $\kappa_\alpha,\zeta_\alpha$ belong to the range of $k_{\beta_n,\alpha}$ for all $n$. Let $V_n:=k_{\beta_n}(\mathcal{U})(\bar{\kappa}^n_{\alpha},\bar{\zeta}^n_{\alpha})$ where $\bar{\kappa}_{\alpha}^n, \bar{\zeta}^n_{\alpha}$ are the pre-images of $\kappa_\alpha$ and $\zeta_\alpha$ by way of $k_{\beta_n,\alpha}$. Since at stage $\beta_{n}$ the iteration $k$  hits the measure $k_{\beta_n}(\mathcal{U})(\kappa_{\beta_n}, \zeta_{\beta_n})$ and either $\kappa_{\beta} > \bar\kappa^n_\alpha$ or $\kappa_\beta=\bar\kappa^n_\alpha$ but $\zeta_{\beta_n}>\bar{\zeta}^n_\alpha$, by coherency of $k_{\beta_n}(\mathcal{U})$, $V_n\in M_{\beta_n+1}$ and so $V_n\in \mathcal{K}^M\s M.$  

It is now routine to check that 
      $$k_\alpha(\mathcal{U})(\kappa_\alpha,\zeta_\alpha)=\{X\in \mathcal{P}^{M_\alpha}(\kappa_\alpha)\mid \exists n\, \forall m\geq n\, (X\cap \kappa_{\beta_m}\in V_n)\}.$$
      But then by $\sigma$-closure of $M[H]$ in $V[G]$, and Mitchell's Covering Theorem, we get again that $k_\alpha(\mathcal{U})(\kappa_\alpha,\zeta_\alpha)\in \mathcal{K}^M$. This is impossible.
 \end{proof}

 \begin{subclaim}
 The following superset of $A$ is bounded in $\alpha$: 
     \[B = \{\beta < \bar \alpha \mid \tilde{k}_{\beta,\bar\alpha}(\tilde\kappa_\beta)=\ell_\alpha(\kappa_\alpha)\wedge \tilde{k}_{\beta,\bar\alpha}(\tilde\zeta_\beta)\geq \ell_\alpha(\zeta_\alpha)\}\]
    In particular, $A$ must be  bounded.
 \end{subclaim}
 \begin{proof}[Proof of subclaim]
Suppose otherwise – that $B$ is unbounded in $\bar\alpha$. 

Using this assumption we will argue that 
$$C:=\{\gamma<\alpha\mid \tilde{k}_{h(\gamma),\bar\alpha}(\tilde{\kappa}_{h(\gamma)})=\ell_\alpha(\kappa_\alpha)\,\wedge\,\tilde{k}_{h(\gamma),\bar\alpha}(\tilde{\zeta}_{h(\gamma)})\geq \ell_\alpha(\zeta_\alpha) \}$$
is unbounded in $\alpha$. Combined  with  our induction hypothesis (II),  $$\text{$\ell_\alpha(\kappa_\alpha)=\tilde{k}_{\bar \gamma, \bar\alpha}(\ell_\gamma(\kappa_\gamma))$ and $\ell_\alpha(\zeta_\alpha)\leq  \tilde{k}_{\bar \gamma, \bar\alpha}(\ell_\gamma(\zeta_\gamma))$}$$
holds for all $\gamma\in C$. By commutativity of the diagrams (Clause~(I) of the induction hypothesis),
$k_{\gamma,\alpha}(\kappa_\gamma)=\kappa_\alpha$ and $k_{\gamma,\alpha}(\zeta_\gamma)\geq \zeta_\alpha$ for all $\gamma\in C$. This is at odds with  Claim~\ref{subclaim sigma closure} and yields the desired contradiction. 

\smallskip

So, suppose that $B\s \bar\alpha$ is unbounded. Let $\beta \in B$. 
     If $\beta\in \range(h)$  then we are good. Otherwise, let $\gamma < \alpha$ be the unique ordinal such that $\beta\in [\bar\gamma, h(\gamma))$. {Without loss of generality,  $\beta$ is such that the corresponding $\gamma$ is above the supremum of the bounded set of Subclaim \ref{subclaim sigma closure}.} In particular,  
     $$k_{\gamma,\alpha}(\kappa_\gamma)\leq \kappa_\alpha.$$

    Note that $\ell_\gamma(\kappa_\gamma)$ is a $\tilde{M}_{\bar\gamma}$-measurable $\geq \tilde{\kappa}_{\bar\gamma}$, so by Claim~\ref{claim: things work as expected} there is $\bar\gamma_*\in (h(\gamma),\Omega]$ such that $\tilde{k}_{\bar\gamma,\delta}(\ell_\gamma(\kappa_\gamma))\geq \tilde{\kappa}_\delta$ for all $\delta<\bar\gamma_*$. 
     In particular, 
$$\tilde{\kappa}_{h(\gamma)}=\tilde{k}_{\bar\gamma,h(\gamma)}(\ell_\gamma(\kappa_\gamma))\geq \tilde{k}_{\beta, h(\gamma)}(\tilde{\kappa}_\beta).$$ 
     Moreover, equality holds because:
     \[\ell_\alpha(\kappa_\alpha)=\tilde{k}_{\beta,\bar\alpha}(\tilde{\kappa}_\beta) \leq \tilde{k}_{\beta,\bar\alpha}(\tilde{k}_{\bar\gamma,\beta}(\ell_\gamma(\kappa_\gamma))) = \tilde{k}_{\bar\gamma,\bar\alpha}(\ell_{\gamma}(\kappa_\gamma)) = \ell_\alpha(k_{\gamma,\alpha}(\kappa_\gamma)) \leq \ell_\alpha(\kappa_\alpha).\] 
     The first equality holds because $\beta\in B$; the first inequality follows because $\beta<\bar\gamma_*$;  the third inequality uses the commutativity of the diagram and the latest inequality our choice of $\gamma$.  As a result, we conclude that
    $$\tilde{k}_{h(\gamma),\bar\alpha}(\tilde{\kappa}_{h(\gamma)})=\ell_\alpha(\kappa_\alpha).$$

     For latter use also note that the above equality  yields 
     \begin{equation}\label{eq: **}
       \tag{**}\tilde{k}_{\bar\gamma,\beta}(\ell_\gamma(\kappa_\gamma))=\tilde{\kappa}_\beta.  
     \end{equation}
     {Now, we would like to claim that  $\tilde{k}_{h(\gamma),\bar\alpha}(\tilde{\zeta}_{h(\gamma)})\geq \ell_\alpha(\zeta_\alpha)$. To show this it suffices to justify that $\tilde{\zeta}_{h(\gamma)}\geq \tilde{k}_{\beta, h(\gamma)}(\tilde{\zeta}_\beta)$ (because $\beta\in B$).  Suppose towards a contradiction that $\tilde{\zeta}_{h(\gamma)}<\tilde{k}_{\beta, h(\gamma)}(\tilde{\zeta}_\beta)$. Then $\tilde{k}_{\bar\gamma,\beta}(\ell_\gamma(\zeta_\gamma))<\tilde{\zeta}_\beta.$ Since the iteration $\tilde{k}$ has not halted at stage $\beta$ it must be  that  $\cf^{\mathcal{K}^N}(\beta)=\omega$\footnote{See Claim~\ref{subclaim: Mitchell}~(2).} and that there are cofinally-many $\delta\in (\bar\gamma, \beta)$ such that $\tilde{k}_{\delta,\beta}(\tilde{\kappa}_\delta)=\tilde{\kappa}_\beta$ and $\tilde{k}_{\delta,\beta}(\tilde{\zeta}_\delta)=\tilde{k}_{\bar\gamma,\beta}(\ell_\gamma(\zeta_\gamma)).$ Therefore, using \eqref{eq: **} above,  for cofinally $\delta\in (\bar\gamma,\beta)$,
     $$\tilde{k}_{\bar\gamma,\delta}(\ell_\gamma(\kappa_\gamma))=\tilde{\kappa}_\delta\;\text{and}\; \tilde{k}_{\bar\gamma,\delta}(\ell_\gamma(\zeta_\gamma))=\tilde{\zeta}_\delta.$$
     Yet, $h(\gamma)>\beta$ was the minimal ordinal above $\bar\gamma$ making possible the above two equalities, thus yielding a contradiction with $\tilde{\zeta}_{h(\gamma)}<\tilde{k}_{\beta, h(\gamma)}(\tilde{\zeta}_\beta).$ }
 \end{proof}

The previous claim yields a contradiction and completes the proof of (II).

\smallskip

\underline{\textbf{Clause~(III)}:}
Look at the $\tilde{M}_{\bar\alpha}$-measurable $\ell_\alpha(\kappa_\alpha)$ (which is $\geq \tilde{\kappa}_{\bar\alpha}$, by \textbf{Clause~(II)}) and invoke  Claim~\ref{claim: things work as expected} with respect  to it. In this way we  find an ordinal  $\bar\alpha_*\in (\bar\alpha, \Omega]$, with countable ${\mathcal{K}^N}$-cofinality, for which the set 
$$\{\gamma<\bar\alpha_*\mid \gamma\geq \bar\alpha\,\wedge\, \tilde{k}_{\gamma,\bar \alpha_*}(\tilde{\kappa}_\gamma)=\tilde{k}_{\bar\alpha,\bar\alpha_*}(\ell_\alpha(\kappa_\alpha))\,\wedge\, \tilde{k}_{\gamma,\bar \alpha_*}(\tilde{\zeta}_\gamma)=\tilde{k}_{\bar\alpha,\bar\alpha_*}(\ell_\alpha(\zeta_\alpha))\}$$
is unbounded. Let $h(\alpha)$ be the least member in the above-displayed set above $\bar\alpha$ and use the elementarity of $\tilde{k}_{h(\alpha), \bar\alpha_*}$ to infer that  $$\text{$\tilde{\kappa}_{h(\alpha)}=\tilde{k}_{\bar\alpha, h(\alpha)}(\ell_\alpha(\kappa_\alpha))$ and $\tilde{\zeta}_{h(\alpha)}=\tilde{k}_{\bar\alpha, h(\alpha)}(\ell_\alpha(\zeta_\alpha))$.}$$
Certainly $h(\alpha)<\Omega$ so we are done.
\end{proof}
The previous claim allows us to construct $\ell\colon \mathcal{K}^{M}\rightarrow \tilde{\mathcal{K}}$ making the diagram commute. This completes the proof of the lemma.
\end{proof}

\smallskip


\begin{lemma}\label{lemma:imagesofmu}Assume that \eqref{weakerantilargecardinals} holds. 
    Let $\iota\colon V \to N$ be a finite iteration factoring $j\restriction V$ and let us assume that $\iota$ is normal. Let $k \colon N \to M$ be the factor elementary embedding. Then, for every $\mu \in \Gamma_\iota$ we have $k\image \mu \subseteq \mu$.   
\end{lemma}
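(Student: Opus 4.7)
The plan is to reduce the claim, via the universal iteration constructed in Lemma~\ref{lemma: universal iteration}, to an analogous statement about an \emph{internal} iteration of $\mathcal{K}^N$, where the desired bound can be obtained from the GCH in the core model.

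First, I would apply Lemma~\ref{lemma: universal iteration} to $\iota$ and $k$, obtaining an internal normal iteration $\tilde{k}\colon \mathcal{K}^N \to \tilde{\mathcal{K}}$ and an elementary embedding $\ell\colon \mathcal{K}^M \to \tilde{\mathcal{K}}$ satisfying $\ell \circ (k\restriction\mathcal{K}^N) = \tilde{k}$. Since ordinals are absolute and $\ell$ is an elementary embedding between inner models, $\ell$ is order-preserving with $\ell(\alpha) \geq \alpha$ for every ordinal $\alpha$. In particular, for any $\xi < \mu$ we have $k(\xi) \leq \ell(k(\xi)) = \tilde{k}(\xi)$. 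Consequently, to establish $k\image\mu \subseteq \mu$ it suffices to verify $\tilde{k}\image\mu \subseteq \mu$.

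For the reduced statement, observe that $\mu = \iota_{n,m}(\mu_n)$ is the image, under a finite normal iteration of $\mathcal{K}$, of a $\mathcal{K}^{N_n}$-measurable cardinal. By elementarity of $\iota_{n,m}\restriction\mathcal{K}^{N_n}$, $\mu$ is measurable in $\mathcal{K}^{N_m}$, and by the normality of $\iota$ (the subsequent critical points $\mu_m < \mu_{m+1} < \cdots$ are all strictly increasing and leave $\mu$ untouched once we pass above it), $\mu$ remains a $\mathcal{K}^N$-measurable cardinal. I would then argue by induction on the stage $\alpha$ of the iteration $\tilde{k}$ that $\tilde{k}_{0,\alpha}\image \mu \subseteq \mu$. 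At the successor step, if $\tilde{\kappa}_\alpha < \mu$ is the critical point used, then $\tilde{\kappa}_\alpha$ is $\mathcal{K}^{\tilde{M}_\alpha}$-measurable, the ultrapower image $\tilde{k}_{\alpha,\alpha+1}(\tilde{\kappa}_\alpha)$ is bounded by $(\tilde{\kappa}_\alpha^{++})^{\mathcal{K}^{\tilde{M}_\alpha}}$, and since $\mu$ is itself measurable in an inner model satisfying the GCH, any measurable cardinal strictly below $\mu$ has its double successor strictly below $\mu$ as well (because $\mu$ must be a strong limit). Hence $\tilde{k}_{\alpha,\alpha+1}(\tilde{\kappa}_\alpha) < \mu$, and more generally, ordinals $< \mu$ are not pushed above $\mu$ by $\tilde{k}_{\alpha,\alpha+1}$. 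At limit stages, one uses that direct limits are computed pointwise and appeals to the induction hypothesis.

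The main obstacle will be ensuring that $\mu$ preserves its measurable-cardinal status (and therefore its strong-limit character) across the intermediate models $\tilde{M}_\alpha$ appearing in $\tilde{k}$, as the iteration $\tilde{k}$ may be of transfinite length and its critical points are chosen by the Mitchell-style rules $(\aleph)$, $(\beth)$ in the proof of Lemma~\ref{lemma: universal iteration}. Once this preservation is justified, together with Claim~\ref{claim: things work as expected} governing where each $\tilde{\kappa}_\alpha$ lives relative to $\mu$, the bound $\tilde{k}_{\alpha,\alpha+1}(\tilde{\kappa}_\alpha) < \mu$ is maintained throughout, giving $\tilde{k}\image\mu \subseteq \mu$ and thus $k\image\mu \subseteq \mu$.
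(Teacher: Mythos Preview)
Your reduction to the internal iteration $\tilde{k}$ via Lemma~\ref{lemma: universal iteration} is exactly right, and matches the paper. The gap is in the stage-by-stage induction you propose for $\tilde{k}_{0,\alpha}\image\mu\subseteq\mu$.

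The limit step does not go through. Knowing $\tilde{k}_{0,\alpha}(\xi)<\mu$ for every $\alpha<\gamma$ does \emph{not} yield $\tilde{k}_{0,\gamma}(\xi)<\mu$: direct limits of ordinals are not computed as suprema of the thread, and nothing prevents the portion of $\tilde{k}$ using critical points below $\mu$ from having length $\mu$ (the $\tilde\kappa_\alpha$ are strictly increasing below $\mu$, but there can still be $\mu$ many of them). In that situation the crude counting bound $\tilde{k}_{0,\gamma}(\xi)\le(|\xi|^{\tilde\kappa_0}\cdot|\gamma|)^{+\mathcal K^N}$ only gives $\mu^+$. Your successor step is also circular as written: to conclude $(\tilde\kappa_\alpha^{++})^{\tilde M_\alpha}<\mu$ you need $\mu$ to be a strong limit in $\tilde M_\alpha$, which amounts to $\tilde{k}_{0,\alpha}\image\mu\subseteq\mu$ --- precisely what you are trying to prove. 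The ``obstacle'' you flag (preservation of measurability of $\mu$) is a symptom of this circularity rather than the root difficulty.

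The paper avoids both problems by abandoning the induction on stages and proving instead the uniform pointwise bound $\tilde{k}(\alpha)<(\alpha^{+3})^{\mathcal K^N}$ for every $\alpha\in[\tilde\kappa_0,\mu)$. This is done by contradiction: if $\tilde{k}(\alpha)\ge(\alpha^{+3})^{\mathcal K^N}$ then the iteration must have length $\ge(\alpha^{+3})^{\mathcal K^N}$, and one then runs the Fodor-type argument of Claim~\ref{subclaim:Mitchells} on the stationary set of stages of $\mathcal K^N$-cofinality $\omega$ below $(\alpha^{+3})^{\mathcal K^N}$, stabilizing the representing functions for $(\tilde\kappa_\beta,\tilde\zeta_\beta)$ and contradicting rule~$(\beth)$ in the construction of $\tilde{k}$. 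The point is that the specific Mitchell-style definition of $\tilde{k}$ (non-repetition at $\omega$-cofinal stages) is what gives the bound; a generic internal normal iteration would not satisfy it. Once $\tilde{k}(\alpha)<(\alpha^{+3})^{\mathcal K^N}$ is in hand, $\mu$ being $\mathcal K^N$-measurable immediately gives $\tilde{k}(\alpha)<\mu$.
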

\begin{proof}
   Since we want to bound the value of $k$ on ordinals it suffices to look at its restriction  to the core model $\mathcal{K}$. Applying Lemma~\ref{lemma: universal iteration} to $j\restriction\mathcal{K}$, $\iota\restriction \mathcal{K}$ and $k\restriction \mathcal{K}^N$ we find an $\mathcal{K}^N$-internal, normal iteration $\tilde{k}\colon \mathcal{K}^N\rightarrow\tilde{\mathcal{K}}$ such that  
\[
\begin{tikzcd}
    \mathcal{K} \arrow[r, "{j}"] \arrow[d, "\iota"'] 
     & \mathcal{K}^M \arrow[d, "\ell"'] \\
   \mathcal{K}^N \arrow[ru, "{k}"] \arrow[r, "\tilde{k}"'] 
     &  \tilde{\mathcal{K}}
\end{tikzcd}
\]
commutes.
To bound  $k(\gamma)$ for  $\gamma<\mu\in \Gamma_\iota$, we bound the (potentially) larger value $\tilde{k}(\gamma)$. Therefore, we shall prove that for each $\mu\in \Gamma_\iota$,  $\tilde{k}``\mu\s \mu.$



Fix $\mu\in \Gamma_\iota$. For each $\tilde{\kappa}_0\leq \alpha<\mu$ we  show $\tilde{k}(\alpha)<(\alpha^{+3})^{\mathcal{K}^N}.$ Since $\tilde{k}$ is internal to ${\mathcal{K}^N}$ and $\mu$ is ${\mathcal{K}^N}$-measurable we conclude that $\tilde{k}(\alpha)<\mu.$ Towards a contradiction, assume that for some $\alpha<\mu$, $\tilde{k}(\alpha)\geq (\alpha^{+3})^{\mathcal{K}^N}.$

Since $\tilde{k}$ is a linear iteration of normal measures, it must be that the $\tilde{k}$ has length at least $(\alpha^{+3})^{\mathcal{K}^N}$ – otherwise, the iteration would be unable to map $\alpha$ that high. In the next argument we shall care about $\langle (\tilde{\kappa}_\beta,\tilde{\zeta}_\beta)\mid \beta\in S\rangle$ where $S$ is the set of ordinals ${<}(\alpha^{+3})^{\mathcal{K}^N}$ with ${\mathcal{K}^N}$-cofinality $\omega.$ Recall that these are the critical points/Mitchell orders  hitted by $\tilde{k}$ at limit stages of ${\mathcal{K}^N}$-cofinality $\omega$. We use the argument of Claim~\ref{subclaim:Mitchells} to get a contradiction. 

\smallskip

For each $\beta\in S$, let $f_\beta\colon [\tilde{\kappa}_0]^n\rightarrow(\tilde{\kappa}_0+1)$, $g_\beta\colon [\tilde{\kappa}_0]^n\rightarrow \tilde{\kappa}_0$ and generators $\tilde{\kappa}_{\beta_0},\dots, \tilde{\kappa}_{\beta_{n_\beta-1}}<\tilde{\kappa}_{\beta}$ representing $\tilde{\kappa}_\beta$ and $\tilde{\zeta}_\beta$, respectively. Namely,
$$\tilde{\kappa}_\beta=k_\beta(f_\beta)(\tilde{\kappa}_{\beta_0},\dots, \tilde{\kappa}_{\beta_{n_\beta-1}}),\footnote{Recall that we are always assuming that $\tilde{\kappa}_\beta\leq \tilde{k}_{\beta}(\tilde{\kappa}_0)$, so the above makes sense.}$$
$$\tilde{\zeta}_\beta=k_\beta(g_\beta)(\tilde{\kappa}_{\beta_0},\dots, \tilde{\kappa}_{\beta_{n_\beta-1}}).$$
Clearly $\tilde{\kappa}_\beta\leq \alpha$ as otherwise $\alpha$ would not be moved in the further stages.

The above naturally induces a function $F\colon S\rightarrow (H_{|\alpha|^{++}})^{\mathcal{K}^N}$ given by $$\beta\mapsto \langle f_\beta, g_\beta, \langle\tilde{\kappa}_{\beta_0},\dots, \tilde{\kappa}_{\beta_{n-1}}\rangle\rangle.$$
This is well defined: There are at most $(\tilde{\kappa}_0^{+})^{\mathcal{K}^N}$-many such functions  and $(\tilde{\kappa}_0^{+})^{\mathcal{K}^N}\leq |\alpha|^{+{\mathcal{K}^N}}$; since $\mathcal{K}\models${``There is no inner model for $\exists \theta\, (o(\theta)=\theta^{++})$"} the same is true, by elementarity, in ${\mathcal{K}^N}$ and in its ultrapowers $\tilde{M}_\beta.$

\smallskip

Let $S_0\s S$ be a stationary set for which the function $F$ is constant; say, with value $\langle f^*,g^*,\langle \kappa^*_{0},\dots, \kappa^*_{n^*-1}\rangle\rangle.$ It follows that for each $\beta<\gamma$ in $S_0$, $$\tilde{k}_{\beta,\gamma}(\tilde{\kappa}_\beta)=\tilde{\kappa}_\gamma\;\text{and}\; \tilde{k}_{\beta,\gamma}(\tilde{\zeta}_\beta)=\tilde{\zeta}_\gamma.$$
By the same reasoning of Claim~\ref{subclaim:Mitchells}, this is a contradiction. 
\end{proof}

We will also need the following observation concerning the regularity of the critical points of those finite normal iterations $\iota\colon V \to N$ that factor $j\restriction V$. We emphasize that this observation applies to an arbitrary non-stationary support iteration of Prikry-type forcings $\mathbb{P}_\kappa$, as  described in Definition~\ref{Gitikiteration}. In particular, the lemma is not specific to the Gluing Iteration of \S\ref{sec: the gluing iteration}. Consequently, the weaker anti–large cardinal assumption \eqref{weakerantilargecardinals} is sufficient.

\begin{lemma}\label{lem:mu-remains-regular}
Assume that \eqref{weakerantilargecardinals} holds. Let $\iota\colon V\rightarrow N$ be a finite normal iteration factoring $j\restriction V$ and $\mu \in \Gamma_\iota$. Then,  $\mu$ is regular in $M[H]$.\footnote{We remind our readers that as per our \textbf{Setup} in page \pageref{setupcoding} $j\colon V[G]\rightarrow M[H]$ was a $V[G]$-internal ultrapower by a $\kappa$-complete ultrafilter on $\kappa.$}
\end{lemma}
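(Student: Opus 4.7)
My plan is to proceed by a case analysis on the hypothetical cofinality of $\mu$ in $M[H]$, reducing the hard case to a statement about the core model $\mathcal{K}^M$ and then leveraging the universal-iteration machinery already developed.

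For the base case $\mu = \kappa$, the argument will be immediate: the Gluing Iteration $\mathbb{P}_\kappa$ preserves the measurability (in particular, regularity) of $\kappa$ in $V[G]$, while $M[H]$ is $\kappa$-closed in $V[G]$ because $j$ is the ultrapower by a $\kappa$-complete ultrafilter. A cofinal sequence in $\kappa$ of length $<\kappa$ in $M[H]$ would then lie in $V[G]$, contradicting the regularity of $\kappa$ there.

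For the case $\mu > \kappa$, I would suppose $\cf^{M[H]}(\mu) = \lambda < \mu$ for a contradiction. If $\lambda < \kappa$, the same $\kappa$-closure argument gives $\cf^{V[G]}(\mu) \leq \lambda < \kappa$, contradicting the regularity of $\mu$ in $V[G]$ (which holds because $\mu$ is inaccessible in $V$ and $\mathbb{P}_\kappa$ has size $\leq 2^\kappa$ and is sufficiently small to preserve cofinalities at $\mu$). The substantive case is therefore $\kappa \leq \lambda < \mu$.

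For this main case the plan is to show that $\mu$ is regular in $\mathcal{K}^M$ and to transfer that conclusion to $M[H]$ via covering. Generic absoluteness of $\mathcal{K}$ (Theorem~\ref{CoreModelTheorem}(3)) gives $\mathcal{K}^{M[H]} = \mathcal{K}^M$, and the anti-large-cardinal hypothesis \eqref{weakerantilargecardinals} transfers from $V$ into $M[H]$, so Mitchell's covering theorem is available in $M[H]$; any singularity of $\mu$ in $M[H]$ would therefore descend to a singularity in $\mathcal{K}^{M[H]} = \mathcal{K}^M$. To see that $\mu$ is actually regular in $\mathcal{K}^M$, I would invoke Theorem~\ref{CoreModelTheorem}(2) to view $k\restriction\mathcal{K}^N\colon \mathcal{K}^N \to \mathcal{K}^M$ as a normal iteration of $\mathcal{K}^N$-measures, and argue that every critical point of this iteration is at least $\mu$. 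Granting this, since $\mu$ is a regular cardinal of $\mathcal{K}^N$ (as $\mu \in \Gamma_\iota$ corresponds, through the factoring structure, to a measurable cardinal in an iterate of $\mathcal{K}$), all further ultrapower steps fix the interval below $\mu$ and a direct-limit computation yields that $\mu$ remains regular in $\mathcal{K}^M$.

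The main obstacle I expect is precisely this critical-point bound on $k\restriction \mathcal{K}^N$. Lemma~\ref{lemma:imagesofmu} supplies the starting point $k\image\mu \subseteq \mu$, but upgrading this into the statement that the iteration $k\restriction \mathcal{K}^N$ has no critical point strictly below $\mu$ requires a reflection argument in the spirit of the proof of Lemma~\ref{lemma:imagesofmu}: one introduces the internal universal iteration $\tilde{k}\colon \mathcal{K}^N \to \tilde{\mathcal{K}}$ of Lemma~\ref{lemma: universal iteration} together with $\ell\colon \mathcal{K}^M \to \tilde{\mathcal{K}}$, and uses the coherency of the $\mathcal{K}^N$-measure sequence---combined with the $\sigma$-closure of $M[H]$ in $V[G]$ and the absoluteness of $\mathcal{K}$---to rule out any measurable of $\mathcal{K}^N$ below $\mu$ appearing as a critical point in the tail. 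This fusion of Lemma~\ref{lemma:imagesofmu}'s proof with a careful accounting of how $\Gamma_\iota$ sits relative to the natural critical-point sequence of $k\restriction\mathcal{K}^N$ is the delicate technical heart of the argument.
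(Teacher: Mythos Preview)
Your proposal has two genuine gaps in the main case.

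First, the covering step is stated incorrectly. Mitchell's covering theorem does \emph{not} say that a singularity of $\mu$ in $M[H]$ descends to a singularity in $\mathcal{K}^{M[H]}=\mathcal{K}^M$; indeed Prikry-type forcing routinely singularizes cardinals that remain regular (even measurable) in $\mathcal{K}$. What covering actually gives you here (Mitchell's Theorem~2.6 in \cite{Mithand2}, as invoked in the paper) is: if $\mu$ is regular in $\mathcal{K}^M$ but singular in $M[H]$, then there is a weak Prikry--Magidor set $C\subseteq\mu$ in $M[H]$ consisting of $\mathcal{K}^M$-regulars. That is the object the paper's proof works with; it does not merely rule out singularity in $\mathcal{K}^M$.

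Second, the claim that every critical point of $k\restriction\mathcal{K}^N$ is at least $\mu$ is false in general, not just hard to prove. Lemma~\ref{lemma:imagesofmu} only yields $k``\mu\subseteq\mu$; the iteration $k$ can and typically does have many critical points strictly between $\kappa$ and $\mu$. The paper establishes regularity of $\mu$ in $M$ (hence in $\mathcal{K}^M$) by a different route: if $a\in M$ were a short cofinal set in $\mu$, extend $\iota$ to a longer finite iteration $\iota'$ so that $a=k'(b)$, and then use $k'``\mu\subseteq\mu$ (still supplied by Lemma~\ref{lemma:imagesofmu}) to see that $b$ is a short cofinal set in $\mu$ inside $N'$, contradicting $\mu\in\Gamma_{\iota'}$. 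The passage from $M$ to $M[H]$ then proceeds via the Prikry--Magidor set $C$: the $\omega$-club of $k$-fixed points below $\mu$ (available because $\crit(k)>\omega_1$ and $k``\mu\subseteq\mu$) meets $C$ in some $\alpha$ with $k(\alpha)=\alpha$ and $\cf^{M[H]}(\alpha)=\omega$; elementarity then makes $\alpha$ regular in $\mathcal{K}^N$, whence $\cf^{V[G]}(\alpha)\geq\kappa^+$, a contradiction. Your outline skips this entire endgame and replaces it with the two incorrect steps above.
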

\begin{proof}
    Let $\iota \colon V \to N$  and $\mu \in \Gamma_\iota$ be as above. Clearly, $\mu$ is regular in $N$. 

    \begin{claim}\label{claim: mu is M regular}
        $M\models``\mu$ is regular".
    \end{claim}
    \begin{proof}[Proof of claim]
    Suppose otherwise. Let $a \in M$ be a cofinal subset of $\mu$ of order-type ${<}\mu$. Let $\iota'\colon V \to N'$ be a finite normal iteration, exteding $\iota$,  factoring $j$, and such that $a = k'(b)$ for some $b \in N'$. (Here $k'\colon N'\rightarrow M$ denotes the factoring embedding making $j=k'\circ \iota'$.) By Lemma~\ref{lemma:imagesofmu}, $k' \image \mu \subseteq \mu$ and thus $b\in N'$ must be cofinal in $\mu$ and $\otp b < \mu$: Indeed, suppose that $b\s \mu$ were to be bounded in $\mu$. Since $N'\models ``\mu$ is regular" (as $\mu\in \Gamma_\iota\s \Gamma_{\iota'}$) it must be that $\sigma:=\sup(b)<\mu$. Then $k'(\sigma)<\mu$. So, by unboundedness of $k'(b)$, there is $\alpha\in k'(b)$ such that $k'(\sigma)<\alpha$. By elementarity of $k'$ there is $\alpha\in b$ such that $\sigma<\alpha$, which is impossible.
    
   The above shows that $b$ is a witness for the singularity of $\mu$ inside $N'$, but this is not possible in that $\mu$ is a member of $\Gamma_{\iota'}.$
    \end{proof}


\begin{claim}
    $M[H]\models ``\mu$ is regular".
\end{claim}
\begin{proof}[Proof of claim]
      First, note that $N\models ``\cf(\mu) > \kappa^{+}$" and $N \models ``|\mathbb{P}| \leq \kappa^{+}$". So, $N[G] \models ``\cf(\mu) \geq \kappa^{+}$". As per the \emph{fusion-like} argument in \cite[Corollary~2.6]{BenUng}, the non-stationary supported iteration of Prikry-type forcings $\mathbb{P}_\kappa$ does not change cofinalities of  cardinals with cofinalities ${\geq}\kappa$. Since $N$ is closed under $\kappa$-sequences in $V$,   $ \cf^V(\mu)\geq \kappa^+$ and so $V[G]\models``\cf(\mu) \geq \kappa^{+}$". 

      \smallskip

      Now, suppose that $M[H]\models ``\mu$ is singular". As $M[H]\subseteq V[G]$, the cofinality of $\mu$ in $M[H]$ must be uncountable. Now we use our assumption that there is no inner model of $``\exists \alpha(o(\alpha)=\alpha^{++})$". As $\mu$ is regular in $M$, and in particular regular in $\mathcal{K}^{M}$, and singular in $M[H]$, it is a consequence of 
    Mitchell's covering theorem \cite[Theorem~2.6]{Mithand2} that there is a weak Prikry--Magidor set $C \subseteq \mu$ in $M[H]$ consisting of $\mathcal{K}^M$-regulars.\footnote{Recall that $C\s \mu$ is a  \emph{Prikry--Magidor set} if $|C|<\mu$ and for every $D\in \mathcal{K}^M$ club on $\mu$ the set $D\setminus C$ is bounded in $\mu$.}

    \smallskip

    As the map $k \colon N \to M$ has critical point above $\omega_1$ and $k``\mu\s \mu$, the set
    $$D=\{\alpha<\mu\mid k(\alpha)=\alpha\}$$
    is an $\omega$-club. So, there is a $\mathcal{K}^M$-regular cardinal $\alpha\geq \kappa^+$ with $k(\alpha)=\alpha$ and \begin{equation}\label{eq: cofinalities}
      \tag{**}  \cf^{M[H]}(\alpha)=\cf^{V[G]}(\alpha)=\omega.
    \end{equation}

    \smallskip

     Now comes the end game. By elementarity of $k$, $\mathcal{K}^{N} \models ``\alpha\text{ is regular}$". As the embedding $\iota\colon V\rightarrow N$ is a lifting of $\iota \restriction \mathcal K\colon \mathcal K \to \mathcal{K}^N$, a finite normal iteration  using measures from $\mathcal K$, the $\mathcal{K}^N$-regularity of $\alpha$ entails $\mathcal{K} \models ``\cf(\alpha) \geq \kappa^{+}$". Now recall that $V$ was obtained after forcing over $\mathcal{K}$ with the fast function forcing $\mathbb{S}_\kappa$, hence after forcing with a cofinality preserving poset. As a result, $V\models ``\cf(\alpha)\geq \kappa^+$". Finally, $V[G]$ is a generic extension of $V$ via the non-stationary supported iteration of Prikry-type forcings, $\mathbb{P}_\kappa$. It was mentioned above that this iteration does not change the cofinality of cardinals whose cofinalities are $\geq \kappa$. Hence, $V[G]\models``\cf(\alpha)\geq \kappa^+$". However, this contradicts the  equation displayed in \eqref{eq: cofinalities}.
\end{proof}
The above completes the proof that $M[H]\models ``\mu$ is regular'' for $\mu\in \Gamma_\iota.$
\end{proof}

\begin{remark}
    Note that the previous argument does not use at any point that $\mathbb{P}_\kappa$ is the Gluing Iteration. In other words, the argument is general enough to apply to any non-stationary supported iteration of Prikry-type forcings as in Definition~\ref{Gitikiteration}.
\end{remark}

We are now in a position in which we can prove the \textbf{Key Lemma} (Lemma~\ref{lem;maximal-stem}) about the existence of maximal stems. Since this lemma relies on the specific definition of the Gluing Iteration, this time we have to assume the stronger anti-large cardinal assumption \eqref{BlanketAssumption}. 
\begin{proof}[Proof of Lemma~\ref{lem;maximal-stem}]
    For the sake of a contradiction, let us suppose that the lemma is false. That means that there is a finite iteration $\iota_0\colon V\rightarrow N_0$ and $\bar\mu\in A_{\iota_0}$ such that for every $\leq_j$-extension of $\iota_0$ to a finite iteration $\iota\colon V\rightarrow N$, factoring $j\restriction V$ via $k\colon N\rightarrow M$, the set   $$\bar{H}^{\iota, k}_{\bar\mu}:= \{b_{\mathrm{stem}(k(r(\bar\mu))_{H\restriction k(\bar\mu)})} \mid \text{$r$ witnesses $\bar\mu\in A_\iota$}\}$$ does not admit a $\sq$-maximal element.

    \smallskip
    
    Let $\bar\mu$ be the first of these ordinals and $\iota\colon V\rightarrow N$ a $\leq_j$-extension of the offending $\iota_0$ such that { $\bar\mu\in k^{-1}``\mathcal{C}_\iota.$}\footnote{Recall $\mathcal{C}_\iota=\{k(\bar \mu_i)\mid \bar\mu_i\in \crit(\iota)\}$} Through the proof we write $\mu:=k(\bar\mu)$.

    \smallskip

    For technical reasons that will become apparent when we prove Claim~\ref{claim: key claim indeed}, we shall assume that  $\mathbb{P}_\kappa$ actually is the $\leq^*$-dense subposet of Lemma~\ref{lemma: reducingdensesets}.

    \smallskip

  Since $\bar\mu\in \Gamma_{\iota}$, by Lemma \ref{lem:mu-remains-regular}, at step $\bar\mu$ in the forcing iteration $j(\mathbb{P}_\kappa)$, the poset does not singularize $\bar \mu$. However, {$\mu$ is measurable} in $M$ and as a result it is singularized in $M[H]$ (as per our anti-large cardinal hypothesis \eqref{BlanketAssumption} and the design of the iteration $j(\mathbb{P}_\kappa)$). This has the following consequence:
\begin{claim}
    $b_{\mu}\cap \bar\mu$ is a bounded subset of $\bar\mu$.
\end{claim}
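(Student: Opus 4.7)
The plan is to argue by contradiction: assume $b_\mu\cap\bar\mu$ is unbounded (hence cofinal) in $\bar\mu$ and derive a contradiction by combining the regularity of $\bar\mu$ in $M[H]$ (given by Lemma~\ref{lem:mu-remains-regular}) with a tight bound on the order type of $b_\mu$. Note that the overall setup already forces $\bar\mu<\mu$: $\bar\mu$ remains regular in $M[H]$, whereas $\mu$ is $M$-measurable and so the stage-$\mu$ gluing poset $\mathbb{G}_\mu$ inside $j(\mathbb{P}_\kappa)$ singularises $\mu$ in $M[H]$ (under \eqref{BlanketAssumption}).

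If $b_\mu\cap\bar\mu$ were cofinal in $\bar\mu$, then since $\bar\mu$ is a regular uncountable cardinal in $M[H]$, $b_\mu\cap\bar\mu$ would have order type at least $\bar\mu$ in $M[H]$; in particular, $\mathrm{otp}^{M[H]}(b_\mu)\geq\bar\mu$. By Lemma~\ref{lemma: properties of gluing poset}(4) applied inside $M$ to the stage-$\mu$ poset of $j(\mathbb{P}_\kappa)$, $b_\mu$ has $M[H]$-order type $\omega^{o^{j(\ell)}(\mu)}$. Since $\bar\mu$ is an inaccessible cardinal—hence an $\varepsilon$-number—in $M$ (Claim~\ref{claim: mu is M regular} together with strong-limitness transferred from $N$ via Lemma~\ref{lemma:imagesofmu}), we would obtain $o^{j(\ell)}(\mu)\geq\bar\mu$. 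By elementarity of the factor embedding $k\colon N\to M$ and the identities $\mu=k(\bar\mu)$, $j(\ell)=k(\iota(\ell))$, this reads $k(\Omega)\geq\bar\mu$, where $\Omega:=o^{\iota(\ell)}(\bar\mu)$ computed in $N$.

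The next step is to invoke Lemma~\ref{lemma:imagesofmu} (the ``$k``\bar\mu\subseteq\bar\mu$'' assertion): every ordinal $\delta<\bar\mu$ has $k(\delta)<\bar\mu$, so $k(\Omega)\geq\bar\mu$ forces $\Omega\geq\bar\mu$ in $N$. This, however, is incompatible with the design of the gluing iteration in $N$. The key observation—and the main obstacle to nail down—is that $\Omega<\bar\mu$ in $N$. I would justify this as follows: by definition of the fast-function forcing, $\iota(\ell)(\bar\mu)\in H(\bar\mu^+)^{N}$, so the sequence of codes $\langle c_\beta \mid \beta<\omega^\Omega\rangle$ encoded in $\iota(\ell)(\bar\mu)$ has cardinality at most $\bar\mu$ in $N$, yielding $\omega^\Omega\leq\bar\mu$ and hence $\Omega\leq\bar\mu$. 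The boundary case $\Omega=\bar\mu$ is then ruled out by the fact that the recursive unfolding of $o^\ell(\bar\mu)$ in Definition~\ref{ellorder} requires a genuine nesting of measures of strictly smaller $\mathcal{K}^N$-Mitchell order, all bounded by $o^N(\bar\mu)<\bar\mu$ (via \eqref{BlanketAssumption} in $N$, which is preserved under normal iteration of $\mathcal{K}$-measures); a diagonal-style argument confines $\Omega$ strictly below $\bar\mu$. With $\Omega<\bar\mu$ in $N$, Lemma~\ref{lemma:imagesofmu} gives $o^{j(\ell)}(\mu)=k(\Omega)<\bar\mu$, whence $|b_\mu|^{M[H]}=|\omega^{o^{j(\ell)}(\mu)}|^{M[H]}<\bar\mu$ and so, by regularity of $\bar\mu$ in $M[H]$, $b_\mu\cap\bar\mu$ is bounded—contradicting our assumption.
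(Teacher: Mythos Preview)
Your proof is correct and takes a genuinely different route from the paper's. The paper's argument (as it itself acknowledges) reprises the cofinality contradiction from Lemma~\ref{lem:mu-remains-regular}: assuming $b_\mu\cap\bar\mu$ is unbounded, one intersects this club with the $\omega$-club of $k$-fixed points below $\bar\mu$ to locate an $\alpha<\bar\mu$ that is $M$-regular, $k$-fixed (hence $N$-regular, hence of $V[G]$-cofinality $\geq\kappa^+$), yet of $M[H]$-cofinality $\omega$. Your approach instead bounds $\otp(b_\mu)$ directly, combining the order-type formula of Lemma~\ref{lemma: properties of gluing poset}(4) with the closure $k``\bar\mu\subseteq\bar\mu$ of Lemma~\ref{lemma:imagesofmu}. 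This is more direct and sidesteps the fixed-point/cofinality machinery, but it is specific to the gluing iteration (you need the explicit formula $\otp(b_\mu)=\omega^{o^{j(\ell)}(\mu)}$), whereas the paper's argument would go through verbatim for any non-stationary iteration to which Lemma~\ref{lem:mu-remains-regular} applies. One remark: your justification for $\Omega<\bar\mu$ via a ``diagonal-style argument'' is vaguer than needed. The clean statement is that the Setup of \S\ref{sec: the gluing iteration} (namely $o(\alpha)<\alpha$ for measurable $\alpha<\kappa$) reflects to $N$ by elementarity of $\iota$; since clause~2(c) of Definition~\ref{ellorder} forces $\langle\zeta^\rho_0\mid 1\leq\rho<\Omega\rangle$ to be strictly increasing in $o^N(\bar\mu)$, one obtains $\Omega\leq o^N(\bar\mu)+1<\bar\mu$ immediately.
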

\begin{proof}[Proof of claim]
The argument is sort of buried in the proof of Lemma~\ref{lem:mu-remains-regular}, but since it is short enough we spell it out:
Suppose towards a contradiction that $b_\mu\cap \bar\mu$ was unbounded in $\bar\mu$. Then this is a club in $\bar\mu$ consisting of $M$-regular cardinals. Since $\bar\mu$ is $N$-regular, hence of $V[G]$-cofinality ${\geq}\kappa^+$, and $\crit(k)\geq \omega_1$, the set 
$\{\alpha<\bar \mu\mid k(\alpha)=\alpha\}$
is an $\omega$-club on $\bar\mu$. Ergo, there is a $M$-regular cardinal $\alpha<\bar\mu$ such that $k(\alpha)=\alpha$ and $\cf^{M[H]}(\alpha)=\omega$. By elementarity, $\alpha$ is $N$-regular. Ergo, since $\iota\colon V\rightarrow N$ is a finite iteration, $\cf^{V[G]}(\alpha)\geq (\kappa^+)^{V[G]}$, which yields the sought  contradiction.
\end{proof}
By the previous claim there is  
$\langle t,T\rangle$ in the $\mu$-component of $ H$  such that $C\cap \bar\mu =b_{t}$. 
Let $t=\langle \delta_0,\dots, \delta_{n-1}\rangle$ be the associated stem. Since $\delta_i<\mu\in \crit(j)$ each of these ordinals is represented by critical points below $\mu$; i.e.,
$$\delta_i=j(f_i)(\mu_{0}^{i},\dots,\mu_{k_i}^i) $$
for some function $f_i\colon [\kappa]^{k_i}\rightarrow V$ and $\langle \mu_0^i,\dots \mu^i_{k_i}\rangle\in \mu^{<\omega}$.

\smallskip

Arguing as in  Lemma~\ref{lemma: characterizing Ciota} we find a finite normal iteration $\iota_1\colon V\rightarrow N_1$ factoring $j$ by way of $k_1\colon N_1\rightarrow M$ such that $\mathcal{C}_{\iota_1}=\mathcal{C}_{\iota}\cup \{\mu^i_l\mid i< n,\, l<k_i\}.$
In particular,  $\delta_i\in \range(k_1)$ for all $i<n$. Say   $k_1(\bar\delta_i)=\delta_i$ for $i<n$.

\smallskip

From this,  we obtain an expanded  commutative diagram of embeddings
\[
\begin{tikzcd}[column sep=2.5em, row sep=2.2em]
& V \arrow[dl, "\iota"'] \arrow[d, "\iota_1"] \arrow[dr, "j"] & \\
N \arrow[r, "\ell_1"] \arrow[rr, "k"', out=-90, in=-90] 
& N_1 \arrow[r, "k_1"] & M
\end{tikzcd}
\]
by stipulating $\ell_1(\iota(f)(k^{-1}``\mathcal{C}_{\iota})):=\iota_1(\hat{f})(k_{1}^{-1}``\mathcal{C}_{\iota_1})$, where  $\hat{f}\colon [\kappa]^{|\mathcal{C}_{\iota_1}|}\rightarrow V$ is the function defined by $\hat{f}(\xi_0,\dots, \xi_{|\mathcal{C}_{\iota}|-1},\dots,\xi_{|\mathcal{C}_{\iota_1}|-1}):= f(\xi_0,\dots, \xi_{|\mathcal{C}_{\iota}|-1}).$

Note that $\ell_1(\bar \mu)=\bar\mu$, and so  $k_0(\bar \mu)=k_1(\bar\mu)=\mu$. 

The above ensures that in $N_1$, {$\langle \bar\delta_0, \dots, \bar\delta_{n-1}\rangle$ is a stem for a condition in $\mathbb{G}_{\bar\mu}^{N_1}$  that is sent to $H$ by way of the new factor embedding $k_1\colon N_1\rightarrow M$:} Indeed, a witnessing condition could be, for instance,  the pair $\langle s, S\rangle$ where $s:=\langle \bar\delta_0,\dots, \bar\delta_{n-1}\rangle$ and $S$ is the weakest tree on $\bar \mu$ for which $\langle s, S\rangle$ is a condition. Certainly, $k_1(\langle s, S\rangle)\geq \langle t, T\rangle\in H$, ergo $\langle s, S\rangle\in k_1^{-1}(H).$

\begin{claim}\label{claim: key claim indeed}
  $\bar{H}^{\iota_1,k_1}_{\bar\mu}$ admits a $\sq$-maximal element.   
\end{claim}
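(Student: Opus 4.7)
The natural candidate for the $\sq$-maximal element of $\bar H^{\iota_1, k_1}_{\bar\mu}$ is $b_t$ itself, where $t$ is the stem identified just before the claim (with $b_t = b_\mu \cap \bar\mu$). My plan is to show separately that $b_t$ is an upper bound in $\bar H^{\iota_1, k_1}_{\bar\mu}$ and that it is actually attained by some witness. The key technical ingredient is Lemma~\ref{lemma:imagesofmu}, combined with the specific way $\iota_1$ was built so that the generators of $t$ live in $N_1$.

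For the upper bound, I would argue as follows. Let $r$ be any witness to $\bar\mu \in A_{\iota_1}$; then $r(\bar\mu)$ is a name in $N_1$ for a condition in $\iota_1(\mathbb{G})_{\bar\mu}$, and in particular any of its (forced) stems is an element of $[\bar\mu]^{<\omega}$ in $N_1$. Now $\bar\mu \in \Gamma_\iota \subseteq \Gamma_{\iota_1}$, so Lemma~\ref{lemma:imagesofmu} gives $k_1 \image \bar\mu \subseteq \bar\mu$. Consequently every entry of the stem of $k_1(r(\bar\mu))_{H\restriction \mu}$ is an ordinal below $\bar\mu$, and hence the associated Gitik/Magidor sequence $b_{\stem(k_1(r(\bar\mu))_{H\restriction\mu})}$ is a subset of $\bar\mu$. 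On the other hand this sequence is $\sq b_\mu$ because $k_1(r(\bar\mu))_{H\restriction\mu} \in H_\mu$, and combining both facts we obtain $b_{\stem(k_1(r(\bar\mu))_{H\restriction\mu})} \sq b_\mu \cap \bar\mu = b_t$.

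For attainment, I would produce an explicit witness as follows. Start with some $r_0 \in k^{-1}(H)$ witnessing $\bar\mu \in A_\iota$ (guaranteed since $\bar\mu \in A_\iota$ by hypothesis), and push it forward via $\ell_1$ to obtain $r_1 := \ell_1(r_0) \in \iota_1(\mathbb{P}_\kappa)$; the identity $k = k_1 \circ \ell_1$ ensures $k_1(r_1) = k(r_0) \in H$. Now replace the $\bar\mu$-coordinate of $r_1$ by the meet $r_1(\bar\mu) \wedge \langle s, S\rangle$ (computed in $N_1$ using Lemma~\ref{lemma: existence of meets}), which exists because $k_1$ sends both conditions into $H_\mu$ and hence they are $N_1$-compatible. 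The resulting condition $r$ still lies in $k_1^{-1}(H)$, still witnesses $\bar\mu \in A_{\iota_1}$ (the condition for being in $A$ concerns only strict domination of the stem, which is only strengthened by extending at coordinate $\bar\mu$), and by the description of meets in Lemma~\ref{lemma: existence of meets}(2) its stem at $\bar\mu$ is equivalent (modulo the equivalence identified in Definition~\ref{forcingorder}) to the longer of the two input stems, whose Gitik sequence is $\sq b_t$ by the previous paragraph and $\sqsupseteq b_s = b_t$ by construction. Thus $b_t \in \bar H^{\iota_1,k_1}_{\bar\mu}$, completing the proof.

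The main obstacle I anticipate is the bookkeeping in the last step: one must verify that the specific $r_1(\bar\mu)$ inherited from $r_0$ is genuinely $N_1$-compatible with $\langle s,S\rangle$ (rather than merely $M$-compatible after applying $k_1$), and that after taking the meet the resulting stem's Gitik sequence is \emph{exactly} $b_t$ rather than a strictly longer sequence. The first point is handled by observing that $k_1$ is elementary and the meet operation is uniformly definable; the second follows from the upper bound in the second paragraph, which already prevents Gitik sequences from exceeding $b_t$.
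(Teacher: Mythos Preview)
Your overall strategy matches the paper's: show that $b_t$ (equivalently $b_{\langle\delta_0,\dots,\delta_{n-1}\rangle}$) is the $\sq$-maximal element of $\bar H^{\iota_1,k_1}_{\bar\mu}$ by verifying it is both an upper bound and actually attained. For attainment the paper simply uses the condition which is $\langle s,S\rangle$ at coordinate $\bar\mu$; your meet-based construction is more elaborate but aims at the same target.

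There is, however, a genuine gap in your upper-bound argument. You observe that every possible value of the stem of $r(\bar\mu)$ in $N_1$ lies in $[\bar\mu]^{<\omega}$ and that $k_1\image\bar\mu\subseteq\bar\mu$, and from these two facts you conclude that the interpreted stem $\stem(k_1(r(\bar\mu)))_{H\restriction\mu}$ also lies in $[\bar\mu]^{<\omega}$. This inference fails: elementarity of $k_1$ only transports the statement ``the stem is forced to lie in $[\bar\mu]^{<\omega}$'' to ``the stem is forced to lie in $[k_1(\bar\mu)]^{<\omega}=[\mu]^{<\omega}$'', and the actual value $k_1(\dot t)_{H\restriction\mu}$ need not lie in the pointwise image $k_1\image[\bar\mu]^{<\omega}$, since $H\restriction\mu$ is not in the range of $k_1$. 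The paper closes this gap via the standing assumption, made just before the claim, that $\mathbb{P}_\kappa$ has been replaced by the $\leq^*$-dense subset of Lemma~\ref{lemma:densityconditions}. That assumption produces a \emph{fixed} ordinal $\delta<\bar\mu$ with $r\restriction\bar\mu\Vdash^{N_1}\stem(r(\bar\mu))\subseteq\check\delta$ (the stem name is a $\mathbb{P}_\eta$-name for some $\eta<\bar\mu$, and $|\mathbb{P}_\eta|<\bar\mu$ bounds its possible values). Elementarity then gives $k_1(r)\restriction\mu\Vdash^{M}\stem(k_1(r)(\mu))\subseteq\widecheck{k_1(\delta)}$, and now Lemma~\ref{lemma:imagesofmu} yields $k_1(\delta)<\bar\mu$, which is exactly what you need. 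Without first producing such a single $N_1$-side bound $\delta$ that survives under $k_1$, your step does not go through.
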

\begin{proof}[Proof of claim]
We claim that $$b_{\stem(k_1(s)(k_1(\bar\mu)))_{H\restriction k_1(\mu)}}=b_{\langle \delta_0,\dots, \delta_{n-1}\rangle}$$ is such a $\sq$-maximal element.

\smallskip

On the one hand, we have already argued that $\langle s, S\rangle\in k_{1}^{-1}(H)$ {but does it satisfy $j(p)(k_1(\bar\mu))<k_1(s)(k_1(\bar\mu))$ for all $p\in G$?} As a result, $b_{\stem(k_1(s)(k_1(\bar\mu)))_{H\restriction k_1(\mu)}}$ is part of the union defining $\bar{H}^{\iota_1,k_1}_{\bar\mu}$.

\smallskip

Let  $r$ witnessing $\bar\mu\in A_{\iota_1}$. Then, $k_1(r)\in H$.

Let $\delta<\bar\mu$ be an ordinal such that $$r\restriction \bar\mu\forces^N_{\iota(\mathbb{P}_\kappa)_{\bar\mu}}``\stem(r(\bar\mu))\s \check{\delta}".$$ Such an ordinal exists  because $r\restriction\bar\mu$ forces a $\iota(\mathbb{P}_\kappa)_\eta$-name (with $\eta<\bar\mu$) to be  equal to $\stem(\dot{r}(\bar\mu))$ (by Lemma~\ref{lemma: reducingdensesets}), but this is a bounded subset of a measurable cardinal $\bar\mu$.
In particular, $k_1(r)\restriction \mu\forces^M_{j(\mathbb{P}_\kappa)_\mu}``\stem(k_1(\dot{r})(\mu))\s \bar\mu"$ because $\bar\mu\in \Gamma_{\iota}\s \Gamma_{\iota_1}$ and Lemma~\ref{lemma:imagesofmu} ensures that those ordinals are closure points of the factor embedding $k_1\colon V\rightarrow N_1$.  Since $k_1(r)\in H$,  $b_{\stem(k_1(\dot{r}(\mu)))_{H\restriction \mu}}\s b_{\mu}\cap \bar\mu=b_{\langle \delta_0,\dots, \delta_{n-1}\rangle}$ and we are done.
\end{proof}
Notice that this claim yields the sought contradiction. Indeed, we were originally assuming that $\bar\mu$ was such that no $\leq_j$-extension $\iota$ of $\iota_0$ makes $\bar{H}^{\iota^*,k^*}_{\bar\mu}$ have a $\sq$-maximal element, yet $\iota_1$ witnesses the  opposite conclusion.
\end{proof}

With the proof of Lemma~\ref{lem;maximal-stem} completed, we have now provided the missing piece needed to establish the \textbf{Coding Lemma}. 

\section{The $\kappa$-directednnes of $\langle \mathfrak{U}_\kappa,\leq_{\mathrm{RK}}\rangle$ from optimal assumptions}\label{sec: A model for gluing}

The next is the main theorem of this paper:
\begin{theorem}[$V=\mathcal{K}$]\label{Theoremlambdagluing}
Suppose that $\kappa$ is a measurable cardinal with $$\omega_1\leq \cf(o(\kappa))=o(\kappa)\leq \kappa.$$
and that there are no  measurable cardinals $\alpha<\kappa$ with  {$o(\alpha){\geq}\min\{\alpha,o(\kappa)\}$.}

Then, in a cardinal-preserving generic extension, $\kappa$ has the ${<}o(\kappa)$-$\gp$. 
\end{theorem}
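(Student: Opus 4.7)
The plan is to force with $\mathbb{S}\ast\dot{\mathbb{P}}_\kappa$ over $\mathcal{K}$, where $\mathbb{S}$ is the fast function forcing from Definition~\ref{def: fast function forcing} and $\mathbb{P}_\kappa$ is the Gluing Iteration of \S\ref{sec: the gluing iteration}. The hypothesis on Mitchell orders below $\kappa$ guarantees that $o^\ell(\alpha)<\alpha$ holds at every measurable $\alpha<\kappa$, so the construction of $\mathbb{P}_\kappa$ is well defined. By Proposition~\ref{PropertiesofS} together with the properties of non-stationary supported iterations of Prikry-type forcings recalled in \S\ref{sec:prelimminaries}, the two-step forcing preserves cardinals and the GCH below $\kappa$, and $\kappa$ remains measurable in $\mathcal{K}[S][G]$ via standard liftings of each $U(\kappa,\zeta)$. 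In view of Theorem~\ref{thm: EquivalencewithRK} it suffices to verify the $<o(\kappa)$-$\gp$ in $\mathcal K[S][G]$.

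Fix $\gamma<o(\kappa)$ and a sequence $\langle U_\alpha\mid \alpha<\gamma\rangle$ of $\kappa$-complete ultrafilters on $\kappa$ in $\mathcal K[S][G]$. By the Coding Lemma (Lemma~\ref{lemma: coding lemma}) each $U_\alpha$ admits a code $c_\alpha\in H(\kappa^+)^{\mathcal K[S]}$ in the sense of Definition~\ref{def: codes}. Since $o(\kappa)=\cf(o(\kappa))\geq\omega_1$ and $\gamma<o(\kappa)$, I would fix an ordinal $\rho<o(\kappa)$ with $\omega^\rho>\gamma$ and, inductively from Definition~\ref{ellorder}, a Mitchell order $\zeta^*<o(\kappa)$ together with lower-order witnesses $\vec\zeta$ so that the tuple $\langle\langle c_\alpha\mid \alpha<\omega^\rho\rangle,\vec\zeta\,\rangle$ is a legitimate value for $\iota_{\kappa,\zeta^*}(\ell)(\kappa)$ forcing $o^{\iota_{\kappa,\zeta^*}(\ell)}(\kappa)=\rho$ in the ultrapower. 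The next step is to lift $\iota_{\kappa,\zeta^*}\colon \mathcal K\to N$ to an elementary embedding $\iota^*\colon \mathcal K[S][G]\to N[\iota^*(S)\ast \iota^*(G)]$ with $\iota^*(\ell)(\kappa)=\langle\langle c_\alpha\mid \alpha<\omega^\rho\rangle,\vec\zeta\,\rangle$. Lemma~\ref{lem;lifting-S} and a density argument in $\mathbb{S}$ handle the first leg, yielding $\iota^*\restriction \mathcal K[S]$ with the prescribed value at coordinate $\kappa$; Lemma~\ref{lemma:bnu} and the fact that $\iota^*(\mathbb{P}_\kappa)$ opts at stage $\kappa$ for the Gluing Poset $\mathbb{G}^\rho_\kappa$ furnish a master condition, which generically extended produces $\iota^*(G)$ together with a generic filter for $\mathbb{G}^\rho_\kappa$ with an associated club $b_\kappa\subseteq\kappa$ of order-type $\omega^\rho$ (Lemma~\ref{lemma: properties of gluing poset}(4)).

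Once the lifting is in place, absoluteness of codes between $\mathcal K[S][G]$ and $N[\iota^*(S)\ast \iota^*(G)]$ ensures that the measures $\mathcal U^0_\alpha$ decoded from the $c_\alpha$'s coincide with the original $U_\alpha$. The higher-order analogues of Lemmas~\ref{lemma: disjointifying levels} and \ref{lemma: least member of bnu enters A}, applied along the levels of $b_\kappa$, then give that for each $\alpha<\gamma$ the $\alpha$-th element $\beta_\alpha$ of a suitable enumeration of $b_\kappa$ is a seed for $U_\alpha$, i.e., $U_\alpha=\{X\subseteq\kappa\mid \beta_\alpha\in \iota^*(X)\}$. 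Increasingness of $\langle \beta_\alpha\mid \alpha<\gamma\rangle$ follows from the order-type of $b_\kappa$, and the whole witnesses the $\gamma$-$\gp$ for $\kappa$. Combined with Lemma~\ref{lemma: cardinal dependence of gluing} and Theorem~\ref{thm: EquivalencewithRK}, this will yield the $<o(\kappa)$-$\gp$ via ultrafilters and hence the $o(\kappa)$-directedness of $\langle \mathfrak U_\kappa,\leq_{\mathrm{RK}}\rangle$.

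The main obstacle is the interaction of three layers of bookkeeping in constructing the master condition: (i) predicting \emph{all} the codes $c_\alpha$ simultaneously via the single value $\iota^*(\ell)(\kappa)$ while simultaneously forcing the right $\ell$-order $\rho$; (ii) arranging the coherency demands of the Magidor/Gitik club $b_\kappa$ so that its stratification into blocks of successive $\ell$-orders matches the grading of the codes  dictated by the definition of $\mathbb{G}^\rho_\kappa$, so that each $c_\alpha$ is read off by the correct seed; and (iii) respecting, for every $\sigma<\rho$ simultaneously, the extension of the lower-order measures $\mathcal U^\sigma_\beta(t)$ to the target model. Addressing these uniformly for every $\gamma<o(\kappa)$ requires combining Corollary~\ref{cor: reducingnames}, Lemma~\ref{lemma:densityconditions}, and the fusion-style arguments of \cite{BenUng} with the compactness step of Lemma~\ref{lemma:compactnessofGluing} when $\gamma$ approaches $o(\kappa)$.
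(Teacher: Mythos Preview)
Your outline has the right ingredients up to the point where you lift $\iota_{\kappa,\zeta^*}$ to $\mathcal K[S]$, but the step after that contains a genuine gap. You propose to ``generically extend'' a master condition to obtain $\iota^*(G)$ and with it a generic for $\mathbb G^\rho_\kappa$ and an actual club $b_\kappa\subseteq\kappa$. The problem is that $\mathbb G^\rho_\kappa$ is a bona fide Prikry-type forcing: it adds a new sequence $b_\kappa$ that does \emph{not} live in $\mathcal K[S][G]$. Forcing with it (and with the tail of $\iota^*(\mathbb P_\kappa)$) moves you to a strictly larger universe, and the resulting embedding is definable only there, not in $\mathcal K[S][G]$. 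Since the $\gamma$-$\gp$ must be witnessed inside $\mathcal K[S][G]$, and since the poset $\mathbb G^\rho_\kappa$ depends on the particular sequence $\langle U_\alpha\mid\alpha<\gamma\rangle$ to be glued, no single further forcing can produce the $<o(\kappa)$-$\gp$ in a fixed model.

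The paper circumvents this by never forcing with $\mathbb G^\Omega_\kappa$ at all. Instead, working entirely inside $\mathcal K[S\ast G]$, it defines a $\kappa$-complete ultrafilter $\mathcal W$ on the space $\kappa^\Omega$ of \emph{potential} Magidor sequences, via the forcing relation: $X\in\mathcal W$ iff there are $p\in G$ and a tree $T$ with $p^\smallfrown\langle\varnothing,T\rangle^\smallfrown\iota_{\kappa,\zeta_*}(p)\setminus(\kappa{+}1)\Vdash\dot c_\kappa\in\iota_{\kappa,\zeta_*}(\dot X)$. The gluing embedding is then $j_{\mathcal W}$, and the seeds are the coordinates of $[\id]_{\mathcal W}$ rather than elements of an actual generic club. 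What remains is to show that the evaluation maps $\mathrm{eval}_\beta$ project $\mathcal W$ onto each $\mathcal U_\beta$ (Lemma~\ref{lemma:omega-gluing-by-a-measure}); this is where the tree-shrinking argument, the claim that $\iota_{\kappa,\zeta_*}(\dot Y)\cap\kappa=\dot Y$ (proved by a fusion over the iteration), and the analogues of Lemmas~\ref{lemma: disjointifying levels} and~\ref{lemma: least member of bnu enters A} are actually used---but applied to the forcing relation, not to an external generic. Your identification of the relevant lemmas is correct; the missing idea is that they feed into the definition-by-forcing-relation of $\mathcal W$, not into a lifting argument.
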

\begin{proof}
     Let $\mathcal{U}=\langle U(\alpha,\zeta)\mid \alpha\leq \kappa,\, \zeta<o^{\mathcal{U}}(\alpha)\rangle$ denote the unique coherent sequence of measures in $\mathcal{K}$. Let $G\s \mathbb{P}_\kappa$ be generic over $\mathcal{K}[S]$ being $\mathbb{P}_\kappa$ {the gluing iteration defined through} \S\ref{sec: the gluing iteration}. Suppose that $\langle \mathcal{U}_\beta\mid \beta<{\omega^{\Omega}}\rangle\in \mathcal{K}[S\ast G]$ is a sequence of $\kappa$-complete measures over $\kappa$ and $\Omega<o(\kappa)$ an ordinal. Utilizing Lemma~\ref{lemma:omega-gluing-by-a-measure}, we  show that this sequence of measures can be glued via a measure on $\kappa^\Omega$ – in particular, the $\Omega$-gluing property will hold. 

     \smallskip
     
   In light of our anti-large cardinal assumption (i.e., {there is no measurable $\alpha<\kappa$ with  $o(\alpha)\geq \alpha$}) we can appeal to the  \textbf{Coding Lemma} (Lemma~\ref{lemma: coding lemma}) thus concluding the existence of a sequence of codes $\langle c_\beta\mid \beta<\omega^{\Omega}\rangle\in H(\kappa^+)^{\mathcal{K}}$  where  each $c_\beta$ codes the corresponding ultrafilter $\mathcal{U}_\beta$. (See Definition~\ref{def: codes}.) 
   
  For each $\ell$-order $1\leq\rho<\Omega\leq \kappa$ and   $\beta$ such that $\omega^\rho\cdot\beta<\omega^{\Omega}$ we denoted
\begin{equation*}
    \vec{c}^{\;\rho}_\beta:=\langle c_\gamma\mid \gamma\in [\omega^\rho\cdot\beta, \omega^\rho\cdot(\beta+1))\rangle,
\end{equation*}
\begin{equation*}
    \vec{\zeta}^{\;\rho}_\beta:=\langle \zeta^\sigma_\gamma\mid 1\leq\sigma<\rho\,\wedge\,\gamma\in [\omega^\sigma\cdot\beta, \omega^{\sigma}\cdot(\beta+1))\rangle.
\end{equation*}

   Take $\zeta^1_\beta<o^{\mathcal{U}}(\kappa)=o(\kappa)$ be an ordinal above all the ordinals mentioned in the codes $c_\gamma\in\vec{c}^{\;1}_\beta$. This choice is possible by regularity of $o(\kappa)$ and the fact that $o(\alpha)<o(\kappa)$ for all measurables $\alpha<\kappa$.
   In general, for each $1\leq \rho<\Omega$ and each $\beta$ such that $\omega^\rho\cdot\beta<\omega^{\Omega}$ we let  $\zeta^\rho_\beta\in (\sup\vec{\zeta}^{\,\rho}_\beta, o(\kappa)).$ 

   \smallskip

    Arguing as in \cite[Theorem~6.21]{HP} we can lift $\iota_{\kappa,\zeta^\rho_\beta}$ to $$\iota_{\kappa,\zeta^\rho_\beta}\colon \mathcal{K}[S]\rightarrow \mathscr{N}_{\kappa,\zeta^\rho_\beta}[\iota_{\kappa,\zeta^\rho_\beta}(S)]$$ in a way that $\iota_{\kappa,\zeta^\rho_\beta}(\ell)(\kappa)=\langle \vec{c}^{\;\rho}_\beta,\vec{\zeta}^{\,\rho}_\beta\rangle.$ Besides, as in \cite[Claim~6.21.2]{HP}, $$\mathscr{N}_{\kappa, \zeta^\rho_\beta}[\iota_{\kappa,\zeta^\rho_\beta}(S)\ast G]\models \text{$``\vec{c}^{\;\rho}_\beta$ is a sequence of $\kappa$-codes''}.$$

    Since $\Omega<o(\kappa)$ and the latter is a regular cardinal, we can let $\zeta_*<o(\kappa)$  above  $\sup\{\zeta^\rho_\beta\mid 1\leq \rho<\Omega\,\wedge\, \omega^\rho\cdot\beta<\omega^\Omega\}$. As before, we can lift the ultrapower embedding $\iota_{\kappa,\zeta_*}$ to $\iota_{\kappa,\zeta_*}\colon \mathcal{K}[S]\rightarrow \mathscr{N}_{\kappa, \zeta_*}[\iota_{\kappa,\zeta_*}(S)]$ in a way that $$\iota_{\kappa,\zeta_*}(\ell)(\kappa)=\langle \langle c_\beta\mid \beta<\omega^\Omega\rangle, \langle \zeta^\rho_\beta\mid 1\leq \rho<\Omega\,\wedge\,\omega^\rho\cdot\beta<\omega^\Omega\rangle \rangle.$$
{Notice also that the sequence of $U(\kappa,\zeta^\rho_\beta)$'s belongs to $\mathscr{N}_{\kappa,\zeta_*}$ because this latter model is closed under $\kappa$-sequences and $\omega^\Omega\leq \kappa$.} 
 Combining this with the above we infer that $o^\ell(\kappa)=\Omega$ in $\mathscr{N}_{\kappa,\zeta_*}[\iota_{\kappa,\zeta_*}(S)]$ (see Lemma~\ref{lemmacoherency}). Therefore,  the iteration $\iota_{\kappa,\zeta_*}(\mathbb{P}_\kappa)$ at stage $\kappa$ opts for the gluing poset $\mathbb{G}^\Omega_\kappa$. (Notice that $\mathbb{G}^\Omega_\kappa$  is computed the same way in $\mathscr{N}_{\kappa,\zeta_*}[\iota_{\kappa,\zeta_*}(S)\ast G]$ and in $\mathcal{K}[S\ast G]$ because the former model contains the measures to define $\mathbb{G}^\Omega_\kappa$.)

    \smallskip

    Define the gluing measure $\mathcal{W}$ as follows.

    \smallskip
    
    For each $X\s \kappa^\Omega$ in $\mathcal{K}[S\ast G]$, $X$ belongs to $\mathcal{W}$ if and only if there is  $p\in G$ and a $\mathbb{P}_\kappa$-name $\dot{T}$ for a tree  such that $p\forces_{\mathbb{P}_\kappa}``\langle \varnothing,\dot{T}\rangle \in \dot{\mathbb{G}}^\Omega_\kappa"$ and
    $$ \mathscr{N}_{\kappa, \zeta_*}[\iota_{\kappa,\zeta_*}(S)]\models ``p^\smallfrown\langle \varnothing,T\rangle^\smallfrown \iota_{\kappa,\zeta_*}(p)\setminus (\kappa+1)\forces_{\mathbb{P}_\kappa}\dot{c}_\kappa\in \iota_{\kappa,\zeta_*}(\dot{X})",$$
    where $\dot{c}_\kappa$ is the standard name for the generic sequence added by $\mathbb{G}^\Omega_\kappa$.

  It is routine to check that $\mathcal{W}$ is a $\kappa$-complete ultrafilter in $\mathcal{K}[S\ast G]$. 

  \smallskip

We will utilize our characterization of the $\Omega$-gluing property stated in Lemma~\ref{lemma:omega-gluing-by-a-measure} to show that $\mathcal{W}$ glues the departing sequence $\langle \mathcal{U}_\beta\mid \beta<\omega^\Omega\rangle.$

\smallskip

  Let 
  $X\in\mathcal{W}$ and fix $\beta<\omega^\Omega$. Suppose towards a contradiction that $$Y:=\{s(\beta)\mid s\in X\}\notin \mathcal{U}_\beta.$$ Let $p\in G$ be a condition forcing $``\dot{\mathrm{eval}_\beta}``\dot{X}=\dot{Y}\notin\dot{\mathcal{U}}_\beta$".  
   
   Since $X\in \mathcal{W}$, by extending $p$ if necessary, we may assume that 
   $$\mathscr{N}_{\kappa, \zeta_*}[\iota_{\kappa,\zeta_*}(S)]\models``p^\smallfrown\langle \varnothing,T\rangle^\smallfrown \iota_{\kappa,\zeta_*}(p)\setminus (\kappa+1)\forces_{\mathbb{P}_\kappa}\dot{c}_\kappa\in \iota_{\kappa,\zeta_*}(\dot{X})".$$
   In particular, 
   $$\mathscr{N}_{\kappa, \zeta_*}[\iota_{\kappa,\zeta_*}(S)]\models ``p^\smallfrown\langle \varnothing,T\rangle^\smallfrown \iota_{\kappa,\zeta_*}(p)\setminus (\kappa+1)\forces_{\mathbb{P}_\kappa}\dot{c}_\kappa(\beta)\in \iota_{\kappa,\zeta_*}(\dot{Y})\cap \kappa".$$
   (Here we used the fact that $\iota_{\kappa,\zeta_*}(\beta)=\beta$, which is true as $\beta<\kappa=\crit(\iota_{\kappa,\zeta_*})$.)
   \begin{claim}
   There is $p\in G$ such that $\iota_{\kappa,\zeta_*}(p)\forces_{\iota_{\kappa,\zeta_*}(\mathbb{P}_\kappa)}\iota_{\kappa,\zeta_*}(\dot{Y})\cap\kappa=\dot{Y}$.
   \end{claim}
   \begin{proof}
   To streamline the presentation  denote the embedding $\iota_{\kappa,\zeta_*}$ by $\iota.$

   Let $g\s \mathbb{G}_\kappa^\Omega$ generic over $V[G]$. Working in $V[G\ast g]$, for each $\alpha<\kappa$ we look at the following set: $$D_\alpha:=\{q\in \iota(\mathbb{P}_\kappa)/{G\ast g}\mid q\forces_{\iota(\mathbb{P}_\kappa)/{G\ast g}} \alpha\in \check{Y}\leftrightarrow \alpha\in \iota(\dot{Y})_{{G\ast g}}\}.$$
   Note that $D_\alpha$ is a $\leq^*$-dense open subset of $\iota(\mathbb{P}_\kappa)/{G\ast g}$  for all $\alpha<\kappa$. 

   \smallskip

   Let $d_\alpha\colon \kappa\rightarrow V$ be a function representing $D_\alpha$; to wit, $D_\alpha=\iota(d_\alpha)(\kappa)_{{G\ast g}}$. Notice that we may assume, without losing any generality, that $d_\alpha(\eta)$ is a $\mathbb{P}_{\eta+1}$-name for a $\leq^*$-dense open subset of $\mathbb{P}_\kappa/G_{\eta+1}$.

   \smallskip

Our goal is to show that the following set is $\leq^*$-dense:
$$\textstyle E=\{p\in \mathbb{P}_\kappa\mid \exists C\in \mathrm{Cub}_\kappa\,\wedge\,\forall \eta\in C\, (p\restriction \eta+1\forces_{\mathbb{P}_{\eta+1}}``p\restriction[\eta+1,\kappa)\in \bigcap_{\delta<\eta}d_\delta(\eta)")\}.$$
If this is the case then any $p\in E\cap G$ will satisfy that 
$\iota(p)\forces_{\iota(\mathbb{P}_\kappa)}\dot{Y}=\iota(\dot{Y})\cap \kappa.$

\smallskip

So, fix $p\in \mathbb{P}_\kappa$. We define via a fusion argument a sequence of triples $\langle \langle p_\delta, \nu_\delta, C_\delta\rangle \mid \delta<\kappa\rangle$ as follows. Using the Strong Prikry Property relative to $\nu_0:=0$ and the function $d_0$ we find $p_0\leq^* p$, $p_0\restriction \nu_0+1=p\restriction\nu_0+1$ and a club $C_0\s \kappa$ (which we may assume $C_0\cap \supp(p_0)=\emptyset$) such that 
$$p_0\restriction\eta+1\forces_{\mathbb{P}_{\eta+1}}p_0\restriction[\eta+1,\kappa)\in d_0(\eta),$$
for all $\eta\in C_0.$

\smallskip

Suppose that $\langle \langle p_\eta, \nu_\eta, C_\eta\rangle \mid \eta<\delta\rangle$ has been defined. 

\smallskip

$\br$ \underline{Case $\delta=\bar{\delta}+1$:} In this case we proceed as before. More explicitly, we  invoke the Strong Prikry Property with respect to $\nu_{\delta}:=\min(C_{\bar{\delta}}\setminus \nu_{\bar{\delta}}+1)$ and the function $d_\delta$, thus finding $p_\delta\leq^* p_{\bar{\delta}}$ and a club $C_{\delta}\s C_{\bar{\delta}}$  such that
\begin{itemize}
    \item $C_{\delta}\cap \supp(p_\delta)=\emptyset,$
    \item $p_\delta\restriction\nu_{\delta}+1=p_{\bar\delta}\restriction\nu_\delta+1$,
    \item and $p_{\delta}\restriction\eta+1\forces_{\mathbb{P}_{\eta+1}}``p_{\delta}\restriction[\eta+1,\kappa)\in d_\delta(\eta)"$ for all $\eta\in C_\delta.$
\end{itemize}

$\br$ \underline{Case $\delta$ limit:} In this case we stipulate $\nu_\delta=\sup_{\eta<\delta}\nu_\eta$ and take
$$\textstyle \bar{p}_{\delta}\restriction\nu_\delta+1=\bigcup_{\eta<\delta}p_\eta\restriction\nu_\eta+1$$
and $\bar{p}_{\delta}\restriction [\nu_\delta+1,\kappa)$ a $\leq^*$-lower bound for the sequence of tails $p_\eta\restriction[\nu_\eta+1,\kappa).$ Let $\bar{p}_\delta$ be $\bar{p}_\delta\restriction \nu_\delta+1{}^\smallfrown p_\delta\restriction [\nu_{\delta+1},\kappa)$. Note that  $\bar{p}_\delta$ is a legitimate condition in $\iota(\mathbb{P}_\kappa)$ (i.e., the support of $\bar{p}_\delta$ is nowhere stationary) as witnessed by the club $\bar{C}_\delta:=\bigcap_{\bar{\delta}<\delta} C_{\bar\delta}.$  Now  invoke the Strong Prikry Property with respect to the ordinal $\nu_{\delta}$, the function $d_\delta$ and the condition $\bar{p}_\delta$, thus finding $p_\delta\leq^* \bar{p}_{{\delta}}$ and a club $C_{\delta}\s \bigcap_{\alpha<\delta}C_{\alpha}$  with the properties mentioned above.

\smallskip

In this fashion, we obtain a sequence $\langle \langle p_\delta, \nu_\delta, C_\delta\rangle \mid \delta<\kappa\rangle$. Now let 
$$\textstyle p^*:=\bigcup_{\delta<\kappa}p_\delta\restriction \nu_\delta+1,\;C^*:=C(\nu)\cap \triangle_{\delta<\kappa} C_\delta,$$
where $C(\nu)$ is the club of closure points of $\nu\colon \delta\mapsto \nu_\delta.$

\smallskip

We claim that $p^*\in E$ as witnessed by the club $C^*$. 

Let $\eta\in C$. Then, $\eta=\sup_{\delta<\eta}\nu_\delta$
and $\eta\in \bigcap_{\delta<\eta} C_\delta$. From this we deduce,  
$$\textstyle p^*\restriction \eta+1\forces_{\mathbb{P}_{\eta+1}}p^*\restriction [\eta+1,\kappa)\leq^* p_\delta\restriction[\eta+1,\kappa)\in d_\delta(\eta)$$
for all $\delta<\eta.$ By openess of $d_\delta(\eta)$ we finally infer 
  $$\textstyle p^*\restriction \eta+1\forces_{\mathbb{P}_{\eta+1}}p^*\restriction [\eta+1,\kappa)\in \bigcap_{\delta<\eta} d_\delta(\eta),$$
  for all $\eta\in C$. Thereby, $p^*\leq^* p$ is in $E$.
   \end{proof}
   Let $\vec{\rho}=\langle \rho_i\mid i\leq k\rangle$ be a decreasing sequence of ordinals with $\rho_k=0$ and a sequence of non-negative integers $ \vec{n}=\langle n_i\mid i\leq k\rangle$ such that $\beta=\sum_{i=0}^k\omega^{\rho_i}\cdot n_i$. 
   
 Working in $\mathcal{K}[S\ast G]$,  we denote by $\mathcal{T}({\vec\rho,\vec n})$ the collection of $\Omega$-coherent sequences $t\in T$ having exactly $n_i$-many ordinals of $\ell$-order $\rho_i$ and no ordinals of $\ell$-order $\sigma$ for $\sigma$'s outside $\range(\vec\rho).$ More formally, $t$ is such that
 $$|t_\sigma|:=\begin{cases}
     n_i, & \text{if $\sigma=\rho_i$;}\\
     0, & \text{otherwise.}
 \end{cases}$$
  For each such $t\in \mathcal{T}({\vec\rho,\vec n})$ the definition of $\mathbb{G}^\Omega_\kappa$ requires $$\textstyle \mathrm{Succ}_T(t)\in \bigcap_{\rho<\Omega}\mathcal{U}^\rho_{j_\rho}(t\restriction\rho)\s  \bigcap_{i\leq k}\mathcal{U}^{\rho_i}_{j_{\rho_i}}(t\restriction\rho_i)\footnote{By convention $\mathcal{U}^{\rho_k}_{j_k}(\varnothing)=\mathcal{U}_{j_k}$.}$$ where
  $j_{\rho_i}$ is the order-type of the collection of ordinals with $\ell$-order $\rho_i$ in the Gitik/Magidor generic sequence stemming from $t$; namely, 
  $$j_{\rho_i}:=\otp\{\nu\in c_{t}\mid o^\ell(\nu)=\rho_i\}.$$

  We can compute $j_{\rho_i}$ explicitly as follows:
   $$\text{$j_{\rho_0}:=n_0$ and $j_{\rho_{i+1}}:=\omega^{\rho_i-\rho_{i+1}}\cdot j_{\rho_i}+n_{i+1}$}.$$

   \smallskip

   Let us define an $\Omega$-tree $T^*\s T$ \emph{disjoint} from $Y$. We do this going  over the levels of $T$: For  $n\leq n^\star:=(\sum_{i\leq k} n_i)$ let us stipulate that $$\mathrm{Lev}_n(T)=\mathrm{Lev}_n(T^*).$$

\smallskip


For each $t\in \mathcal{T}(\vec{\rho},\vec{n})$, let $\biguplus_{i<\Omega} S^{\rho}_t$ be a partition of $\mathrm{Succ}_T(t)$ according to the $\ell$-ordering.   Since $Y$ is $\mathcal{U}_\beta$-small,  $S^{0}_t\cap (\kappa\setminus Y)\in\mathcal{U}_\beta$ and also
$$\bar{S}^\rho_t:=\{\nu\in S^\rho_t\mid \min(b_\nu\setminus (\max(t\restriction\rho)+1))\in (\kappa\setminus Y) \}\in \mathcal{U}^{\rho}_{j_\rho}(t\restriction \rho).$$
For each $t\in \mathcal{T}(\vec{\rho},\vec{n})$, define   $$\mathrm{Succ}_{T^*}(t):=(S^{0}_t\cap (\kappa\setminus Y))\cup\textstyle\bigcup_{\rho>0} \bar{S}^\rho_t.$$
In general, we stipulate $$\textstyle \mathrm{Lev}_{n^\star}(T^*):=(\bigcup_{t\notin \mathcal{T}(\vec\rho,\vec n)}\mathrm{Succ}_T(t))\cup (\bigcup_{t\in \mathcal{T}(\vec\rho,\vec n)}\mathrm{Succ}_{T^*}(s)).$$ 
The other levels of $T^*$ are obtained copying the nodes of $T$ above $\mathrm{Lev}_{n^\star}(T^*)$.


   \smallskip


   By extending $p\in G$, $\mathscr{N}_{\kappa, \zeta_*}[\iota_{\kappa,\zeta_*}(S)]\models ``p\forces_{\mathbb{P}_\kappa}\langle \varnothing, \dot{T}^*\rangle\leq^* \langle \varnothing, \dot{T}\rangle$''. So, 
   $$\mathscr{N}_{\kappa, \zeta_*}[\iota_{\kappa,\zeta_*}(S)]\models``p^\smallfrown\langle \varnothing,T^*\rangle^\smallfrown \iota_{\kappa,\zeta_*}(p)\setminus (\kappa+1)\forces_{\mathbb{P}_\kappa}\dot{c}_\kappa(\beta)\in \dot{Y}".$$
  But in fact $p^\smallfrown\langle \varnothing,T^*\rangle^\smallfrown \iota_{\kappa,\zeta_*}(p)\setminus (\kappa+1)$ forces the opposite. Contradiction. 
\end{proof}
\begin{cor}
Let $\kappa$ be a measurable cardinal and $\lambda$ a cardinal with $\lambda\leq \kappa$. The following theories are equiconsistent, modulo $\mathrm{ZFC}$:
\begin{enumerate}
    \item $\kappa$ has the ${<}\lambda^+$-gluing property.
    \item $o(\kappa)=\begin{cases}
        \lambda^+, & \text{if $\lambda<\kappa$;}\\
        \kappa, & \text{if $\lambda=\kappa$.}
    \end{cases}$
\end{enumerate}
\end{cor}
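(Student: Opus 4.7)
The plan is to establish each direction of the equiconsistency separately.

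For the direction (2) $\Rightarrow$ Con(1), I would start from a model of~(2), pass to its core model $\mathcal{K}$, and truncate so that (a)~$V=\mathcal{K}$, (b)~no measurable $\alpha<\kappa$ satisfies $o(\alpha)\geq\min\{\alpha,o(\kappa)\}$, and (c)~$o(\kappa)$ retains its prescribed value. In either case $\omega_1\leq\cf(o(\kappa))=o(\kappa)\leq\kappa$, so the hypotheses of Theorem~\ref{Theoremlambdagluing} are met, yielding a cardinal-preserving generic extension in which $\kappa$ has the ${<}o(\kappa)$-$\gp$. For $\lambda<\kappa$ this is precisely the ${<}\lambda^+$-$\gp$ required in~(1). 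For $\lambda=\kappa$ the theorem only delivers the ${<}\kappa$-$\gp$, but Lemma~\ref{lemma:compactnessofGluing}, applied with the measurable cardinal $\kappa$, upgrades it to the ${<}\kappa^+$-$\gp$.

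For the direction (1) $\Rightarrow$ Con(2), I would adapt the argument of Proposition~\ref{prop: capturing}. By Theorems~\ref{thm: EquivalencewithRK} and~\ref{theorem: variantsofgp}, for $\lambda\leq\kappa$ the ${<}\lambda^+$-$\gp$ is equivalent to the super $\lambda$-$\gp$ via ultrafilters. Suppose for contradiction that $o^{\mathcal{K}}(\kappa)=\beta$ with $\beta\leq\lambda$ (resp.\ $\beta\leq\kappa$ when $\lambda=\kappa$), and enumerate the normal measures on $\kappa$ in $\mathcal{K}$ as $\langle U_\gamma\mid\gamma<\beta\rangle$. Since $\beta<\kappa^+$, the $\beta$-$\gp$ applied to the $V$-liftings $\langle U_\gamma^V\mid\gamma<\beta\rangle$ produces a $\kappa$-complete ultrafilter $W$ on $\kappa$ together with seeds $\langle\eta_\gamma\mid\gamma<\beta\rangle\in M_W$ such that $U_\gamma=\{X\in\mathcal{P}(\kappa)^{\mathcal{K}}\mid\eta_\gamma\in j_W(X)\}$ for every $\gamma<\beta$. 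Since $j_W\restriction\mathcal{K}$ is a normal iteration (Theorem~\ref{CoreModelTheorem}(2)) definable in $M_W$, and $\mathcal{P}(\kappa)^{\mathcal{K}}=\mathcal{P}(\kappa)^{\mathcal{K}^{M_W}}$ by $\kappa$-closure of $M_W$, the entire sequence $\langle U_\gamma\mid\gamma<\beta\rangle$ belongs to $M_W$. Following the proof of Proposition~\ref{prop: capturing}, the first extender $E\in\mathcal{K}$ used by $j_W\restriction\mathcal{K}$ sits on $\kappa$; factoring through its derived normal measure $E_\kappa$ places the sequence (coded by a subset of $\kappa^+$, which lies below the critical point of the factor map) inside $\Ult(\mathcal{K},E_\kappa)$. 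This contradicts coherency of the $\mathcal{K}$-sequence, which forces $o^{\Ult(\mathcal{K},E_\kappa)}(\kappa)=o^{\mathcal{K}}(E_\kappa)<\beta$. Hence $o^{\mathcal{K}}(\kappa)>\lambda$ (resp.\ $>\kappa$), and truncating $\mathcal{K}$ at the height where the Mitchell order of $\kappa$ equals $\lambda^+$ (resp.\ $\kappa$) gives an inner model of~(2).

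The main obstacle is ensuring that the sequence $\langle U_\gamma\mid\gamma<\beta\rangle$ is genuinely an element of $M_W$, which hinges on the internal $M_W$-definability of $j_W\restriction\mathcal{K}$. Since $W$ need not be $\mathcal{K}$-normal and $W\cap\mathcal{K}$ need not belong to $\mathcal{K}$, one has to exploit that, under our anti-large-cardinal assumption, $j_W\restriction\mathcal{K}$ is the unique normal iteration of $\mathcal{K}$ into $\mathcal{K}^{M_W}$ (by maximality and absoluteness of $\mathcal{K}$) and is therefore recoverable inside $M_W$ from parameters available there. Once this internal definability is in place, the remainder of the argument is a verbatim reuse of the factoring-through-$E_\kappa$ step from Proposition~\ref{prop: capturing}.
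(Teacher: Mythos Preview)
Your forward direction $(2)\Rightarrow\mathrm{Con}(1)$ is correct and is exactly the paper's argument: Theorem~\ref{Theoremlambdagluing} plus, in the case $\lambda=\kappa$, the bootstrap Lemma~\ref{lemma:compactnessofGluing}.

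Your backward direction $(1)\Rightarrow\mathrm{Con}(2)$ has a genuine gap. You want to apply the $\beta$-$\gp$ to the ``$V$-liftings $\langle U_\gamma^V\mid \gamma<\beta\rangle$'' of the normal measures on $\kappa$ in $\mathcal{K}$, but you never justify that such liftings exist. A normal measure in $\mathcal{K}$ need not extend to a $\kappa$-complete ultrafilter on $\kappa$ in $V$; the core model may well carry measures on $\kappa$ that are not restrictions of any $V$-measure. Without the liftings you cannot invoke the $\gp$ at all, and the rest of the argument never gets off the ground. (Attempting instead to code $\langle U_\gamma\mid\gamma<\beta\rangle$ as a subset of $\kappa^+$ and capture it \`a la Proposition~\ref{prop: capturing} does not help either: each $U_\gamma$ already has size $2^\kappa$, so the coding requires the $\kappa^+$-$\gp$, which is more than the hypothesis gives you.) The ``main obstacle'' you identify---internal definability of $j_W\restriction\mathcal{K}$ in $M_W$---is a secondary issue that only becomes relevant once the liftings are secured.

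The paper avoids this problem entirely: for $(1)\Rightarrow(2)$ it simply invokes the lower-bound argument of \cite[Theorem~7.1 and Remark~7.2]{HP}. That argument glues an arbitrary sequence of $V$-measures (no reference to $\mathcal{K}$-measures needed), and then analyzes the resulting seeds as strong generators of the normal iteration $j\restriction\mathcal{K}$ to force $o^{\mathcal{K}}(\kappa)$ high---an approach you can see echoed in the proof of Theorem~\ref{thm: no gluing in gitiks} in the present paper.
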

\begin{proof}
    (1) $\Rightarrow$ (2). If $\lambda<\kappa$  then the argument in \cite[Theorem~7.1]{HP} (see also \cite[Remark~7.2]{HP}) yields $o^{\mathcal{K}}(\kappa)\geq \lambda^+$. If $\kappa=\lambda$ then the same argument yields $o^{\mathcal{K}}(\kappa)\geq \kappa$. This establishes (1) $\Rightarrow$ (2).
    
    (2) $\Rightarrow$ (1). This is precisely Theorem~\ref{Theoremlambdagluing} when $\lambda<\kappa$. If $\lambda=\kappa$ then Theorem~\ref{Theoremlambdagluing} produces a generic extension where  $\kappa$ has the ${<}\kappa$-$\gp$ and Lemma~\ref{lemma:compactnessofGluing} implies that in the same model $\kappa$ has the ${<}\kappa^+$-$\gp$.
\end{proof}
The previous corollary combined with Theorem~\ref{thm: EquivalencewithRK} yields the following:
\begin{cor}
    Let $\kappa$ be a measurable cardinal and $\lambda<  \kappa$ be a cardinal. The following theories are equiconsistent, modulo $\mathrm{ZFC}$:
    \begin{enumerate}
        \item  $\langle\mathfrak{U}_\kappa,\leq_{\RK}\rangle$ is $\lambda^+$-directed.
        \item $``(\text{$\kappa$ is measurable with }o(\kappa)=\lambda^+)$".
    \end{enumerate}
     In addition, the following theories are equiconsistent modulo $\mathrm{ZFC}$:
  \begin{enumerate}
      \item $\langle\mathfrak{U}_\kappa,\leq_{\RK}\rangle$ is $\kappa^+$-directed.
      \item  $``(\text{$\kappa$ is measurable with }o(\kappa)=\kappa^+)$".
      \end{enumerate}
\end{cor}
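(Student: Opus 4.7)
The plan is to derive both equiconsistencies by chaining two earlier results: Theorem~\ref{thm: EquivalencewithRK}, which identifies directedness of $\langle\mathfrak{U}_\kappa,\leq_{\RK}\rangle$ with the appropriate variant of the Gluing Property, and the immediately preceding corollary, which computes the consistency strength of the Gluing Property itself in terms of the Mitchell order. The argument amounts to showing that, in both  clauses, the directedness assumption translates into the corresponding Gluing hypothesis to which the preceding corollary is applicable.

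For the first equivalence, assume $\lambda<\kappa$. By Theorem~\ref{thm: EquivalencewithRK} the $\lambda^+$-directedness of $\langle\mathfrak{U}_\kappa,\leq_{\RK}\rangle$ is equivalent to $\kappa$ having the ${<}\lambda^+$-$\gp$ via ultrafilters, and since $\lambda\leq\kappa$ the \emph{moreover} part of the same theorem identifies this with the plain ${<}\lambda^+$-$\gp$. Then the preceding corollary yields that this property is equiconsistent with $``\kappa\text{ is measurable with }o(\kappa)=\lambda^+$''. So the two theories named in the first clause are equiconsistent.

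For the second equivalence the strategy is identical, taking $\lambda=\kappa$. Apply Theorem~\ref{thm: EquivalencewithRK} to pass from $\kappa^+$-directedness of $\langle\mathfrak{U}_\kappa,\leq_{\RK}\rangle$ to the ${<}\kappa^+$-$\gp$ via ultrafilters, and again use the \emph{moreover} clause (which applies since $\lambda=\kappa\leq\kappa$) to drop the ``via ultrafilters'' qualifier. Then invoke the preceding corollary in the case $\lambda=\kappa$ to obtain the equiconsistency with the indicated hypothesis on $o(\kappa)$. Notice that the consistency strength of the ${<}\kappa^+$-$\gp$ is already captured in the preceding corollary via the bootstrap argument of Lemma~\ref{lemma:compactnessofGluing}, which converts the ${<}\kappa$-$\gp$ (the natural output of Theorem~\ref{Theoremlambdagluing}) into the ${<}\kappa^+$-$\gp$.

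Since both implications in each equiconsistency are obtained by citation rather than independent construction, there is no genuine obstacle remaining: the technical work (the \textbf{Coding Lemma}, the Gluing Iteration, and the lower-bound argument via the core model) was already carried out to prove the preceding corollary and Theorem~\ref{thm: EquivalencewithRK}. The only point that requires a moment of care is the invocation of the \emph{moreover} part of Theorem~\ref{thm: EquivalencewithRK}: it needs $\lambda\leq\kappa$, which holds in both clauses under discussion, thereby legitimately replacing  $\gp$ via ultrafilters with the plain $\gp$, so that the preceding corollary applies verbatim. \qed
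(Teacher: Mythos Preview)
Your proposal is correct and follows exactly the approach the paper itself indicates: the paper's proof is simply the one-line remark that ``the previous corollary combined with Theorem~\ref{thm: EquivalencewithRK} yields the following,'' and you have spelled out precisely this chain, including the use of the \emph{moreover} clause to drop ``via ultrafilters'' when $\lambda\leq\kappa$.
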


Let us close this section providing  an upper bound for the existence of a cardinal with $\lambda$-$\gp$ for every $\lambda$. The argument is in essence due to Gitik (\cite[Theorem~2.1]{GitikOnMeasurables}) modulo the following fix –– $\kappa$ is not assumed to be $\kappa$-compact  (as Gitik did originally) but to be a ``gluing cardinal".
\begin{theorem}\label{thm: full gluing}
Suppose that $\kappa$ has the $\lambda$-$\gp$ for every cardinal $\lambda$. Then, there is an inner model for a strong cardinal.     
\end{theorem}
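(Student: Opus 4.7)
The plan is to adapt Gitik's original proof from \cite[Theorem~2.1]{GitikOnMeasurables}, which establishes the same conclusion under the (much stronger) hypothesis that $\kappa$ is $\kappa$-compact. In Gitik's argument, $\kappa$-compactness is invoked through two consequences: every $\kappa$-complete filter on $\kappa$ extends to a $\kappa$-complete ultrafilter, and $\langle\mathfrak{U}_\kappa,\leq_{\RK}\rangle$ is $(2^\kappa)^+$-directed. The second feature is supplied by Theorem~\ref{thm: EquivalencewithRK}, since the full gluing hypothesis yields $\lambda^+$-directedness at every $\lambda\leq 2^\kappa$; the first is replaced, at every relevant step, by the capturing argument of Proposition~\ref{prop: capturing}, suitably iterated. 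Thus the filter-extension step in Gitik's proof can be substituted throughout by our gluing property, and the main line of his argument goes through.

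Concretely, I would argue by contradiction: assume no inner model for a strong cardinal exists, and work in the corresponding Mitchell--Steel core model $\mathcal{K}$ (below a strong), which enjoys the standard maximality, rigidity (every non-trivial $j\colon \mathcal{K}\to M$ is an iterated ultrapower by $\mathcal{K}$-extenders), weak covering, and generic absoluteness features. For each cardinal $\lambda$, I would produce a $\kappa$-complete ultrafilter $W_\lambda$ on $\kappa$ (or on $\kappa^{<\kappa}$) whose ultrapower captures the data required to read off a $(\kappa,\lambda)$-extender of $\mathcal{K}$. This is done iteratively: for each $A\in V_\lambda^{\mathcal{K}}$, pick a capturing ultrafilter $W_A$ with $A\in \Ult(V,W_A)$ via the method of Proposition~\ref{prop: capturing}; then use the $\lambda^+$-$\gp$ to glue the family $\langle W_A\mid A\in V_\lambda^{\mathcal{K}}\rangle$ into a single $W_\lambda$ whose seeds uniformly recover each $A$. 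Restricting $j_{W_\lambda}\colon V\to M_{W_\lambda}$ to $\mathcal{K}$ and invoking maximality together with generic absoluteness, this restriction is an iterated ultrapower by $\mathcal{K}$-extenders, and its first extender must have length at least $\lambda$.

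Since $\lambda$ was arbitrary, $\kappa$ is $\lambda$-strong in $\mathcal{K}$ for every $\lambda$, hence strong, contradicting our standing assumption on $\mathcal{K}$. The main obstacle is the verification that the extender read off from $j_{W_\lambda}\restriction\mathcal{K}$ genuinely belongs to $\mathcal{K}$ and coheres with the $\mathcal{K}$-extender sequence across varying $\lambda$. In Gitik's original proof this is handled by combining $\kappa$-compactness with the maximality of $\mathcal{K}$; the adaptation requires us to carry out the same analysis, but with every appeal to filter extension replaced by an appeal either to the gluing property or to the capturing argument of Proposition~\ref{prop: capturing}. The delicate bookkeeping needed to keep the extenders coherent as $\lambda$ grows is where most of the technical effort is concentrated, but no essentially new ideas beyond those already present in \cite{GitikOnMeasurables} are required.
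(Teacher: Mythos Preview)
Your approach has genuine gaps, and the paper's argument is both different and much simpler. The paper does not attempt to build a $(\kappa,\lambda)$-extender in $\mathcal{K}$ at all. Instead, it invokes Gitik's \cite[Theorem~1.1]{GitikOnMeasurables}: under the anti-large-cardinal hypothesis, for every embedding $j\colon V\to M$ with $\crit(j)=\kappa$ and $M^{\omega_1}\subseteq M$, the iteration $j\restriction\mathcal{K}$ uses no extender more than $\omega_1$ times, and consequently $\mathcal{I}_\kappa=\{j(\kappa)\mid j\text{ as above}\}$ is a \emph{set}. Setting $\lambda=\sup\mathcal{I}_\kappa$, the $\lambda^+$-$\gp$ then produces a single embedding $j$ together with $\lambda^+$-many increasing seeds below $j(\kappa)$, forcing $j(\kappa)>\lambda^+$, which contradicts $j(\kappa)\in\mathcal{I}_\kappa$. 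That is the entire proof.

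Your sketch, by contrast, relies on Proposition~\ref{prop: capturing}, which requires the \emph{super} $\kappa^+$-$\gp$ together with $2^\kappa=\kappa^+$; neither is part of the hypothesis, and the paper explicitly shows the super version can fail while the $\gp$ via ultrafilters holds. Even granting capturing, gluing the $W_A$'s only places each $W_A$ Rudin--Keisler below $W_\lambda$; it does not put $A$ itself into $\Ult(V,W_\lambda)$, so your claim that $W_\lambda$ ``captures'' $V_\lambda^{\mathcal{K}}$ is unsupported. Finally, the assertion that the first extender of $j_{W_\lambda}\restriction\mathcal{K}$ has length at least $\lambda$ does not follow from anything you have arranged: a long iteration can be built from short extenders, and $j(\kappa)$ being large says nothing about the length of any individual step. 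The ``delicate bookkeeping'' you defer is not a technicality but the entire content of the missing argument.
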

\begin{proof}
  Suppose otherwise – that there is no inner model with a strong cardinal. Let $\mathcal{K}$ be the corresponding Mitchell-Steel core model. By the core model theorem (see \cite{MitchellUpto,Schi}), the restriction to $\mathcal{K}$ of any non-trivial elementary embedding $j\colon V\rightarrow M$  is a normal iteration of extenders in $\mathcal{K}$. In particular, this applies to elementary embeddings where ${}^{\omega_1} M\cap V\s M$.

  By Gitik's \cite[Theorem~1.1]{GitikOnMeasurables}, in the iteration $j\restriction\mathcal{K}$ no extender was used more than $\omega_1$-many times. In particular   the images under those embeddings $$\mathcal{I}_\kappa:=\{j(\kappa)\mid j\colon V\rightarrow M,\, \crit(j)=\kappa,\, M^{\omega_1}\cap V\s M\}$$ form a set. Set $\lambda:=\sup(\mathcal{I}_\kappa).$

  Back in $V$, $\kappa$ has the $\lambda^+$-$\gp$.\footnote{This is exactly the place where Gitik's original assumptions have to be amended. Indeed, it is consistent for a  $\kappa$-compact to fail to satisfy the  $(2^\kappa)^+$-$\gp$ \cite{HP}.} Thus, given any $\kappa$-complete ultrafilter over $\kappa$ there is an elementary embedding $j\colon V\rightarrow M$ with $\crit(j)=\kappa$ and ${}^\kappa M\cap V\s M$, and an increasing sequence of ordinals $\langle \eta_\alpha\mid \alpha<\lambda^+\rangle$  such that $U=\{X\s \kappa\mid \eta_\alpha\in j(X)\}$. Thus, $j(\kappa)>\lambda^+$ –– a contradiction.
\end{proof}

\section{The failure of the Gluing Property in Gitik's model}\label{sec: no gluing in Gitiks}
In \cite{HP} we showed that if $o(\kappa)=\omega_1$ and the universe is the core model up to $o(\kappa)=\kappa^{++}$, then  after forcing with a non-stationary support iteration of Tree Prikry forcings, $\kappa$ possesses the $\aleph_0$-$\gp$. In light of  this it is natural to expect that the consistency of  the $\aleph_1$-$\gp$ will require forcing with a non-stationary support iteration of Gitik's posets $\mathbb{P}(\alpha,o(\alpha))$ from \cite{ChangingCofinalities}. After a series of seminar lectures delivered by the first author at the Hebrew University, Gitik suggested that the $\gp$ should hold in his model of \cite{ChangingCofinalities}. In this section we show that the $\aleph_0$-$\gp$ fails in Gitik's model. This shows that the Gluing Poset introduced in \S\ref{sec: the gluing iteration} is necessary to produce a model with arbitrary degrees of gluing below the measurable cardinal $\kappa.$

\smallskip

Through this section we suppose that $\kappa$ is a measurable cardinal and that   $$\mathcal{U}=\langle \mathcal{U}_{\alpha,\beta}\mid \alpha\leq \kappa,\,\beta<o^{\mathcal{U}}(\alpha)\rangle$$
is a coherent sequence of normal measures with $o^{\mathcal{U}}(\kappa)<\kappa.$ As usual, denote $$\dom_1(\mathcal{U}):=\{\alpha\leq \kappa\mid \exists \beta<o^{\mathcal{U}}(\alpha)\; (\mathcal{U}_{\alpha,\beta}\in \mathcal{U})\}.$$

We shall denote $\mathbb{P}^{\mathrm{E}}(\mathcal{U})$ (resp. $\mathbb{P}^{\mathrm{NS}}(\mathcal{U})$)  Gitik's Easton support iteration of \cite{ChangingCofinalities} (resp. the  non-stationary support version  considered in \cite{BenUng}).

\smallskip

The main theorem of this section reads as follows:

\begin{theorem}\label{thm: no gluing in gitiks}
    Assume that $V=\mathcal{K}$ and that there is no inner model for $``\exists\alpha\, (o(\alpha)=\alpha^{++})"$. Suppose that $\kappa$ is a measurable cardinal with $o(\kappa)<\kappa$ as witnessed by a coherent sequence $\mathcal{U}$. Let $\mathbb{P}\in \{\mathbb{P}^{\mathrm{E}}(\mathcal{U}), \mathbb{P}^{\mathrm{NS}}(\mathcal{U})\}$. Then, 
 $$\text{$\one \forces_{\mathbb{P}}``\check{\kappa}$ does not have the $\check{\omega}$-$\gp$".}$$
In particular, $\langle \mathfrak{U}_\kappa,\leq_{\mathrm{RK}}\rangle$ is not $\aleph_1$-directed.
\end{theorem}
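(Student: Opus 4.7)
The plan is to refute the $\aleph_1$-directedness of $\langle \mathfrak{U}_\kappa,\leq_{\mathrm{RK}}\rangle$ in the generic extension $V[\mathbb P]$; by Theorem~\ref{thm: EquivalencewithRK} applied inside $V[\mathbb P]$, this is equivalent to the failure of the $\omega$-$\gp$ at $\kappa$, which is precisely the statement of the theorem. Concretely, I aim to exhibit $\omega$-many $\kappa$-complete non-principal ultrafilters on $\kappa$ in $V[\mathbb P]$ admitting no common $\leq_{\mathrm{RK}}$-upper bound.

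The linchpin of the argument is the following classification (due to Gitik~\cite{ChangingCofinalities} for $\mathbb{P}^{\mathrm{E}}(\mathcal U)$, and established along the lines of \cite{BenUng,GitikKaplan-nonstationary2022} for $\mathbb{P}^{\mathrm{NS}}(\mathcal U)$): for every $\beta<o^{\mathcal U}(\kappa)$, the normal measure $\mathcal U_{\kappa,\beta}$ generates a normal $\kappa$-complete ultrafilter $\widetilde U_\beta$ on $\kappa$ in $V[\mathbb P]$, and $\widetilde U_\beta\cap \mathcal K=\mathcal U_{\kappa,\beta}$. Equivalently, $j_{\widetilde U_\beta}\restriction \mathcal K$ is the one-step ultrapower $j_{\mathcal U_{\kappa,\beta}}\colon \mathcal K\to \Ult(\mathcal K,\mathcal U_{\kappa,\beta})$. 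This is the ``bounding-ordinals'' phenomenon flagged in the preamble to the theorem: normality of $\mathcal U_{\kappa,\beta}$ together with the $\mathcal K$-maximality clause of Theorem~\ref{CoreModelTheorem}(1) pins $\widetilde U_\beta\cap\mathcal K$ down to $\mathcal U_{\kappa,\beta}$. In the Gluing Poset of \S\ref{sec: the gluing iteration}, by contrast, the corresponding extensions need not be normal and this rigidity breaks.

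Granting the classification, the main case $o^{\mathcal U}(\kappa)\geq\omega$ runs as follows. Fix a strictly increasing sequence $\langle \beta_n\mid n<\omega\rangle$ below $o^{\mathcal U}(\kappa)$ and put $U_n:=\widetilde U_{\beta_n}$. Assume towards a contradiction that some $\kappa$-complete $W\in \mathfrak{U}_\kappa^{V[\mathbb P]}$ satisfies $U_n\leq_{\mathrm{RK}} W$ for every $n<\omega$, witnessed by factor maps $k_n\colon M_{U_n}\to M_W$ with $k_n\circ j_{U_n}=j_W$. Restricting to the core model and using the classification, we obtain
\[
j_W\restriction\mathcal K \;=\; \bigl(k_n\restriction \mathcal K^{M_{U_n}}\bigr)\circ j_{\mathcal U_{\kappa,\beta_n}} \qquad \text{for every } n<\omega.
\]
By Theorem~\ref{CoreModelTheorem} applied in $V[\mathbb P]$ (so $\mathcal K^{V[\mathbb P]}=\mathcal K$), $j_W\restriction\mathcal K$ is a normal iteration of measures in $\mathcal K$, and in particular its \emph{first} normal measure is uniquely determined. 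The displayed identity forces this first measure to be $\mathcal U_{\kappa,\beta_n}$ simultaneously for every $n<\omega$, contradicting the pairwise distinctness of the $\beta_n$'s.

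When $o^{\mathcal U}(\kappa)<\omega$, only finitely many $\widetilde U_\beta$ are available and this---the main obstacle---must be addressed differently. The remedy I would pursue is to supplement the normal extensions with non-normal $\kappa$-complete ultrafilters produced by the Coding-Lemma machinery adapted to Gitik's iterations (along the lines of \cite{BenNeriaMeasures,GitikKaplan-nonstationary2022,BenNeriaKaplan}). Such ultrafilters are parametrised by finite normal iterations of $\mathcal K$ together with a representing ordinal and a forcing condition; one then selects an $\omega$-sequence whose underlying iterations have pairwise incompatible first-measure signatures, so that no single normal iteration of $\mathcal K$ can factor through all of their $\mathcal K$-restrictions simultaneously. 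Once the sequence is in place, the uniqueness-of-first-step argument of the previous paragraph applies verbatim. Verifying that the selected ultrafilters really do belong to $\mathfrak{U}_\kappa^{V[\mathbb P]}$, and that their first-measure signatures are genuinely incompatible, is a careful bookkeeping of codes rather than any conceptually new argument, and is where the bulk of the technical work in \S\ref{sec: no gluing in Gitiks} would lie.
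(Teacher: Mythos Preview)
Your argument contains a genuine gap at its central step. From the factorization $j_W\restriction\mathcal{K} = (k_n\restriction\mathcal{K}^{M_{U_n}})\circ j_{\mathcal{U}_{\kappa,\beta_n}}$ you conclude that the \emph{first} measure of the normal iteration $j_W\restriction\mathcal{K}$ must be $\mathcal{U}_{\kappa,\beta_n}$. This inference is incorrect. The first measure of a normal iteration with critical point $\kappa$ is the measure derived from $j_W$ at the seed $\kappa$; it is fixed and independent of $n$. The factorization through $j_{\mathcal{U}_{\kappa,\beta_n}}$ only says that $\mathcal{U}_{\kappa,\beta_n}$ is derived from $j_W$ at the seed $k_n(\kappa)$, and $k_n(\kappa)=[\pi_n]_W=\eta_n$, which in general lies strictly above $\kappa$ (indeed the $\eta_n$ are pairwise distinct since the $U_n$ are). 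Different seeds yield different derived measures, so no contradiction arises. In short, ``$j$ factors through $j_U$'' does not mean ``the first step of $j$ is $U$'' unless the factor map fixes $\kappa$, and here it does not.

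The paper's proof proceeds along a completely different line. Rather than varying the Mitchell order $\beta$, it fixes $\beta=1$ and uses the $t$-parametrized extensions $\mathcal{U}_{\kappa,1}(\langle\alpha_n\rangle)$ for an $\omega$-sequence of distinct inaccessibles $\alpha_n$; each such ultrafilter concentrates on those $\alpha$ whose generic Prikry sequence $b_\alpha$ begins with $\alpha_n$. Assuming a gluing embedding $j$ with seeds $\langle\eta_n\mid n<\omega\rangle$, one shows that the supremum $\eta_\omega$ is a critical point of $j\restriction\mathcal{K}$ and is $\mathcal{K}^M$-measurable, hence carries a generic Magidor sequence $b_{\eta_\omega}$. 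The technical core (Claim~\ref{claim: bounding ordinals}, which exploits the Strong Prikry Lemma for $\mathbb{P}(\eta_\omega,o^{\mathcal{U}}(\eta_\omega))$ together with normality of the measures) shows that every $\eta_n$ must lie in $b_{\eta_\omega}$. By coherency, $b_{\eta_n}\sqsubseteq b_{\eta_\omega}$, and since $b_{\eta_n}$ begins with $\alpha_n$, the sequence $b_{\eta_\omega}$ would have to begin with all the distinct $\alpha_n$'s simultaneously---the contradiction. This argument needs only $o^{\mathcal{U}}(\kappa)\geq 2$, so the case split you introduce is unnecessary.
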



\smallskip

Recall that $\mathbb{P}$ forces (non-trivially) only at measurables $\alpha\in \dom_1(\mathcal{U})$ and for those measurables the $\alpha$th-stage of $\mathbb{P}$, $\mathbb{P}(\alpha,o^{\mathcal{U}}(\alpha))$, introduces a generic club set $b_\alpha\s \alpha$ with $\otp(b_\alpha)=\omega^{o^{\mathcal{U}}(\alpha)}.$ In addition,  for each Mahlo  $\eta\leq \kappa$, every $V[\mathbb{P}_{\eta}]$-club on $\eta$ contains a $V$-club on $\eta$: This  is clear whenever $\mathbb{P}=\mathbb{P}^{\mathrm{E}}(\mathcal{U})$ (by the $\eta$-cc of $\mathbb{P}_\eta$). In the context of non-stationary support $\mathbb{P}^{\mathrm{NS}}(\mathcal{U})$, this was proved by Ben-Neria–Unger in \cite[Corollary~3.7]{BenUng}. 

\smallskip

The next is the Strong Prikry Lemma for Gitik's poset $\mathbb{Q}_\eta:=\mathbb{P}(\eta,o^{\mathcal{U}}(\eta))$. 

\begin{lemma}\label{lemma: SPP for Motis}
   Suppose $o^{\mathcal{U}}(\eta)<\eta.$ Let $p\in \mathbb{Q}_\eta$ and $D\s \mathbb{Q}_\eta$ be dense open. Then, there is a direct extension $q\leq^* p$ and a finite sequence of Mitchell orders $\vec \alpha\in o^{\mathcal
    U}(\eta)^{<\omega}$  such that $q\cat s\in D$ for every sequence  $s\in T^p$ with $\langle o^{\mathcal{U}}(s(i))\mid i<|s|\rangle=\vec \alpha$. 
\end{lemma}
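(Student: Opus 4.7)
The plan is to prove Lemma~\ref{lemma: SPP for Motis} by induction on $o^{\mathcal{U}}(\eta)$, following the standard coloring-and-diagonal-intersection recipe for Prikry/Magidor-type forcings. The base case $o^{\mathcal{U}}(\eta)=0$ is trivial since $\mathbb{Q}_\eta$ is then the singleton poset. For the inductive step, fix $p=\langle t,T\rangle \in \mathbb{Q}_\eta$ and $D\subseteq \mathbb{Q}_\eta$ dense open. Throughout I may pass freely to direct extensions, since $\langle\mathbb{Q}_\eta,\leq^*\rangle$ is $\eta$-closed and $o^{\mathcal{U}}(\eta)<\eta$.

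The first main step is a \emph{coloring} of $T$. For each node $s\in T$ and each Mitchell order $\rho<o^{\mathcal{U}}(\eta)$, the maximality of $\mathcal{U}_{\eta,\rho}$ lets me decide whether
\[
\{\nu\in\mathrm{Succ}_T(s):o^{\mathcal{U}}(\nu)=\rho\ \text{and}\ p\cat s\cat\langle\nu\rangle\ \text{admits a}\ \leq^*\text{-extension in}\ D\}
\]
belongs to $\mathcal{U}_{\eta,\rho}$ or to its dual. Since there are fewer than $\eta$ many Mitchell orders $\rho$, a diagonal-intersection argument performed simultaneously at all nodes $s\in T$—legitimate because $\langle\mathbb{Q}_\eta,\leq^*\rangle$ is $\eta$-closed and each relevant $\mathcal{U}_{\eta,\rho}$ is $\eta$-complete—produces a direct extension $q_0\leq^* p$ such that the coloring is \emph{realized along $T^{q_0}$}: at every node $s\in T^{q_0}$ and every $\rho$ colored "large", every successor $\nu$ of $s$ in $T^{q_0}$ of Mitchell order $\rho$ satisfies that $q_0\cat s\cat\langle\nu\rangle$ admits a $\leq^*$-extension in $D$. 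Applying the inductive hypothesis at each such $\nu$ (on the tail-forcing governing the coherent extensions above $\nu$, whose Mitchell order is strictly below $o^{\mathcal{U}}(\eta)$), I can select these witnessing direct extensions in a coherent fashion and absorb them into a further direct extension $q_1\leq^* q_0$ whose tree has the property that any one-step extension at a "large-colored" pair $(s,\rho)$ is already in $D$ outright, not merely direct-extendable into $D$. Here I will work inside the $\leq^*$-dense set from Lemma~\ref{lemma: existence of meets}(2), so that the meets among the chosen witnesses are well defined.

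The second step extracts a uniform type $\vec\alpha$. Suppose, toward a contradiction, that no finite Mitchell-order sequence $\vec\alpha$ witnesses the conclusion for $q_1$. Then for every $\vec\alpha$ there is some $s\in T^{q_1}$ of type $\vec\alpha$ with $q_1\cat s\notin D$. By density of $D$, $q_1\cat s$ admits some extension into $D$, and by the ordinary Prikry Property this can be upgraded to a direct extension; but by construction of $q_1$ this forces the coloring at $(s,\rho)$ to be "large" for some $\rho$, pinning down a further Mitchell order. Iterating, I produce an infinite descending sequence of Mitchell orders below $o^{\mathcal{U}}(\eta)$—impossible by well-foundedness. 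Hence some $\vec\alpha$ uniformly works, and setting $q:=q_1$ completes the proof. The hard part will be managing the coherency constraints: an extension at a node of Mitchell order $\rho$ constrains subsequent extensions via the generic clubs $b_\nu$ added at the higher-order stages below $\eta$, so a naive diagonal intersection may destroy the tree structure. Restricting to the $\leq^*$-dense set of Lemma~\ref{lemma: existence of meets}(2) and carrying the induction hypothesis through the tail-forcings is what will make the coherent shrinking go through cleanly.
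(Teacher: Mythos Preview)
Your proposal has a genuine gap: neither the induction on $o^{\mathcal{U}}(\eta)$ nor the closing well-foundedness argument matches the combinatorics of $\mathbb{Q}_\eta$. In Gitik's poset a condition is $\langle t,T\rangle$ where at \emph{every} node $s\in T$ the successor set $\mathrm{Succ}_T(s)$ is required to be large in each measure $\mathcal{U}_{\eta,\rho}(\cdot)$ for all $\rho<o^{\mathcal{U}}(\eta)$. So after appending to the stem an ordinal $\nu$ of order $\rho$, the remaining tree $T_{\langle\nu\rangle}$ still branches at every order below $o^{\mathcal{U}}(\eta)$: the cone below $q_1\cat s\cat\langle\nu\rangle$ is, up to a shift of stem, the same kind of forcing you started with, with the same top order. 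There is no ``tail forcing of strictly smaller Mitchell order'' on which to invoke an inductive hypothesis. For the same reason the sequence $\vec\alpha$ the lemma delivers is an arbitrary word in $o^{\mathcal{U}}(\eta)^{<\omega}$ with no monotonicity whatsoever, so your second step---iterate one-step extensions and extract an infinite descending chain of Mitchell orders---has no footing. (Also, Lemma~\ref{lemma: existence of meets} is stated for the gluing poset $\mathbb{G}_\alpha$, not for $\mathbb{Q}_\eta$.)

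The paper's argument uses no induction on order. It performs an $\omega$-length refinement of $T$: at stage $n$, for each level-$n$ node $\vec\delta\in T_n$ and each order $\varepsilon<o^{\mathcal{U}}(\eta)$ one colors the order-$\varepsilon$ successors of $\vec\delta$ by whether some tree $S$ puts $\langle t^\smallfrown\vec\delta^\smallfrown\langle\cdot\rangle,S\rangle$ into $D$, selects the measure-one color, and then homogenizes this color backward along $\vec\delta$ (using $\eta$-completeness of the measures together with $o^{\mathcal{U}}(\eta)<\eta$) so that it depends only on the order pattern $\langle o^{\mathcal{U}}(\delta_i)\rangle_{i<n}$; the tree is shrunk to $T_{n+1}$ accordingly. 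Setting $T_\omega=\bigcap_n T_n$ and $q=\langle t,T_\omega\rangle$, density of $D$ gives some $\langle t^\smallfrown\vec\delta^{\,*},S\rangle\leq q$ in $D$; one then takes $\vec\alpha:=\langle o^{\mathcal{U}}(\delta^*_i)\rangle$, and the homogenization guarantees $q\cat\vec\delta\in D$ for every $\vec\delta\in T_\omega$ sharing this order pattern.
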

\begin{proof}
    Let $p=\langle t, T\rangle$ and $D$ be as above. Set $o:=o^{\mathcal{U}}(\eta).$ The argument consists of an induction, lasting for $\omega$-many steps, where the original tree $T$ is refined to some  $T_\omega$ in such a way that  $\langle t, T_\omega\rangle$ has the desired property.

    \smallskip
    
   Let us describe the first step of this induction. For each  $\alpha<o$ define
    $$X^0_{\alpha}:=\{\delta\in \mathrm{Succ}_{T,\alpha}(\varnothing)\mid \exists S\, (\langle t^\smallfrown \langle\delta\rangle, S\rangle\leq \langle t, T\rangle\, \wedge\, \langle t^\smallfrown \langle\delta\rangle, S\rangle\in D)\}$$
    and $X^1_\alpha:=\mathrm{Succ}_{T,\alpha}(\varnothing)\setminus X^0_\alpha.$ 

    \smallskip
    
    Let $i_\alpha\in\{0,1\}$ be the unique index such that $X^{i_\alpha}_\alpha\in \mathcal{U}_{\eta,\alpha}(\varnothing).$ Let $T_1$ be the tree with $\varnothing \in T_1$, first level $\mathrm{Succ}_{T_1}(\varnothing):=\bigcup_{\alpha<o} X^{i_\alpha}_\alpha$ and for each $\delta\in \mathrm{Succ}_{T_1,\alpha}(\varnothing)$  the subtree   $(T_1)_{\langle \delta\rangle}$ is defined as follows:
    $$(T_1)_{\langle \delta\rangle}:=\begin{cases}
    S, & \text{$i_\alpha=0$;}\\
    T_{\langle \delta\rangle} & \text{$i_\alpha=1,$}
    \end{cases}$$
    being $S$ one of the witnessing trees.

    This procedure yields a condition $\langle t, T_1\rangle\leq^* \langle t, T\rangle$

    \smallskip

    Next we describe the construction of $\langle t, T_3\rangle$, assuming $\langle t, T_2\rangle$ is given. (This contains the main ideas to construct $\langle t, T_{n+1}\rangle$ for a general $n<\omega.$)

    \smallskip

    Fix orders $\alpha,\beta,\gamma<o$. For each $\delta_0\in \Succ_{T_2, \alpha}(\varnothing)$,  $\delta_1\in \Succ_{T_2,\beta}(\langle \delta_0\rangle)$ and $\delta_2\in \Succ_{T_2, \gamma}(\langle \delta_0,\delta_1\rangle)$ and an order $\varepsilon<o$ define $X^0_\varepsilon(\langle \delta_0,\delta_1,\delta_2\rangle)$ as follows:
    $$\{\eta\in \mathrm{Succ}_{T_2,\varepsilon}(\langle \delta_0,\delta_1,\delta_2\rangle)\mid \exists S\, (\langle t^\smallfrown \langle\delta_0,\delta_1,\delta_2, \eta\rangle, S\rangle\leq \langle t, T\rangle\, \wedge\, \langle t^\smallfrown \vec\delta^\smallfrown\langle\eta\rangle, S\rangle\in D)\}.$$
    Similarly, $X^1_\varepsilon(\langle \delta_0,\delta_1,\delta_2\rangle):=\Succ_{T_2,\varepsilon}(\langle \delta_0,\delta_1,\delta_2\rangle)\setminus X^0_\varepsilon(\langle \delta_0,\delta_1,\delta_2\rangle).$

    \smallskip

    Let $i_\varepsilon(\langle \delta_0,\delta_1,\delta_2\rangle)\in\{0,1\}$ be the unique index $i$ such that $$X^{i}_\varepsilon(\langle \delta_0,\delta_1,\delta_2\rangle)\in \mathcal{U}_{\eta,\varepsilon}(t^\smallfrown\langle \delta_0,\delta_1,\delta_2\rangle \restriction \varepsilon).$$

    Consider the function $$\varphi_{\gamma}(\langle \delta_0,\delta_1\rangle)\colon \delta\in \mathrm{Succ}_{T_2,\gamma}(\langle \delta_0,\delta_1\rangle)\mapsto \langle i_\varepsilon(\langle \delta_0,\delta_1,\delta\rangle)\mid \varepsilon < o\rangle\in\{0,1\}^o.$$
    By $\eta$-completeness of $\mathcal{U}_{\eta,\gamma}(t^\smallfrown \langle \delta_0,\delta_1\rangle\restriction \gamma)$ we find a large set $X_\gamma(\langle \delta_0,\delta_1\rangle)$ where the above function is constant with value $\langle i_{\gamma, \varepsilon}(\langle \delta_0,\delta_1\rangle)\mid \varepsilon<o\rangle$. (Note that this value does not depend on $\delta$ anymore, but rather on the fixed  order $\gamma$.)

    We can repeat this process with the function 
    $$\varphi_{\beta, \gamma}(\langle \delta_0\rangle)\colon \delta\in \mathrm{Succ}_{T_2,\beta}(\langle \delta_0\rangle)\mapsto \langle i_{\gamma,\varepsilon}(\langle \delta_0,\delta\rangle)\mid \varepsilon < o\rangle\in\{0,1\}^o,$$
    thus obtaining a large set  $X_{\beta,\gamma}(\langle \delta_0\rangle)$ where the funcion is constant with value $\langle i_{\beta, \gamma, \varepsilon}(\langle \delta_0\rangle)\mid \varepsilon<o\rangle$. Repeating exactly the same argument with 
     $$\varphi_{\alpha, \beta, \gamma}\colon \delta\in \mathrm{Succ}_{T_2,\alpha}(\varnothing)\mapsto \langle i_{\beta,\gamma,\varepsilon}(\langle \delta\rangle)\mid \varepsilon < o\rangle\in\{0,1\}^o$$
     we obtain $X_{\alpha,\beta,\gamma}\in \mathcal{U}_{\eta, \alpha}(t\restriction \alpha)$ where the function takes constant value $\langle i_{\alpha, \beta, \gamma, \varepsilon}\mid \varepsilon<o\rangle$. At this point, we have made the value of the indices $i_\varepsilon(\langle \delta_0,\delta_1,\delta_2\rangle)$ dependent only of $\alpha,\beta, \gamma$ – the orders of  $\delta_0,\delta_1,\delta_2.$ 

     \smallskip

     Let us now define the tree $T_3$. We stipulate that $\varnothing\in T_3$ and that
     \begin{itemize}
         \item For each $\alpha<o$, $\mathrm{Succ}_{T_3,\alpha}(\varnothing)=\bigcap_{\beta,\gamma<o}X_{\alpha, \beta,\gamma}$.\footnote{Note that this is $\mathcal{U}_{\eta,\alpha}(\varnothing)$-large because $o<\eta$, the completeness of the ultrafilters.}
         \item For each $\langle \delta_0\rangle\in T_3$ and $\beta<o$, $\mathrm{Succ}_{T_3,\beta}(\langle \delta_0\rangle)=\bigcap_{\gamma<o}X_{ \beta,\gamma}(\langle \delta_0\rangle)$.
          \item For each $\langle \delta_0,\delta_1\rangle\in T_3$ and $\gamma<o$, $\mathrm{Succ}_{T_3,\gamma}(\langle \delta_0,\delta_1\rangle)=X_{\gamma}(\langle \delta_0,\delta_1\rangle)$.
          \item For each $\langle \delta_0,\delta_1,\delta_2\rangle\in T_3$ and $\varepsilon<o$, $\mathrm{Succ}_{T_3,\varepsilon}(\langle \delta_0,\delta_1,\delta_2\rangle)=X^{i}_\varepsilon(\langle \delta_0,\delta_1,\delta_2\rangle)$ where $i=i_\varepsilon(\langle \delta_0,\delta_1,\delta_2\rangle).$
          \item For each $\langle \delta_0,\delta_1,\delta_2,\eta\rangle\in T_3$ define
          $$(T_3)_{\langle \delta_0,\delta_1,\delta_2,\eta\rangle}:=\begin{cases}
              S, & \text{if $i_{o^{\mathcal{U}(\eta)}}(\langle\delta_0,\delta_1,\delta_2\rangle)=0$;}\\
              (T_2)_{\langle \delta_0,\delta_1,\delta_2,\eta\rangle}, & \text{if $i_{o^{\mathcal{U}(\eta)}}(\langle\delta_0,\delta_1,\delta_2\rangle)=1$,}
          \end{cases}$$
          where $S$ is a tree witnessing $\eta\in X^0_{o^\mathcal{U}(\eta)}(\langle \delta_0,\delta_1,\delta_2\rangle).$
     \end{itemize}
     Clearly, $\langle t, T_3\rangle$ is a condition and $\langle t, T_3\rangle\leq^* \langle t, T\rangle.$

    \smallskip

    In general, we define a $\leq^*$-decreasing sequence $\langle \langle t, T_n\rangle\mid n<\omega\rangle$ of conditions following the previous procedure and we take $T_\omega:=\bigcap_{n<\omega} T_n.$ The sought condition is $q=\langle t, T_\omega\rangle$ and the sequence of orders $\vec \alpha\in o^{\mathcal{U}}(\eta)$ is determined as follows: By density, there is $\langle s, S\rangle\leq \langle t, T_\omega\rangle$ in $D$. Thus, there is $\langle \delta^*_0,\dots, \delta^*_{n-1}\rangle\in T_\omega$ such that $\langle s, S\rangle$ is equivalent to $\langle t^\smallfrown \langle \delta^*_0,\dots, \delta^*_{n-1}\rangle, S\rangle.$ Since $\langle t^\smallfrown \langle \delta^*_0,\dots, \delta^*_{n-1}\rangle, S\rangle\leq^* \langle s, S\rangle$, by openess, $\langle t^\smallfrown \langle \delta^*_0,\dots, \delta^*_{n-1}\rangle, S\rangle\in D$.

    Set $\langle \alpha_i\mid i<n\rangle:=\langle o^{\mathcal{U}}(\delta^*_i)\mid i<n\rangle$. We claim that for every sequence of ordinals  $\langle \delta_0,\dots,\delta_{n-1}\rangle\in T_\omega$ such that, for each $i<n$, $o^{\mathcal{U}}(\delta_i)=\alpha_i$, $$q\cat \langle \delta_0,\dots, \delta_{n-1}\rangle= \langle t^\smallfrown \langle \delta_0,\dots, \delta_{n-1}\rangle, (T_\omega)_{\langle \delta_0,\dots, \delta_{n-1}\rangle}\rangle\in D.$$

    To better illustrate the argument suppose that $n=4$. Thus, we have $\langle \delta_0,\delta_1,\delta_2,\delta_3\rangle\in T_\omega\s T_3.$ By definition of $T_3$, $\delta_0\in \bigcap_{\beta,\gamma<o} X_{\alpha_0,\beta,\gamma}\s X_{\alpha_0,\alpha_1,\alpha_2}$, $\delta_1\in X_{\alpha_1,\alpha_2}(\langle \delta_0\rangle)$ and $\delta_2\in X_{\alpha_2}(\langle \delta_0,\delta_1\rangle)$. By the choice of these sets, $i_{\alpha_3}(\langle \delta_0,\delta_1,\delta_2\rangle)=i_{\alpha_0,\alpha_1,\alpha_2, \alpha_3}=i_{\alpha_3}(\langle \delta^*_0,\delta^*_1,\delta^*_2\rangle)$. Note that, because $\delta^*_3\in X^0_{\alpha_3}(\langle \delta^*_0,\delta^*_1,\delta^*_2\rangle)$, it must be the case that  $i_{\alpha_0,\alpha_1,\alpha_2, \alpha_3}=0.$ In particular,
    $$(T_\omega)_{\langle \delta_0,\delta_1,\delta_2,\delta_3\rangle}\s (T_3)_{\langle \delta_0,\delta_1,\delta_2,\delta_3\rangle}=S$$
    is a witnessing tree for $\delta_3\in X^0_{\alpha_3}(\langle \delta_0,\delta_1,\delta_2\rangle).$ Therefore, $$\langle t^\smallfrown \langle \delta_0,\delta_1,\delta_2,\delta_3\rangle, (T_\omega)_{\langle \delta_0,\delta_1,\delta_2,\delta_3\rangle}\rangle\in D.$$
This finishes the proof of the Strong Prikry Lemma.
\end{proof}

The time is now ripe to establish the failure of the $\gp:$

\begin{proof}[Proof of Theorem~\ref{thm: no gluing in gitiks}]
    Since we assumed that $V=\mathcal{K}$, the coherent sequence $$\mathcal{U}=\langle \mathcal{U}_{\alpha,\beta}\mid \alpha\leq \kappa,\, \beta<o^{\mathcal{U}}(\kappa)\rangle$$ mentions all the $\alpha$-complete normal ultrafilters in $\mathcal{K}$, for all  $\alpha\leq \kappa$.

    \smallskip

    Let $G\s \mathbb{P}$ be generic over $V$. As per Gitik's analysis \cite{ChangingCofinalities} (or \cite{BenUng}), in $V[G]$ there is a collection of $\kappa$-complete ultrafilters over $\kappa$, $$\{\mathcal{U}_{\kappa,1}(t)\mid t\in [\kappa]^{<\omega}\cap V\,\wedge\, \text{$t$ is $1$-coherent}\},$$
   each of which has the following three properties:
    \begin{enumerate}
        \item[$(\aleph)$] $\mathcal{U}_{\kappa,1}(t)$ extends $\mathcal{U}_{\kappa,1}$.
          \item[$(\beth)$] $\{\alpha<\kappa\mid t\sq b_\alpha\}\in \mathcal{U}_{\kappa,1}(t).$
        \item[$(\gimel)$] $\mathcal{U}_{\kappa,1}(t)$ contains the club filter $\mathrm{Cub}_\kappa^{V[G]}$.
    \end{enumerate}

   (About $(\gimel)$: If $\mathbb{P}=\mathbb{P}^E(\mathcal{U})$ this follows combining the $\kappa$-cc of $\mathbb{P}^E(\mathcal{U})$, the normality of $\mathcal{U}_{\kappa,1}$ and clause (1). If $\mathbb{P}=\mathbb{P}^{\mathrm{NS}}(\mathcal{U})$ one replaces the $\kappa$-cc by the above-mentioned  fact about clubs proved in \cite[Corollary~3.7]{BenUng}.)

\smallskip
    
    Let $\{\alpha_n\mid n<\omega\}$ be a set (in $V$) of distinct inaccessible cardinals below $\kappa$. Thus, $\{\mathcal{U}_{\kappa,1}(\langle \alpha_n\rangle)\mid n<\omega\}$ consists of $\kappa$-complete ultrafilters concentrating on $V$-measurable cardinals $\alpha$ (with $o^{\mathcal{U}}(\alpha)=1$) whose associated Prikry sequence $b_\alpha$ starts with the $V$-inaccessible $\alpha_n$. For each $n<\omega$, denote $$\mathcal{U}_n:=\mathcal{U}_{\kappa,1}(\langle \alpha_n\rangle).$$
  We claim that in $V[G]$, $\langle \mathcal{U}_n\mid n<\omega\rangle$ cannot be glued.  

   \smallskip

   Suppose otherwise and let $j\colon V[G]\rightarrow M$ be an elementary embedding with $\crit(j)=\kappa$ and $M^\kappa \s M$, and $\langle \eta_n\mid n<\omega\rangle$ an increasing sequence of ordinals such that
   $\mathcal{U}_n=\{X\s \kappa\mid \eta_n\in j(X)\}$. By virtue of our anti-large hypothesis and the core model Theorem~\ref{CoreModelTheorem}, the restriction  $$j\restriction \mathcal{K}\colon \mathcal{K}\rightarrow \mathcal{K}^M$$ is a normal iteration via normal measures. Thus, $j\restriction\mathcal{K}$ is the direct limit of a directed system of elementary embeddings $$\langle \iota_{\alpha,\beta}\colon \mathcal{K}_\alpha\rightarrow \mathcal{K}_\beta\mid \alpha\leq \beta\leq\delta\rangle.$$ Set $\iota:=\iota_{0,\delta}$ and let $H\s \iota(\mathbb{P})$ a $\mathcal{K}^M$-generic filter for which $M=\mathcal{K}^M[H].$

   \smallskip

   Note that $\eta_n$ is a strong generator for the iteration $\iota$ – that is, $\eta_n\in \iota(C)$ for all $C\in \mathrm{Cub}_\kappa^{\mathcal{K}}$: Indeed,  $ \mathrm{Cub}_\kappa^{\mathcal{K}}\s \mathrm{Cub}_\kappa^{V[G]}\s \mathcal{U}_n$ and $\eta_n$ ``glues" $\mathcal{U}_n$. 

   Let $\eta_\omega:=\sup_{n<\omega}\eta_n$. Clearly, $\eta_\omega$ is a strong generator for $\iota$ as well.

\begin{claim}\hfill
     \begin{enumerate}
       \item There is $\alpha<\delta$ such that $\eta_\omega=\iota_\alpha(\kappa)$ and $\crit(\iota_{\alpha,\delta})=\iota_\alpha(\kappa)$.
       \item $\eta_\omega$  is a $\mathcal{K}^M$-measurable cardinal with $o^{\mathcal{U}}(\eta_\omega)<\eta_\omega$ (and, as such, a non-trivial stage of forcing for the iteration $\iota(\mathbb{P})\in \mathcal{K}^M$).
   \end{enumerate}
\end{claim}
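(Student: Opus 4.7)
The plan is to invoke the standard characterization of strong generators in a normal linear iteration of internal normal measures: an ordinal $\eta<\iota(\kappa)$ satisfies $\eta\in\iota(C)$ for every $\mathcal{K}$-club $C\subseteq\kappa$ if and only if $\eta=\iota_{\beta,\delta}(\mu_\beta)$ for some $\beta<\delta$. The forward direction uses that the normal measure $\mathcal{V}_\beta\in\mathcal{K}_\beta$ on $\mu_\beta$ refines the club filter on $\mu_\beta$, so $\mu_\beta\in\iota_\beta(C)$ and applying $\iota_{\beta,\delta}$ yields $\iota_{\beta,\delta}(\mu_\beta)\in\iota(C)$. The reverse direction is established by representing any other candidate via a function $f\in\mathcal{K}$ together with critical points and producing a $\mathcal{K}$-club on $\kappa$ whose $\iota$-image avoids it.

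Granting this, we obtain stages $\beta_n<\delta$ with $\eta_n=\iota_{\beta_n,\delta}(\mu_{\beta_n})$; the monotonicity of $\langle \eta_n\rangle$ forces $\langle \beta_n\rangle$ to be strictly increasing. Set $\alpha:=\sup_{n<\omega}\beta_n$. To see $\alpha<\delta$: since $j\colon V[G]\to M$ is the ultrapower by a $\kappa$-complete ultrafilter, $\iota(\kappa)=j(\kappa)$ is regular in $M$ and its $V[G]$-cofinality is at least $\kappa^+$ (by the $\kappa$-closure of $M$ in $V[G]$), whereas $\eta_\omega$ has $V[G]$-cofinality $\omega$; hence $\eta_\omega<\iota(\kappa)$, precluding $\alpha=\delta$. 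Now decompose $\iota_{\beta_n,\delta}=\iota_{\alpha,\delta}\circ\iota_{\beta_n,\alpha}$ and write $\sigma_n:=\iota_{\beta_n,\alpha}(\mu_{\beta_n})<\iota_\alpha(\kappa)$, so that $\eta_n=\iota_{\alpha,\delta}(\sigma_n)$. Applying the strong-generator characterization to $\eta_\omega$ itself produces $\gamma<\delta$ with $\eta_\omega=\iota_{\gamma,\delta}(\mu_\gamma)$; confronting this with $\eta_\omega=\sup_n\iota_{\alpha,\delta}(\sigma_n)$ and exploiting the direct-limit presentation of $\mathcal{K}_\alpha$ forces $\gamma=\alpha$, $\mu_\alpha=\iota_\alpha(\kappa)$, and $\sup_n\sigma_n=\iota_\alpha(\kappa)$. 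Hence $\crit(\iota_{\alpha,\delta})=\mu_\alpha=\iota_\alpha(\kappa)=\eta_\omega$, settling (1).

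For (2), elementarity of $\iota_\alpha\colon\mathcal{K}\to\mathcal{K}_\alpha$ applied to the hypothesis $o^{\mathcal{U}}(\kappa)<\kappa$ yields $o^{\iota_\alpha(\mathcal{U})}(\eta_\omega)<\eta_\omega$ in $\mathcal{K}_\alpha$, and since $\iota_{\alpha,\delta}$ pointwise fixes everything strictly below its critical point $\eta_\omega$, the measures on $\eta_\omega$ in $\mathcal{K}^M=\mathcal{K}_\delta$ coincide with those in $\mathcal{K}_\alpha$. Therefore $\eta_\omega$ is a $\mathcal{K}^M$-measurable cardinal with $o^{\iota(\mathcal{U})}(\eta_\omega)<\eta_\omega$, and consequently a non-trivial forcing stage of $\iota(\mathbb{P})$.

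The main obstacle I expect is justifying the precise equality $\sup_n\sigma_n=\iota_\alpha(\kappa)$: while plausible from the direct-limit structure of $\mathcal{K}_\alpha$, the fact that $\langle\beta_n\rangle$ is cofinal in $\alpha$ does not \emph{a priori} force $\langle\iota_{\beta_n,\alpha}(\mu_{\beta_n})\rangle$ to be cofinal in $\iota_\alpha(\kappa)$; overcoming this subtlety requires simultaneously exploiting that $\eta_\omega$ is itself a strong generator (so cannot be sandwiched strictly below $\iota_\alpha(\kappa)$ without violating the characterization) and the coherence of the measures $\mathcal{V}_\beta$ driving the iteration.
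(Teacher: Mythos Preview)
Your argument for (2) contains a genuine error. You assert that because $\iota_{\alpha,\delta}$ fixes ordinals below its critical point $\eta_\omega$, the measures on $\eta_\omega$ in $\mathcal{K}^M=\mathcal{K}_\delta$ coincide with those in $\mathcal{K}_\alpha$. This is false: while $\mathcal{P}(\eta_\omega)$ is preserved, the first step $\iota_{\alpha,\alpha+1}$ is an ultrapower by the normal measure $\iota_\alpha(\mathcal{U})(\eta_\omega,\zeta_\alpha)$, and by coherence $o^{\iota_{\alpha+1}(\mathcal{U})}(\eta_\omega)=\zeta_\alpha$ in $\mathcal{K}_{\alpha+1}$ and hence in $\mathcal{K}^M$. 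Nothing you have argued rules out $\zeta_\alpha=0$, in which case $\eta_\omega$ is \emph{not} $\mathcal{K}^M$-measurable at all. The paper's route is entirely different: one uses $M^\omega\subseteq M$ to get $\cf^M(\eta_\omega)=\omega$, observes that $\eta_\omega$ is $\mathcal{K}^M$-inaccessible (being a critical point of the iteration), and then invokes Mitchell's covering theorem under the anti-large-cardinal hypothesis to deduce that a $\mathcal{K}^M$-regular cardinal that is singular in $M$ must be $\mathcal{K}^M$-measurable.

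For (1), your strong-generator characterization is misstated: in a normal linear iteration by internal normal measures, the strong generators below $\iota(\kappa)$ are the critical points $\mu_\beta$ themselves, not $\iota_{\beta,\delta}(\mu_\beta)$ (the latter is strictly larger, since $\crit(\iota_{\beta,\delta})=\mu_\beta$). With this correction you obtain $\eta_n=\mu_{\beta_n}$ and, since $\eta_\omega$ is also a strong generator, $\eta_\omega=\mu_\alpha$ for $\alpha=\sup_n\beta_n$; this already yields $\crit(\iota_{\alpha,\delta})=\eta_\omega$. However, the remaining equality $\mu_\alpha=\iota_\alpha(\kappa)$ does not follow from the club-filter property alone, and your acknowledged obstacle is precisely this missing step. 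The paper simply defers to \cite[Claims~7.1.2, 7.1.7]{HP} here; reconstructing that argument requires exploiting more than strong-generator status of the $\eta_n$, namely that the $\mathcal{K}$-ultrafilter derived from each $\eta_n$ via $\iota$ is $\mathcal{U}_{\kappa,1}$, a measure on $\kappa$ itself.
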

\begin{proof}[Proof of claim]
   (1). This was established in \cite[Claims~7.1.2, 7.1.7]{HP}.

   (2). In \cite{HP} we showed that $\eta_\omega$ is the critical point of some stage of the iteration $j\restriction \mathcal{K}$ and, as such, $\mathcal{K}^M$-inaccessible. Also, $M^\omega\s M$, hence $\cf^M(\eta_\omega)=\omega<\eta_\omega$. Combining this with our anti-large cardinal assumption and Mitchell's covering theorem we deduce that $\eta_\omega$ is a $\mathcal{K}^M$-measurable.
\end{proof}

   Working in $M$ (i.e., in $\mathcal{K}^M[H]$) we let $b_{\eta_{\omega}}$ the Magidor sequence introduced by the generic $H$ at stage $\eta_\omega.$ If we show that $\langle \eta_n\mid n<\omega\rangle$ is eventually contained in $b_{\eta_\omega}$ we will get the desired contradiction: Suppose that for some $n^*<\omega$ and for each $n\geq n^*$, $\eta_n\in b_{\eta_{\omega}}$. First, by $o^{\mathcal{K}^M}(\eta_\omega)$-coherency, $b_{\eta_n}\sq b_{\eta_\omega}$. Second,  each of these $\eta_n$'s glue the corresponding measure $\mathcal{U}_n$ which  yields $\eta_n\in j(\{\alpha<\kappa\mid \langle\alpha_n\rangle\sq b_\alpha\})$ – equivalently, $\langle \alpha_n\rangle\sq b_{\eta_n}$.\footnote{Note the use of $\crit(j)=\kappa>\alpha_n.$} Note, however, that this is impossible, for it would imply that the first member of $b_{\eta_\omega}$ is $\alpha_n$ for all $n\geq n^*$, yet we assumed that these ordinals were different.

   \smallskip

   So everything amounts to show that $\langle \eta_n\mid n<\omega\rangle$ is eventually contained in the Magidor sequence $b_{\eta_\omega}.$ The intuition for why this is true comes from a well-known fact about Prikry forcing $\mathbb{Q}$; namely, if $Q\s \mathbb{Q}$ is generic and $A\in V[Q]$ is a subset of $\kappa$ that is almost included in every $C\in \mathrm{Cub}_\kappa^V$ then $A$ must be eventually contained in the Prikry sequence induced by $Q$.

       

 \smallskip

 We will show that $\langle \eta_n\mid n<\omega\rangle$ is fully contained in $b_{\eta_\omega}$. 

   \begin{claim}\label{claim: bounding ordinals}
       Suppose  $p\in \iota(\mathbb{P})_{\eta_\omega+1}$ and $\dot{\alpha}$ is a $\iota(\mathbb{P})_{\eta_\omega+1}$-name such that $$p\forces_{\iota(\mathbb{P})_{\eta_\omega+1}}\dot{\alpha}<\check{\eta}_\omega\,\wedge\, \dot{\alpha}\notin\dot{b}_{\eta_\omega}.$$
      Then, there exist $q\leq p$, $i<\omega$ and a $\iota(\mathbb{P})_{\eta_\omega}$-name  $\dot{g}\colon \eta_\omega^i \rightarrow \eta_\omega$ such that 
       $$q\forces_{\iota(\mathbb{P})_{\eta_\omega+1}} \exists x\in {(\dot{b}_{\eta_\omega}\cap \dot{\alpha})^i}\, (\dot{g}(x)=\dot{\alpha}).$$
   \end{claim}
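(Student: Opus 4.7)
\medskip

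\noindent\textbf{Proof proposal for Claim~\ref{claim: bounding ordinals}.} The plan is to reduce the name $\dot{\alpha}$, via a Strong Prikry–style argument at stage $\eta_\omega$, to a function of finitely many members of the generic Magidor club that lie below $\dot{\alpha}$. First I would decompose $p=p'{}^{\smallfrown}\langle t, \dot{T}\rangle$, where $p'\in\iota(\mathbb{P})_{\eta_\omega}$ and $\langle t,\dot{T}\rangle$ is a $\iota(\mathbb{P})_{\eta_\omega}$-name for a condition in $\mathbb{P}(\eta_\omega,o^{\mathcal{U}}(\eta_\omega))$. Working in the extension by $\iota(\mathbb{P})_{\eta_\omega}$ and applying Lemma~\ref{lemma: SPP for Motis} to the dense open set of conditions deciding $\dot{\alpha}$, I would obtain $\langle t,\dot{T}^*\rangle\leq^* \langle t,\dot{T}\rangle$ and a finite sequence of Mitchell orders $\vec{\beta}\in o^{\mathcal{U}}(\eta_\omega)^{k}$ such that for every $s\in T^*$ of length $k$ with $\langle o^{\mathcal{U}}(s(j))\mid j<k\rangle=\vec{\beta}$, the condition $\langle t{}^{\smallfrown}s, (T^*)_s\rangle$ decides $\dot{\alpha}$; denote its value by $\alpha_s$. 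Since $p\forces \dot{\alpha}\notin \dot{b}_{\eta_\omega}$, we have $\alpha_s\notin b_{t{}^{\smallfrown}s}$ for every such $s$.

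Next, since $\alpha_s<\eta_\omega$, there is a unique $i_s\leq k$ with $\alpha_s\in(\max(t{}^{\smallfrown}s\restriction i_s),\, \min(s\setminus i_s))$ (with the obvious conventions when $i_s\in\{0,k\}$). Because $i_s\in\{0,1,\dots,k\}$ takes only finitely many values, by a further shrinking of $T^*$ using the finite intersection of the $\bigotimes_{j<k}\mathcal{U}_{\eta_\omega,\beta_j}(\cdot)$-large sets, I may assume $i_s=i$ is constant. The crucial step is then to show that $\alpha_s$ depends only on $s^{\mathrm{down}}:=s\restriction i$ and not on $s^{\mathrm{up}}:=s\setminus i$. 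Fix $s^{\mathrm{down}}$ and consider the function $f_{s^{\mathrm{down}}}(s^{\mathrm{up}})=\alpha_{s^{\mathrm{down}}{}^{\smallfrown}s^{\mathrm{up}}}$. Its values lie below $\min(s^{\mathrm{up}})$ (they are bounded by $\min(s\setminus i)$), and the domain is a large set in the product measure $\bigotimes_{j\geq i}\mathcal{U}_{\eta_\omega,\beta_j}(\cdot)$. Iterated normality of these measures (the Rowbottom/pressing-down principle, which relies crucially on the normality of $\mathcal{U}_{\eta_\omega,\beta_j}$, as flagged in the remark preceding the statement) produces a simultaneous shrinking on which $f_{s^{\mathrm{down}}}$ is constant, say with value $g(s^{\mathrm{down}})$.

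Carrying out this shrinkage uniformly in $s^{\mathrm{down}}$ across the tree, I obtain a $\iota(\mathbb{P})_{\eta_\omega}$-name $\dot{T}^{**}$ for a refinement of $\dot{T}^*$ and a $\iota(\mathbb{P})_{\eta_\omega}$-name $\dot{g}\colon \eta_\omega^i\to \eta_\omega$ (depending only on $q'$, the measures, and the name for $T^{**}$, and hence living at stage $\eta_\omega$ not $\eta_\omega+1$) such that the condition $q:=q'{}^{\smallfrown}\langle t,\dot{T}^{**}\rangle$ forces $\dot{g}(\dot{x})=\dot{\alpha}$, where $\dot{x}$ names the $i$-tuple consisting of the first $i$ members of $\dot{b}_{\eta_\omega}\setminus(\max(t)+1)$. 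By construction, these $i$ members are strictly below $\dot{\alpha}$, so $\dot{x}\in (\dot{b}_{\eta_\omega}\cap\dot{\alpha})^i$, as required.

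The main obstacle I expect is the homogenization step in the second paragraph: proving that $\alpha_s$ truly depends only on $s^{\mathrm{down}}$. This is where the normality of the measures $\mathcal{U}_{\eta_\omega,\beta}$ is indispensable—without it, there would be no mechanism guaranteeing that functions $f_{s^{\mathrm{down}}}$ taking values below their arguments stabilize on a large set, and the reduction would break down. A secondary technicality is that names for trees in the $\iota(\mathbb{P})_{\eta_\omega}$-extension must be handled with care so that the final $\dot{g}$ is genuinely a $\iota(\mathbb{P})_{\eta_\omega}$-name and not a $\iota(\mathbb{P})_{\eta_\omega+1}$-name; this should be manageable by performing the shrinkings entirely via the direct-extension order $\leq^*$ at stage $\eta_\omega$, which enjoys sufficient closure in the intermediate model.
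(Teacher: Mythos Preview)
Your outline is correct and matches the paper's: apply the Strong Prikry Lemma, obtain a function $f$ indexed by sequences $s$ of prescribed Mitchell orders, locate the interval index $i$, and reduce $f$ to a function of the first $i$ coordinates. The reduction from coordinates $j>i$ down to $i$ is fine: the value lies below $s(i)$, which is a fixed bound once $s\restriction(i+1)$ is fixed, so $\eta_\omega$-completeness (not normality) suffices there.

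The gap is at coordinate $i$ itself. You invoke ``iterated normality of $\mathcal{U}_{\eta_\omega,\beta_j}$'' to press down the regressive map $\gamma_i\mapsto g(\gamma_0,\dots,\gamma_i)$. But the measure governing the $i$th level of the tree is not the ground-model measure $\mathcal{U}_{\eta_\omega,\beta_i}$; it is the extension $\mathcal{U}_{\eta_\omega,\beta_i}(t{}^\smallfrown s\restriction\beta_i)$ living in $\mathcal{K}^M[K]$, and for $\beta_i>0$ its normality is not available for free (the remark you cite is pointing only to the normality of the \emph{order-$0$} measure). The paper does not appeal to normality of the higher-order lifted measures at all. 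Instead, it treats the case $\alpha_i>0$ by exhibiting an explicit Rudin--Keisler projection
\[
\pi_{\langle\gamma_0,\dots,\gamma_{i-1}\rangle}\colon\gamma\;\longmapsto\;\min\{\mu\in b_\gamma:\mu>g(\gamma_0,\dots,\gamma_{i-1},\gamma),\ o^{\mathcal{U}}(\mu)=0\}
\]
from $\mathcal{U}_{\eta_\omega,\alpha_i}(\cdot)$ onto $\mathcal{U}_{\eta_\omega,0}(\varnothing)$, and then applies the genuine normality of $\mathcal{U}_{\eta_\omega,0}$ to the regressive map $\pi(\gamma)\mapsto g(\dots,\gamma)$. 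This is precisely the step where the contrast with the Gluing Poset (whose order-$0$ measures need not be normal) becomes decisive.

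So your proposal is correct up to and including the identification of the index $i$, but the homogenization you flagged as ``the main obstacle'' really is one: your pressing-down step at level $i$ needs either a proof that $\mathcal{U}_{\eta_\omega,\beta_i}(t)$ is normal in $\mathcal{K}^M[K]$ when $\beta_i>0$, or the paper's RK-projection trick that routes the argument through the order-$0$ measure.
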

   \begin{proof}[Proof of claim]
       Let $K\s \iota(\mathbb{P})_{\eta_\omega}$ generic over $\mathcal{K}^M$ with $p\restriction \eta_\omega\in K$. Our ground model for the rest of the argument is  $\mathcal{K}^M[K]$. Let $p:=(\dot{p}_{\eta_\omega})_K.$

       For simplicity of notations, let us assume that $p=\langle \varnothing, T^p\rangle.$

       In the ground model the forcing at stage $\eta_\omega$ (call it $\mathbb{Q}$) has the Strong Prikry Property. Thus, there is $q_0\leq^* p$ and a sequence of orders $\langle \alpha_0,\dots, \alpha_{n-1}\rangle$ such that for each $\vec\gamma=\langle \gamma_0,\dots,\gamma_{n-1}\rangle$, with $o^{\mathcal{U}}(\gamma_i)=\alpha_i$,  $q_0\cat \vec\gamma$ decides $\dot{\alpha}$. In particular, there is a function $f\colon [\eta_\omega]^{n}\rightarrow \eta_\omega$ such that for each sequence $\vec{\gamma}=\langle \gamma_0,\dots, \gamma_{n-1}\rangle\in T^{q_0}$ consisting of ordinals with  $o^{\iota(\mathcal{U})}$-orders $\langle \alpha_0,\dots, \alpha_{n-1}\rangle$, $q_0\cat \vec\gamma\forces \check{f}(\vec\gamma)=\dot{\alpha}$. Since $q_0\cat\vec\gamma\forces \dot{\alpha}\notin \dot{b}_{\eta_\omega}$ it must be  that
       $$f(\vec\gamma)\in (0,\gamma_0)\cup (\gamma_0,\gamma_1)\cup \dots \cup (\gamma_{n-1},\eta_\omega).$$
       For each $\vec\gamma\in T^{q_0}$ as before denote $i_{\vec\gamma}\in \{0, \dots, n\}$  the unique index for which $f(\vec\gamma)\in (\vec\gamma(i-1),\vec\gamma({i}))$. Using the completeness of the ultrafilters, one can argue as in Lemma~\ref{lemma: SPP for Motis} to  produce a ``large" subtree $S\s \mathrm{Lev}_n(T)$ where the map $\vec\gamma\mapsto i_{\vec\gamma}$ is constant with value $i:=i(\alpha_0,\dots, \alpha_{n-1})$.

       For each $\vec\gamma$ such that $\vec\gamma^\smallfrown\langle\gamma\rangle\in S$ for some $\gamma$, the function $$f_{\vec\gamma}\colon \gamma\mapsto f(\vec\gamma^\smallfrown \langle\gamma\rangle)\in \gamma_i$$
       is constant on a $\mathcal{U}_{\eta_\omega,\alpha_n}(\vec\gamma\restriction \alpha_n)$-large set, simply by $\eta_\omega$-completeness of the measure. 
      {Repeating this argument we can shrink $S$ to $S^*$ in such a way that for each $\langle \gamma_0,\dots, \gamma_{n-1}\rangle\in S^*$ the value of $f(\langle \gamma_0,\dots,\gamma_{n-1}\rangle)$ depends on $\langle \gamma_0,\dots, \gamma_{i}\rangle$ only. Let $g\colon \eta_\omega^{i+1}\rightarrow \eta_\omega$ be this function} and (to simplify notations) let us keep denoting the refined tree by $T^{q_0}.$

       \smallskip

Let us show that we can get rid of the dependence of the $i$th-coordinate of $\langle \gamma_0,\dots,\gamma_{n-1}\rangle$. If $o^{\mathcal{U}}(\gamma_i)=0$ this is immediate because of the normality of the ultrafilter $\mathcal{U}_{\eta_\omega,0}(\varnothing)$. So, suppose that $o^{\mathcal{U}}(\gamma_i)\neq 0$.

\begin{subclaim}
    For each $\langle \gamma_0,\dots, \gamma_{i-1}\rangle$ as above, the map
$$\pi_{\langle \gamma_0,\dots, \gamma_{i-1}\rangle}\colon \gamma \mapsto \min\{\mu\in b_\gamma\mid \mu>g(\langle \gamma_0,\dots,\gamma_{i-1}, \gamma\rangle),\, o^{\mathcal{U}}(\mu)=0\}$$ 
defines a Rudin-Keisler projection from $\mathcal{U}_{\eta_\omega, \alpha_i}(\langle \gamma_0,\dots, \gamma_{i-1}\rangle\restriction \alpha_i)$ to $\mathcal{U}_{\eta_\omega, 0}(\varnothing)$.
\end{subclaim}
\begin{proof}[Proof of claim]
The definition of the ultrafilters depend on the support of $\mathbb{P}$. Let us assume that $\mathbb{P}$ has Easton support. (The other case is analogous.)

\smallskip

    Let $X\in \mathcal{U}_{\eta_\omega, 0}(\varnothing)$ and $Y\in \mathcal{U}_{\eta_\omega, \alpha_i}(\langle \gamma_0,\dots, \gamma_{i-1}\rangle\restriction \alpha_i)$. 
    
    It suffices to show that 
    $Y\cap \pi_{\langle \gamma_0,\dots, \gamma_{i-1}\rangle}^{-1} X$ is non-empty.

By definition, there is $p\in K$, a name $\dot{\Upsilon}$ and $\alpha<\kappa^{+}$ such that
$$p^\smallfrown \{\langle \langle \gamma_0,\dots,\gamma_{i-1}\rangle\restriction \alpha_i,  \dot{\Upsilon}\rangle\}^\smallfrown r_\alpha\forces \kappa\in j_{\mathcal{U}_{\eta_\omega,\alpha_i}}(\dot{Y}).$$
Consider
$$X^*:=\{\beta\in \mathrm{Succ}_{T^*,0}(\varnothing)\mid \beta\in X\,\wedge\, \beta>j_{\mathcal{U}_{\eta_\omega, \alpha_i}}(g)(\langle \gamma_0,\dots, \gamma_{i-1}, \kappa\rangle)\}.$$
This is $\mathcal{U}_{\eta_\omega, 0}(\varnothing)$-large, by uniformity. Thus, there is a tree $\Upsilon'\s \dot{\Upsilon}_K$ such that $\mathrm{Succ}_{\Upsilon',0}(\varnothing)\s X^*$. Let $q\leq p$ be in $K$ forcing this property. Then,
$$q^\smallfrown \{\langle \langle \gamma_0,\dots,\gamma_{i-1}\rangle\restriction\alpha_i, \dot{\Upsilon}'\rangle\}^\smallfrown r_\alpha\forces \kappa\in j_{\mathcal{U}_{\eta_\omega,\alpha_i}}(\dot{Y})$$
and also
$$q^\smallfrown \{\langle \langle \gamma_0,\dots,\gamma_{i-1}\rangle\restriction\alpha_i, \dot{\Upsilon}'\rangle\}^\smallfrown r_\alpha\forces j_{\mathcal{U}_{\eta_\omega,\alpha_i}}(\pi_{\langle \gamma_0,\dots, \gamma_{i-1}\rangle})(\kappa)\in j_{\mathcal{U}_{\eta_\omega,\alpha_i}}(\dot{X}),$$
because of  our choice of $X^*.$ 

So, $Y\cap \pi^{-1}_{\langle \gamma_0,\dots, \gamma_{i-1}\rangle} X\in \mathcal{U}_{\eta_\omega,\alpha_i}(\langle \gamma_0,\dots, \gamma_{i-1}\rangle\restriction\alpha_i)$ and we are done.
\end{proof}
Now we proceed as follows. For each $\langle \gamma_0,\dots, \gamma_{i-1}\rangle\in T^{q_0}$ with orders $\langle \alpha_0,\dots, \alpha_{i-1}\rangle$ we let $X(\langle \gamma_0,\dots, \gamma_{i-1}\rangle)$ the image of $\mathrm{Succ}_{T^{q_0},\alpha_i}(\langle \gamma_0,\dots, \gamma_{i-1}\rangle)$  under $\pi_{\langle \gamma_0,\dots, \gamma_{i-1}\rangle}$. By the above claim, $X(\langle \gamma_0,\dots, \gamma_{i-1}\rangle)\in \mathcal{U}_{\eta_\omega,0}(\varnothing)$.

Clearly, the map $h_{\langle \gamma_0,\dots, \gamma_{i-1}\rangle}\colon X({\langle \gamma_0,\dots, \gamma_{i-1}\rangle})\rightarrow \eta_\omega$ given by
 $$h_{\langle \gamma_0,\dots, \gamma_{i-1}\rangle}\colon \pi_{\langle \gamma_0,\dots, \gamma_{i-1}\rangle}(\gamma)\mapsto g(\langle \gamma_0,\dots, \gamma_{i-1},\gamma\rangle)$$
 is regressive. By normality of $\mathcal{U}_{\eta_\omega,0}(\varnothing)$ we find  a large set $$Y({\langle \gamma_0,\dots, \gamma_{i-1}\rangle})\s X({\langle \gamma_0,\dots, \gamma_{i-1}\rangle})$$ 
 where $h_{\langle \gamma_0,\dots, \gamma_{i-1}\rangle}$ is constant – say, with value $g({\langle \gamma_0,\dots, \gamma_{i-1}\rangle}).$

 Therefore, $\pi^{-1}_{{\langle \gamma_0,\dots, \gamma_{i-1}\rangle}} Y({\langle \gamma_0,\dots, \gamma_{i-1}\rangle})$ is a $\mathcal{U}_{\eta_\omega,\alpha_i}({\langle \gamma_0,\dots, \gamma_{i-1}\rangle}\restriction\alpha_i)$-large set where the function $g(\langle \gamma_0,\dots, \gamma_{i-1},\bullet\rangle)$ is constant, and obtains the value $g({\langle \gamma_0,\dots, \gamma_{i-1}\rangle}).$

       \smallskip

       Using the above it is easy to define a condition $q\leq^* q_0$ such that
       $$q\cat \langle \gamma_0,\dots, \gamma_{n-1}\rangle\forces_{\mathbb{Q}_{\eta_\omega}} \dot\alpha=g(\langle \gamma_0,\dots, \gamma_{i-1}\rangle)$$
       for all $\langle \gamma_0,\dots, \gamma_{n-1}\rangle\in T^{q}$ with orders $\langle \alpha_0,\dots, \alpha_{n-1}\rangle$.

       A moment's reflection should convince our readers that the above yields
      $$q\forces_{\mathbb{Q}_{\eta_\omega}} ``\exists x\in (\dot{b}_{\eta_\omega}\cap { \dot\alpha)}^i\,\check{g}(x)=\dot\alpha".$$

  Since this holds in $\mathcal{K}^M[K]$, there is $p^*\leq p\restriction \eta_\omega$ in $K$ forcing it. Thus,  $p^*{}^\smallfrown \dot{q}$ and a name $\dot{g}$ such that $\dot{g}_K=g$ satisfy the properties of Claim~\ref{claim: bounding ordinals}.
   \end{proof}
Work now in $M$ (i.e., in $\mathcal{K}^M[H]$). To conclude the proof we prove:
\begin{claim}
   $\langle \eta_n\mid n<\omega\rangle$ is contained in $b_{\eta_\omega}$. 
\end{claim}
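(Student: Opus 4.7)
The plan is to argue by contradiction. Suppose some $\eta_n \notin b_{\eta_\omega}$. Let $p\in H$ force this (in the sense that $p$ decides a canonical $\iota(\mathbb{P})_{\eta_\omega+1}$-name $\dot\alpha$ for $\eta_n$ to satisfy $\dot\alpha<\eta_\omega$ and $\dot\alpha\notin\dot b_{\eta_\omega}$). Applying Claim~\ref{claim: bounding ordinals}, I would extract $q\leq p$ (which I then absorb into $H$), an integer $i<\omega$, and a $\iota(\mathbb{P})_{\eta_\omega}$-name $\dot g\colon\eta_\omega^i\to\eta_\omega$ such that, setting $K=H\cap\iota(\mathbb{P})_{\eta_\omega}$ and $g=\dot g_K$, in $M$ there exists $\vec\gamma=\langle \gamma_0,\dots,\gamma_{i-1}\rangle\in(b_{\eta_\omega}\cap\eta_n)^i$ with $g(\vec\gamma)=\eta_n$.

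The second step is to locate $\dot g$ inside the iteration. Because $\iota(\mathbb{P})_{\eta_\omega}$ has cardinality at most $\eta_\omega$ in $\mathcal{K}^M$, the nice name $\dot g$ can be coded by a subset of $\eta_\omega$ and so lives in $H(\eta_\omega^+)^{\mathcal{K}^M}$. Using $\crit(\iota_{\alpha,\delta})=\eta_\omega$ (where $\eta_\omega=\iota_\alpha(\kappa)$), one has $\mathcal{P}(\eta_\omega)^{\mathcal{K}^M}=\mathcal{P}(\eta_\omega)^{\mathcal{K}_\alpha}$, and hence $\dot g\in\mathcal{K}_\alpha$. Invoking the analogue of Lemma~\ref{lem;representing-elements-in-direct-limit} for the finite portion of $\iota$ up to stage $\alpha$ (formally one has to reduce to a finite sub-iteration factoring $\iota_\alpha$ whose range contains $\dot g$), I would write $\dot g = \iota(F)(\mu_{\beta_0},\dots,\mu_{\beta_{k-1}})$ with $F\in\mathcal{K}$ and critical points $\mu_{\beta_j}<\eta_\omega$ of $\iota$. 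Combining this with $\eta_n=g(\vec\gamma)$ gives a $\mathcal{K}$-definable recipe that produces $\eta_n$ from $F$, a finite tuple of critical points $\vec\mu$ of $\iota$ below $\eta_\omega$, the tuple $\vec\gamma\in\eta_n^i$, and the part of $K$ relevant to evaluating $\dot g$ at $\vec\gamma$.

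The third step is to collapse this representation to one using only critical points strictly below $\eta_n$, thereby contradicting the fact that $\eta_n$ is a strong generator of $\iota$ (recall $\eta_n\in\iota(C)$ for every $\mathcal{K}$-club $C\subseteq\kappa$, since $\mathrm{Cub}_\kappa^{\mathcal{K}}\subseteq\mathcal{U}_n$). For the critical points $\mu_{\beta_j}$ lying above $\eta_n$ in the representation of $\dot g$, I would apply a Fodor/normality argument using the $\mathcal{K}$-normal measures used between $\eta_n$ and $\eta_\omega$ together with the Prikry property for $\iota(\mathbb{P})_{\eta_\omega}$: strengthening $q$ if necessary one may diagonalize over the $\mu_{\beta_j}>\eta_n$ and replace them by a ground-model function evaluated at smaller critical points. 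What remains is a representation $\eta_n=\iota(F')(\vec\nu)$ with $F'\in\mathcal{K}$ and $\vec\nu$ critical points of $\iota$ all below $\eta_n$, which manufactures a $\mathcal{K}$-club $C=\{\xi<\kappa\mid \xi\neq F'(\vec\eta)\text{ for any }\vec\eta\in\xi^{<\omega}\}$ avoiding $\eta_n$; this contradicts $\eta_n\in\iota(C)$.

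The main obstacle will be the third step: the critical points $\mu_{\beta_j}\in[\eta_n,\eta_\omega)$ appearing in the representation of $\dot g$ are essentially free parameters that do not directly contradict strong generator-ness of $\eta_n$. Eliminating them cleanly will require a delicate blend of the Strong Prikry Lemma (Lemma~\ref{lemma: SPP for Motis}) applied stage-by-stage together with $\kappa$-completeness/normality of the $\mathcal{K}$-measures involved, to absorb their contribution into a $\mathcal{K}$-function of critical points below $\eta_n$, paralleling the trick used inside Claim~\ref{claim: bounding ordinals} itself.
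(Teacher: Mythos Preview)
Your first step (invoke the bounding claim to get $g$ with $g(\vec\gamma)=\eta_n$ for some $\vec\gamma\in\eta_n^{i}$) and your overall goal (contradict that $\eta_n$ is a strong generator of $\iota$) match the paper. The divergence is in how you pass from the equation $\eta_n=g(\vec\gamma)$ back to a $\mathcal{K}$-level statement.

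Your steps 2--3 attempt to represent the name $\dot g$ via critical points of $\iota$ and then eliminate those lying in $[\eta_n,\eta_\omega)$. Step~3 has a genuine gap. The $\mu_{\beta_j}\in[\eta_n,\eta_\omega)$ are generators of the \emph{iteration} $\iota$, not forcing data, so the Strong Prikry Lemma for $\iota(\mathbb{P})_{\eta_\omega}$ gives no leverage over them; the trick in the bounding claim worked because it diagonalized over a \emph{single} forcing stage, not over iteration generators. Normality of the measures appearing in $\iota$ would let you stabilize a regressive ordinal-valued function of a critical point, but $\dot g$ is a name for a function on all of $\eta_\omega$, and the equation $\eta_n=\dot g_K(\vec\gamma)$ also depends on the generic $K$. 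You give no mechanism to absorb either dependence, and your own remark that this is ``the main obstacle'' is accurate.

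The paper sidesteps all of this with a club argument. After obtaining $g_n\in\mathcal{K}^M[H\restriction\eta_\omega]$ with $g_n(x_n)=\eta_n$ for some $x_n\in\eta_n^{i_n}$, one forms the club $C_n=\{\alpha<\eta_\omega\mid g_n[\alpha^{i_n}]\subseteq\alpha\}$ of its closure points, which lives in $\mathcal{K}^M[H\restriction\eta_\omega]$. The forcing property recorded at the start of the section (every club in the extension by $\iota(\mathbb{P})_{\eta_\omega}$ contains a ground-model club) yields a $\mathcal{K}^M$-club $D\subseteq C_n$. Now write a witnessing function $h\in\mathcal{K}^M$ (whose closure points lie in $D$) as $h=\iota(g)(\zeta_0,\dots,\zeta_{k-1})$ with $g\in\mathcal{K}$ and $\zeta_j<\eta_\omega$; the $\mathcal{K}$-club $E$ of closure points of $g$ then satisfies $\iota(E)\subseteq^* D\subseteq C_n$. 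Strong generator-ness of $\eta_n$ gives $\eta_n\in\iota(E)$, hence $\eta_n\in C_n$, contradicting $g_n(x_n)=\eta_n$ with $x_n\in\eta_n^{i_n}$. No representation of $\dot g$ by critical points and no elimination step are needed: passing to the closure-point club absorbs both the generic and the high iteration generators in one stroke.
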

\begin{proof}[Proof of claim]
Suppose for the sake of contradiction that $n<\omega$ is such that $\eta_n\notin b_{\eta_\omega}$.  Invoke the previous claim to find a function $g_n\colon \eta_\omega^{i_n}\rightarrow \eta_\omega$ in $\mathcal{K}^{M}[H\restriction\eta_\omega]$ and $x_n\in (b_{\eta_\omega}\cap \eta_n)^{i_n}$ such that $g_n(x_n)=\eta_n$. 
Therefore, $$\eta_n\in g_n[\eta_n^{i_n}].$$

\smallskip

Let $C_n:=\{\alpha<\eta_\omega\mid \forall x\in [\alpha]^{i_n}\, g_n(x)<\alpha\}$. Clearly, $C_n$ is a club on $\eta_\omega$ in the model  $\mathcal{K}^M[H\restriction \eta_\omega]$. Let $C:=\bigcap_{n<\omega} C_n$. By the properties of the iteration $\iota(\mathbb{P})_{\eta_\omega}$, there is a club $D\s \eta_\omega$ in $\mathcal{K}^M$ such that $D\s C$. Then, there is $E\in \mathrm{Cub}^{\mathcal{K}}_{\kappa}$  such that $\iota(E)\s^* D\s C$: To see this, let $h \colon \eta_\omega \to \eta_\omega$ be a function such that its closure points are contained in $D$. As the iteration $\iota=j\restriction \mathcal{K}$ is normal, $h = \iota(g)(\zeta_0,\dots, \zeta_{n-1})$ for $\zeta_0, ..., \zeta_{n-1} < \eta_\omega$. Let $E$ be the closure points of $g$. It follows that  $\iota(E) \setminus \zeta_{n-1} \subseteq D\s C$.

Since each $\eta_n$ is a strong generator,  $\eta_n\in \iota(E)\s C_n$. But notice that this is a contradiction with the conclusion of the previous paragraph, where we showed that $\eta_n=g_n(x_n)$ for some $x_n\in [\eta_n]^{i_n}.$ 
\end{proof}
The above completes the proof of the theorem.
\end{proof}

\section{Open problems}\label{sec: open problems}
We close  the paper with a couple of open problems. The most accessible question,  given our current techniques and understanding, seems to be:
\begin{question}\label{que: gp via ult}
    Does $\kappa^+$-$\gp$ imply  $\kappa^+$-$\gp$ via ultrafilters? 
\end{question}
We conjecture that the answer is negative. 

\smallskip

Assuming  $2^\kappa=\kappa^{+}$,  Kunen  \cite[Theorem~2.3]{Ketonen} (or Comfort–Negrepontis \cite[Theorem~4.3]{ComfortNegrepontis}) in combination with Theorem~\ref{thm: EquivalencewithRK} show that $\kappa$-compact cardinals have the $\kappa^+$-$\gp$ via ultrafilters. However the optimal consistency strength is plausibly much weaker than that. Therefore, we ask:
\begin{question}
   What is the  consistency strength of the following, modulo $\mathrm{ZFC}$:
   \begin{enumerate}
       \item  ``There is a measurable cardinal $\kappa$ with the  $\kappa^+$-$\gp$".
       \item ``There is a measurable cardinal $\kappa$ with the  $\kappa^+$-$\gp$ via ultrafilters".
   \end{enumerate}
\end{question}

\begin{question}\label{que: full gluing}
    What is the consistency strength of the theory $\mathrm{ZFC}+$``There exists a measurable cardinal $\kappa$ with the $\lambda$-$\gp$ for all cardinals $\lambda$"?
\end{question}

In Theorem~\ref{thm: full gluing} we justified (leveraging an argument of Gitik) that a lower bound for the consistency of the above configuration is the existence of a strong cardinal. We conjecture that this is the exact consistency strength.


\bibliographystyle{alpha}
\bibliography{biblio}

\begin{thebibliography}{BNK24}

\bibitem[BG24]{BenhamouGoldberg}
Tom Benhamou and Gabriel Goldberg.
\newblock Applications of the {M}agidor {I}teration to {U}ltrafilter {T}heory.
\newblock {\em arXiv preprint arXiv:2412.09683}, 2024.

\bibitem[Bla70]{BlassPhD}
Andreas Blass.
\newblock {\em Orderings of ultrafilters}.
\newblock PhD thesis, Harvard University, 1970.

\bibitem[Bla73]{BlassTAMS}
Andreas Blass.
\newblock The {R}udin-{K}eisler {O}rdering of {P}-{P}oints.
\newblock {\em Transactions of the American Mathematical Society},
  179:145--166, 1973.

\bibitem[BM14]{BagariaMagidor}
Joan Bagaria and Menachem Magidor.
\newblock Group radicals and strongly compact cardinals.
\newblock {\em Transactions of the American Mathematical Society},
  366(4):1857--1877, 2014.

\bibitem[BN14]{BenNeriaMeasures}
Omer Ben-Neria.
\newblock Forcing magidor iteration over a core model below {$0^{\P}$}.
\newblock {\em Archive for Mathematical Logic}, 53(3):367--384, 2014.

\bibitem[BNK24]{BenNeriaKaplan}
Omer Ben-Neria and Eyal Kaplan.
\newblock A {K}unen-{L}ike {M}odel with a {C}ritical {F}ailure of the
  {C}ontinuum {H}ypothesis.
\newblock {\em arXiv preprint arXiv:2412.05493}, 2024.

\bibitem[BNU17]{BenUng}
Omer Ben-Neria and Spencer Unger.
\newblock Homogeneous changes in cofinalities with applications to {HOD}.
\newblock {\em Journal of Mathematical Logic}, 17(02):1750007, 2017.

\bibitem[CN72]{ComfortNegrepontis}
W~Comfort and S~Negrepontis.
\newblock On families of large oscillation.
\newblock {\em Fundamenta Mathematicae}, 75(3):275--290, 1972.

\bibitem[CN12]{CNBook}
William~Wistar Comfort and Stylianos Negrepontis.
\newblock {\em The theory of ultrafilters}, volume 211.
\newblock Springer Science \& Business Media, 2012.

\bibitem[FM09]{FriedmanMagidor}
Sy-David Friedman and Menachem Magidor.
\newblock The number of normal measures.
\newblock {\em The Journal of Symbolic Logic}, 74(3):1069--1080, 2009.

\bibitem[Git86]{ChangingCofinalities}
Moti Gitik.
\newblock Changing cofinalities and the nonstationary ideal.
\newblock {\em Israel Journal of Mathematics}, 56(3):280--314, 1986.

\bibitem[Git88]{Gitik1988Ordering}
Moti Gitik.
\newblock On the {M}itchell and {R}udin-{K}iesler orderings of ultrafilters.
\newblock {\em Annals of Pure and Applied Logic}, 39(2):175--197, 1988.

\bibitem[Git93]{GitikOnMeasurables}
Moti Gitik.
\newblock On measurable cardinals violating the continuum hypothesis.
\newblock {\em Annals of Pure and Applied Logic}, 63(3):227--240, 1993.

\bibitem[Git97]{GitikNonStaI}
Moti Gitik.
\newblock Some results on the nonstationary ideal {I}.
\newblock {\em Israel Journal of Mathematics}, 99:175--188, 1997.

\bibitem[Git10]{Gitik-handbook}
Moti Gitik.
\newblock Prikry-type forcings.
\newblock In {\em Handbook of Set Theory. {V}ols. 1, 2, 3}, pages 1351--1447.
  Springer, Dordrecht, 2010.

\bibitem[Git20]{Gitikcompact}
Moti Gitik.
\newblock On $\kappa$-compact cardinals.
\newblock {\em Israel Journal of Mathematics}, 237(1):457--483, 2020.

\bibitem[GK23]{GitikKaplan-nonstationary2022}
Moti Gitik and Eyal Kaplan.
\newblock Non-stationary support iterations of prikry forcings and restrictions
  of ultrapower embeddings to the ground model.
\newblock {\em Annals of Pure and Applied Logic}, 174(1):103164, 2023.

\bibitem[Gol22a]{GoldbergUA}
Gabriel Goldberg.
\newblock {\em The {U}ltrapower {A}xiom}, volume~10.
\newblock Walter de Gruyter GmbH \& Co KG, 2022.

\bibitem[Gol22b]{GoldbringBook}
Isaac Goldbring.
\newblock {\em Ultrafilters throughout mathematics}, volume 220.
\newblock American Mathematical Society, 2022.

\bibitem[GP25]{GoldbgerPov}
Gabriel Goldberg and Alejandro Poveda.
\newblock Compactness phenomena in {HOD}.
\newblock 13:e118, 2025.

\bibitem[Hay19]{Hayutpartial}
Yair Hayut.
\newblock Partial strong compactness and squares.
\newblock {\em Fundamenta Mathematicae}, 246:193--204, 2019.

\bibitem[HH23]{HavicHonzik}
Miha~E Habi{\v{c}} and Radek Honz{\'\i}k.
\newblock Capturing sets of ordinals by normal ultrapowers.
\newblock {\em Annals of Pure and Applied Logic}, 174(6):103261, 2023.

\bibitem[HP24]{HP}
Yair Hayut and Alejandro Poveda.
\newblock The gluing property.
\newblock {\em Journal of Mathematical Logic}, 0(0):2450030, 2024.

\bibitem[Kan76]{KanamoriUltrafilters}
A.~Kanamori.
\newblock Ultrafilters over a measurable cardinal.
\newblock {\em Annals of Mathematical Logic}, 10(3):315--356, 1976.

\bibitem[Kan09]{Kan}
Akihiro Kanamori.
\newblock {\em The higher infinite}.
\newblock Springer Monographs in Mathematics. Springer-Verlag, Berlin, second
  edition, 2009.
\newblock Large cardinals in Set Theory from their beginnings, Paperback
  reprint of the 2003 edition.

\bibitem[Ket72]{Ketonen}
Jussi Ketonen.
\newblock Strong compactness and other cardinal sins.
\newblock {\em Annals of Mathematical Logic}, 5(1):47--76, 1972.

\bibitem[KR18]{KR}
Borisa Kuzeljevic and Dilip Raghavan.
\newblock A long chain of p-points.
\newblock {\em Journal of Mathematical Logic}, 18(01):1850004, 2018.

\bibitem[Kun70]{KunenMeasures}
Kenneth Kunen.
\newblock Some applications of iterated ultrapowers in set theory.
\newblock {\em Annals of Mathematical Logic}, 1(2):179--227, 1970.

\bibitem[Kun72]{KunenTAMS}
Kenneth Kunen.
\newblock Ultrafilters and independent sets.
\newblock {\em Transactions of the American Mathematical Society},
  172:299--306, 1972.

\bibitem[KV14]{HandbookTopology}
Kenneth Kunen and Jerry Vaughan.
\newblock {\em Handbook of set-theoretic topology}.
\newblock Elsevier, 2014.

\bibitem[Mag76]{MagSuper}
Menachem Magidor.
\newblock How large is the first strongly compact cardinal? {Or} a study on
  identity crises.
\newblock {\em Annals of Mathematical Logic}, 10(1):33--57, 1976.

\bibitem[Mit79]{Mitchell1979}
William Mitchell.
\newblock Hypermeasurable cardinals.
\newblock In {\em Logic {C}olloquium '78 ({M}ons, 1978)}, volume~97 of {\em
  Stud. Logic Found. Math.}, pages 303--316. North-Holland, Amsterdam-New York,
  1979.

\bibitem[Mit84]{MitIter}
William~J Mitchell.
\newblock The core model for sequences of measures. {I}.
\newblock In {\em Mathematical Proceedings of the Cambridge Philosophical
  Society}, volume~95, pages 229--260. Cambridge University Press, 1984.

\bibitem[Mit95]{MitchellUpto}
William Mitchell.
\newblock The core model up to a {W}oodin cardinal.
\newblock In Dag Prawitz, Brian Skyrms, and Dag Westerståhl, editors, {\em
  Logic, Methodology and Philosophy of Science IX}, volume 134 of {\em Studies
  in Logic and the Foundations of Mathematics}, pages 157--175. Elsevier, 1995.

\bibitem[Mit09]{Mithand2}
William~J Mitchell.
\newblock The covering lemma.
\newblock In {\em Handbook of set theory}, pages 1497--1594. Springer, 2009.

\bibitem[Mit10]{MitChap}
William~J. Mitchell.
\newblock Beginning inner model theory.
\newblock In {\em Handbook of Set Theory}, pages 1449--1495. Springer, 2010.

\bibitem[MS94]{MagSheGroups}
Menachem Magidor and Saharon Shelah.
\newblock When does almost free imply free?({F}or groups, transversals, etc.).
\newblock {\em Journal of the American Mathematical Society}, pages 769--830,
  1994.

\bibitem[MS15]{MalliarisShelah}
Maryanthe Malliaris and Saharon Shelah.
\newblock Constructing regular ultrafilters from a model-theoretic point of
  view.
\newblock {\em Transactions of the American Mathematical Society},
  367(11):8139--8173, 2015.

\bibitem[MS16]{MalliarisShelahII}
Maryanthe Malliaris and Saharon Shelah.
\newblock Cofinality spectrum theorems in model theory, set theory, and general
  topology.
\newblock {\em Journal of the American Mathematical Society}, 29(1):237--297,
  2016.

\bibitem[PRS23]{PartIII}
Alejandro Poveda, Assaf Rinot, and Dima Sinapova.
\newblock Sigma-{P}rikry forcing {III}: Down to {$\aleph_\omega$}.
\newblock {\em Adv. Math.}, 435(part A), 2023.

\bibitem[Rie08]{Riesz}
Frigyes Riez.
\newblock Stetigkeitsbegriff und abstrakte menenlehre.
\newblock In {\em Atti del IV Congresso Internazionale del Matematici, Roma},
  volume~2, 1908.

\bibitem[Rud71]{RudinTAMS}
Mary~Ellen Rudin.
\newblock Partial orders on the types in $\beta\mathbb{N}$.
\newblock {\em Transactions of the American Mathematical Society}, pages
  353--362, 1971.

\bibitem[Sch06]{Schi}
Ralf Schindler.
\newblock Iterates of the core model.
\newblock {\em The Journal of Symbolic Logic}, 71(1):241--251, 2006.

\bibitem[She75]{Shelahcompactness}
Saharon Shelah.
\newblock A compactness theorem for singular cardinals, free algebras,
  whitehead problem and tranversals.
\newblock {\em Israel Journal of Mathematics}, 21(4):319--349, 1975.

\bibitem[Sil75]{Sil}
Jack Silver.
\newblock On the {S}ingular {C}ardinal {P}roblem.
\newblock In {\em Proceedings of the International Congress of Mathematicians.
  {V}ols 1}, page 265–268. Springer, Dordrecht, 1975.

\bibitem[Ste16]{SteelIteratedUlt}
John Steel.
\newblock An introduction to iterated ultrapowers.
\newblock {\em Forcing, Iterated Ultrapowers, and Turing Degrees. Lecture Notes
  Series. Institute for Mathematical Sciences. National University of
  Singapore}, 29:123--174, 2016.

\bibitem[Tod07]{TodorcevicWalks}
Stevo Todorcevic.
\newblock {\em Walks on ordinals and their characteristics}.
\newblock Springer, 2007.

\bibitem[Ula29]{Ulam}
Stanisław Ulam.
\newblock Concerning functions of sets.
\newblock {\em Fundamenta Mathematicae}, 14(1):231--233, 1929.

\end{thebibliography}

\end{document}